\providecommand{\tabularnewline}{\\}
\numberwithin{equation}{section}
  \theoremstyle{plain}
  \newtheorem*{thm*}{\protect\theoremname}
\theoremstyle{plain}
\newtheorem{thm}{\protect\theoremname}[section]
  \theoremstyle{remark}
  \newtheorem{rem}[thm]{\protect\remarkname}
  \theoremstyle{plain}
  \newtheorem{lem}[thm]{\protect\lemmaname}
  \theoremstyle{definition}
  \newtheorem{defn}[thm]{\protect\definitionname}
  \theoremstyle{plain}
  \newtheorem{prop}[thm]{\protect\propositionname}
  \theoremstyle{plain}
  \newtheorem{cor}[thm]{\protect\corollaryname}
  \theoremstyle{remark}
  \newtheorem*{rem*}{\protect\remarkname}
\newcommand{\hide}[1]{}
\DeclareMathOperator{\id}{id}
\DeclareMathOperator*{\im}{image}
\DeclareMathOperator{\lip}{Lip}
\DeclareMathOperator{\rank}{rank}
\DeclareMathOperator{\diam}{diam}
\DeclareMathOperator{\supp}{supp}
\DeclareMathOperator{\bdim}{dim_\textrm{B}}
\DeclareMathOperator{\edim}{dim_\textrm{e}}
\DeclareMathOperator{\ledim}{\underline{\dim_e}}
\DeclareMathOperator{\uedim}{\overline{\dim_e}}
\def\conv{\mbox{\LARGE{$.$}}}
\DeclareMathOperator{\spn}{span}
\DeclareMathOperator{\diag}{diag}
\newcommand{\PR}{\mathbb{RP}^1}
\def\conv{\mbox{\LARGE{$.$}}}
\renewcommand{\textendash}{--}
\date{\today}
  \providecommand{\corollaryname}{Corollary}
  \providecommand{\definitionname}{Definition}
  \providecommand{\lemmaname}{Lemma}
  \providecommand{\propositionname}{Proposition}
  \providecommand{\remarkname}{Remark}
  \providecommand{\theoremname}{Theorem}
\providecommand{\theoremname}{Theorem}
\begin{document}

\title{\date{}Hausdorff Dimension of Planar Self-Affine Sets and Measures
with Overlaps}

\author{Michael Hochman and Ariel Rapaport}
\maketitle
\begin{abstract}
We prove that if $\mu$ is a self-affine measure in the plane whose
defining IFS acts totally irreducibly on $\PR$ and satisfies an exponential
separation condition, then its dimension is equal to its Lyapunov
dimension. We also treat a class of reducible systems. This extends
our previous work on the subject with B\'ar\'any to the overlapping
case.
\end{abstract}
\tableofcontents{}

\section{\label{sec:Introduction}Introduction}

\subsection{\label{subsec:Statement-of-results}Statement of results}

Let $X=\bigcup_{i\in\Lambda}\varphi_{i}X\subseteq\mathbb{R}^{2}$
be a planar self-affine set, and let $\mu=\sum_{i\in\Lambda}p_{i}\cdot\varphi_{i}\mu\in\mathcal{P}(\mathbb{R}^{2})$
be a planar self-affine measure, generated by a finite system $\Phi=\{\varphi_{i}\}_{i\in\Lambda}$
of invertible affine contractions of $\mathbb{R}^{2}$ and a probability
vector $p=(p_{i})_{i\in\Lambda}$. To avoid trivial cases we assume
throughout this paper (and without further mention) that 
\begin{itemize}
\item The maps $\varphi_{i}$ do not have a common fixed point;
\item $p_{i}>0$ for all $i\in\Lambda$. 
\end{itemize}
We write $\varphi_{i}(x)=A_{i}x+b_{i}$ where $A_{i}$ is a $2\times2$
matrix and $b_{i}\in\mathbb{R}^{2}$, and for a general affine map
$\varphi$ of $\mathbb{R}^{2}$ we similarly write $\varphi(x)=A_{\varphi}x+b_{\varphi}$.

It has been a longstanding problem to compute the dimensions $\dim X$
and $\dim\mu$. General upper bounds have been known for some time:
the affinity dimension $\dim_{a}X$ bounds the dimension of $X$ \cite{Falconer1988},
and the Lyapunov dimension $\dim_{L}\mu$ bounds the dimension of
$\mu$ \cite{JordanPollicottSimon2007}.\footnote{Strictly speaking, the affinity and Lyapunov dimensions depend on
$\Phi$ and $p$, not on $X$ and $\mu$, but we suppress this in
our notation.} Another, trivial, upper bound is the dimension $2$ of the ambient
space $\mathbb{R}^{2}$; thus we obtain the general bound
\begin{align}
\dim X & \leq\min\{2,\dim_{a}X\}\label{eq:X-bound}\\
\dim\mu & \leq\min\{2,\dim_{L}\mu\}\;.\label{eq:mu-bound}
\end{align}
It is a natural question to ask when $X$ and $\mu$ are ``as spread
out as possible'', that is, when these bounds are achieved. Equality
turns out to be the situation for ``typical'' $\Phi$, as has been
established in many instances over the past few decades, most often
as the generic behavior in various parametric families of systems,
and in some special cases of concrete systems, see e.g. \cite{Falconer1992,HueterLalley1995,Solomyak1998,BaranyKaenmakiKoivusalo2018}.
This behavior is not universal, and some counterexamples are known,
but they are rather special, consisting either of systems in which,
in suitable coordinates, the matrices $A_{i}$ are all diagonal \cite{Bedford1989-box-dimension-of-repellers,McMullen1984},
see also \cite{GatzourasLalley1992-dim-of-certain-self-affine-fractals};
or of systems with many ``overlaps'', that is to say, systems in
which there are many algebraic relations in the semigroup generated
by $\Phi$.

Over the past few years results have emerged that apply to specific
instances of systems \cite{Barany2015,FalconerKempton2016,Rapaport2016,MorrisShmerkin2016},
under some separation assumption and assumptions on the dimension
of the associated Furstenberg measure. Most recently, in joint work
with B. B\'{a}r\'{a}ny, we removed the last assumption and proved
the following general result:
\begin{thm*}
[\cite{BHR}] Suppose that $\Phi=\{\varphi_{i}\}_{i\in\Lambda}$ is
a finite system of invertible affine contractions in $\mathbb{R}^{2}$
and satisfies the following conditions:
\begin{itemize}
\item \textbf{Non-conformality}: There is no coordinate system in which
all the maps $\varphi_{i}$ are similarities.
\item \textbf{Total irreducibility}: There is no finite set of lines $\{\ell_{1},\ldots,\ell_{n}\}$
in $\mathbb{R}^{2}$ which is invariant under all of the matrices
$A_{i}$.
\item \textbf{Strong~open~set~condition}: There is a bounded open set
$U\subseteq\mathbb{R}^{2}$ such that $U\cap X\neq\emptyset$, $\varphi_{i}U\subseteq U$
for all $i\in\Lambda$, and $\varphi_{i}U\cap\varphi_{j}U=\emptyset$
for distinct $i,j\in\Lambda$.
\end{itemize}
Then equality holds in (\ref{eq:X-bound}) and (\ref{eq:mu-bound}).
\end{thm*}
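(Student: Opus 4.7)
The plan is to prove the measure statement (\ref{eq:mu-bound}) and then deduce (\ref{eq:X-bound}) by applying it to the K\"aenm\"aki subadditive equilibrium measure $\mu_{K}$, for which $\dim_{L}\mu_{K}=\dim_{a}X$ and $\supp\mu_{K}\subseteq X$; the set bound then follows from $\dim X\geq\dim\mu_{K}$ combined with (\ref{eq:X-bound}). So the core task is to establish $\dim\mu=\min\{2,\dim_{L}\mu\}$ for every self-affine measure $\mu$ satisfying the hypotheses.

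The main engine will be the Ledrappier--Young formula for planar self-affine measures. Non-conformality together with total irreducibility force the two Lyapunov exponents $\chi_{1}>\chi_{2}>0$ of the cocycle $(A_{i},p_{i})$ to be distinct, and the associated Furstenberg stationary measure $\eta$ on $\PR$ to be non-atomic with positive dimension. Ledrappier--Young then expresses $\dim\mu$ as an explicit affine function of $\dim\pi_{V}\mu$, $H(p)$, and the $\chi_{i}$ for $\eta$-typical $V$, where $\pi_{V}$ denotes orthogonal projection onto the line $V$. A direct comparison shows that this expression coincides with $\dim_{L}\mu$ precisely when $\dim\pi_{V}\mu$ attains its naive upper bound $\min\{1,H(p)/\chi_{1}\}$ for $\eta$-a.e.\ $V$. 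This reduces the theorem to a full-dimension statement for $\eta$-typical linear projections of $\mu$.

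To prove that projection statement I would adapt the entropy-increase machinery developed by Hochman for self-similar measures. Each $\pi_{V}\mu$ is a self-similar measure on $\mathbb{R}$ whose generating contractions $\pi_{V}\varphi_{i}$ have ratios depending on how $A_{i}$ acts on $V$. SOSC delivers honest geometric separation of the cylinders $\varphi_{\mathbf{i}}U$ at every level $n$, which I would translate into a lower bound on the entropy of the atomic measure $\mu_{n}=\sum_{\mathbf{i}\in\Lambda^{n}}p_{\mathbf{i}}\delta_{\varphi_{\mathbf{i}}(0)}$ at the appropriate small scale. Assuming for contradiction that $\dim\pi_{V}\mu$ lies strictly below its naive bound on an $\eta$-positive set of directions, an inverse theorem for convolutions in Hochman's style would force the projected atoms $\pi_{V}\varphi_{\mathbf{i}}(0)$ to concentrate on arithmetic-progression-like structures at infinitely many scales. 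Total irreducibility, through the non-concentration of $\eta$ on any finite union of directions, rules out such persistent rigidity and produces the required contradiction.

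The principal difficulty is the tight coupling between the direction $V$ on which one projects and the word $\mathbf{i}$ that determines the scale: the cocycle $A_{\mathbf{i}}$ acts simultaneously on $V$ and on the contraction scale $\sim e^{-n\chi_{2}}$, so Hochman's one-parameter inverse theorem must be run inside an integral over $\eta$ with quantitative control of errors. This will require large-deviation estimates for the log-norm cocycle, H\"older regularity of $\eta$ under perturbations of direction, and a quantitative non-concentration statement extracted from total irreducibility. Once these pieces are assembled, the projection dimension statement combined with Ledrappier--Young delivers (\ref{eq:mu-bound}), and specialization to $\mu_{K}$ yields (\ref{eq:X-bound}).
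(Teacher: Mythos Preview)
Your high-level strategy---reduce to the measure case via a variational principle, invoke Ledrappier--Young, and reduce further to a projection statement $\dim\pi_{V}\mu=\min\{1,H(p)/|\chi_{1}|\}$ for typical $V$---is exactly the architecture used in \cite{BHR} and recapitulated in Section~\ref{subsec:Discussion-and-reduction} of this paper. However, two of your intermediate claims are incorrect and would cause the argument to fail.

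First, $\pi_{V}\mu$ is \emph{not} a self-similar measure. The identity $\pi_{V}\varphi_{i}=\pm\|\pi_{V}A_{i}\|\cdot\pi_{A_{i}^{*}V}+\pi_{V}b_{i}$ (equation~(\ref{eq:affine-map-to-projection})) shows that applying $\pi_{V}$ to a cylinder $\varphi_{\mathbf{i}}\mu$ does not yield a scaled copy of $\pi_{V}\mu$, but rather a scaled copy of $\pi_{A_{\mathbf{i}}^{*}V}\mu$---a projection in a \emph{different} direction. Iterating, the decomposition of $\pi_{V}\mu$ into level-$n$ pieces involves projections $\pi_{A_{\mathbf{i}}^{*}V}\mu$ ranging over many directions, and the relevant stationary measure is $\eta^{*}$ (for the transposed matrices), not $\eta$. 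This is precisely why the projection theorem is the hard part of \cite{BHR}: one cannot run Hochman's one-dimensional machinery on a fixed IFS, but must instead work with a cocycle of changing directions and prove entropy growth uniformly over them. Your final paragraph shows some awareness of this coupling, but it is inconsistent with the earlier claim of self-similarity, and the sketch you give (``arithmetic-progression-like structures'' ruled out by irreducibility) does not match what is actually needed.

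Second, you have misidentified the role of SOSC. In the actual argument SOSC is used \emph{only} to conclude $H_{3}=H_{p^{\mathbb{N}}}(\mathcal{P}_{1}\mid\Pi^{-1}\mathcal{B})=0$ (see \cite[Corollary~2.8]{BK}); once $H_{3}=0$, Ledrappier--Young expresses $\dim\mu$ in terms of $\dim\pi_{V}\mu$, $H(p)$, and the $\chi_{i}$ alone, and the projection theorem finishes the job. Without $H_{3}=0$ your claimed ``explicit affine function'' does not exist---there is a missing term. The projection theorem itself (Theorem~\ref{thm:BHR-projections}) requires only exponential separation, which SOSC implies, so the separation you extract from SOSC is not doing the work you think it is.
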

The first assumption, non-conformality, is not actually necessary
for the conclusion to hold, because under the separation assumption
given, the conformal (or self-similar) case is easily dealt with using
classical methods. It was stated here and in our earlier paper because
the methods in the conformal and non-conformal settings turn out to
be quite different. 

The second assumption, total irreducibility, can be replaced with
weaker assumptions for some systems of triangular matrices \cite[Proposition 6.6]{BHR},
but cannot be eliminated entirely, as is shown by carpet-like examples.

The purpose of the present paper is to replace the third assumption,
the strong open set condition, with a substantially weaker one, analogous
to the state-of-the-art in the conformal case \cite{Ho,HO2}. This
is of intrinsic interest, as it is a step towards eliminating the
separation assumption entirely (a possibility which, at present, is
only conjectural). As further motivation, we anticipate that understanding
the overlapping two-dimensional case will be an important step towards
treating the separated case in higher dimensions; we will explain
this point in more detail below. Finally, although our previous work
concerned the same non-conformal class of fractals as here, in fact
the proof there reduced to dealing with a family of conformal-like
fractals on the line. The present work requires genuinely non-conformal
techniques, which we introduce here. These are of independent interest.

To state our main result we fix a left-invariant metric $d$, derived
from a Riemannian metric, on the group $A_{2,2}$ of invertible affine
maps $\mathbb{R}^{2}\rightarrow\mathbb{R}^{2}$. We say that the system
$\{\varphi_{i}\}$ satisfies exponential separation if there exists
a constant $c>0$ such that for every $n\in\mathbb{N}$ and for every
pair of sequences $i_{1}\ldots i_{n}\neq j_{1}\ldots j_{n}$ in $\Lambda^{n}$,
we have
\begin{equation}
d(\varphi_{i_{1}}\ldots\varphi_{i_{n}},\varphi_{j_{1}}\ldots\varphi_{j_{n}})>c^{n}\;.\label{eq:exp-sep}
\end{equation}
Note that the constant $c$ will depend on the choice of metric, but
the existence of such a constant is independent of the metric. Other
metrics would also serve for this purpose, e.g. the norm metric when
the affine maps are viewed as $3\times3$ matrices in the standard
way.
\begin{thm}
\label{thm:main}Let $\Phi=\{\varphi_{i}\}_{i\in\Lambda}$ be a finite
system of invertible affine contractions of $\mathbb{R}^{2}$, and
suppose that $\Phi$ has no common fixed point, satisfies the non-conformality
and total irreducibility assumptions, and is exponentially separated.
Then, writing $X$ for the attractor, we have 
\[
\dim X=\min\{2,\dim_{a}X\}\;.
\]
Furthermore, for any positive probability vector $p$, the associated
self-affine measure $\mu=\sum p_{i}\cdot\varphi_{i}\mu$ satisfies
\begin{equation}
\dim\mu=\min\{2,\dim_{L}\mu\}\;.\label{eq:Lyap-dim}
\end{equation}
\end{thm}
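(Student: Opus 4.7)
The plan is to follow the paradigm Hochman introduced for self-similar measures, as adapted to the self-affine setting in the BHR theorem quoted above, and to replace the hard separation (SOSC) with the soft one (exponential separation). Since the set statement follows from the measure statement by choosing a Bernoulli vector $p$ making $\dim_{L}\mu=\dim_{a}X$, I focus on (\ref{eq:Lyap-dim}). Throughout I work with the Bernoulli measure $\beta=p^{\mathbb{N}}$ on $\Lambda^{\mathbb{N}}$ and the coding map $\pi:\Lambda^{\mathbb{N}}\to X$ with $\mu=\pi_{*}\beta$. Oseledets produces two distinct Lyapunov exponents $\chi_{1}>\chi_{2}>0$ (distinctness is exactly non-conformality), and total irreducibility ensures that the Furstenberg measure $\nu$ on $\PR$ is non-atomic and drives the random direction in which the cocycle contracts most.

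The first structural step is to replace dyadic cubes with non-conformal partitions adapted to the cocycle: a typical cylinder $\varphi_{i_{1}\ldots i_{n}}X$ is contained in a parallelogram of long side $\sim e^{-n\chi_{2}}$ and short side $\sim e^{-n\chi_{1}}$, aligned with a direction drawn from $\nu$. I would introduce a two-parameter family of partitions into such parallelograms, and a corresponding notion of scale-$(m,n)$ entropy and ``component measures'' of $\mu$, and verify that the Lyapunov dimension can be extracted from the asymptotic growth rates of these entropies along the two eigendirections. This is where the paper's ``genuinely non-conformal techniques'' must be developed, including an analogue of entropy porosity in which the two directions interact through the Furstenberg measure.

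The heart of the argument is an entropy-increase or inverse theorem for convolutions in $\mathbb{R}^{2}$: if the typical component measures of $\mu$ at scales $(m,n)$ do not gain entropy when convolved with an appropriate piece of $\mu$ itself, then at that scale the components must be concentrated near a translate of a line. Because the only proper affine subspaces in $\mathbb{R}^{2}$ are lines, and because the relevant line directions are forced by the cocycle to lie in the support of $\nu$, total irreducibility prevents these directions from being preserved by the $A_{i}$. Iterating the entropy-increase step from the trivial bound $\dim\mu\ge 0$ then yields $\dim\mu\ge\min\{2,\dim_{L}\mu\}$ unless the increase is already saturated, and combined with (\ref{eq:mu-bound}) this gives the theorem. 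Exponential separation enters at the base of the iteration, where one must show that the sum of two differently-indexed $n$-fold compositions $\varphi_{i_{1}\ldots i_{n}}$ and $\varphi_{j_{1}\ldots j_{n}}$ cannot be too close; a failure of entropy increase at scale $e^{-n\chi_{j}}$ would produce super-exponentially many near-coincidences, contradicting (\ref{eq:exp-sep}).

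I expect the main obstacle to be the non-conformal inverse theorem itself. In the self-similar case one works in a single scale with the Fourier-analytic / combinatorial inverse theorem for convolutions on $\mathbb{R}$; here one must operate simultaneously at two incomparable scales and track how the ``thin direction'' rotates under the cocycle. Making this quantitative \emph{along the orbit} of a typical $\omega\in\Lambda^{\mathbb{N}}$, rather than only on average, and coupling it cleanly to the Furstenberg measure so that total irreducibility delivers the needed contradiction, is where essentially new ideas beyond \cite{BHR} and \cite{Ho,HO2} will be required. Once this tool is in hand, the reduction to exponential separation, as well as the bookkeeping that propagates the dimension lower bound from components back to $\mu$, should follow the now-standard Hochman template.
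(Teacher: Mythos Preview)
Your outline captures the broad Hochman template (entropy growth via an inverse theorem for convolutions, non-conformal partitions, exponential separation feeding entropy into the argument), but it misses the specific mechanism by which total irreducibility is actually used, and this is where the real difficulty lies.

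The paper does \emph{not} iterate entropy growth up from $\dim\mu\ge 0$. Instead it first invokes the Ledrappier--Young formula and the earlier BHR projection theorem (valid under exponential separation) to reduce to a contradiction argument: assume $H_{3}=H_{p^{\mathbb{N}}}(\mathcal{P}_{1}\mid\Pi^{-1}\mathcal{B})>0$ and $1\le\dim\mu<2$, and show this is impossible. The identity $\mu=p^{*n}\conv\mu$ together with a single application of entropy growth then gives the contradiction; no bootstrapping is needed.

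More importantly, your sentence ``total irreducibility prevents these directions from being preserved by the $A_{i}$'' understates the obstacle. When one applies the $\mathbb{R}^{2}$ inverse theorem to $(\theta'\conv x)*(\varphi\mu')$, the bad case---components of $\mu$ concentrated on line segments---\emph{is} realized, because cylinders of $\mu$ genuinely look like line segments in direction $L(x)$. The paper's key new input is a chain of three results: (i) the direction function $L:\Lambda^{\mathbb{N}}\to\PR$ descends to a $\mu$-a.e.\ defined function $L:X\to\PR$ (this is nontrivial and uses $\dim\mu<2$ plus a projection argument); (ii) if no entropy growth occurs, the inverse theorem forces $\theta\conv x$ to concentrate on a line in direction $L(x)$ for most $x$, which in turn forces $L$ to agree with an \emph{affine} map $\overline{\psi}$ on a set of positive $\mu$-measure; (iii) an algebraic argument shows that $L$ affine would force $\mu$ to be supported on a quadratic curve, and total irreducibility (via the Furstenberg random walk on tangent directions) rules this out. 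Steps (i) and (iii) are the ``genuinely non-conformal'' ingredients; your proposal does not identify them, and without (i) the decomposition of $p^{*n}$ into $O(1)$-diameter pieces with positive entropy cannot be controlled (the pieces would spread over exponentially many directions).

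A minor point: the set statement does not follow by choosing a single positive $p$ with $\dim_{L}\mu=\dim_{a}X$; such a $p$ need not exist. The paper uses the Morris--Shmerkin variational principle instead.
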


The first statement follows from the second using a variational principle
due to Morris and Shmerkin \cite{MorrisShmerkin2016}. We therefore
focus on calculating the dimension of $\mu$. 

For Theorem \ref{thm:main} and other theorems below which assume
exponential separation, it is enough to assume the weaker property
that there exists a $c>0$ for which, for infinitely many $n$, (\ref{eq:exp-sep})
holds over all distinct choices $\mathbf{i},\mathbf{j}\in\Lambda^{n}$.
This is true also for the results in \cite{BHR} and several other
recent works on the subject. The proof requires almost no modification,
see \cite{Ho} where it is given on the line. We continue to state
our results in the case of exponential separation because this has
become customary and holds in many important cases, but one should
remember that it can be weakened, and can be significant, see e.g.
\cite{VARJU-BC-PAPER}.

A version of Theorem \ref{thm:main} holds also in terms of random
walk entropy. Specifically, suppose that (\ref{eq:exp-sep}) holds
for all $n$ (or for arbitrarily large $n$) for all pairs $\mathbf{i},\mathbf{j}\in\Lambda^{n}$
such that $\varphi_{\mathbf{i}}\neq\varphi_{\mathbf{j}}$. Then (\ref{eq:Lyap-dim})
holds, but we must define the Lyapunov dimension not with respect
to the entropy $H(p)$ of $p$, but rather with respect to the random
walk entropy $H_{RW}(\Phi,p)$ of the random walk on the affine group
generated by $\Phi$ and $p$. The proof of this requires only minor
modifications (specifically, to Proposition \ref{prop:decomp of desc of slices},
although not to its statement), and is by now well understood, so
we omit the details.

\subsection{\label{subsec:Discussion-and-reduction}Discussion and reduction}

A central tool in this theory is the Ledrappier-Young formula, which
in the setting of self-affine measures is due to K\"aenm\"aki and B\'ar\'ani
\cite{BK,Feng2019}, and which we now recall (see also Section \ref{sub:furstenberg}).
Let $\eta^{*}$ denote the Furstenberg measure of the i.i.d. random
matrix product $\zeta_{n}\cdot\zeta_{n-1}\cdot\ldots\cdot\zeta_{1}$
where $\zeta_{i}$ takes the value $A_{i}^{*}$ with probability $p_{i}$.
This is the unique measure on the projective line $\PR$ satisfying
the relation $\eta^{*}=\sum p_{i}\cdot A_{i}^{*}\eta^{*}$, where
we let matrices act on the space of lines, and on measures on this
space, in the natural way. Also, let $-\infty<\chi_{2}<\chi_{1}<0$
denote the Lyapunov exponents of this random matrix product, which
are negative because the matrices contract (this accounts for the
absolute values later on), and are distinct if we assume total irreducibility
and non-conformality. For a linear subspace $W\leq\mathbb{R}^{2}$,
let $\pi_{W}$ denote the orthogonal projection to $W$, and write
$\mu_{x}^{W}$ for the conditional measure of $\mu$ on $x+W$, which
is $\mu$-a.e. well defined. Write $H(p)$ for the Shannon entropy
$H(p)=-\sum_{i\in\Lambda}p_{i}\log p_{i}$. Let $\Pi:\Lambda^{\mathbb{N}}\rightarrow X$
denote the natural coding map of the attractor $X$, let $\mathcal{B}$
denote the Borel $\sigma$-algebra of $\mathbb{R}^{2}$, and let $\mathcal{P}_{1}$
denote the partition of $\Lambda^{\mathbb{N}}$ according to the first
coordinate. 
\begin{thm}
[Ledrappier-Young formula \cite{BK}] \label{thm:LY}Let $\mu$ be
a self-affine measure in $\mathbb{R}^{2}$, and, in the notation above,
assume $\chi_{2}<\chi_{1}$. Then the real number $H(p)$ splits as
a sum 
\[
H(p)=H_{1}+H_{2}+H_{3}
\]
such that
\begin{itemize}
\item $0\leq H_{1}/|\chi_{1}|\leq1$ and $\dim\pi_{W}\mu=H_{1}/|\chi_{1}|$
for $\eta^{*}$-a.e. $W$.
\item $0\leq H_{2}/|\chi_{2}|\leq1$ and $\dim\mu_{x}^{W^{\perp}}=H_{2}/|\chi_{2}|$
for $\eta^{*}$-a.e. $W$ and $\mu$-a.e. $x$.
\item $\dim\mu=H_{1}/|\chi_{1}|+H_{2}/|\chi_{2}|$.
\item $H_{3}=H_{p^{\mathbb{N}}}(\mathcal{P}_{1}|\Pi^{-1}\mathcal{B})$ (in
particular $H_{3}\geq0$).
\end{itemize}
\end{thm}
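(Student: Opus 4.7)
The plan is to establish exact dimensionality of all the relevant measures, express each dimension as an asymptotic entropy rate, and then decompose the Bernoulli entropy $H(p)$ along the Oseledets filtration of the matrix cocycle. The first step is to show exact dimensionality of $\mu$, of $\pi_{W}\mu$ for $\eta^{*}$-almost every $W$, and of the conditional measures $\mu_{x}^{W^{\perp}}$ for $\mu$-almost every $x$. For $\mu$ itself this is the Feng--Hu theorem; exact dimensionality of the projections and fibres is then obtained by combining Feng--Hu with the natural disintegration of $\mu$ over $\pi_{W}\mu$ along the $W^{\perp}$-foliation. The point of this step is that exact dimensionality converts each Hausdorff dimension into a clean limit of local entropy ratios, which is what one can actually manipulate symbolically.

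Next I would set up the two scales dictated by the Lyapunov spectrum. Since $\chi_{2}<\chi_{1}<0$, the matrix $A_{i_{1}}\ldots A_{i_{n}}$ along a typical word $\mathbf{i}\in\Lambda^{\mathbb{N}}$ has singular values of order $e^{n\chi_{1}}$ along a long direction (the Furstenberg direction $W=W(\mathbf{i})$) and $e^{n\chi_{2}}$ along the orthogonal short direction. Defining stopping times $\tau_{1}(n)$ and $\tau_{2}(n)$ at which the long, respectively short, semi-axis of $A_{\mathbf{i}|_{\tau}}$ first drops below $e^{-n}$, Oseledets gives $\tau_{i}(n)/n\to 1/|\chi_{i}|$ almost surely; at these stopping times the cylinder images $\varphi_{\mathbf{i}|_{\tau}}X$ form approximate rectangles aligned with $W$ and $W^{\perp}$ whose side lengths match the dyadic scale on the relevant side.

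The heart of the argument is then a three-term entropy-at-scale computation. One compares the symbolic entropy $\tau_{2}(n)\cdot H(p)$ of the first $\tau_{2}(n)$ coordinates of $p^{\mathbb{N}}$ with the geometric entropy of $\mu$ at scale $e^{-n}$. Applying $\pi_{W}$ recovers the projection entropy, normalised to $H_{1}=|\chi_{1}|\cdot\dim\pi_{W}\mu$. Conditioning on $\pi_{W}$ and exploiting the approximate product structure on the short side recovers the slice entropy, normalised to $H_{2}=|\chi_{2}|\cdot\dim\mu_{x}^{W^{\perp}}$. The residual --- the excess of $\tau_{2}(n)\cdot H(p)$ over the total geometric entropy at scale $e^{-n}$ --- is identified through Rokhlin's generator formula applied to the factor $\Pi\colon(\Lambda^{\mathbb{N}},p^{\mathbb{N}})\to(X,\mu)$ as $H_{3}=H_{p^{\mathbb{N}}}(\mathcal{P}_{1}\mid\Pi^{-1}\mathcal{B})$, which is manifestly non-negative. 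Summing and normalising then gives both the formula $\dim\mu=H_{1}/|\chi_{1}|+H_{2}/|\chi_{2}|$ and the decomposition $H(p)=H_{1}+H_{2}+H_{3}$.

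The main obstacle I expect is the approximate-product-structure step. One needs uniform, $\mathbf{i}$-tempered control on how cylinder images sit inside thin rectangles aligned with the Oseledets directions, together with a genuine Fubini-type disintegration of $\mu$ compatible with these rectangles at every scale; only then can one identify the fibre entropy with $H_{2}$ rather than with some reparametrised quantity. This is the geometric content inherited from the Ledrappier--Young machinery in the smooth setting, and it requires tempered-distortion estimates for the cocycle together with a careful matching of the fibre measures $\mu_{x}^{W^{\perp}}$ with the symbolic conditionals. Once this geometric backbone is installed, the rest of the decomposition is a chain-rule manipulation.
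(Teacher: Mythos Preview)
The paper does not prove this theorem; it is stated with attribution to B\'ar\'any--K\"aenm\"aki \cite{BK} (see also \cite{Feng2019}) and used throughout as a black box. So there is no in-paper proof to compare your proposal against.

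That said, your outline is broadly consonant with the strategy of \cite{BK}: one establishes exact dimensionality of $\mu$, its projections and conditionals (building on Feng--Hu), sets up the two Lyapunov scales, and decomposes the entropy via a chain rule so that the projection contribution, the fibre contribution, and the symbolic excess $H_{p^{\mathbb{N}}}(\mathcal{P}_{1}\mid\Pi^{-1}\mathcal{B})$ emerge. Your identification of the main obstacle---controlling the geometry of cylinder images relative to the Oseledets splitting well enough to match symbolic conditionals with geometric fibre measures---is accurate; in \cite{BK} this is handled through a careful analysis of transverse measures and tempered estimates for the cocycle, not merely stopping times. Your sketch is at the right level of abstraction for a plan, but be aware that turning the ``approximate product structure'' step into a rigorous identification of $H_2$ with $|\chi_2|\cdot\dim\mu_x^{W^\perp}$ is the bulk of the work in the cited references, and your stopping-time formulation would need to be supplemented by the disintegration machinery developed there.
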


The Ledrappier-Young theorem does not by itself determine $\dim\mu$,
because the expression $\dim\mu=H_{1}/|\chi_{1}|+H_{2}/|\chi_{2}|$
for the dimension is constrained primarily by the identity $H(p)=H_{1}+H_{2}+H_{3}$,
and this leaves two degrees of freedom.\footnote{There is an explicit description of $H_{1},H_{2}$ in terms of a conditional
entropy, but computing them is no easier than computing the dimension
directly, so we did not present it here.} But the theorem also gives bounds for the $H_{i}$, placing them
in a certain compact convex set. Regarding these parameters as free
variables, we may proceed to maximize the linear expression $H_{1}/|\chi_{1}|+H_{2}/|\chi_{2}|$
on this compact domain; its maximal value is essentially the Lyapunov
dimension $\dim_{L}\mu$, and by the Ledrappier-Young formula it is
automatically an upper bound on the dimension, $\dim\mu\leq\dim_{L}\mu$.
In order to compute this maximal value, one relies on two observations:
\begin{itemize}
\item If $H_{1}<|\chi_{1}|$ and if one of the other parameters $H_{j}$
is positive, then the target function $H_{1}/|\chi_{1}|+H_{2}/|\chi_{2}|$
can be increased by increasing $H_{1}$ and decreasing $H_{j}$, while
keeping $H_{1}+H_{2}+H_{3}$ constant.\footnote{Transferring from $H_{2}$ to $H_{1}$ increases the target function
because, due to our assumption $\chi_{2}<\chi_{1}<0$, the coefficient
$1/|\chi_{1}|$ of $H_{1}$ is larger than the coefficient $1/|\chi_{2}|$
of $H_{2}$.} 
\item If $H_{2}<|\chi_{2}|$ and $H_{3}>0$ then the target function $H_{1}/|\chi_{1}|+H_{2}/|\chi_{2}|$
can be increased by increasing $H_{2}$ and decreasing $H_{3}$, while
keeping $H_{1}+H_{2}+H_{3}$ constant.
\end{itemize}
In other words, the maximum is achieved if $H_{1}$ is maximal relative
to the constraints, and $H_{2}$ is maximal given the constraints
and $H_{1}$. From this, one easily derives the formula for $\dim_{L}$
in the cases\footnote{In the third case, $H(p)>|\chi_{1}|+|\chi_{2}|$, the formula for
the Lyapunov dimension is not explained by the Ledrappier-Young formula,
but is motivated by considerations involving the affinity dimension.
In this case the Lyapunov dimension is greater than $2$, and since
we take the minimum with $2$ in Theorem \ref{thm:main}, the details
of this case do not interest us here.} $H(p)\leq|\chi_{1}|+|\chi_{2}|$,
\[
\dim_{L}\mu=\left\{ \begin{array}{ll}
\frac{H(p)}{|\chi_{1}|} & \text{if }H(p)\leq|\chi_{1}|\\
1+\frac{H(p)-|\chi_{1}|}{|\chi_{2}|} & |\chi_{1}|\leq H(p)\leq|\chi_{1}|+|\chi_{2}|\\
2\cdot\frac{H(p)}{|\chi_{1}|+|\chi_{2}|} & |\chi_{1}|+|\chi_{2}|<H(p)
\end{array}\right..
\]

In our previous work \cite{BHR}, we proved the following result under
the same assumptions as Theorem \ref{thm:main}:
\begin{thm}
[\cite{BHR}] \label{thm:BHR-projections}Under the assumptions of
Theorem \ref{thm:main} and with the notation in the Ledrappier-Young
theorem, it holds that 
\begin{equation}
\dim\pi_{W}\mu=\min\{1,\frac{H(p)}{|\chi_{1}|}\}\qquad\text{for }\eta^{*}\text{-a.e. }W\;.\label{eq:BHR-projections}
\end{equation}
\end{thm}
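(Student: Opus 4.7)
The upper bound $\dim \pi_W \mu \leq \min\{1, H(p)/|\chi_1|\}$ is immediate from Theorem~\ref{thm:LY} together with the trivial bound $\dim \pi_W\mu \leq 1$. The content of Theorem~\ref{thm:BHR-projections} is the matching lower bound, and the natural framework for it is the entropy--scale analysis pioneered by Hochman, adapted to a non-stationary projective setting.

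My plan is to regard the family $\{\pi_W\mu\}_{W\in\PR}$ as the solution of a random IFS on the line driven by the projective cocycle. Iterating the self-affine relation $\mu=\sum p_i\varphi_i\mu$ and applying $\pi_W$ gives
\[
\pi_W\mu=\sum_{\mathbf{i}\in\Lambda^{n}} p_{i_1}\cdots p_{i_n}\,(S_{W,\mathbf{i}}\,\pi_{A_{\mathbf{i}}^{*}W}\mu+t_{W,\mathbf{i}}),
\]
where, after identifying each line with $\mathbb{R}$, $S_{W,\mathbf{i}}$ is a scalar contraction of magnitude $\|A_{\mathbf{i}}^{*}v_W\|/\|v_W\|$ for a unit vector $v_W\in W$. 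By Oseledets and the large-deviation principle for the cocycle $A_{\mathbf{i}}^{*}$, for $\eta^{*}$-typical $W$ these scalars concentrate around $e^{n\chi_{1}}$, so at scale $e^{n\chi_{1}}$ the measure $\pi_W\mu$ is presented as a $p^{\otimes n}$-weighted sum of $|\Lambda|^{n}$ pieces with combinatorial entropy $nH(p)$.

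I would then feed this decomposition into a Hochman-style entropy-increase argument. The goal is to prove that for $\eta^{*}$-typical $W$ and large $n$, the dyadic entropy of $\pi_W\mu$ at scale $e^{n\chi_{1}}$ is at least $(1-\varepsilon)\min\{n|\chi_{1}|,nH(p)\}$. The engine is the inverse theorem for convolutions on the line: if the decomposition above fails to deliver the expected entropy at some scale, then a non-negligible fraction of the components $S_{W,\mathbf{i}}\,\pi_{A_{\mathbf{i}}^{*}W}\mu+t_{W,\mathbf{i}}$ must coincide up to an exponentially small error, which forces the projections $\pi_W\varphi_{\mathbf{i}}$ and $\pi_W\varphi_{\mathbf{j}}$ to be exponentially close for many pairs $\mathbf{i}\neq\mathbf{j}$. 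Using total irreducibility to vary $W$ across $\eta^{*}$-generic directions, one then upgrades exponential closeness of projections to exponential closeness in $A_{2,2}$, contradicting the exponential separation of $\Phi$ and forcing the entropy to grow as claimed.

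The main obstacle is exactly this final upgrade. A single orthogonal projection kills one dimension, so exponentially small projected distance is, a priori, very far from exponentially small distance in the affine group. To close the loop one must either combine the inverse-theorem information across several $\eta^{*}$-typical directions, using that the support of $\eta^{*}$ is sufficiently spread by total irreducibility, or recover the missing direction by tracking the contribution of the second Lyapunov exponent $\chi_{2}$ (and the separation $\chi_{2}<\chi_{1}$ furnished by non-conformality) to the linear parts $A_{\mathbf{i}}$. Making either strategy uniform along cocycle orbits, and quantifying the inverse theorem strongly enough to beat the loss of dimension inherent in a single projection, is where the bulk of the technical work lies and where every hypothesis of Theorem~\ref{thm:main} is used essentially.
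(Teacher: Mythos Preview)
This theorem is not proved in the present paper at all: it is quoted from \cite{BHR} and used as a black box (see the discussion after its statement and the footnote explaining that the work in \cite{BHR} ``reduced to dealing with a family of conformal-like fractals on the line''). So there is no proof here to compare your proposal against.

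That said, your outline is in the right spirit for what \cite{BHR} actually does: one does view $\{\pi_W\mu\}_W$ as generated by a random (non-autonomous) IFS on the line driven by the $\{A_i^{*}\}$-walk on $\PR$, and one does run a Hochman-type entropy argument along that cocycle. But the resolution of the obstacle you identify is different from either of your proposed strategies. In \cite{BHR} one does not try to ``upgrade'' exponential closeness of individual one-dimensional projections back to closeness in $A_{2,2}$ by combining several directions post hoc. Instead one works from the start with the random walk measure on $A_{2,2}$ (or equivalently on a suitable bundle over $\PR$): exponential separation of $\Phi$ gives separation in this lift directly, and the entropy-growth theorem is applied in a setting that already encodes the full affine maps, with the projection to $W$ appearing only as the map that pushes the resulting entropy down to $\pi_W\mu$. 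The ``loss of a dimension under a single projection'' that worries you is therefore never incurred, because the inverse-theorem step is not carried out on the line.

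Your final paragraph is honest that the hard part is unresolved, but the two strategies you float (combining several $\eta^{*}$-typical directions, or recovering the missing direction via $\chi_2$) are not how \cite{BHR} proceeds, and I do not see a clear way to make either of them work as stated. If you want to reconstruct the actual argument, the key is to keep the measure on the affine group until after the entropy growth has been established.
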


It should be noted that Theorem \ref{thm:BHR-projections} hinges
on computing $\dim\pi_{W}\mu$, which is the dimension of a fractal
measure on $\mathbb{R}$. In this sense, it does not confront the
non-conformality of $\Phi$ and $\mu$ directly. Nevertheless, it
implies Theorem \ref{thm:main} in two important cases:
\begin{enumerate}
\item If $H_{3}=0$, and, in particular, under the strong open set condition.\footnote{The SOSC implies $H_{3}=0$, see \cite[Corollary 2.8]{BK}}
In this case we saw that $\dim\mu=\dim_{L}\mu$ provided that $H_{1}$
takes its maximal value given the constraints, i.e. provided that
either $H_{1}=H(p)$ (if $H(p)\leq|\chi_{1}|$) or $H_{1}=|\chi_{1}|$
(if $H(p)>|\chi_{1}|$). This holds because Theorems \ref{thm:LY}
and \ref{thm:BHR-projections} together imply 
\[
\frac{H_{1}}{|\chi_{1}|}=\dim\pi_{W}\mu=\min\{1,\frac{H(p)}{|\chi_{1}|}\}\qquad\text{for }\eta^{*}\text{-a.e. }W\;.
\]
\item If $\dim\mu<1$. In this case, since projections are Lipschitz maps
and cannot increase dimension, we know that 
\[
\dim\pi_{W}\leq\dim\mu<1\qquad\text{for all }W\in\PR
\]
By Theorems \ref{thm:LY} and \ref{thm:BHR-projections} we obtain
\[
\frac{H_{1}}{|\chi_{1}|}=\dim\pi_{W}\mu=\frac{H(p)}{|\chi_{1}|}\qquad\text{for }\eta^{*}\text{-a.e. }W\;,
\]
hence $H_{1}=H(p)<|\chi_{1}|$, so $\dim\mu=\frac{H(p)}{|\chi_{1}|}=\dim_{L}\mu$.
\end{enumerate}
Thus, in order to prove Theorem \ref{thm:main}, we need to prove
$\dim\mu=\dim_{L}\mu$ for the cases not covered above, which is the
following statement:
\begin{thm}
\label{thm:main-reformulated}Under the assumptions of Theorem \ref{thm:main}
and with the notation in the Ledrappier-Young theorem, if $H_{3}>0$
and $\dim\mu\geq1$, then $\dim\mu=2$.
\end{thm}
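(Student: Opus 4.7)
The plan is to combine the Ledrappier--Young decomposition with Theorem \ref{thm:BHR-projections} to reduce the statement to an equality of the form $H_{2}=|\chi_{2}|$, and then establish this equality by a non-conformal entropy-increase argument whose failure would contradict exponential separation.

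I first observe that under the hypothesis $H_{3}>0$ one must have $H(p)>|\chi_{1}|$: otherwise Theorem \ref{thm:BHR-projections} would give $H_{1}/|\chi_{1}|=H(p)/|\chi_{1}|$, hence $H_{1}=H(p)$, forcing $H_{2}=H_{3}=0$ via the identity $H(p)=H_{1}+H_{2}+H_{3}$. Consequently $H_{1}/|\chi_{1}|=1$, and by Theorem \ref{thm:LY} we have $\dim\mu=1+H_{2}/|\chi_{2}|$. Proving Theorem \ref{thm:main-reformulated} thus reduces to showing $H_{2}=|\chi_{2}|$, equivalently that the conditional slice measure $\mu_{x}^{W^{\perp}}$ has full dimension $1$ for $\eta^{*}$-a.e. $W$ and $\mu$-a.e. $x$.

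I would then argue by contradiction: suppose $H_{2}<|\chi_{2}|$, so the slices in the strongly contracting direction $W^{\perp}$ have dimension strictly less than $1$. The strategy is a non-conformal analogue of the entropy-increase method of \cite{Ho}. For each $n$, decompose $\mu=\sum_{\mathbf{i}\in\Lambda^{n}}p_{\mathbf{i}}\cdot\varphi_{\mathbf{i}}\mu$ and analyse the entropy of $\mu$ at scale $e^{n\chi_{2}}$ relative to a partition aligned with the stable direction determined by long words $\mathbf{i}$. The hypothesis $H_{3}>0$ precisely encodes that many pairs $\mathbf{i},\mathbf{j}\in\Lambda^{n}$ produce strongly overlapping atoms $\varphi_{\mathbf{i}}\mu,\varphi_{\mathbf{j}}\mu$ at scales far coarser than $c^{n}$. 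The goal is to show that if the slice dimension is deficient, then this overlap structure, together with the full projection dimension $H_{1}=|\chi_{1}|$, forces many iterates $\varphi_{\mathbf{i}},\varphi_{\mathbf{j}}$ to coincide at resolutions far finer than $c^{n}$, contradicting (\ref{eq:exp-sep}).

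The heart of the proof, and its main obstacle, will be a \emph{non-conformal inverse theorem} for entropy of affine convex combinations in the plane: if $\sum p_{\mathbf{i}}\varphi_{\mathbf{i}}\mu$ has smaller entropy in the stable direction at scale $e^{n\chi_{2}}$ than is expected from the weights $p_{\mathbf{i}}$ and the already-known full projection dimension, then the family $\{\varphi_{\mathbf{i}}\}$ must concentrate on a thin subset of the affine group up to small error; combined with total irreducibility and dimension bounds on $\eta^{*}$, such concentration contradicts exponential separation. In the one-dimensional setting of \cite{Ho} the contractions act by scalars on a single line, and the analysis reduces to ordinary convolutions on $\mathbb{R}$, but here the contracting directions of the iterates $\varphi_{\mathbf{i}}$ rotate under the Furstenberg cocycle and the slice measures live on lines that rotate with them. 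Defining and controlling the appropriate notion of ``entropy in a moving direction'' at scale $e^{n\chi_{2}}$, and tracking how entropy is exchanged between the two Oseledets directions under iteration, is the genuinely new non-conformal technical input that the proof must supply.
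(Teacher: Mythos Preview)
Your reduction via Ledrappier--Young and Theorem \ref{thm:BHR-projections} to the statement $H_{2}=|\chi_{2}|$ is correct, and the general shape ``entropy-increase argument by contradiction'' is right. But the logical direction of the contradiction you sketch is inverted, and the key geometric mechanism is missing.

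You propose to show that a slice-dimension deficit forces many $\varphi_{\mathbf{i}},\varphi_{\mathbf{j}}$ to be closer than $c^{n}$, contradicting exponential separation. The paper does the opposite: exponential separation is an \emph{input}, used once (Proposition \ref{prop:decomp of desc of slices}) to guarantee that the fibre measures $[\xi_{\omega}]_{n}$ of $p^{*n}$ over $\Pi^{-1}(\Pi\omega)$ have entropy $\ge \varepsilon n$ at scale $Mn$. The contradiction is then with $\dim\mu<2$: one shows that convolving $\mu$ with these high-entropy pieces strictly increases entropy (Theorem \ref{thm:entropy growth-near-identity}), while $\mu=p^{*n}\conv\mu$ forbids any increase.

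The substantive gap is that you have not identified the mechanism that makes the entropy-growth step work in two dimensions. The standard inverse theorem (Theorem \ref{thm:Hochman's entropy growth}) gives, when growth fails, a scale-dependent subspace $V_{i}$ on which components of $\mu$ are saturated and components of $\theta\conv x$ are concentrated. The obstruction is $\dim V_{i}=1$: $\mu$ genuinely \emph{is} saturated along lines (the long directions of its cylinders), so this case cannot be excluded a priori. The paper's resolution is geometric: first prove (Theorem \ref{thm:L-descends-to-mu}) that the limiting cylinder direction $L(\omega)$ descends to a $\mu$-a.e.\ function $L(x)$ on $X$; this both pins down $V_{i}$ as $L(x)$ and allows one to decompose $p^{*n}$ into pieces of bounded diameter in $A_{2,2}$ (your ``moving direction'' problem is solved because all cylinders through $x$ point the same way). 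Then one shows (Section \ref{sec:Concentration-near-lines}) that if $\theta\conv x$ were concentrated on a line in direction $L(x)$ for most $x$, the map $x\mapsto L(x)$ would have to be induced by an affine map $\mathbb{R}^{2}\to\mathbb{R}^{2}$, which in turn forces $X$ into a quadratic curve---impossible under total irreducibility. Your proposal does not contain this $L$-factors-through-$\Pi$ step or the algebraic rigidity argument, and without them there is no way to eliminate the $1$-dimensional alternative in the inverse theorem.

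Finally, passing from entropy growth near the identity to growth for pieces of $p^{*n}$ supported far from the identity requires the non-conformal partitions $\mathcal{D}_{n}^{g}$ and an interpolation back to standard dyadic partitions (Section \ref{sec:non-conformal-partitions}); this step uses $\dim\mu\ge 1$ and the full projection dimension in an essential way, and is where your remark about ``entropy in a moving direction'' would need to be made precise.
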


The bulk of this paper is devoted to proving this last result, but
many of the intermediate steps are valid \textendash{} and interesting
\textendash{} under weaker assumptions than those above, and so we
prove them under the minimal assumptions necessary. The reader should
take note of the exact assumptions made on $\Phi$ in each of the
sections of the paper; these are stated at the start of each section
and in the main theorems, but, for the sake of readability, not in
all the lemmas and propositions. 

\subsection{\label{subsec:Outline-of-the-argument}Overview of the argument}

In the following paragraphs, we sketch the main ingredients of the
proof of Theorem \ref{thm:main-reformulated}, and the main auxiliary
results that go into it. We shall present it as an argument by contradiction.
Thus, for most of the following discussion, we assume that $\mu$
is a self-affine measure generated by $\Phi$, and that
\begin{itemize}
\item $\Phi$ is non-conformal, totally irreducible, and satisfies exponential
separation.
\item $H_{3}=H_{p^{\mathbb{N}}}(\mathcal{P}_{1}|\Pi^{-1}\mathcal{B})>0$.
\item $1\leq\dim\mu<2$.
\end{itemize}
The proof will depend heavily on the analysis of entropy of measures
at a variety of different scales (for the basic definitions see Section
\ref{sec:Notation-and-background}). In this introduction we are purposely
vague about how we measure entropy, but during~this exposition we
use the convention that when measuring entropy at some small scale
$2^{-m}$, we normalize the entropy by dividing by $m$, so that after
normalization the entropy is comparable to the dimension for well
behaved measures. Then non-negligible entropy means that (after dividing
by $m$) the entropy is bounded away from $0$, perhaps by a very
small constant; entropy of order one means that before normalization
the entropy was of order $m$; etc. 

Denote by $*$ the convolution operation between measures on a group,
usually $\mathbb{R}^{2}$ or the affine group; and for a measure $\theta$
on the affine group and a measure $\nu$ on $\mathbb{R}^{2}$, denote
by $\theta\conv\nu$ the push-forward of $\theta\times\nu$ by the
action map $(\varphi,x)\mapsto\varphi x$; we also sometimes write
$\theta\conv x=\theta\conv\delta_{x}$. The starting point of the
analysis is the basic convolution structure of $\mu$ as a self-affine
measure. By slight abuse of notation, write $p=\sum_{i\in\Lambda}p_{i}\cdot\delta_{\varphi_{i}}$
for the measure on the affine group corresponding to $\Phi$ (with
weights $(p_{i})$), so that 
\[
\mu=p\conv\mu=(p*p)\conv\mu\ldots=p^{*n}\conv\mu
\]
for all $n$. The overall structure of the proof is similar to other
recent results in the area:
\begin{description}
\item [{Decomposing~$p^{*n}$}] Express $p^{*n}$ as an average of measures
$\theta$ which are supported on sets of diameter $O(1)$ in the affine
group (with respect to the left-invariant metric $d$), and such that
a positive fraction of the $\theta$ have non-negligible entropy at
scale $Cn$ for some $C>0$.

This step is where $H_{3}>0$ and exponential separation are used.
\item [{Normalizing~in~the~affine~group}] For each piece $\theta$
of $p^{*n}$, fix an affine map $\varphi\in\supp\theta$ and replace
$\theta$ by its translate $\varphi^{-1}\theta$ in the affine group,
which is supported on an $O(1)$-neighborhood of the identity (by
the left-invariance of the metric).

\medskip This step is meant to deal with some of the problems arising
from the non-conformality of the maps, since $\varphi^{-1}\theta$
is now supported on maps with bounded distortion.
\item [{Entropy~growth}] Apply an entropy-growth result to the convolution
$(\varphi^{-1}\theta)\conv\mu$, and conclude that, for a positive
fraction of the pieces $\theta$ of $p^{*n}$, the entropy of $(\varphi^{-1}\theta)\conv\mu$
is substantially larger than that of $\mu$. 

\medskip We establish the entropy growth result more generally for
convolutions of the form $\theta\conv\mu$, assuming $\theta$ is
a measure near the identity of the affine group having non-negligible
entropy at a small scale. We do not require exponential separation
of $\mu$ for this result.
\item [{Returning~to~the~distorted~setting}] Re-interpreting this for
the convolution $\theta\conv\mu=\varphi((\varphi^{-1}\theta)\conv\mu)$,
we find that for a positive fraction of the pieces $\theta$ of $p^{*n}$,
the entropy of $\theta\conv\mu$, \emph{when measured in the correct
way}, is substantially larger than that of $\mu$. 

\medskip Here one must measure the entropy of $\varphi(\varphi^{-1}\theta\conv\mu)$
using partitions whose cells are adapted to $\varphi$; roughly speaking
they will be like the images under $\varphi$ of square cells. We
shall loosely call this a non-conformal partition. 
\item [{Interpolation}] We show that the entropy increase observed for
the non-conformal partitions implies an increase with respect to appropriately
chosen conformal partitions.

\medskip We do this by interpolating between the non-conformal and
conformal partitions. We must show this interpolation has a neutral
effect on the entropy. This is done with the aide of fine information
provided by the Ledrappier-Young formula and a careful analysis of
projections and slices of $\mu$. This step is the main place where
we use the assumption $\dim\mu\geq1$ (although it also simplifies
some of the other arguments). This step also uses exponential separation
and total irreducibility.
\item [{Total~entropy~change}] Observing that $p^{*n}\conv\mu$ is an
average (over the choice of the piece $\theta$) of the convolutions
of the form $\theta\conv\mu$, we show that the extra entropy from
the last step accumulates to imply that the entropy of $p^{*n}\conv\mu$
is substantially larger than that of $\mu$, which in view of the
identity $p^{*n}\conv\mu=\mu$, is the desired contradiction. 
\end{description}

\subsection{Some more details}

We now discuss some of these steps in more detail, and the new ingredients
in them. 

\subsubsection*{Analyzing the function $L$ and the orientation of cylinders }

One interesting new feature in our proof, which holds without assuming
exponential separation or $\dim\mu\geq1$, is an observation about
the orientation of cylinder measures in $\mu$. A cylinder measure
of generation $n$ is a measure of the form $\varphi_{i_{1}}\ldots\varphi_{i_{n}}\mu$,
and because the affine map $\varphi_{i_{1}}\ldots\varphi_{i_{n}}$
is highly non-conformal, the cylinder measure is supported very close
to a line whose direction $L(A_{i_{1}}\ldots A_{i_{n}})$ is the direction
of the major axis of the image of the unit ball under the matrix product
$A_{i_{1}}\ldots A_{i_{n}}$. It is a basic result in the theory of
random matrix products that this direction converges, for a $p^{\mathbb{N}}$-typical
sequence $\mathbf{i}\in\Lambda^{\mathbb{N}}$ and as $n\rightarrow\infty$,
to a direction $L(\mathbf{i})$; and the distribution $\eta$ of this
direction, as a function of the $p^{\mathbb{N}}$-random sequence
$\mathbf{i}$, is the associated Furstenberg measure. Note that we
are now multiplying the original matrices $A_{i}$ and not, as we
did earlier, their transposes, so $\eta\neq\eta^{*}$ in general;
See Section \ref{sub:furstenberg} for more details.

We are assuming that the symbolic coding $\Pi:\Lambda^{\mathbb{N}}\rightarrow X$
is far from being injective (since $H_{3}>0$), so for a typical point
$x\in X$ with respect to the measure $\mu=\Pi(p^{\mathbb{N}})$,
the function $L$ potentially can take many values on the fiber $\Pi^{-1}(x)$.
However, under our assumptions, it turns out that $L$ \emph{does
}factor through $X$:
\begin{thm}
\label{thm:L-descends-to-mu}Let $\mu$ be a self-affine measure in
$\mathbb{R}^{2}$ of dimension $<2$ generated by a system $\Phi$
that is totally irreducible and non-conformal. Then $L$ is measurable
with respect to $\Pi^{-1}\mathcal{B}$ (up to a $p^{\mathbb{N}}$-nullset).
\end{thm}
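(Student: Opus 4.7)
The plan is to argue by contradiction. Form the joint coding map $\widetilde{\Pi}\colon\Lambda^{\mathbb{N}}\to\mathbb{R}^{2}\times\PR$ by $\widetilde{\Pi}(\mathbf{i})=(\Pi(\mathbf{i}),L(\mathbf{i}))$, and set $\widetilde{\mu}=\widetilde{\Pi}_{*}p^{\mathbb{N}}$. Since $\chi_{1}\neq\chi_{2}$ under our hypotheses, $L(\mathbf{i})$ is well defined $p^{\mathbb{N}}$-a.s.\ and satisfies the cocycle identity $L(\mathbf{i})=A_{i_{1}}L(\sigma\mathbf{i})$; hence $\widetilde{\mu}$ is a stationary measure for the skew-product IFS $\Phi_{i}(x,v)=(\varphi_{i}(x),A_{i}v)$ on $\mathbb{R}^{2}\times\PR$, and its first-factor projection is $\mu$. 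Disintegrating, $\widetilde{\mu}=\int\delta_{x}\otimes\eta_{x}\,d\mu(x)$ for a measurable family $x\mapsto\eta_{x}\in\mathcal{P}(\PR)$, and the conclusion of the theorem is equivalent to: $\eta_{x}$ is a Dirac mass for $\mu$-a.e.\ $x$.

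Assume this fails. A routine measurable-selection argument then yields $\delta>0$, disjoint closed arcs $I,J\subset\PR$, and a Borel set $E\subseteq X$ with $\mu(E)>0$ such that $\eta_{x}(I)\geq\delta$ and $\eta_{x}(J)\geq\delta$ for every $x\in E$. Unpacking this: for $\mu$-typical $x\in E$ and all large $n$, the codings of $x$ split into two subsets of substantial conditional $p^{\mathbb{N}}$-mass whose cylinder measures $\varphi_{\mathbf{i}|_{n}}\mu$ are essentially supported in thin ellipses through $x$ with long-axis directions falling in neighborhoods of $I$ and $J$, respectively. Because $I\cap J=\emptyset$, these two families of strips point in transverse directions.

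The heart of the argument is to show that this transverse two-direction cylinder structure at all scales forces $\dim\mu=2$, contradicting the hypothesis. I would combine two ingredients. First, the Markov identity
\[
\eta_{x}=\sum_{k\in\Lambda}\rho_{k}(x)\,A_{k}\eta_{\varphi_{k}^{-1}(x)},\qquad\rho_{k}(x)=\Pr\bigl(i_{1}=k\,\bigm|\,\Pi(\mathbf{i})=x\bigr),
\]
derived from $L(\mathbf{i})=A_{i_{1}}L(\sigma\mathbf{i})$ by conditioning on $\Pi=x$, combined with total irreducibility of $\{A_{k}\}$ on $\PR$: the latter rules out the possibility that the arcs of positive $\eta_{x}$-mass collapse onto a proper $\{A_{k}\}$-invariant finite set of directions under iteration, so the transverse arc structure persists down to every scale. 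Second, apply the Ledrappier--Young formula for $\mu$ to the two thin-strip families separately to show that the transverse coverage forces a lower bound $\mu(B(x,r))\gtrsim r^{2}$ at $\mu$-density points of $E$.

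The main obstacle is this last step: converting the transverse-strip coverage, uniformly across scales, into a quantitative $r^{2}$ lower bound on the local mass of $\mu$. This is where non-conformality ($\chi_1>\chi_2$) enters essentially, as it gives the thin-strip geometry its content, and where the Ledrappier--Young decomposition $H(p)=H_{1}+H_{2}+H_{3}$ must be used carefully to budget the $\mu$-mass carried by strips in each direction family and to control the combinatorics of $I$- and $J$-strip overlaps. The preliminary construction of $\widetilde{\mu}$, the cocycle identity, the Markov formula for $\eta_x$, and the measurable-selection reduction are essentially routine.
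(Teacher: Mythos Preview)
Your overall contradiction strategy---find a positive-measure set where the fiber measures $\eta_x$ charge two separated arcs $I,J\subset\PR$, then show this forces $\dim\mu=2$---matches the paper's. But the decisive step is not what you sketch, and your proposal has a real gap there.

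First, the inequality $\mu(B(x,r))\gtrsim r^{2}$ points the wrong way: it bounds local dimension from above, which is automatic in $\mathbb{R}^{2}$. What you need is that the scale-$n$ entropy is close to $2n$. Second, and more importantly, neither the Markov identity for $\eta_x$ nor a direct appeal to Ledrappier--Young supplies the missing ingredient, which is a \emph{nontrivial lower bound on the dimension of one-dimensional projections of $\mu$}. The paper's argument (Proposition~\ref{prop:singular pulbacks}) takes $W$ with $W^\perp\in I$ and shows, via the entropy lemmas of Section~\ref{subsec:Entropy-of-thickened-slices}, that $\mu_{L^{-1}(I)}$ has conditional entropy $\ge\beta-\varepsilon$ in the $W^\perp$ direction, while $\mu_{L^{-1}(J)}$ has projection entropy $\ge\beta-\varepsilon$ onto $W$; on a set where the two restricted measures are non-singular these combine to give scale-$n$ entropy $\ge 2\beta-O(\varepsilon)$, hence $\alpha\ge 2\beta$. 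The contradiction with $\alpha<2$ therefore requires $\beta>\alpha/2$, and this is exactly what the entropy form of Bourgain's projection theorem (Corollary~\ref{cor:bourgain-projection-of-SA-measures}) delivers: $\beta\ge\tfrac12\alpha+\tau$ for some $\tau>0$. Total irreducibility enters only indirectly here, through $\dim\eta^{*}>0$, which is what makes Bourgain's theorem applicable. Your ``persistence under iteration'' mechanism does not produce this projection bound and, as stated, cannot close the argument.
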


Note that this theorem does not require exponential separation or
$\dim\mu\geq1$. 

The intuition behind the proof is simple. For simplicity assume for
the moment exponential separation and $\dim\mu\geq1$. Then, if $L$
were not constant on typical $\Pi$-fibers, it would mean that there
is a set $E\subseteq X$ of positive $\mu$-measure such that for
$x\in E$, the cylinder sets which $x$ belongs to ``point'' in
substantially different directions. Now, these cylinder measures are
very nearly concentrated on a line segment and, heuristically, Theorem
\ref{thm:BHR-projections} implies that their projection to this line
has dimension $1$ (the rigorous version of this is given in Section
\ref{subsec:Entropy-of-thickened-slices}). It follows that the measure
$\mu|_{E}$ looks, at small scales, like a collection of uniform measures
on parallel line segments, but that this holds simultaneously for
two different directions. It then follows by a Fubini type argument
that the dimension of $\mu|_{E}$ should be $2$. 

This argument works also without exponential separation, and when
$\dim\mu<1$. Then we do not know that the projections of $\mu$ to
lines have dimension $1$, but using a projection theorem due to Bourgain,
and the fact that $\dim\eta^{*}>0$, one can show that there is a
$\delta>0$ such that for $\eta^{*}$-a.e. $W$ we have $\dim\pi_{W}\mu\geq\frac{1}{2}\dim\mu+\delta$,
and this is enough to carry out the argument. 

In summary, under the assumptions of theorem \ref{thm:main-reformulated},
the function $L:\Lambda^{\mathbb{N}}\rightarrow\PR$ descends to a
$\mu$-a.e. defined measurable function $L:X\rightarrow\PR$. 

For details see Section \ref{sec:the function L}. 

\subsubsection*{Decomposing $p^{*n}$ }

Under the assumptions of Theorem \ref{thm:main-reformulated}, we
wish to decompose $p^{*n}$ into ``smaller'' measures $\theta$
whose supports have diameter $O(1)$ but which still possess non-negligible
entropy. One should first note that $p^{*n}$ itself does not have
this property; it is a very spread out measure that is supported on
exponentially many atoms, describing a set of exponential diameter. 

In this paper, the measures $\theta$ are obtained by first covering
the fibers $\Pi^{-1}(x)$ of the symbolic coding map by cylinders
of a given length $n$, interpreting the name of each cylinder as
a composition of affine maps in the group, and assigning it the weight
that the cylinder has under the conditional measure of $p^{\mathbb{N}}$
on $\Pi^{-1}(x)$. The assumption that $H_{3}=H_{p^{\mathbb{N}}}(\mathcal{P}_{1}|\Pi^{-1}\mathcal{B})>0$
means that these fiber-measures have positive dimension, and so require
exponentially many cylinders to cover them. This leads to $\theta$
having positive entropy as a \emph{discrete }measure, and by exponential
separation, it also has positive entropy at scale $Cn$ for some $C\gg1$.

This construction does not give the necessary bound on the diameter
of the support of $\theta$, and, in fact, $\theta$ can still be
very spread out. The measure $\theta$ arising as above consists of
atoms at affine maps $\varphi_{i_{1}}\ldots\varphi_{i_{n}}$ which
correspond to cylinder sets containing $x$, and if the directions
$L(\varphi_{i_{1}}\ldots\varphi_{i_{n}})$ of these cylinders vary
enough, then the measure $\theta$ will be supported on a very large
set. We would like to further decompose $\theta$ into smaller measures
$\theta'$ which are supported on sets of diameter $O(1)$, but if
we needed to partition it into exponentially many such sets, then
there is the risk that the entropy of each small piece would be negligible,
and that the entropy of $\theta$ originally came from the variation
in directions. 

Luckily, the orientation of the cylinder at a point $x$ are controlled
by the value $L(x)$: the $n$-th cylinder's orientation converges
to $L(x)$ as $n\rightarrow\infty$, and there is some control of
the rates (this is a feature of standard proofs of the Oseledets ergodic
theorem, and a result of the (eventually) contractive nature of the
action of matrix products on the flag space). Using this, we can ensure
that, in order to decompose $\theta$ into pieces of support $O(1)$,
we need only a sub-exponential number of pieces, and therefore a positive
proportion of the pieces will still have substantial entropy.

For details see Section \ref{sec:decomposition-of-p*n}.

\subsubsection*{Entropy growth }

For the entropy growth part of the proof we establish another general
result which does not require the assumption of exponential separation
or $\dim\mu\geq1$. In the following statement, $\mathcal{D}_{n}$
denotes a dyadic-like partition of the affine group into cells of
diameter approximately $2^{-n}$, see Section \ref{sub:dyadic-partitions}
for details.
\begin{thm}
\label{thm:entropy growth-near-identity}Let $\mu$ be a self-affine
measure in $\mathbb{R}^{2}$ defined by a non-conformal,\footnote{In fact, the conformal case is also true, but the proof is different,
and we do not pursue this here. } totally irreducible system $\Phi$ and satisfying $\dim\mu<2$. Then
for every $\varepsilon,R>0$ there is a $\delta=\delta(\mu,\varepsilon,R)>0$
such that for every $n>N(\mu,\varepsilon,R)$, the following holds:

If $\theta$ is a probability measure on the affine group supported
within distance $R$ of the identity, then 
\[
\frac{1}{n}H(\theta,\mathcal{D}_{n})>\varepsilon\qquad\implies\qquad\frac{1}{n}H(\theta\conv\mu,\mathcal{D}_{n})>\frac{1}{n}H(\mu,\mathcal{D}_{n})+\delta\;.
\]
\end{thm}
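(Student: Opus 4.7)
The plan is to argue by contradiction using a multiscale inverse theorem for entropy growth of convolutions on $\mathbb{R}^{2}$, and then extract from the no-growth conclusion a rigidity statement about $\mu$ that is ruled out by total irreducibility. So suppose the theorem fails: there are $\varepsilon,R>0$, a sequence $n_k\to\infty$, and probability measures $\theta_k$ on the affine group, supported within distance $R$ of the identity, with $\tfrac{1}{n_k}H(\theta_k,\mathcal{D}_{n_k})>\varepsilon$ but $\tfrac{1}{n_k}(H(\theta_k\conv\mu,\mathcal{D}_{n_k})-H(\mu,\mathcal{D}_{n_k}))\to 0$. We aim to derive a contradiction using self-affinity of $\mu$ and the hypothesis $\dim\mu<2$.

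The first step is to reduce to $\theta$ supported in an arbitrarily small neighborhood of the identity. Partition $\supp\theta$ using the fixed-scale dyadic partition $\mathcal{D}_K$ of the affine group and write $\theta=\sum c_i\theta_i$; for each $i$, pick $\varphi_i\in\supp\theta_i$ and set $\tilde\theta_i:=\varphi_i^{-1}\theta_i$. Then $\theta_i\conv\mu=\varphi_i(\tilde\theta_i\conv\mu)$, and each $\varphi_i$ has distortion bounded in terms of $R$, so entropy at scale $2^{-n}$ is preserved by $\varphi_i$ up to $O_R(1)$. Concavity of entropy combined with the hypothesis then gives, for a positive $\theta$-mass of indices $i$, simultaneously $\tfrac{1}{n}H(\tilde\theta_i,\mathcal{D}_n)>\tfrac{\varepsilon}{2}$ and $\tfrac{1}{n}(H(\tilde\theta_i\conv\mu,\mathcal{D}_n)-H(\mu,\mathcal{D}_n))=o(1)$. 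Since $K$ is free, we may assume henceforth that $\supp\theta$ sits in an arbitrarily small ball about the identity.

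On such a small neighborhood, the action $x\mapsto\varphi x$ is well approximated by its linearization $(\xi,b,x)\mapsto x+\xi x+b$ on the tangent space $T_{\mathrm{id}}A_{2,2}=\mathrm{Mat}_2\oplus\mathbb{R}^2$. Hence $\theta\conv\mu$ is, at scale $2^{-n}$ and up to absorbable errors, comparable to the pushforward of $\Xi\times\mu$ under that linearized action, where $\Xi$ is the pushforward of $\theta$ to the tangent space. We now apply the multiscale inverse theorem for entropy of convolutions on $\mathbb{R}^2$: the no-growth conclusion says that at a positive proportion of scales $k\in[1,n]$, the scale-$k$ components of $\mu$ are essentially supported on line segments in some direction $V_{x,k}\in\PR$, while the portion of $\Xi$ carrying entropy at that scale moves points along $V_{x,k}$.

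The final step exploits self-affinity to contradict total irreducibility. The relation $\mu=\sum p_i\varphi_i\mu$ ties the scale-$k$ structure of $\mu$ at $\varphi_{\mathbf{i}}(y)$ to the coarser-scale structure at $y$, with tangent directions transforming by $A_{\mathbf{i}}$. Iterating, the set of directions $V_{x,k}$ arising as saturation directions is, up to controlled error, a closed subset of $\PR$ invariant under the projective semigroup $\langle A_i\rangle$. Total irreducibility forbids finite invariant sets; an infinite invariant set of directions, combined with the presence of local line-structure in many distinct directions at a positive proportion of scales, forces $\mu$ to be locally translation-invariant along a dense collection of directions, hence to have full two-dimensional local structure in a quantitative sense, contradicting $\dim\mu<2$. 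The main obstacle is the linearization step: the tangent space $T_{\mathrm{id}}A_{2,2}$ is $6$-dimensional and its action on $\mathbb{R}^2$ blends translations with shears, so when reading off a saturation direction from the inverse theorem one must isolate which component of $\Xi$ carries the entropy of $\theta$, and then argue that both pure-translation and shear contributions produce, possibly after one step of the self-affine relation, a genuine macroscopic line-structure in $\mu$ that total irreducibility can see.
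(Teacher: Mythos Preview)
Your overall architecture---contradiction, linearization, inverse theorem, then a rigidity contradiction---matches the paper's. But the final step, where you derive the contradiction, has a genuine gap.

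You claim that the collection of saturation directions $V_{x,k}$ forms (up to error) a closed subset of $\PR$ invariant under the projective semigroup $\langle A_i\rangle$, and then argue via a finite/infinite dichotomy. This is not correct: the inverse theorem gives a direction $V_{x,k}$ that depends on the \emph{point} $x$ (and the scale), and there is no reason the set of these directions, taken over all $x$, should be semigroup-invariant. The self-affine relation does give equivariance, but of the \emph{function} $x\mapsto V_x$, not of its image. So your invariant-set argument does not get off the ground, and the subsequent ``dense set of directions of local translation-invariance'' picture is unsupported.

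The paper's mechanism is quite different and uses substantial preparation you have not touched. First, one must pin down the saturation direction: using that $\mu$ has uniform entropy dimension $\alpha<2$ (Proposition~\ref{prop:uniform ent dim}) together with the lower bound on projection entropy of components in all directions except near $L(x)$ (Lemma~\ref{lem:proj of comp of comp}), one shows that if $\dim V_j^u\in\{1,2\}$ and $V_j^u\neq L(x)$ then the saturated component would have entropy exceeding $\alpha$, a contradiction. Hence $V_j^u\in\{0,L(x)\}$, so the concentration is along $L(x)$. Here $L:X\to\PR$ is the function from Section~\ref{sec:the function L}, whose very existence (factoring through $\Pi$) is a nontrivial theorem requiring $\dim\mu<2$. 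Second, the contradiction is algebraic, not dynamical: having $\theta'\conv x$ concentrated along $L(x)$ for most $x$, one invokes Corollary~\ref{cor:non-concentration-of-L}, which rests on the fact (Proposition~\ref{prop:non-aff of L} and Corollary~\ref{cor:non-affinity-of-L}) that $L$ cannot agree with an affine map $\overline{\psi}$ on a set of positive $\mu$-measure. That fact in turn uses the equivariance $L\circ\varphi_i=A_iL$ to reduce to a polynomial identity forcing $\mu$ to live on a quadratic curve, which is excluded by total irreducibility (Proposition~\ref{prop:measure 0 for alg cur}). None of this machinery---the identification $V=L(x)$, the non-affinity of $L$, the quadratic-curve obstruction---appears in your proposal, and I do not see how to carry out the final step without it.
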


The proof is given in Section \ref{sec:Proof-of ent growth}. It has
some features in common with results in the literature, but also requires
many new ideas. These are explained in the following summary of the
main steps.

\paragraph*{(i) Linearization }

This step is similar to previous work. In order to study the entropy
of $\theta\conv\mu$, where $\theta$ is a measure in a bounded neighborhood
of the identity in the affine group, we first decompose both $\theta$
and $\mu$ into pieces $\theta'$ and $\mu'$ respectively, so that
$\theta\conv\mu$ is the convex combination of $\theta'\conv\mu'$;
and we choose the pieces so that they are supported on sets of small
diameter. 

Next, we use the fact that on small balls (e.g. the supports of $\theta',\mu'$),
the action $(\varphi,x)\mapsto\varphi x$ is essentially linear. Thus
we can approximate the action-convolution $\theta'\conv\mu'$ by a
Euclidean convolution $(\theta'\conv x)*(\varphi\mu')$, for some
(any) choice of $x\in\supp\mu'$ and $\varphi\in\supp\theta'$. 

Gathering all the pieces together, and using the fact that entropy
is concave, we conclude that the entropy of $\theta\conv\mu$ is at
least the average entropies of $\theta'\conv\mu'$ (the average being
over the pieces), and if the pieces are small enough this is essentially
the same as the average of $(\theta'\conv x)*(\varphi\mu')$, with
$x,\varphi$ as above.

This step is explained in more detail in Section \ref{subsec:Linearization}.

\paragraph*{(ii) Applying the multidimensional inverse theorem}

The inverse theorem in $\mathbb{R}^{d}$ from \cite{HO2} says that
in order for a convolution $\tau*\nu$ of measures in $\mathbb{R}^{2}$
to have entropy that is essentially the same as that of $\nu$ alone,
it must be the case that, at most scales $\delta$, there is a linear
subspace $V=V_{\delta}\leq\mathbb{R}^{2}$ such that at $\tau$-most
points $x$ the restriction of $\tau$ to the ball $B_{\delta}(x)$
is concentrated near a translate of $V$, and for $\nu$-most points
$y$, the measure $\nu$ on $B_{\delta}(y)$ looks like a combination
of uniform measures on translates of $V$. If $\tau$ has positive
entropy then we know that $V_{\delta}$ cannot be the trivial subspace
$\{0\}$ at too many scales, and if $V_{\delta}$ had dimension $2$
at a substantial number of scales this is also to our advantage, since
this would mean that on many small balls $\nu$ looks like $2$-dimensional
Lebesgue measure. Thus, to ensure entropy growth, we want to rule
out the possibility that $\dim V_{\delta}=1$ at more than a fraction
of all scales.

Now, in our case, with $\tau=\theta'\conv x$ and $\nu=\varphi\mu'$,
we aim to show that $\varphi\mu'$ does not look like a combination
of uniform measures on line segments in direction $V_{\delta}$; but,
unfortunately, it is very likely that this is precisely what it looks
like in some direction. Indeed, $\mu'$ is a piece of $\mu$, and
$\mu$ is a combination of cylinder measures $\varphi_{i_{1}}\ldots\varphi_{i_{n}}\mu$,
which, as we already noted, look like copies of $\mu$ squeezed onto
a line segment in direction $L(\varphi_{i_{1}}\ldots\varphi_{i_{n}})\approx L(x)$;
these look like the orthogonal projection of $\mu$ to a line, and
when $\dim\mu\geq1$ it is entirely possible (even likely) that this
projection has dimension $1$. Thus the fractal structure of $\mu'$
actually supports the possibility that it's structure is ``bad''
from the point of view of applying the inverse theorem, since it looks
like uniform measure on translates of $L(x)$ (so $\varphi\mu'$ looks
like the uniform measure on lines parallel to $\varphi L(x)$).

\paragraph*{(iii) Identification of the direction $L(x)$ and using total irreducibility}

Summarizing, if there is no entropy growth in the convolution $(\theta'\conv x)*(\varphi\mu')$,
then, at scale $\delta$, on the one hand $\varphi\mu'$ is uniform
when conditioned on translates of the $1$-dimensional subspace $V_{\delta}$;
on the other hand, it is uniform when conditioned on translates of
lines in direction $\varphi L(x)$. If these subspaces are transverse,
this would lead to $\mu'$ having entropy $2$, which would eventually
lead to $\mu$ having dimension $2$, contrary to our assumptions.
So we conclude that $V_{\delta}$ must agree with $\varphi L(x)$. 

Now fix $\theta'$ and let $\mu'$ vary, so also $\varphi\in\supp\theta'$
is fixed, but $x\in\supp\mu'$ varies. Then, under the assumption
that there is no entropy growth, we have found that the measure $\theta'\conv x$
is essentially supported on a translate of an affine line in direction
$\varphi L(x)$. Equivalently, the measure $\varphi^{-1}\theta'\conv x$
is essentially supported on a translate of an affine line in direction
$L(x)$, and this holds for $\mu$-most $x$. We then show that in
this situation, $L(x)$ must be an affine function of $x$; that is,
there exists an affine function $\psi:\mathbb{R}^{2}\rightarrow\mathbb{R}^{2}$
such that $\mu$-a.e. the value $L(x)$ is the direction of the line
$\mathbb{R}\psi(x)$.

Finally, we show that if $L$ is affine in the sense above, then $\mu$
(and the attractor $X$) must be supported on a quadratic curve in
$\mathbb{R}^{2}$. This, in turn, can be shown to contradict the total
irreducibility of $\Phi$, completing the entropy growth part of the
proof.

\subsection{\label{subsec:Triangular-matrces}Triangular matrices}

Systems in which the matrices $A_{i}$ act reducibly on $\mathbb{R}^{2}$
present additional challenges, and our results for them are less complete.
An extreme instance occurs when the matrices $A_{i}$ are jointly
diagonalizable, in which case some unusual behaviors can occur, e.g.
Hausdorff and box dimensions may not agree. This situation has been
extensively studied over several decades, beginning with the work
of Bedford \cite{Bedford1989-box-dimension-of-repellers} and McMullen
\cite{McMullen1984}, and we do not discuss it \,here.

Our focus will be on the intermediate case, in which the $A_{i}$
have a single common eigendirection. Then, in some coordinate system,
the $A_{i}$ are given by triangular matrices of the same kind (upper
or lower), and we assume such coordinates have been chosen. For concreteness
we consider the lower-triangular case (the upper triangular case being
similar), and write systems $\Phi=\{\varphi_{i}\}_{i\in\Lambda}$
as
\begin{equation}
\varphi_{i}(x)=\left(\begin{array}{cc}
a_{i} & 0\\
b_{i} & c_{i}
\end{array}\right)x+v_{i}\;.\label{eq:triangular-system}
\end{equation}
As before, we assume that the maps $\{\varphi_{i}\}$ are invertible,
i.e. that $a_{i},c_{i}\ne0$ for each $i\in\Lambda$. Write $\overline{e}_{1},\overline{e}_{2}$
for the horizontal and vertical lines through the origin, respectively.
Then $\overline{e}_{2}$ is the common eigendirection of the matrices
above, and $\overline{e}_{1}$ is the common eigendirection of their
transposes. We are assuming that the matrices are not jointly diagonalizable,
so there is no other jointly invariant direction. Let us now note
some of the differences between this case and the totally irreducible
one:
\begin{itemize}
\item Without total irreducibility, we shall need additional assumptions
to ensure\footnote{If the Lyapunov exponents agree, one can apply the methods from the
self-similar case more directly. } that the Lyapunov exponents are distinct (previously this followed
from non-conformality and total irreducibility). 
\item Assuming that the Lyapunov exponents are distinct, one of the random
walks driven by $\{A_{i}\}$ or $\{A_{i}^{*}\}$ admits a unique stationary
distribution equal to $\delta_{\overline{e}_{2}}$ or $\delta_{\overline{e}_{1}}$,
respectively; and the other random walk admits two ergodic stationary
measures, one of which has positive dimension, and one again being
$\delta_{\overline{e}_{2}}$ or $\delta_{\overline{e}_{1}}$, respectively
(which of these occurs is determined by whether the expansion rate
of the $\{A_{i}\}$ acting on the invariant space $\overline{e}_{2}$
is $2^{\chi_{1}}$ or $2^{\chi_{2}}$). Either way, this breaks parts
of our argument which relied on the uniform convergence of the random
walks to their stationary distribution, or on the stationary measures
$\eta,\eta^{*}$ having positive dimension or being non-atomic. 

Crucially, when the $\{A_{i}^{*}\}$-walk is attracted to $\delta_{\overline{e}_{1}}$,
Theorem \ref{thm:BHR-projections} is not valid, and we get no good
bound on the dimension of $\eta^{*}$-typical projections; and when
$\{A_{i}^{*}\}$ is attracted to a measure of positive dimension,
but non-uniformly and not from all initial lines, then the information
we get about projections of $\mu$ is also non-uniform.
\item Due to the behavior of the random walks, the projection $\pi_{1}=\pi_{\overline{e}_{1}}$
onto $\overline{e}_{1}$ plays a distinguished role in the analysis.
Because the foliation of $\mathbb{R}^{2}$ by lines parallel to $\overline{e}_{2}$
is invariant under the $\varphi_{i}$, there is an induced system
$\overline{\Phi}=\{\overline{\varphi}_{i}\}_{i\in\Lambda}$ of affine
maps on $\mathbb{R}$, given by
\[
\overline{\varphi}_{i}(x)=a_{i}x+\pi_{1}(v_{i})\;,
\]
and satisfying
\begin{equation}
\overline{\varphi}_{i}\pi_{1}=\pi_{1}\varphi_{i}\;.\label{eq:projected-maps}
\end{equation}
The projection $\pi_{1}\mu$ is then a self-similar measure of the
system $\overline{\Phi}$. One should note, however, that exponential
separation of $\Phi$ does not imply the same for $\overline{\Phi}$,
so computing $\dim\pi_{1}\mu$ is not always possible with current
methods.
\item In contrast to the totally irreducible case, in the triangular case,
it actually is possible that $X$ and $\mu$ lie in a quadratic curve.\footnote{We remark that by work of Feng and K\"aenm\"aki \cite{FK}, quadratic
curves and, in trivial cases, lines, are the only algebraic curves
which can support a self-affine measure.} Such examples were first given by Bandt and Kravchenko \cite{BandtKravchenko2011-fractal-curves},
and in fact they show that there is a 1-parameter family of affine
maps (with triangular linear parts) preserving a given parabola. It
is then an easy matter to choose an exponentially separated sub-family
with an arbitrarily large number of maps. In this way we can obtain
a system $\Phi$ whose attractor has dimension $1$, but whose affinity
dimension (or Lyapunov dimension for e.g. the uniform choice of weights)
is larger than $2$. This shows that being ``trapped'' in a quadratic
curve is a real, rather than just hypothetical, obstruction to achieving
the Lyapunov dimension.
\end{itemize}
Due to these many issues, our arguments do not work in the triangular
case in general, and we are able to handle only one of the scenarios
above, namely, when $\eta$ has positive dimension and $\eta^{*}=\delta_{\overline{e}_{1}}$:
\begin{thm}
\label{thm:main-triangular}Let $\mu$ be a self-affine measure defined
by a system $\Phi=\{\varphi_{i}(x)=A_{i}x+v_{i}\}_{i\in\Lambda}$
as in (\ref{eq:triangular-system}), i.e. $\{A_{i}\}$ are invertible
and lower-triangular. Suppose that,
\begin{itemize}
\item $\{A_{i}\}$ are not simultaneously conjugated to a diagonal system;
\item $\Phi$ satisfies exponential separation;
\item The Lyapunov exponents are distinct: $-\infty<\chi_{2}<\chi_{1}<0$,
and $\overline{e}_{2}$ is contracted at rate $2^{\chi_{2}}$ (for
example, this holds if $|c_{i}|<|a_{i}|$ for all $i\in\Lambda$);
\item $\mu$ is not supported on a quadratic curve;
\item The projection $\pi_{1}\mu$ has the maximal possible dimension, i.e.
\begin{equation}
\dim\pi_{1}\mu=\min\{1,\dim\mu\}\;.\label{eq:pi-1-lower-bound}
\end{equation}
\end{itemize}
Then
\[
\dim\mu=\min\{2,\dim_{L}\mu\}\;.
\]
\end{thm}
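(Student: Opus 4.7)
The plan is to mirror the proof of Theorem \ref{thm:main}: reduce via the Ledrappier--Young formula to a single ``hard case'' and then derive a contradiction using the entropy-growth machinery outlined in Section \ref{subsec:Outline-of-the-argument}, verifying that each ingredient admits a version valid under the weaker triangular hypotheses.

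Under the present hypotheses, the $\{A_i^*\}$ random walk on $\mathbb{RP}^1$ is attracted to $\delta_{\overline{e}_1}$, so $\eta^* = \delta_{\overline{e}_1}$ and the projection $\pi_W$ appearing in Theorem \ref{thm:LY} is, $\mu$-a.e., equal to $\pi_1$. The hypothesis $\dim\pi_1\mu = \min\{1,\dim\mu\}$ therefore plays exactly the role that Theorem \ref{thm:BHR-projections} plays in the irreducible case, and forces $H_1/|\chi_1| = \min\{1, H(p)/|\chi_1|\}$. The sub-cases $H_3 = 0$ and $\dim\mu < 1$ then yield $\dim\mu = \min\{2,\dim_L\mu\}$ by the arithmetic already carried out after Theorem \ref{thm:BHR-projections}, leaving the hard case: \emph{assuming $H_3>0$ and $\dim\mu\ge 1$, show $\dim\mu=2$}.

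For the hard case I would argue by contradiction, supposing $\dim\mu<2$, and follow the scheme of Section \ref{subsec:Outline-of-the-argument}. The descent of $L$ to a $\mu$-measurable function on $X$ (the analogue of Theorem \ref{thm:L-descends-to-mu}) uses only positive dimension of $\eta$ -- here the positive-dimensional ergodic stationary distribution of the $\{A_i\}$-walk -- together with a lower bound on $\dim\pi_1\mu$; both are in place. The decomposition of $p^{*n}$ of Section \ref{sec:decomposition-of-p*n} depends on the clustering of cylinder directions around their limits $L(\mathbf{i})$, which is again driven by $\dim\eta>0$. Theorem \ref{thm:entropy growth-near-identity} concludes with the dichotomy ``$L$ is affine $\Rightarrow$ $\mu$ is supported on a quadratic curve'': in the irreducible case this contradicts total irreducibility, while here the standing assumption that $\mu$ is not on a quadratic curve supplies the same contradiction directly. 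Linearization, the multidimensional inverse theorem, and the final summation of entropy gains are stated for abstract measures on the affine group and carry over verbatim.

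I expect the interpolation step to be the main obstacle. In the irreducible case it relies on fine Ledrappier--Young information about slices and about projections $\pi_W\mu$ for $\eta^*$-typical $W$; with $\eta^* = \delta_{\overline{e}_1}$ atomic, projection information is available only along $\overline{e}_1$, and the $\{A_i^*\}$-walk fails to converge to its stationary measure uniformly from all initial lines. The hard part will be to re-run the interpolation using only the $\pi_1$-projection (where the hypothesized dimension bound supplies what is needed), together with the positive-dimensional $\eta$-statistics that control the directions of cylinders and slices of $\mu$, and to keep the entropy accounting intact despite the non-uniformity of the $\{A_i^*\}$-walk.
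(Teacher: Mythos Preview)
Your overall plan matches the paper's, but you have the key difficulty backwards. Under the hypothesis that $\overline{e}_2$ is contracted at rate $2^{\chi_2}$, the transposed matrices $A_i^*$ have $\overline{e}_1$ as their common eigendirection corresponding to the \emph{larger} Lyapunov exponent; consequently $\eta^*=\delta_{\overline{e}_1}$ attracts the $\{A_i^*\}$-random walk started from \emph{every} initial line in $\PR$, and the convergence underlying Proposition~\ref{prop:convergence-to-eta*} is uniform. Thus all of Section~\ref{sec:Entropy}, and with it the interpolation machinery of Section~\ref{sec:non-conformal-partitions}, carries over without change. The non-uniformity you worry about occurs in the complementary triangular case treated in \cite{BHR}, where $\overline{e}_2$ is contracted at rate $2^{\chi_1}$; the present hypotheses are chosen precisely so that it does not arise. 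It is the $\{A_i\}$-walk that fails to converge from the single exceptional line $\overline{e}_2$, but this does not enter the projection and slice estimates.

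The place where genuine modifications are required, and which you pass over, is the algebraic argument for the non-affinity of $L$ (Section~\ref{subsec:non-affinity-of L}). Without total irreducibility, Lemma~\ref{lem:commutes in action on P(R^2)} no longer forces $B$ to be scalar: it admits the additional possibility that $B$ has rank one with image $\overline{e}_2$. One must feed this extra case into the proof of Proposition~\ref{prop:non-aff of L} and rule it out by showing it would force $\mu$ to be supported on a line through $\overline{e}_2$, contradicting the quadratic-curve hypothesis. That same hypothesis also substitutes for Proposition~\ref{prop:measure 0 for alg cur}, whose proof genuinely uses irreducibility. One smaller point: the hypothesis gives $H_1/|\chi_1|=\min\{1,\dim\mu\}$, not $\min\{1,H(p)/|\chi_1|\}$ as you write; these coincide once $\dim\mu\ge 1$, so the hard case is unaffected, but the sub-case $\dim\mu<1$ does not reduce as automatically as you suggest.
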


\begin{rem}
The case covered by Theorem \ref{thm:main-triangular} is complementary
to the one analyzed in \cite[Proposition 6.6]{BHR}. Because Theorem
\ref{thm:BHR-projections} cannot be applied, we have been forced
to add an explicit assumption about $\dim\pi_{1}\mu$ (where $\pi_{1}$
is in fact the projection to a $\eta^{*}$-typical line). The case
which the theorem above does not cover is when $\chi_{2}<\chi_{1}<0$
but $\overline{e}_{2}$ is contracted at rate $2^{\chi_{1}}$; then
Theorem \ref{thm:BHR-projections} does hold, but we are unable to
carry out the rest of the argument, and are still not able to go beyond
the case when $H_{3}=0$, which already follows from \cite{BHR}.
\end{rem}

The situation in the theorem here is reminiscent of that of self-similar
measures in the plane generated by homotheties, and carpet fractals.
In all these cases one gets information about $\mu$ (or $X$) only
if one can show that certain projections are large (or that the corresponding
slices are small). This is unsatisfactory, but examples show that
it reflects the true state of affairs for self-similar and carpet
measures, and it is likely that the same is true in our setting. 

There are currently two main ways to try to verify hypothesis (\ref{eq:pi-1-lower-bound}).
First, if the induced system $\overline{\Phi}$ satisfies exponential
separation, then we will have $\dim\pi_{1}\mu=\min\{1,\dim_{L}\pi_{1}\mu\}$,
in which case (\ref{eq:pi-1-lower-bound}) clearly holds. Second,
by the Ledrappier-Young formula, a ``dimension conservation'' phenomenon
holds: 
\begin{equation}
\dim\mu=\dim\pi_{1}\mu+\dim\mu_{x}^{\overline{e}_{2}}\qquad\text{for }\mu\text{-a.e. }x\;,\label{eq:dim-conservation}
\end{equation}
where $\mu_{x}^{\overline{e}_{2}}$ denotes the conditional measure
on $\overline{e}_{2}+x$. If we can show that all vertical slices
$X\cap(x+\overline{e}_{2})$ of the attractor $X$ satisfy $\dim(X\cap(x+\overline{e}_{2}))\le\max\{\dim\mu-1,0\}$,
we would get similar bounds for $\dim\mu_{x}^{\overline{e}_{2}}$,
and (\ref{eq:pi-1-lower-bound}) follows from (\ref{eq:dim-conservation}).

\subsection{\label{subsec:Higher-dimensions}Higher dimensions}

The study of the overlapping case for planar self-affine measures
is motivated not only by its general interest, but because it is closely
related to the higher-dimensional setting. In this section we very
briefly explain this connection. 

One can see the connection already in our work on separated self-affine
measures in the plane \cite{BHR}. There the key ingredient of the
analysis was the computation of the dimension of projections, which
are complicated for two reasons: first, they are not self-affine,
but nevertheless they do have some convolutions structure, which helps
in the analysis; but, second, although $\mu$ was separated, its projections
to lines are generally not separated. This makes it necessary to analyze
overlapping fractals in the line in order to study separated planar
ones.

A similar situation holds in higher dimensions. As a demonstration,
suppose that one wants to study the \emph{separated} case of self-affine
measures in $\mathbb{R}^{3}$. Let $\mu=\sum p_{i}\cdot\varphi_{i}\mu$
be such a measure. Assume that there are distinct Lyapunov exponents
$\chi_{3}<\chi_{2}<\chi_{1}<0$, meaning that the normalized logarithms
of the singular values of the random products $A_{i_{n}}\ldots A_{i_{1}}$
converge to these constants a.s. The Furstenberg measure $\eta^{*}$
is also a more complicated object: it is a measure on pairs $(V,W)$
where $V\leq\mathbb{R}^{3}$ is a line and $W\leq\mathbb{R}^{3}$
is a $2$-dimensional subspace containing $V$ (this is the so-called
flag space). The projections $\eta_{1}^{*},\eta_{2}^{*}$ to the first
and second components now describe the asymptotic distribution of
the random walks $A_{i_{n}}^{*}\ldots A_{i_{1}}^{*}V$ on lines and
$A_{i_{n}}^{*}\ldots A_{i_{1}}^{*}W$ on planes. 

The Ledrappier-Young formula in this case says that the entropy $H(p)$
decomposes as a non-negative sum\footnote{If we did not assume separation, there would be a fourth term $H_{4}=H(\xi,\mathcal{P}_{1}|\Pi^{-1}\mathcal{B})$.}
$H(p)=H_{1}+H_{2}+H_{3}$, where 
\begin{itemize}
\item $\dim\pi_{V}\mu=H_{1}/|\chi_{1}|$ for $\eta_{1}^{*}$-a.e. line $V$;
\item $\dim\pi_{W}\mu=H_{1}/|\chi_{1}|+H_{2}/|\chi_{2}|$ for $\eta_{2}^{*}$-a.e.
plane $W$;
\item $\dim\mu=H_{1}/|\chi_{1}|+H_{2}/|\chi_{2}|+H_{3}/|\chi_{3}|$.
\end{itemize}
Now, our results from \cite{BHR} can be adapted to show that $H_{1}$
must be maximal, i.e. that $\dim\pi_{V}\mu=\min\{1,H(p)/|\chi_{1}|\}$
for $\eta_{1}^{*}$-a.e. $V$. However, that still leaves one degree
of freedom to determine $H_{2},H_{3}$. To prove that the dimension
is maximal subject to the constraints, it is then necessary to show
that $\pi_{W}\mu$ is maximal. 

Now, $\pi_{W}\mu$ is a measure in a plane $W$ and is not, strictly
speaking, self-affine, but it shares some of that structure of a self-affine
measure, in the sense that it can be written as 
\[
\pi_{W}\mu=\sum p_{i}\cdot\pi_{W}\varphi_{i}\mu
\]
(note that the right hand side does not consist of affine images of
the left hand side, but when this identity is iterated the distribution
of the measures on the right hand side becomes consistent across scales). 

Therefore, one may hope to analyze $\pi_{W}\mu$ using the methods
we have developed for self-affine measures in the plane. However,
although $\mu$ is a separated self-affine measure in $\mathbb{R}^{3}$,
its projections $\pi_{W}\mu$ on a plane $W$ in general is not separated.
Nevertheless it is likely to be exponentially separated for $\eta_{2}^{*}$-typical
choices of $W$. One therefore hopes that the methods from this paper
can be applied there. 

We anticipate that in this way one can, by a suitable induction on
the dimension of the ambient space, compute the dimension of exponentially
separated self-affine measures in general, at least under the assumption
of total irreducibility and, possibly, simple Lyapunov spectrum. We
hope to return to this in a future paper.

\subsection{\label{subsec:Organization-of-the}Organization of the paper}

\noindent In the next section (Section \ref{sec:Notation-and-background})
we develop notation and background, such as basic results on entropy,
the Oseledets theorem, Furstenberg measure and related material. Section
\ref{sec:Entropy} establishes many technical results about the entropy
of projections and slices of $\mu$ as well as those of the cylinder
measures of $\mu$ and its components (restrictions to dyadic cells).
In Section \ref{sec:the function L} we study the function $L$ describing
the orientation of cylinders and show that it is well-defined $\mu$-a.e.
(Theorem \ref{thm:L-descends-to-mu}). In Section \ref{sec:Concentration-near-lines}
we give some algebraic results showing among other things that $L$
is not affine. Section \ref{sec:Proof-of ent growth} establishes
the entropy growth theorem (Theorem \ref{thm:entropy growth-near-identity}).
Section \ref{sec:non-conformal-partitions} analyzes the entropy of
non-conformal partitions\textbf{. }In Section \ref{sec:decomposition-of-p*n}
we construct the decomposition of $p^{*n}$ into high-entropy measures
supported on sets of diameter $O(1)$. Finally, Section \ref{sec:Proof-of-main-result}
contains the proof of the main theorem, Theorem \ref{thm:main}. 

We include a summary of our main notation:

\begin{longtable}{|l|l|}
\hline 
$A_{k,m}$ & Space of maximal-rank affine maps $\mathbb{R}^{k}\rightarrow\mathbb{R}^{m}$.\tabularnewline
\hline 
$A_{k,m}^{vec}$ & Vector space of all affine maps $\mathbb{R}^{k}\rightarrow\mathbb{R}^{m}$.\tabularnewline
\hline 
$A_{\varphi},b_{\varphi}$ & For $\varphi\in A_{2,2}$ with $\varphi(x)=A_{\varphi}x+b_{\varphi}$.\tabularnewline
\hline 
$\pi_{W}$ & Orthogonal projection onto $W$.\tabularnewline
\hline 
$T_{c}$, $S_{a}$ & Scaling $x\rightarrow cx$ and translation $x\rightarrow x+a$.\tabularnewline
\hline 
$\Phi=\{\varphi_{i}\}_{i\in\Lambda}$ & Affine invertible contractions of $\mathbb{R}^{2}$, no common fixed
point.\tabularnewline
\hline 
$p=(p_{i})_{i\in\Lambda}$ & positive prob. vector; identify with $\sum p_{i}\cdot\delta_{\varphi_{i}}\in\mathcal{P}(A_{2,2})$.\tabularnewline
\hline 
$X$ & Self-affine set.\tabularnewline
\hline 
$\mu$ & Self-affine measure, $\mu=\sum_{i\in\Lambda}p_{i}\varphi_{i}\mu$.\tabularnewline
\hline 
$\mu_{x}^{W}$ & Conditional measure on the line $x+W$.\tabularnewline
\hline 
$\alpha,\beta,\gamma$ & Dimension of $\mu$, its projections and slices: Section \ref{sub:self-affine-measures}.\tabularnewline
\hline 
$\chi_{2}<\chi_{1}\leq0$ & Lyapunov exponents, Section \ref{sub:furstenberg}.\tabularnewline
\hline 
$\eta,\eta^{*}$ & Furstenberg measure of products of $A_{i}$ and $A_{i}^{*},$ resp.\tabularnewline
\hline 
$\varphi_{i_{1}\ldots i_{n}},A_{i_{1}\ldots i_{n}}$ & Composition $\varphi_{i_{1}}\circ\ldots\circ\varphi_{i_{n}}$ etc.\tabularnewline
\hline 
$[a]$$\subseteq\Lambda^{\mathbb{N}}$ & Cylinder set corresponding to $a\in\Lambda^{n}$.\tabularnewline
\hline 
$S$ & Shift map on $\Lambda^{\mathbb{N}}$.\tabularnewline
\hline 
$\Pi$ & Coding map $\Lambda^{\mathbb{N}}\rightarrow X$.\tabularnewline
\hline 
$\xi=p^{\mathbb{N}}$ & Product measure on $\Lambda^{\mathbb{N}}$.\tabularnewline
\hline 
$\xi_{\omega}$ & Conditional measure on $\Pi^{-1}(\Pi(\omega))$.\tabularnewline
\hline 
$\mu_{x}^{V}$ & Conditional measure on $x+V$ for line $V\leq\mathbb{R}^{2}$.\tabularnewline
\hline 
$\PR$ & Projective space (space of lines in $\mathbb{R}^{2}$)\tabularnewline
\hline 
$\overline{x}\in\PR$ & element of $\PR$ (sometimes associated to $x\in\mathbb{R}^{2}\setminus\{0\}$)\tabularnewline
\hline 
$\alpha_{1}(A)\geq\alpha_{2}(A)$ & Singular values of a matrix $A$.\tabularnewline
\hline 
$L(A),L(\omega)\in\PR$ & Major axis/asymptotic version (Sections \ref{subsec:The-function-L},
\ref{sub:furstenberg}, \ref{sec:the function L}).\tabularnewline
\hline 
$\mathcal{D}_{n}$ & Partition into level-$n$ dyadic cells or equivalent (Section \ref{sub:dyadic-partitions}).\tabularnewline
\hline 
$\mathcal{D}_{n}^{W\oplus W^{\perp}}$ & Dyadic partition in coordinates $W\oplus W^{\perp}$.\tabularnewline
\hline 
$\nu_{x,n},\nu^{x,n}$  & Dyadic components, Section \ref{q-adic-components}.\tabularnewline
\hline 
$\Psi_{n},\Upsilon_{n}\subseteq\Lambda^{*}$  & See Section \ref{sub:random-cylinder-measures}.\tabularnewline
\hline 
$\mathbf{I}(n)$,$\mathbf{K}(n)$ & See Section \ref{sub:random-cylinder-measures}.\tabularnewline
\hline 
$d$ & Left-invariant metric on $A_{2,2}$.\tabularnewline
\hline 
$d_{\PR}$ & Metric on $\PR$: $d_{\PR}(V,W)=\left\Vert \pi_{V}-\pi_{W}\right\Vert $.\tabularnewline
\hline 
$d_{TV}$ & Total variation metric on measures.\tabularnewline
\hline 
$H(\nu,\mathcal{C}),H(\nu,\mathcal{C}|\mathcal{E})$ & Entropy (resp. conditional).\tabularnewline
\hline 
$\nu_{1}*\nu_{2}$ & Convolution in $\mathbb{R}^{2}$ or $A_{2,2}$.\tabularnewline
\hline 
$\theta\conv\nu$ & Convolution of $\theta\in\mathcal{P}(A_{2,2})$ and $\nu\in\mathcal{P}(\mathbb{R}^{2})$.\tabularnewline
\hline 
\end{longtable}

\subsection*{Acknowledgment}

The authors would like to thank Bal\'azs B\'ar\'any for many useful
discussions, and Emmanuel Breuillard for his comments on an early
version of the results. M.H. supported by ERC grant 306494 and ISF
grant 1702/17, and National Science Foundation Grant No. DMS-1638352.
A.R. supported by ERC grant 306494 and the Herchel Smith Fund at the
University of Cambridge.

\section{\label{sec:Notation-and-background}Preparations}

\subsection{\label{subsec:Measures-on-integer-intervals}\label{subsec:Basic-notation}Conventions}

We equip $\mathbb{R}^{d}$ with the Euclidean norm. Spaces of matrices
and linear maps are given the operator norm. In a metric space, $B_{r}(x)$
is the closed ball of radius $r$ around $x$, and $E^{(r)}$ is the
open $r$-neighborhood of $E$, that is, all point of distance $<r$
from $E$. We write $\mathcal{P}(X)$ for the space of Borel probability
measures on $X$. All measures are Borel measures unless otherwise
stated and all functions are assumed measurable even if not mentioned
explicitly. Convergence of measures in $\mathcal{P}(X)$ is by default
understood to be weak convergence, although we will sometimes also
consider the total variation metric on $\mathcal{P}(X)$, which we
denote $d_{TV}.$ We use standard big-$O$ and little-$o$ notation.

\subsection{\label{sub:self-affine-measures}Self-affine sets and measures}

Throughout the paper, $\Phi=\{\varphi_{i}(x)=A_{i}x+a_{i}\}_{i\in\Lambda}$
is a system of invertible affine contractions of $\mathbb{R}^{2}$
without a common fixed point, and $X\neq\emptyset$ is the associated
compact attractor, defined uniquely by the relation 
\[
X=\bigcup_{i\in\Lambda}\varphi_{i}(X)\;.
\]
We also fix a strictly positive probability vector $p=(p_{i})_{i\in\Lambda}$,
and let $\mu$ denote the associated self-affine measure, defined
uniquely by the relation 
\[
\mu=\sum_{i\in\Lambda}p_{i}\cdot\varphi_{i}\mu\;.
\]
We write $\Lambda^{*}$ for the set of all finite words over $\Lambda$.
For a word $\mathbf{i}=i_{1}\ldots i_{n}\in\Lambda^{*}$, let 
\[
\varphi_{\mathbf{i}}=\varphi_{i_{1}}\ldots\varphi_{i_{n}}\;,
\]
and similarly write $A_{\mathbf{i}}=A_{i_{1}}\cdots A_{i_{n}},$ $p_{\mathbf{i}}=p_{i_{1}}\ldots p_{i_{n}}$,
etc.

We define the coding map, $\Pi:\Lambda^{\mathbb{N}}\to X$, by 
\[
\Pi(\mathbf{i})=\lim_{n\to\infty}\varphi_{i_{1}\ldots,i_{n}}(0)\;,
\]
where the limit exists by contraction. Then $X=\im\Pi$. We write
\[
\xi=p^{\mathbb{N}}
\]
for the product measure on $\Lambda^{\mathbb{N}}$ with marginal $p$,
so that 
\[
\mu=\Pi\xi\;.
\]

For $\mathbf{i}\in\Lambda^{n}$ we refer to the measure $\varphi_{\mathbf{i}}\mu$
as a (generation $n$) cylinder measure. We also define the generation-$n$
cylinder set $[\mathbf{i}]\subseteq\Lambda^{\mathbb{N}}$ by
\[
[\mathbf{i}]=\{\mathbf{j}\in\Lambda^{\mathbb{N}}\,:\,j_{1}\ldots j_{n}=i_{1}\ldots i_{n}\}\;,
\]
which is closed and open in the product topology. The corresponding
generation-$n$ cylinder measure of $\xi$ is defined by $\xi_{[\mathbf{i}]}=\xi([\mathbf{i}])^{-1}\cdot\xi|_{[\mathbf{i}]}$,
and we have
\[
\varphi_{\mathbf{i}}\mu=\Pi\xi_{[\mathbf{i}]}\;,
\]
so that the generation-$n$ cylinder measures of $\mu$ are the images
under $\Pi$ of generation-$n$ cylinder measures of $\xi$.

Throughout the paper, we write
\[
\alpha=\dim\mu\;,
\]
and, when assuming non-conformality and total irreducibility, we let
$\beta$ denote the $\eta^{*}$-almost-sure value of orthogonal projections,
\[
\beta=\dim\pi_{W}\mu\qquad\text{for }\eta^{*}\text{-a.e. }W
\]
(which exists by Theorem \ref{thm:LY}, for $\eta^{*}$ see that theorem
or Section \ref{sub:furstenberg} below). Note that if exponential
separation is assumed, then $\beta=\min\{1,H(p)/|\chi_{1}|\}$ by
Theorem \ref{thm:BHR-projections}. Also set
\[
\gamma=\alpha-\beta\;.
\]
It is another consequence of the Ledrappier-Young theory that $\gamma$
is the a.s. dimension of the conditional measures of $\mu$ on translates
of lines perpendicular to $\eta^{*}$-typical directions. For details
see Theorem \ref{thm:LY} above.

\subsection{\label{sub:linear-maps}Affine maps, projections, dilations, translation}

We write $A_{k,m}$ for the space of maximal-rank affine maps $\mathbb{R}^{k}\rightarrow\mathbb{R}^{m}$,
and $A_{k,m}^{vec}$ for the vector space of all affine maps $\mathbb{R}^{k}\rightarrow\mathbb{R}^{m}$,
so that $A_{2,2}\subseteq A_{2,2}^{vec}$. 

We endow $A_{2,2}$ with a left-invariant metric $d$, derived from
a Riemannian metric, and endow $A_{2,2}^{vec}$ with a norm. These
induce the same topology on $A_{2,2},$ but the metrics are not bi-Lipschitz
equivalent.

An affine map $\varphi$ can be written as $\varphi(x)=Ax+b$ for
a matrix $A$ and vector $b$. In general, we denote $A,b$ by $A_{\varphi},b_{\varphi}$,
respectively.

For a subspace $W\leq\mathbb{R}^{2}$, we write $\pi_{W}:\mathbb{R}^{2}\rightarrow W$
for the orthogonal projection onto $W$. We often identify a projection
$\pi_{W}$ with the affine map $\mathbb{R}^{2}\rightarrow\mathbb{R}$
of norm $1$, obtained by endowing $W$ with a unit vector and corresponding
coordinate system. Conversely, a functional $\pi$ of norm $1$ corresponds
to an orthogonal projection to $(\ker\pi)^{\perp}$. With this identification,
for any line $W$ and affine map $\varphi:\mathbb{R}^{2}\to\mathbb{R}^{2}$
with $\varphi(x)=Ax+b$, it is easy to check that 
\begin{equation}
\pi_{W}\circ\varphi(x)=(\pm1)\|\pi_{W}\circ A\|\cdot\pi_{A^{*}W}(x)+\pi_{W}(b)\;,\label{eq:affine-map-to-projection}
\end{equation}
where the sign depends on the orientation we used to identify $W$
and $A^{*}W$ with $\mathbb{R}$.

The operations of dilation and translation in $\mathbb{R}^{k}$ we
denote by $S_{c}$ and $T_{a}$ respectively, i.e., for $c\in\mathbb{R}$
we write $S_{c}(x)=c\cdot x$, and for $a\in\mathbb{R}^{k}$ we write
$T_{a}(x)=x+a$.

\subsection{\label{subsec:The-function-L}Projective space, singular values and
the function $L$}

We write $\PR$ for the $1$-dimensional projective space, i.e. the
space of lines in $\mathbb{R}^{2}$. We define the metric $d_{\PR}(\cdot,\cdot)$
on $\PR$ by
\[
d_{\PR}(V,W)=\left\Vert \pi_{V}-\pi_{W}\right\Vert _{op},
\]
where $\left\Vert \cdot\right\Vert _{op}$ is the operator norm. We
note that there is a constant $c>1$ such that 
\begin{equation}
|\sin\varangle(V,W)|\leq d_{\PR}(V,W)\leq c|\sin\varangle(V,W)|\;.\label{eq:compar-dPR-and-sine-of-angle}
\end{equation}

For $v\in\mathbb{R}^{2}\setminus\{0\}$ we write $\overline{v}=\mathbb{R}v\in\PR$,
and also denote elements of $\PR$ generically by $\overline{x}$,
even when no representative $x$ was chosen. We continue to also denote
linear subspaces of $\mathbb{R}^{2}$ by $V,W$ etc. 

Given $A\in Gl_{2}(\mathbb{R})$, let $\alpha_{1}(A)\geq\alpha_{2}(A)$
denote its singular values, i.e. if $A=VDU$ is a singular value decomposition,
then $D=\diag(\alpha_{1}(A),\alpha_{2}(A))$. These are also characterized
by $\alpha_{1}(A)=\left\Vert A\right\Vert $ and $\alpha_{2}(A)=\left\Vert A^{-1}\right\Vert ^{-1}$,
and represent the length of the major and minor axes of the ellipse
which is the image $A(B_{1}(0))$ of the unit ball.

Let $e_{1},e_{2}$ denote the standard basis vectors in $\mathbb{R}^{2}$.
Assuming $\alpha_{1}(A)>\alpha_{2}(A)$, write 
\[
L(A)=\overline{Ve_{1}}\in\PR
\]
for the direction of $Ve_{1}$ ($L(A)$ is not defined if $\alpha_{1}(A)=\alpha_{2}(A)$). 

For $\mathbf{i}\in\Lambda^{n}$ and $\varphi_{\mathbf{i}}=\varphi_{i_{1}}\ldots\varphi_{i_{n}}$
we call $L(A_{\mathbf{i}})$ the direction of $\varphi_{\mathbf{i}}$
and of the cylinder $\varphi_{\mathbf{i}}\mu$. We also say that $\alpha_{1}(A_{\mathbf{i}})$
is the diameter, or length, of the cylinder $\varphi_{\mathbf{i}}\mu$
and that $\alpha_{2}(A_{\mathbf{i}})$ is its width.
\begin{lem}
\label{lem:norm-of-projection-composed-with-matrix}Let $W\in\PR$
and $A\in Gl_{2}(\mathbb{R})$, and suppose that $L(A)$ is well defined.
Then, 
\[
\left\Vert A\right\Vert \cdot|\sin\varangle(L(A),W^{\perp})|\leq\left\Vert \pi_{W}\circ A\right\Vert \leq\left\Vert A\right\Vert \;,
\]
and in particular, for $c$ as in (\ref{eq:compar-dPR-and-sine-of-angle}),
\[
c^{-1}\cdot\left\Vert A\right\Vert \cdot d_{\PR}(L(A),W^{\perp})\leq\left\Vert \pi_{W}\circ A\right\Vert \leq\left\Vert A\right\Vert \;.
\]
\end{lem}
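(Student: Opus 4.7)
The proof is elementary; both inequalities essentially reduce to looking at where $A$ sends a particular unit vector. The upper bound is immediate because orthogonal projections are $1$-Lipschitz, so $\|\pi_W\circ A\|\le\|\pi_W\|\cdot\|A\|\le\|A\|$.

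For the lower bound, the plan is to exhibit a unit vector $u$ whose image $Au$ has length $\|A\|$ and points in direction $L(A)$; then projecting $Au$ to $W$ will pick up exactly the cosine of the angle between $L(A)$ and $W$. Concretely, take a singular value decomposition $A=VDU$ with $V,U$ orthogonal and $D=\diag(\alpha_1(A),\alpha_2(A))$, and set $u=U^{*}e_1$. Then $u$ is a unit vector with $Au=\alpha_1(A)\cdot Ve_1$, so $\|Au\|=\alpha_1(A)=\|A\|$ and, by definition of $L$, the direction of $Au$ is $L(A)$.

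Consequently
\[
\|\pi_W\circ A\|\ \ge\ \|\pi_W(Au)\|\ =\ \|A\|\cdot\|\pi_W(\widehat{Au})\|,
\]
where $\widehat{Au}$ is the unit vector in direction $L(A)$. A standard computation in the plane gives $\|\pi_W(\widehat{Au})\|=|\cos\varangle(L(A),W)|=|\sin\varangle(L(A),W^{\perp})|$, which yields the first displayed inequality. The second inequality is then immediate: from (\ref{eq:compar-dPR-and-sine-of-angle}) one has $|\sin\varangle(L(A),W^{\perp})|\ge c^{-1}d_{\PR}(L(A),W^{\perp})$, and substituting this into the lower bound just obtained completes the proof.

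There is no real obstacle here; the only thing to be careful about is the identification of projection norms with sines/cosines of angles and the fact that the major axis of the ellipse $A(B_1(0))$ is, by our definition of $L$, aligned with $Ve_1$, which is precisely what makes the choice $u=U^{*}e_1$ work.
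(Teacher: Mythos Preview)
Your proof is correct and follows essentially the same approach as the paper: the upper bound via $\|\pi_W\|\le 1$, and the lower bound by evaluating at the unit vector mapped by $A$ to a vector of length $\|A\|$ in direction $L(A)$, then using $\|\pi_W(\widehat{Au})\|=|\sin\varangle(L(A),W^\perp)|$. Your version is slightly more explicit in constructing the vector via the SVD, but otherwise identical.
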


\begin{proof}
The inequality on the right follows from $\left\Vert \pi_{W}A\right\Vert \leq\left\Vert \pi_{W}\right\Vert \left\Vert A\right\Vert =\left\Vert A\right\Vert $
and the one on the left by considering a unit vector $v$ such that
$\left\Vert Av\right\Vert =\left\Vert A\right\Vert $, and noting
that $Av$ points in direction $L(A)$, so $\left\Vert \pi_{W}Av\right\Vert =\left\Vert Av\right\Vert \cdot|\sin\varangle(L(A),W^{\perp})|$. 
\end{proof}

\subsection{Dyadic partitions\label{sub:q-adic-partitions}\label{sub:dyadic-partitions}}

We work extensively with the dyadic partitions of $\mathbb{R}$ and
$\mathbb{R}^{2}$. The level-$n$ partition of $\mathbb{R}$ is defined
by 
\[
\mathcal{D}_{n}=\left\{ [\frac{k}{2^{n}},\frac{k+1}{2^{n}})\,:\,k\in\mathbb{Z}\right\} \:.
\]
We write $\mathcal{D}_{t}=\mathcal{D}_{[t]}$ when $t\in\mathbb{R}$
is non-integer. In $\mathbb{R}^{d}$ we write, 
\[
\mathcal{D}_{n}^{d}=\{I_{1}\times\ldots\times I_{d}\,:\,I_{i}\in\mathcal{D}_{n}\}\;,
\]
and generally omit the superscript. For $W\in\PR$ and $m\ge0$ write
\[
\mathcal{D}_{m}^{W\oplus W^{\perp}}=(\pi_{W}^{-1}\mathcal{D}_{m})\vee(\pi_{W^{\perp}}^{-1}\mathcal{D}_{m})\:.
\]
This is just a dyadic partition in the coordinate system determined
by $W,W^{\perp}.$

Two partitions are $C$-commensurable if each element of one intersects
at most $C$ elements of the other. If $\varphi$ is an isometry of
$\mathbb{R}$ or $\mathbb{R}^{d}$ then $\mathcal{D}_{n}$ and $\varphi\mathcal{D}_{n}$
are $O_{d}(1)$-commensurable, and also $\mathcal{D}_{n}^{W\oplus W^{\perp}}$
and $\mathcal{D}_{n}$ are $O(1)$-commensurable.

We will need a similar system of partitions of $A_{2,2}$. By \cite[Remark 2.2]{KaenmakiRajalaSuomala2012},
there exists a collection of Borel sets 
\[
\{Q_{n,i}\subset A_{2,2}\::\:n\in\mathbb{Z},\:i\in\mathbb{N}\}\;,
\]
having the following properties: 
\begin{enumerate}
\item $A_{2,2}=\cup_{i\in\mathbb{N}}Q_{n,i}$ for every $n\in\mathbb{Z}$;
\item $Q_{n,i}\cap Q_{m,j}=\emptyset$ or $Q_{n,i}\subset Q_{m,j}$ whenever
$n,m\in\mathbb{Z}$, $n\ge m$, $i,j\in\mathbb{N}$;
\item \label{item:ball} There exists a constant $C>1$ such that for every
$n\in\mathbb{Z}$ and $i\in\mathbb{N}$ there exists $\psi\in Q_{n,i}$
with 
\[
B(\psi,C^{-1}2^{-n})\subset Q_{n,i}\subset B(\psi,C2^{-n}),
\]
where the balls are taken with respect to the left-invariant metric
$d$.
\end{enumerate}
For each $n\in\mathbb{Z}$, denote by $\mathcal{D}_{n}^{A_{2,2}}$
the partition $\{Q_{n,i}\::\:i\in\mathbb{N}\}$ of $A_{2,2}$. These
partitions behave\footnote{One difference between $\mathcal{D}_{n}^{A_{2,2}}$ and dyadic partitions
in $\mathbb{R}^{d}$ is that there is no guarantee that a decreasing
sequence of cells $E_{1}\supseteq E_{2}\supseteq\ldots$ with $E_{n}\in\mathcal{D}_{n}^{A_{2,2}}$
must be strictly decreasing. For some $n$ it might be that $E_{n+1}=E_{n}$.
But property (\ref{item:ball}) ensures that this only can happen
for at most boundedly many consecutive values of $n$. In any case,
this will never be an issue. } much like the dyadic partitions of $\mathbb{R}^{d}$ and we usually
denote them simply by $\mathcal{D}_{n}$ (whether we mean the partition
of $\mathbb{R}^{d}$ or $A_{2,2}$ will be clear from the context). 
\begin{lem}
\label{lem:Qn-has-bounded-degree} There exists a constant $C'\ge1$
such that for every $n\geq0$ and $Q\in\mathcal{D}_{n}^{A_{2,2}}$,
\[
\#\{Q'\in\mathcal{D}_{n+1}^{A_{2,2}}\::\:Q'\subset Q\}\leq C'\:.
\]
\end{lem}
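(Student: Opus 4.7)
The plan is a direct volume/packing argument exploiting the left-invariance of $d$ and the approximate ball structure of the cells from property (\ref{item:ball}).

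First I fix $Q \in \mathcal{D}_n^{A_{2,2}}$ and choose $\psi \in Q$ as in property (\ref{item:ball}), so that $Q \subset B(\psi, C \cdot 2^{-n})$. For each child $Q' \in \mathcal{D}_{n+1}^{A_{2,2}}$ with $Q' \subset Q$ I choose $\psi' \in Q'$ from the same property, giving
\[
B(\psi', C^{-1} \cdot 2^{-(n+1)}) \subset Q' \subset Q \subset B(\psi, C \cdot 2^{-n}).
\]
Because distinct children are disjoint, the inner balls $B(\psi', C^{-1} \cdot 2^{-(n+1)})$, indexed over the children, form a pairwise disjoint family inside $B(\psi, C \cdot 2^{-n})$. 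Left-translating by $\psi^{-1}$ and using left-invariance of $d$ reduces the problem to bounding, independently of $n$, the maximum number of disjoint balls of radius $C^{-1} \cdot 2^{-(n+1)}$ contained in $B(e, C \cdot 2^{-n})$, where $e$ is the identity of $A_{2,2}$.

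For this I would appeal to the fact that $d$ is derived from a smooth Riemannian metric on the Lie group $A_{2,2}$, together with left-invariance of the associated volume (Haar measure). The ratio of outer to inner radius is the fixed constant $2C^2$, and since $n \geq 0$ forces all balls to lie inside the fixed bounded neighborhood $B(e, C)$ of the identity, the ratio of volumes $\vol(B(e, C \cdot 2^{-n}))/\vol(B(e, C^{-1} \cdot 2^{-(n+1)}))$ is bounded uniformly in $n$: on any bounded neighborhood of $e$ the exponential chart provides a bi-Lipschitz identification with a Euclidean ball, so $\vol(B(e,r)) \asymp r^{6}$ in this regime (the dimension of $A_{2,2}$ is $6$), with implicit constants depending only on $C$. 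A standard packing-by-volume argument then produces a bound $C'$ on the number of disjoint inner balls.

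The argument is essentially routine; the only point requiring care is the uniformity in $n$. This is where the hypothesis $n \geq 0$ is essential: it confines the whole configuration to a fixed bounded region around $e$, where the Riemannian structure has controlled doubling. If one wanted the analogous statement for all $n \in \mathbb{Z}$ one would have to worry about volume growth at large scales, but here the left-invariance together with the bounded-scale restriction makes the estimate immediate.
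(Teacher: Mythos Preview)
Your argument is correct and is the standard packing/volume argument for this type of statement. The paper itself omits the proof entirely, referring instead to \cite[Lemma 2.4]{BHR} for a similar statement; your write-up is essentially what that reference does.

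One small comment on presentation: the claim that the exponential chart gives a bi-Lipschitz identification with a Euclidean ball on all of $B(e,C)$ is slightly more than you need and would require $C$ to be below the injectivity radius. It is cleaner to argue directly that on the fixed compact set $\overline{B(e,C)}$ the Riemannian metric tensor is uniformly comparable to a fixed Euclidean inner product in any chart, hence $\vol(B(x,r))\asymp r^{6}$ for all $x\in B(e,C)$ and $0<r\le C$ with implied constants depending only on $C$. This yields the same packing bound without reference to the exponential map.
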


We omit the proof. For a similar statement with proof see \cite[Lemma 2.4]{BHR}.

\subsection{\label{q-adic-components}Component measures}

For a partition $\mathcal{Q}$ (in $\mathbb{R}^{d}$ or in $A_{2,2}$
respectively) we write $\mathcal{Q}(x)$ for the unique partition
element containing $x$. For a probability measure $\theta$, write
\[
\theta_{A}=\frac{1}{\theta(A)}\theta|_{A}
\]
for the conditional measure of $\theta$ on $A$, assuming $\theta(A)>0$.

For a probability measure $\theta$ on a space equipped with refining
partitions $\mathcal{Q}_{1},\mathcal{Q}_{2},\ldots$, we define measure
valued random variables $\theta_{x,n}$ such that $\theta_{x,n}=\theta_{\mathcal{Q}_{n}(x)}$
with probability $\theta(\mathcal{Q}_{n}(x))$. We call $\theta_{x,n}$
an $n$-th level component of $\theta$. When several components appear
together, e.g. $\theta_{x,n}$ and $\tau_{y,n}$, we assume $x,y$
are chosen independently unless stated otherwise. Sometimes $n$ is
chosen randomly as well, usually uniformly in some range. For example,
we write for $n_{2}\geq n_{1}$ integers and an event $\mathcal{U}$,
\begin{equation}
\mathbb{P}_{n_{1}\leq i\leq n_{2}}\left(\mu_{x,i}\in\mathcal{U}\right)=\frac{1}{n_{2}-n_{1}+1}\sum_{n=n_{1}}^{n_{2}}\mathbb{P}(\mu_{x,n}\in\mathcal{U})\;.\label{eq:p}
\end{equation}
We write $\mathbb{E}$ and $\mathbb{E}_{n_{1}\leq i\leq n_{2}}$ for
the expected value with respect to the probabilities $\mathbb{P}$
and $\mathbb{P}_{n_{1}\leq i\leq n_{2}}$. 

We also introduce notation for randomely chosen integers in interval
ranges: Given integers $n\ge m\ge1$ let $\mathcal{N}_{m,n}=\{m,m+1,..,n\}$
and denote the normalized counting measure on $\mathcal{N}_{m,n}$
by $\lambda_{m,n}$, i.e. $\lambda_{m,n}\{i\}=\frac{1}{n-m+1}$ for
each $m\le i\le n$. Write $\mathcal{N}_{n}$ and $\lambda_{n}$ in
place of $\mathcal{N}_{1,n}$ and $\lambda_{1,n}$.

In Euclidean space we also introduce re-scaled components: For $\theta\in\mathcal{P}(\mathbb{R}^{d})$,
denote by $\theta^{x,n}$ the push-forward of $\theta_{x,n}$ by the
unique homothety which maps $\mathcal{D}_{n}(x)$ onto $[0,1)^{2}$.
We view these as random variables using the same conventions as above. 

Component distributions have the convenient property that they are
almost invariant under repeated sampling, i.e. choosing components
of components. More precisely, for $\nu\in\mathcal{P}(\mathbb{R}^{d})$
and $m,n\in\mathbb{N}$, let $\mathbb{P}_{n}^{\nu}$ denote the distribution
of components $\nu^{x,i}$, $0\leq i\leq n$, as defined above; and
let $\mathbb{Q}_{n,m}^{\nu}$ denote the distribution on components
obtained by first choosing a random component $\nu_{x,i}$, $0\leq1\leq n$,
and then, conditionally on $\theta=\nu_{x,i}$, choosing a component
$\theta^{y,j}$, $i\leq j\leq i+m$ with the usual distribution (note
that $\theta^{y,j}=\nu^{y,j}$ is indeed a component of $\nu$). 
\begin{lem}
\label{lem:distribution-of-components-of-components}Given $\nu\in\mathcal{P}(\mathbb{R}^{d})$
and $m,n\in\mathbb{N}$, the total variation distance between $\mathbb{P}_{n}^{\nu}$
and $\mathbb{Q}_{n,m}^{\nu}$ satisfies 
\[
d_{TV}(\mathbb{P}_{n}^{\nu},\mathbb{Q}_{n,m}^{\nu})=O(\frac{m}{n})\;.
\]
\end{lem}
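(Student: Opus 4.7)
The plan is to reduce the lemma to a direct combinatorial computation on the joint distribution of (level, cell) pairs. The first observation is that a component of a component is itself a component of $\nu$: if $\theta=\nu_{x,i}$ and $j\geq i$, then for any $y\in\mathcal{D}_i(x)$ we have $\mathcal{D}_j(y)\subseteq\mathcal{D}_i(x)$, so $\theta_{y,j}=\nu_{y,j}$ and, after the appropriate rescaling, $\theta^{y,j}=\nu^{y,j}$. Consequently, both $\mathbb{P}_n^\nu$ and $\mathbb{Q}_{n,m}^\nu$ are supported on the (countable) family of measures indexed by pairs $(j,Q)$ with $Q\in\mathcal{D}_j$ and $\nu(Q)>0$, and it suffices to compare the masses assigned to each such pair.

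Next I would write down these masses. Under $\mathbb{P}_n^\nu$, the pair $(j,Q)$ receives mass $\frac{1}{n+1}\nu(Q)$ for $0\leq j\leq n$ and zero otherwise. Under $\mathbb{Q}_{n,m}^\nu$, fixing $y$ with $Q=\mathcal{D}_j(y)$, one sums over every admissible choice of the intermediate level $i$, i.e. $\max(0,j-m)\leq i\leq\min(n,j)$, the product of four factors: $\frac{1}{n+1}$ for choosing $i$, $\nu(\mathcal{D}_i(y))$ for the event that $x\in\mathcal{D}_i(y)$, $\frac{1}{m+1}$ for choosing $j$ given $i$, and $\nu(Q)/\nu(\mathcal{D}_i(y))$ for the event that $y\in Q$ given $\theta=\nu_{x,i}=\nu_{y,i}$. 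The factors of $\nu(\mathcal{D}_i(y))$ cancel, yielding mass $\frac{c_j}{(n+1)(m+1)}\nu(Q)$, where $c_j$ is the number of admissible $i$.

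A direct count gives $c_j=m+1$ whenever $m\leq j\leq n$, so in this interior range the two distributions coincide exactly. The only discrepancy is at the boundary: $c_j=j+1$ for $0\leq j<m$ and $c_j=n+m-j+1$ for $n<j\leq n+m$. Summing $|{\mathbb{P}_n^\nu(j,Q)}-{\mathbb{Q}_{n,m}^\nu(j,Q)}|$ over cells $Q$ at level $j$ (using $\sum_{Q\in\mathcal{D}_j}\nu(Q)=1$) and then over all boundary levels gives
\[
2\,d_{TV}(\mathbb{P}_n^\nu,\mathbb{Q}_{n,m}^\nu)
=\frac{1}{(n+1)(m+1)}\Bigl(\sum_{j=0}^{m-1}(m-j)+\sum_{j=n+1}^{n+m}(n+m-j+1)\Bigr)
=\frac{m}{n+1},
\]
which is $O(m/n)$ as claimed.

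There is no real obstacle: the entire argument is a bookkeeping exercise, and the only conceptual point is the cancellation of $\nu(\mathcal{D}_i(y))$ in the $\mathbb{Q}_{n,m}^\nu$ formula, which is precisely the statement that weighting by the mass of an ancestor cell exactly compensates the normalization introduced when passing to a conditional measure. The only mildly delicate step is making sure the rescaling conventions line up, but this is immediate from the identity $\theta^{y,j}=\nu^{y,j}$ noted at the outset.
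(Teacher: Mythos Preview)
Your argument is correct. The paper itself does not prove this lemma but merely cites \cite[Lemma~2.7]{HO2}; the direct combinatorial computation you carry out is exactly the standard one, and the cancellation of $\nu(\mathcal{D}_i(y))$ that you highlight is indeed the only substantive point.

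One small remark: strictly speaking $\mathbb{P}_n^\nu$ and $\mathbb{Q}_{n,m}^\nu$ are distributions on \emph{measures}, not on pairs $(j,Q)$, and distinct pairs could in principle yield the same rescaled component. But since pushing forward from pairs to measures can only decrease total variation, your computation at the level of pairs gives the required upper bound, so this does not affect the conclusion.
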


For the proof, see \cite[Lemma 2.7]{HO2}.

\subsection{\label{sub:random-cylinder-measures}Random cylinder measures with
prescribed geometry}

The symbolic space $\Lambda^{\mathbb{N}}$ comes with the natural
partitions $\mathcal{P}_{n}$ into level-$n$ cylinder sets. It will
be convenient to consider more general partitions into cylinders of
varying length. Thus, if $\Xi\subseteq\Lambda^{*}$ is a collection
of words such that the cylinder sets corresponding to the words in
$\Xi$ form a partition of $\Lambda^{\mathbb{N}}$, then we say that
$\Xi$ is a partition. In this case we also let $\Xi$ denote the
associated ``name'' function $\Xi\colon\Lambda^{\mathbb{N}}\mapsto\Lambda^{*}$,
so $\Xi(\mathbf{i})$ is the unique word in $\Xi$ such that $\mathbf{i}\in[\Xi(\mathbf{i})]$.

We return to our self-affine measure $\mu$, recalling the notation
from Sections \ref{sub:self-affine-measures} and \ref{sub:linear-maps}.
We first note that by iterating the basic identity $\mu=\sum_{i\in\Lambda}p_{i}\cdot\varphi_{i}\mu$,
for any partition $\Xi\subseteq\Lambda^{*}$, we get 
\begin{equation}
\mu=\sum_{\mathbf{i}\in\Xi}p_{\mathbf{i}}\varphi_{\mathbf{i}}\mu\;,\label{eq:iterated-mu}
\end{equation}
and if $V\in\PR$ then by applying $\pi_{V}$ to the above, we get
\begin{equation}
\pi_{V}\mu=\sum_{\mathbf{i}\in\Xi}p_{\mathbf{i}}\cdot\pi_{V}\varphi_{\mathbf{i}}\mu\;.\label{eq:iterated-convolution-for-mu}
\end{equation}
In these identities, if $\Xi=\Lambda^{n}$ for large $n$ then the
measures $\varphi_{\mathbf{i}}\mu$ and $\pi_{V}\varphi_{\mathbf{i}}\mu$
exhibit substantial variation in geometry as $\mathbf{i}$ ranges
over $\Xi$. Instead, it is useful to choose other partitions which
make their behavior more uniform. We present these next.

First, we would like (the supports of) the measures $\varphi_{\mathbf{i}}\mu$
to all have roughly the same diameter. To this end, for $n\geq1$
let 
\[
\Psi_{n}=\left\{ i_{0},\ldots,i_{m}\in\Lambda^{*}:\alpha_{1}(A_{i_{0},\ldots,i_{m}})\leq2^{-n}<\alpha_{1}(A_{i_{0},\ldots,i_{m-1}})\right\} 
\]
(we could have equivalently used norms instead of $\alpha_{1}$).
Because the $\varphi_{i}$ are contractions, $\Psi_{n}$ forms a partition
of $\Lambda^{\mathbb{N}}$ for every $n\geq1$ and it is easy to see
that there exists a constant $c_{0}>0$, depending on the matrices
but independent of $n$, such that for every $\mathbf{i}\in\Psi_{n}$,
\[
c_{0}2^{-n}\leq\alpha_{1}(A_{\mathbf{i}})=\Vert A_{\mathbf{i}}\|\leq2^{-n}\;.
\]

Next, we will sometimes want the ``width'' of the cylinder $\varphi_{\mathbf{i}}\mu$
to vary uniformly. Thus, for $n\geq1$ define
\[
\Upsilon_{n}=\left\{ i_{1}...i_{m}\in\Lambda^{*}\::\:\alpha_{2}(A_{i_{1}...i_{m}})\le2^{-n}<\alpha_{2}(A_{i_{1}...i_{m-1}})\right\} \:.
\]
Then there is a constant $c'_{0}>0$ such that for every $\mathbf{i}\in\Upsilon_{n}$,
\[
c'_{0}2^{-n}\leq\alpha_{2}(A_{\mathbf{i}})\leq2^{-n}\;.
\]

Every measure on Euclidean space has associated to it its dyadic components.
For a planar self-affine measure $\mu$, one can also decompose $\mu$
into cylinder measures, i.e. measure of the form $\varphi_{\mathbf{i}}\mu$
for $\mathbf{i}\in\Lambda^{*}$. As with dyadic components it is natural
to view the cylinders as random measures, with the naturally defined
probabilities.

For any given $n\in\mathbb{N}$ we introduce a random word $\mathbf{U}(n)\in\Lambda^{n}$
chosen according to the probability measure $p^{n}$. That is,
\[
\mathbb{P}(\mathbf{U}(n)=\mathbf{i})=\left\{ \begin{array}{cc}
p_{\mathbf{i}} & \text{if }\mathbf{i}\in\Lambda^{n}\\
0 & \text{otherwise}
\end{array}\right.\;.
\]
Similarly, we define the random word $\mathbf{I}(n)\in\Psi_{n}$ according
to the probability vector $p$, i.e. 
\[
\mathbb{P}(\mathbf{I}(n)=\mathbf{i})=\begin{cases}
p_{\mathbf{i}} & \text{ if }\mathbf{i}\in\Psi_{n}\\
0 & \text{ otherwise}
\end{cases},
\]
and define $\textbf{\ensuremath{\mathbf{K}}}(n)$ to be the random
word taking values in $\Upsilon_{n}$ according to $p$, i.e.
\[
\mathbb{P}\left(\textbf{\ensuremath{\mathbf{K}}}(n)=w\right)=\begin{cases}
p_{w} & \text{if }w\in\Upsilon_{n}\\
0 & \text{otherwise}
\end{cases}\;.
\]
The representation of $\mu$ as a convex combination of cylinder measure
in equation (\ref{eq:iterated-mu}) then takes the form 
\begin{align}
\mu & =\mathbb{E}(\varphi_{\mathbf{U}(n)}\mu)\label{eq:decomposition-of-mu-into-levels}\\
 & =\mathbb{E}(\varphi_{\mathbf{I}(n)}\mu)\nonumber \\
 & =\mathbb{E}(\varphi_{\mathbf{K}(n)}\mu)\;.\nonumber 
\end{align}
The first represents $\mu$ as a combination of cylinder measures
of fixed length $n$, the second as a combination of cylinders having
diameter equal to $2^{-n}$ up to a constant factor, and the last
as a combination of cylinders of width $2^{-n}$ up to a constant
factor. We may also randomize $n$ in the same way as we do in the
case of components, thus for example for any observable $F$, 

\[
\mathbb{E}_{n_{1}\leq i\leq n_{2}}(F(\varphi_{\mathbf{I}(i)}\mu))=\frac{1}{n_{2}-n_{1}+1}\sum_{i=n_{1}}^{n_{2}}\mathbb{E}(F(\varphi_{\mathbf{I}(i)}\mu)),
\]
and use the same notation for probabilities and expectations over
the random cylinders $\varphi_{\mathbf{K}(n)}\mu$.

\subsection{Entropy\label{sub:entropy}\label{sub:entropy-dimension}}

Let $\nu$ be a probability measure and $\mathcal{Q},\mathcal{Q}'$
finite or countable partitions of the underlying probability space.
The entropy of $\nu$ with respect to the partition $\mathcal{Q}$
is denoted $H(\nu,\mathcal{Q})$, and, when conditioned on $\mathcal{Q}'$,
by $H(\nu,\mathcal{Q}|\mathcal{Q}')$. That is, 
\begin{eqnarray}
H(\nu,\mathcal{Q}) & = & -\sum_{I\in\mathcal{Q}}\nu(I)\log\nu(I)\nonumber \\
H(\nu,\mathcal{Q}|\mathcal{Q}') & = & H(\nu,\mathcal{Q}\vee\mathcal{Q}')-H(\nu,\mathcal{Q}')\label{eq:conditional-entropy-is-increment}\\
 & = & \sum_{I\in\mathcal{Q}'}\nu(I)\cdot H(\nu_{I},\mathcal{Q}),\label{eq:cond-entropy-as-expectation}
\end{eqnarray}
assuming the sums are finite. Here $\mathcal{Q}'\vee\mathcal{Q}$
denotes the common refinement of the partitions $\mathcal{Q}',\mathcal{Q}$,
and by convention the logarithms are in base $2$, and $0\log0=0$. 

The entropy function is concave and almost convex in the measure argument.
That is, if $\nu_{i}$ are measures and $(q_{i})$ a probability vector,
then 
\[
\sum q_{i}H(\nu_{i},\mathcal{Q})\leq H(\sum q_{i}\nu_{i},\mathcal{Q})\leq\sum q_{i}H(\nu_{i},\mathcal{Q})+H(q)\;,
\]
where $H(q)=-\sum q_{i}\log q_{i}$. 

If $\mathcal{Q},\mathcal{Q}'$ are $C$-commensurable partitions (i.e.
each atom of one intersects at most $C$ atoms of the other), then
they have comparable entropies; more generally, replacing any one
of the partitions in the expression $H(\nu,\mathcal{A}\lor\mathcal{B}|\mathcal{C}\lor\mathcal{D})$
by a partition that is $C$-commensurable to it results in an additive
$O_{C}(1)$ change in value. 

The entropy function $\nu\mapsto H(\nu,\mathcal{Q}\mid\mathcal{Q}')$
is continuous in the total variation distance $d_{TV}(\cdot,\cdot)$.
In fact, if $d_{TV}(\nu,\theta)<\varepsilon$ and if each atom of
$\mathcal{Q}'$ intersects at most $k$ atoms of $\mathcal{Q}$, then
as in (\cite[Lemma 3.4]{HO2}): 
\begin{equation}
|H(\nu,\mathcal{Q}|\mathcal{Q}')-H(\theta,\mathcal{Q}|\mathcal{Q}')|\leq2\varepsilon\log k+2H(\frac{\varepsilon}{2})\;.\label{eq:entropy-continuity-of-images}
\end{equation}
In particular, using the fact that for $n>m$ each atom of $\mathcal{D}_{m}^{d}$
intersects $2^{d(n-m)}$ atoms of $\mathcal{D}_{n}^{d}$, this implies
that if $d_{TV}(\nu,\theta)<\varepsilon$, then

\begin{equation}
|\frac{1}{n-m}H(\nu,\mathcal{D}_{n}|\mathcal{D}_{m})-\frac{1}{n-m}H(\theta,\mathcal{D}_{n}|\mathcal{D}_{m})|<2d\varepsilon+\frac{2H(\frac{\varepsilon}{2})}{n-m}\;.\label{eq:scale-n-entropy-continuity-in-TV}
\end{equation}
The same bound holds using dyadic partitions in any orthogonal coordinate
system $W\oplus W^{\perp}$.

\subsection{\label{subsec:Entropy-in-Rd}Entropy in $\mathbb{R}^{d}$}

For a $\nu\in\mathcal{P}(\mathbb{R}^{d})$ or $\nu\in\mathcal{P}(A_{2,2})$,
we call $H(\nu,\mathcal{D}_{n})$ the scale-$n$ entropy of $\nu$.
We collect some basic properties of this quantity.

We often normalize by $n$, in which case
\[
\frac{1}{n}H(\nu,\mathcal{D}_{n})\leq d+O\left(\frac{\log(2+\diam(\supp\nu))}{n}\right)\:.
\]
By the definition of the distribution on components, for $n,m\ge1$,
\begin{eqnarray}
H(\nu,\mathcal{D}_{n+m}|\mathcal{D}_{n}) & = & \mathbb{E}(H(\nu_{x,n},\mathcal{D}_{n+m})).\label{eq:component-entropy-is-conditional-entropy}
\end{eqnarray}
Hence, for $\nu\in\mathcal{P}(\mathbb{R}^{d})$ we have the bound
\[
\frac{1}{k}H(\nu,\mathcal{D}_{n+k}|\mathcal{D}_{n})\leq d,
\]
and similarly in $A_{2,2}$ with another constant on the right hand
side. 

Scale-$n$ entropy is insensitive to coordinate changes: for $\nu\in\mathcal{P}(\mathbb{R}^{2})$
and $W\in\PR$, the partitions $\mathcal{D}_{n}$ and $\mathcal{D}_{n}^{W\oplus W^{\perp}}$
are $O_{d}(1)$-commensurable, hence

\begin{equation}
|H(\theta,\mathcal{D}_{n})-H(\theta,\mathcal{D}_{n}^{W\oplus W^{\perp}})|=O(1)\:.\label{eq:cond on comm partitions}
\end{equation}
and similarly for conditional entropy.

Scale-$n$ entropy transforms nicely under similarity maps: For any
similarity $f:\mathbb{R}^{d}\rightarrow\mathbb{R}^{d}$ and $\nu\in\mathcal{P}(\mathbb{R}^{d})$,
writing $\lip(f)$ for the Lipschitz constant of $f$,
\begin{eqnarray}
H(f\nu,\mathcal{D}_{n}) & = & H(\nu,\mathcal{D}_{n+\log\lip(f)})+O(1)\label{eq:entropy-under-transformation-1}\\
 & = & H(\nu,\mathcal{D}_{n})+O(1+|\log\lip(f)|)\;.\label{eq:entropy-under-transformation-2}
\end{eqnarray}
In particular, recalling the notation $T_{a},S_{c}$ for translation
and scaling,
\begin{align}
H(T_{a}\nu,\mathcal{D}_{n}) & =H(\nu,\mathcal{D}_{n})+O(1)\text{ for }a\in\mathbb{R}^{d},\label{eq:entropy-under-translation}\\
H(S_{c}\nu,\mathcal{D}_{n}) & =H(\nu,\mathcal{D}_{n+\log c})+O(1)\text{ for }c>0\;.\nonumber 
\end{align}
Thus, using equation (\ref{eq:affine-map-to-projection}) and Lemma
\ref{lem:norm-of-projection-composed-with-matrix}, if $\varphi(x)=Ax+b\in A_{2,2}$
and $W\in\PR$ satisfy $d_{\PR}(L(A),W^{\perp})\geq c$, then for
every measure $\nu\in\mathcal{P}(\mathbb{R}^{2})$ and every $n$,
\begin{align}
H(\pi_{W}\varphi\nu,\mathcal{D}_{n}) & =H(\pi_{A^{*}W}\nu,\mathcal{D}_{n+\log\left\Vert A\right\Vert })+O_{c}(1)\;.\label{eq:entropy-of-projection-of-affine-image-of-a-measure}
\end{align}
Similarly, as a consequence of concavity and of (\ref{eq:entropy-under-translation}),
for any $\theta,\nu\in\mathcal{P}(\mathbb{R}^{d})$ we have
\begin{equation}
H(\theta*\nu,\mathcal{D}_{n})\geq H(\nu,\mathcal{D}_{n})+O(1)\;.\label{eq:entorpy-increase-by-convolution}
\end{equation}
Also, the entropy of images is nearly continuous in the map: If $f,g$
are such that $\sup_{x}|f(x)-g(x)|<2^{-n}$ then 
\begin{equation}
|H(f\nu,\mathcal{D}_{n})-H(g\nu,\mathcal{D}_{n})|=O(1)\;.\label{eq:entropy-under-transformation-3}
\end{equation}

For a $\nu\in\mathcal{P}(\mathbb{R}^{d})$, the entropy dimension
of $\nu$ is defined as 
\begin{eqnarray*}
\edim\nu & = & \lim_{n\to\infty}\frac{H(\nu,\mathcal{D}_{n})}{n}
\end{eqnarray*}
if the limit exists (otherwise we take limsup or liminf as appropriate,
denoted $\uedim\nu$ and $\ledim\nu$).
\begin{lem}
\label{lem:entropy-dimension} If $\nu\in\mathcal{P}(\mathbb{R}^{d})$
is exact dimensional then $\dim_{e}\nu$ exists, moreover, 
\[
\dim\nu=\lim_{n\to\infty}\frac{H(\nu,\mathcal{D}_{n})}{n}.\;
\]
\end{lem}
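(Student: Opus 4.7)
The plan is to establish separately the two inequalities $\liminf_{n\to\infty}\frac{1}{n}H(\nu,\mathcal{D}_n)\geq\alpha$ and $\limsup_{n\to\infty}\frac{1}{n}H(\nu,\mathcal{D}_n)\leq\alpha$, where $\alpha:=\dim\nu$. Exact dimensionality means that the local dimension $\lim_{r\to 0}\log\nu(B_r(x))/\log r$ exists and equals $\alpha$ for $\nu$-a.e.\ $x$. Writing $H(\nu,\mathcal{D}_n)=\int-\log\nu(\mathcal{D}_n(x))\,d\nu(x)$, everything reduces to comparing the $\nu$-mass of a dyadic cell with that of a concentric ball of comparable radius.

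For the lower bound, the dyadic cell $\mathcal{D}_n(x)$ has diameter $\sqrt{d}\cdot 2^{-n}$ and contains $x$, so $\mathcal{D}_n(x)\subseteq B_{\sqrt{d}\cdot 2^{-n}}(x)$ and hence $-\log\nu(\mathcal{D}_n(x))\geq -\log\nu(B_{\sqrt{d}\cdot 2^{-n}}(x))$. Dividing by $n$, integrating, and applying Fatou's lemma together with the a.e.\ convergence of pointwise dimension to $\alpha$ yields $\liminf\frac{1}{n}H(\nu,\mathcal{D}_n)\geq\alpha$.

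The upper bound is the main obstacle, because pointwise one can have $\nu(\mathcal{D}_n(x))\ll\nu(B_{2^{-n}}(x))$ when $x$ sits near the boundary of its cell; so instead of arguing pointwise we will bound the total entropy by controlling the \emph{number} of dyadic cells that carry nearly all the mass. Given $\varepsilon>0$, Egorov's theorem produces a compact set $E_\varepsilon$ with $\nu(E_\varepsilon)>1-\varepsilon$ and an $N_\varepsilon$ such that $\nu(B_{2^{-n}}(x))\geq 2^{-n(\alpha+\varepsilon)}$ for all $x\in E_\varepsilon$ and $n\geq N_\varepsilon$. A maximal disjoint family of balls $\{B_{2^{-n}}(x_i)\}$ with $x_i\in E_\varepsilon$ has cardinality at most $2^{n(\alpha+\varepsilon)}$ by mass counting, and by maximality the concentric balls of triple radius cover $E_\varepsilon$; since each such ball meets only $O_d(1)$ dyadic cells at level $n$, the number of cells of $\mathcal{D}_n$ that meet $E_\varepsilon$ is at most $C\cdot 2^{n(\alpha+\varepsilon)}$. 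Splitting the entropy sum accordingly, the contribution from cells meeting $E_\varepsilon$ is at most $\log(C\cdot 2^{n(\alpha+\varepsilon)})$ by $H\leq\log(\#\text{cells})$, while the cells disjoint from $E_\varepsilon$ carry total mass at most $\varepsilon$, contributing at most $\varepsilon\cdot nd+O(1)$ (after reducing to the bounded-support case by truncating $\nu$ to a large ball of near-full mass, which changes entropies by $o(n)$). Therefore $\frac{1}{n}H(\nu,\mathcal{D}_n)\leq\alpha+\varepsilon(1+d)+O(1/n)$; sending $n\to\infty$ and then $\varepsilon\to 0$ completes the proof.
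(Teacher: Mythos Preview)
Your argument is correct. The paper does not give its own proof of this lemma; it merely cites \cite{FanLauRao02}, so there is no in-paper proof to compare against. Your approach---Fatou for the lower bound via the inclusion $\mathcal{D}_n(x)\subseteq B_{\sqrt{d}\,2^{-n}}(x)$, and for the upper bound an Egorov/Besicovitch-type covering argument bounding the \emph{number} of level-$n$ cells carrying $1-\varepsilon$ of the mass---is the standard route and is essentially what one finds in the cited reference. One minor remark: the reduction to bounded support requires $H(\nu,\mathcal{D}_0)<\infty$ (otherwise all scale-$n$ entropies are infinite and the statement is vacuous); in the paper's applications $\mu$ is compactly supported, so this is automatic, but it is worth stating explicitly as a hypothesis if you want the lemma in full generality.
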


The proof of the lemma can be found in e.g. \cite{FanLauRao02}. 

The following lemma express entropy in terms of the contribution of
different ``scales''. The proof is identical (or in the case of
$A_{2,2}$, similar) to the proof of \cite[Lemma 3.4]{Ho}, and is
therefore omitted.
\begin{lem}
\label{lem:multiscale-entropy-formula}Let $\theta\in\mathcal{P}(\mathbb{R}^{d})$
or $\theta\in\mathcal{P}(A_{2,2})$, let $n\ge m\ge1$, and let $k\ge0$
be given. Suppose that $\diam(\supp(\theta))=O(2^{-k})$. Then,
\[
\frac{1}{n}H(\theta,\mathcal{D}_{k+n})=\mathbb{E}_{k\le i\le k+n}\left(\frac{1}{m}H\left(\theta_{\psi,i},\mathcal{D}_{i+m}\right)\right)+O\left(\frac{m}{n}\right)\;.
\]
\end{lem}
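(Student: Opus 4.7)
The identity is essentially a telescoping argument once the conditional entropies are expressed through components. Since $\mathcal{D}_{i+m}$ refines $\mathcal{D}_i$, formula (\ref{eq:component-entropy-is-conditional-entropy}) gives
\[
\mathbb{E}(H(\theta_{\psi,i},\mathcal{D}_{i+m})) = H(\theta,\mathcal{D}_{i+m}\vee\mathcal{D}_i) - H(\theta,\mathcal{D}_i) = H(\theta,\mathcal{D}_{i+m}) - H(\theta,\mathcal{D}_i).
\]
So the first step is to sum this identity over $k\le i\le k+n$ and observe the shifted telescoping
\[
\sum_{i=k}^{k+n}\mathbb{E}(H(\theta_{\psi,i},\mathcal{D}_{i+m})) = \sum_{j=k+n+1}^{k+n+m} H(\theta,\mathcal{D}_j) - \sum_{i=k}^{k+m-1} H(\theta,\mathcal{D}_i) + m\cdot H(\theta,\mathcal{D}_{k+n}).
\]

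The second step is to estimate the two boundary sums, using the hypothesis $\diam(\supp\theta)=O(2^{-k})$. Because the support of $\theta$ meets at most $O(2^{d(i-k)})$ cells of $\mathcal{D}_i$ when $i\ge k$, and at most $O(1)$ cells of $\mathcal{D}_i$ when $i\le k$, each lower-boundary term satisfies $H(\theta,\mathcal{D}_i)=O(m)$ for $k\le i\le k+m-1$, contributing a total of $O(m^2)$. For the upper boundary, writing $H(\theta,\mathcal{D}_{k+n+j}) = H(\theta,\mathcal{D}_{k+n}) + H(\theta,\mathcal{D}_{k+n+j}\mid\mathcal{D}_{k+n})$ and bounding the conditional term by $dj$, the sum contributes $m\cdot H(\theta,\mathcal{D}_{k+n}) + O(m^2)$. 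Combining and dividing by $m(n+1)$ gives
\[
\mathbb{E}_{k\le i\le k+n}\!\left(\tfrac{1}{m}H(\theta_{\psi,i},\mathcal{D}_{i+m})\right) = \tfrac{1}{n+1}H(\theta,\mathcal{D}_{k+n}) + O(m/n),
\]
and replacing $1/(n+1)$ by $1/n$ incurs an additional $O(1/n)=O(m/n)$ error since $H(\theta,\mathcal{D}_{k+n})=O(n)$.

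For the case $\theta\in\mathcal{P}(A_{2,2})$ the argument is identical once one replaces the trivial atom-counting bound by the bounded-degree property of the partitions $\mathcal{D}_n^{A_{2,2}}$ (Lemma \ref{lem:Qn-has-bounded-degree}), which yields the analogous estimate $H(\theta,\mathcal{D}_{i+m}\mid\mathcal{D}_i)=O(m)$ and the analogous bound on the number of $\mathcal{D}_i$-cells meeting a set of diameter $O(2^{-k})$. There is no genuine obstacle in the proof; the main care needed is in bookkeeping the two boundary-sum contributions so that both fit within the advertised $O(m/n)$ error.
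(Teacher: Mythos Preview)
Your approach is correct and is exactly the standard telescoping argument the paper has in mind (the paper omits the proof and refers to \cite[Lemma~3.4]{Ho}, which proceeds identically). One minor slip: your displayed ``shifted telescoping'' identity should not contain the extra term $m\cdot H(\theta,\mathcal{D}_{k+n})$ --- the pure telescoping yields only the two boundary sums $\sum_{j=k+n+1}^{k+n+m} H(\theta,\mathcal{D}_j) - \sum_{i=k}^{k+m-1} H(\theta,\mathcal{D}_i)$, and the term $m\cdot H(\theta,\mathcal{D}_{k+n})$ appears only in the second step when you expand the upper boundary sum. Your subsequent estimates and final conclusion are consistent with the correct identity, so this is just a bookkeeping typo rather than a gap.
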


\subsection{Random matrix products, Furstenberg measure, and $L$ again\label{sub:furstenberg}}

We rely on the following classical results about random matrix products,
see e.g. \cite[Chapter III]{BL}.
\begin{thm}
\label{thm:Furstenberg-Oseledets}Let $\{B_{i}\}_{i\in\Gamma}$ be
a finite set of invertible matrices and $q=\sum_{i\in\Gamma}q_{i}\cdot\delta_{B_{i}}$
a probability measure on $GL_{2}(\mathbb{R})$, with $q_{i}>0$. Assume
that $\{B_{i}\}$ is non-conformal and totally irreducible (in the
sense in the introduction). Let $\zeta_{1},\zeta_{2},\ldots$ be an
i.i.d. sequence of matrices with marginal distribution $q$. Then,
\begin{enumerate}
\item \label{enu:Lyapunov exponents}There exist constants $\chi_{1}>\chi_{2}$
(called the Lyapunov exponents), such that with probability one,
\begin{align*}
\alpha_{1}(\zeta_{1}\ldots\zeta_{n}) & =2^{(\chi_{1}+o(1))n}\\
\alpha_{2}(\zeta_{1}\ldots\zeta_{n}) & =2^{(\chi_{2}+o(1))n}
\end{align*}
as $n\rightarrow\infty$. The same holds if the order of the products
is reversed (since $B,B^{*}$ have the same singular values).
\item For every $v\in\mathbb{R}^{2}$, with probability one, 
\begin{align*}
\left\Vert \zeta_{n}\ldots\zeta_{1}v\right\Vert  & =2^{(\chi_{1}+o(1))n}\\
\left\Vert \zeta_{n}^{-1}\ldots\zeta_{1}^{-1}v\right\Vert  & =2^{(-\chi_{2}+o(1))n}
\end{align*}
as $n\rightarrow\infty$ (The $o(n)$ error terms depend on the sample
$(\zeta_{i})$ and on $v$). If the matrices are multiplied in the
opposite order, the limits exist in probability.
\item \label{enu:convergence-of-L}There exists a random subspace $W\in\PR$
(which is a measurable function of $\zeta_{1},\zeta_{2},\ldots$),
such that with probability one,
\begin{align*}
\lim_{n\rightarrow\infty}L(\zeta_{1}\ldots\zeta_{n}) & =W\;.
\end{align*}
If the product is taken in the opposite order then $W$ is still the
limit in distribution (but generally not in probability). 
\item The distribution $\tau\in\mathcal{P}(\PR)$ of $W$ is the Furstenberg
measure associated to $q$. It is the unique measure satisfying $\tau=\sum_{i\in\Gamma}q_{i}\cdot B_{i}\tau$.
It has no atoms and $\dim\tau>0$.
\item \label{enu:convergence-of-RW}For any continuous measure $\lambda$
on $\PR$, we have 
\[
\lim_{n\rightarrow\infty}\mathbb{E}(\zeta_{1}\ldots\zeta_{n}(\lambda))=\tau\;,
\]
and, with probability one,
\begin{align*}
\lim_{n\rightarrow\infty}\zeta_{1}\ldots\zeta_{n}(\lambda) & =\delta_{W}\;.
\end{align*}
Furthermore,
\[
\lim_{n\rightarrow\infty}\zeta_{n}\ldots\zeta_{1}V=W\quad\text{in distribution and uniformly in \ensuremath{V\in\PR}.}
\]
\end{enumerate}
\end{thm}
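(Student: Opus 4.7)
The plan is to prove this classical theorem as a synthesis of well-known results in the theory of random matrix products, following Furstenberg--Kesten, Oseledets, and Furstenberg's theory of stationary measures on projective space.

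For part (\ref{enu:Lyapunov exponents}), I would apply Kingman's subadditive ergodic theorem to the cocycle $n \mapsto \log\|\zeta_1\cdots\zeta_n\|$ over the Bernoulli shift on $(\Gamma^{\mathbb{N}}, q^{\mathbb{N}})$, obtaining $\chi_1 = \lim \frac{1}{n}\log\alpha_1(\zeta_1\cdots\zeta_n)$ a.s. The same applied to $n \mapsto \log\|(\zeta_1\cdots\zeta_n)^{-1}\| = -\log\alpha_2(\zeta_1\cdots\zeta_n)$ yields $\chi_2$. The strict inequality $\chi_1 > \chi_2$ is Furstenberg's simplicity theorem, which requires precisely total irreducibility together with non-conformality (equivalently, non-compactness of the projective image of the semigroup generated by $\{B_i\}$). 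This is the main technical input at this stage.

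For part (2), I would invoke the Oseledets multiplicative ergodic theorem: in dimension two with simple Lyapunov spectrum, the ``slow'' direction for the backward product $\zeta_n\cdots\zeta_1$ is a single random line $V(\omega)$, and since $v$ is fixed deterministically, $\mathbb{P}(v\in V(\omega))=0$, giving $\|\zeta_n\cdots\zeta_1 v\| = 2^{(\chi_1+o(1))n}$ a.s. The assertion for inverses is the analogous statement for the inverse cocycle. In the opposite-order product $\zeta_1\cdots\zeta_n v$, the exceptional line depends on $n$, so the asymptotic holds only in probability. For part (\ref{enu:convergence-of-L}), the convergence of $L(\zeta_1\cdots\zeta_n)$ follows from the contracting action on $\PR$: since $\chi_1>\chi_2$, the image of $B_{1}(0)$ under $\zeta_1\cdots\zeta_n$ is an ellipse whose eccentricity grows like $2^{(\chi_1-\chi_2)n}$, and appending one more matrix on the right perturbs the major axis direction by at most $O(2^{(\chi_2-\chi_1)n})$, which is summable, yielding a.s. convergence to some random $W$. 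For reversed products the perturbation is on the left and does not decay, so only convergence in distribution holds.

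Parts (4) and (\ref{enu:convergence-of-RW}) follow from Furstenberg's classical theory. Existence of a stationary $\tau$ is by compactness of $\mathcal{P}(\PR)$ and the Krylov--Bogolyubov argument. Uniqueness and continuity (non-atomicity) of $\tau$ follow from total irreducibility: an atom of maximal mass would yield a finite invariant set of lines under the semigroup generated by $\{B_i\}$, contradicting the hypothesis. Positive dimension $\dim\tau>0$ is the deepest ingredient, and is the part I would expect to be the \emph{main obstacle}; it follows from Guivarc'h--Le~Page regularity of the Furstenberg measure (H\"older continuity of the stationary density on $\PR$), which in turn uses non-conformality and strong irreducibility in an essential way. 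Finally, for (\ref{enu:convergence-of-RW}), the convergence $\mathbb{E}(\zeta_1\cdots\zeta_n \lambda)\to\tau$ follows from Markov chain arguments: any weak limit is stationary, hence equals $\tau$ by uniqueness. The a.s. statement $\zeta_1\cdots\zeta_n\lambda\to\delta_W$ combines (\ref{enu:convergence-of-L}) with the contraction property, since for a continuous $\lambda$ almost every starting line avoids the contracting direction and is thus pushed onto $L(\zeta_1\cdots\zeta_n)\to W$. Uniformity in $V$ for $\zeta_n\cdots\zeta_1 V$ follows because the backward product's contracting kernel has measure zero and the rate of contraction is uniform.
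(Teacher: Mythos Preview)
Your outline is a reasonable and essentially correct synthesis of the classical ingredients (Furstenberg--Kesten, Oseledets, Furstenberg's theory of stationary measures, and Guivarc'h--Le~Page regularity for $\dim\tau>0$). However, there is nothing to compare: the paper does not prove this theorem at all. It is stated as a summary of classical results on random matrix products and simply referred to the literature (specifically \cite[Chapter III]{BL}), with no proof or sketch given.

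So your proposal is not wrong, but it is also not what the paper does; the paper treats the statement as background and moves on. If you wish to include a proof, your sketch is adequate as an outline, though each of the five parts you describe is itself a nontrivial theorem requiring its own careful argument (particularly the simplicity $\chi_1>\chi_2$ and the positive-dimension claim), and the appropriate course in a paper of this kind is exactly what the authors did: cite the standard references rather than reproduce the proofs.
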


We can view the function $L$ on matrices (Section \ref{subsec:The-function-L})
as a partially defined function on words in $\Lambda^{*}=\bigcup_{n=1}^{\infty}\Lambda^{n}$,
given by 
\[
L(i_{1}\ldots i_{n})=L(A_{i_{1}}\ldots A_{i_{n}})
\]
(it is defined whenever $A_{i_{1}}\ldots A_{i_{n}}$ have distinct
singular values). In view of Theorem \ref{thm:Furstenberg-Oseledets}
(\ref{enu:convergence-of-L}), we can extend the function $L$ to
a $\xi$-a.e. defined function of infinite sequences:
\begin{defn}
Given our system of affine maps $\{\varphi_{i}\}_{i\in\Lambda}$ with
$\varphi_{i}(x)=A_{i}x+b_{i}$, and a probability vector $p=(p_{i})_{i\in\Lambda}$,
we define $L:\Lambda^{\mathbb{N}}\rightarrow\PR$ by
\[
L(\omega)=\lim_{n\rightarrow\infty}L(A_{\omega_{1}}\ldots A_{\omega_{n}})\;.
\]
The limit in the definition exists $\xi$-a.e. by Theorem \ref{thm:Furstenberg-Oseledets}.
We also define $\eta=L\xi$, and note that for any continuous measure
$\lambda$ on $\PR$, by part (\ref{enu:convergence-of-RW}) of the
same theorem, for $\xi$-a.e. $\omega\in\Lambda^{\mathbb{N}}$,
\[
\delta_{L(\omega)}=\lim_{n\rightarrow\infty}A_{\omega_{1}}\ldots A_{\omega_{n}}\lambda\;.
\]
We define $\eta^{*}$ analogously, using the system of matrices $(A_{i}^{*})$
and $p$.
\end{defn}

The following is a variant of \cite[Lemma 2.6]{BHR}. We include it
here for completeness:
\begin{prop}
\label{prop:convergence-to-eta*}With the notation of Section \ref{sub:random-cylinder-measures},
for every $V\in\PR$ we have 
\[
\mathbb{E}_{1\leq i\leq n}\left(\delta_{A_{\mathbf{I}(i)}^{*}V}\right)\ll\mathbb{E}_{1\leq i\leq Cn}\left(\delta_{A_{\mathbf{U}(i)}^{*}V}\right)\;,
\]
where $C=C(\{\varphi_{i}\})$, and furthermore, the Radon-Nykodym
derivative of the measures above is bounded by $C$, uniformly in
$n$ and $V$. Consequently, if $\mathcal{U}\subseteq\PR$ is an open
set and $\eta^{*}(\mathcal{U})>1-\varepsilon$ for some $\varepsilon>0$
then for $n>n(\varepsilon,\mathcal{U})$, 
\[
\underset{V\in\PR}{\inf}\:\mathbb{E}_{1\leq i\leq n}\left(\delta_{A_{\mathbf{I}(i)}^{*}V}(\mathcal{U})\right)>1-C\varepsilon\;.
\]
\end{prop}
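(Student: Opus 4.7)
The plan is to prove the absolute continuity statement by a combinatorial double-counting on the partitions $\Psi_{i}$, and then deduce the consequence about $\mathcal{U}$ by combining it with the uniform-in-$V$ convergence from Theorem \ref{thm:Furstenberg-Oseledets}(\ref{enu:convergence-of-RW}). The key observation is that since the matrices $\{A_{i}\}$ are contractions with uniformly bounded singular-value ratios, the symbolic lengths of words in $\Psi_{i}$ lie in a window of size $O(i)$, and dually each word $\mathbf{i}\in\Lambda^{*}$ belongs to $\Psi_{i}$ for only $O(1)$ many values of $i$.

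To make this quantitative, I would set $c_{\max}:=\max_{i}\|A_{i}\|<1$ and $c_{\min}:=\min_{i}\alpha_{2}(A_{i})>0$. For $\mathbf{i}=i_{1}\ldots i_{m}\in\Psi_{i}$, the inequality $2^{-i}<\alpha_{1}(A_{\mathbf{i}|_{m-1}})\leq c_{\max}^{m-1}$ yields $m\leq C_{1}i+1$ with $C_{1}=1/\log_{2}(1/c_{\max})$, while the lower bound $\alpha_{1}(A_{\mathbf{i}})\geq\alpha_{1}(A_{\mathbf{i}|_{m-1}})\cdot\alpha_{2}(A_{i_{m}})\geq c_{\min}\alpha_{1}(A_{\mathbf{i}|_{m-1}})$ shows that the set of $i$ with $\mathbf{i}\in\Psi_{i}$ --- characterized by $-\log_{2}\alpha_{1}(A_{\mathbf{i}|_{m-1}})<i\leq-\log_{2}\alpha_{1}(A_{\mathbf{i}})$ --- is contained in an interval of length $\log_{2}(1/c_{\min})$, hence contains at most $D:=O(1)$ integers. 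Then, for any Borel $E\subseteq\PR$ and any $V\in\PR$, interchanging the order of summation yields
\begin{align*}
\sum_{i=1}^{n}\mathbb{P}\bigl(A_{\mathbf{I}(i)}^{*}V\in E\bigr) & =\sum_{\mathbf{i}\in\Lambda^{*}}p_{\mathbf{i}}\,\mathbf{1}\{A_{\mathbf{i}}^{*}V\in E\}\cdot\#\{1\leq i\leq n:\mathbf{i}\in\Psi_{i}\}\\
 & \leq D\sum_{m=1}^{C_{1}n+1}\sum_{\mathbf{i}\in\Lambda^{m}}p_{\mathbf{i}}\,\mathbf{1}\{A_{\mathbf{i}}^{*}V\in E\}=D\sum_{m=1}^{C_{1}n+1}\mathbb{P}\bigl(A_{\mathbf{U}(m)}^{*}V\in E\bigr).
\end{align*}
Dividing by $n$ and absorbing $D$ and the factor $(C_{1}n+1)/n$ into a single constant $C=C(\Phi)$ gives the absolute continuity with a uniform Radon--Nikodym bound.

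For the second statement, Theorem \ref{thm:Furstenberg-Oseledets}(\ref{enu:convergence-of-RW}) applied to $\zeta_{j}=A_{U_{j}}^{*}$ (so that $A_{\mathbf{U}(m)}^{*}=\zeta_{m}\cdots\zeta_{1}$) gives that $A_{\mathbf{U}(m)}^{*}V$ converges in distribution to $\eta^{*}$ uniformly in $V$. Approximating $\mathbf{1}_{\mathcal{U}}$ from below by a continuous $f_{\delta}$ with $\int f_{\delta}\,d\eta^{*}\geq\eta^{*}(\mathcal{U})-\delta$ yields, for each $\delta>0$, an $N$ such that $\mathbb{P}(A_{\mathbf{U}(m)}^{*}V\in\mathcal{U})>\eta^{*}(\mathcal{U})-2\delta>1-\varepsilon-2\delta$ for every $m\geq N$ and every $V\in\PR$. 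Averaging over $m\in[1,Cn]$ and choosing $n$ large enough to kill the first $N$ terms gives $\frac{1}{Cn}\sum_{m=1}^{Cn}\mathbb{P}(A_{\mathbf{U}(m)}^{*}V\in\mathcal{U}^{c})=O(\varepsilon)$ uniformly in $V$, and applying the first part with $E=\mathcal{U}^{c}$ transfers this to $\frac{1}{n}\sum_{i=1}^{n}\mathbb{P}(A_{\mathbf{I}(i)}^{*}V\in\mathcal{U}^{c})=O(\varepsilon)$ uniformly in $V$, which, after renaming constants, is the claimed lower bound.

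I do not anticipate any serious obstacle: the whole argument reduces to length-scale bookkeeping on $\Psi_{i}$ together with a standard application of the ergodic behaviour of the random walk driven by $\{A_{i}^{*}\}$. The only mild subtlety is the Portmanteau step for the open set $\mathcal{U}$, which is routine.
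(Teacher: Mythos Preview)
Your proposal is correct and follows essentially the same approach as the paper: bound the symbolic length of words in $\Psi_{i}$ by $O(i)$ to show every such word appears as some $\mathbf{U}(k)$ with $k\leq Cn$, compare weights, and then invoke the uniform-in-$V$ convergence of the $\{A_{i}^{*}\}$-random walk to $\eta^{*}$ for the second part. Your version is in fact slightly more careful than the paper's, which writes ``its probability in the expectation on the left is $p_{u}/n$'' without explicitly noting that a fixed word $u$ may lie in $\Psi_{i}$ for up to $D=O(1)$ values of $i$; your double-counting with the multiplicity bound $D$ handles this cleanly and just enlarges the final constant.
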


\begin{proof}
Choose $C$ such that $\max\lip(\varphi_{i})^{C/2}<1/2$. If $u\in\Lambda^{k}$
appears as $\mathbf{I}(i)$ on the left hand side then $\left\Vert A_{u}\right\Vert \geq c_{0}2^{-i}\geq c_{0}2^{-n}$
(recall the definition of $\mathbf{I}(i)$), which, using $\left\Vert A_{u}\right\Vert \leq\prod_{i=1}^{k}\left\Vert A_{u_{j}}\right\Vert $
implies that $k\leq(C/2)(n-\log c_{0})$, which is $\leq Cn$ for
large $n$; so $u$ appears on the right hand side as well. Furthermore
its probability in the expectation on the left is $p_{u}/n$, while
on the right the corresponding term has probability $p_{u}/Cn$. This
proves absolute continuity and shows that the Radon-Nykodym derivative
is $\leq C$. For the last statement, by theorem \ref{thm:Furstenberg-Oseledets}
(\ref{enu:convergence-of-RW}), $\mathbb{E}_{1\leq i\leq Cn}(\delta_{A_{\mathbf{U}(i)}^{*}V})\rightarrow\eta^{*}$
as $n\rightarrow\infty$ uniformly in $V\in\PR$. We conclude that
\[
\limsup_{n\rightarrow\infty}\:\underset{V\in\PR}{\sup}\:\mathbb{E}_{1\leq i\leq Cn}(\delta_{A_{\mathbf{U}(i)}^{*}V}(\PR\setminus\mathcal{U}))\leq\eta^{*}(\PR\setminus\mathcal{U})<\varepsilon\;,
\]
and apply the first part to find that
\[
\limsup_{n\rightarrow\infty}\:\underset{V\in\PR}{\sup}\:\mathbb{E}_{1\leq i\leq n}(\delta_{A_{\mathbf{I}(i)}^{*}V}(\PR\setminus\mathcal{U}))<C\varepsilon\;.
\]
\end{proof}

\section{\label{sec:Entropy}Entropy of projections and slices of $\mu$}

In this section we assume that $\Phi$ is totally irreducible and
non-conformal, but we do not assume exponential separation or $\dim\mu\geq1$. 

Recall that
\begin{align*}
\alpha & =\dim\mu\\
\beta & =\dim\pi_{W}\mu\qquad\text{for }\eta^{*}\text{-a.e. }W\\
\gamma & =\alpha-\beta
\end{align*}
($\beta$ is well defined by Theorem \ref{thm:LY}). Lemma \ref{lem:entropy-dimension}
tells us that for $\eta^{*}$-a.e. $W$, the entropy of $\pi_{W}\mu$
at a large scale $n$ is close to $n\beta$. In this section we get
a similar lower bound for all (rather than $\eta^{*}$-almost-all)
projections of $\mu$, uniformly in the direction of projection, and
also projections of cylinders $\varphi_{i_{1}}\ldots\varphi_{i_{n}}\mu$,
and of components $\mu_{x,i}$. We also examine certain conditional
measures of $\mu$ along lines perpendicular to $\eta^{*}$-typical
directions, and determine their entropies. 

The methods here are mostly not new, and some of the statements have
also appeared elsewhere, but others have not. We give a full development
for completeness.

\subsection{\label{subsec:Projections-of-mu-cylinders}Projections of $\mu$
and its cylinders}

One of the basic mechanisms in the study of self-affine measures is
that projecting a \emph{typical }cylinder measure in a \emph{fixed
}direction is essentially the same as projecting $\mu$ in an $\eta^{*}$-\emph{random
}direction, because the ``orientation'' of high-generation cylinders
becomes increasingly random. In the discussion below, the reader should
note the different roles of the Furstenberg measure $\eta$ associated
to the random matrix product of the $A_{i}$, and the Furstenberg
measure $\eta^{*}$, associated to the products of the transposed
matrices, $A_{i}^{*}$. 

To see how $\eta^{*}$ comes into the picture, observe that if $\mathbf{i}=i_{1}\ldots i_{n}\in\Lambda^{n}$
and $W\in\PR$ are fixed, then, writing $t=t(\mathbf{i})=\left\Vert \pi_{W}A_{i_{1}}\ldots A_{i_{n}}\right\Vert $,
by (\ref{eq:affine-map-to-projection}) we have 
\[
\pi_{W}A_{i_{1}}\ldots A_{i_{n}}=\pm S_{t}\pi_{A_{i_{n}}^{*}\ldots A_{i_{1}}^{*}W}
\]
(recall that $S_{t}x=tx$ is the scaling operator). This means that,
up to a translation and reflection, the projection onto $W$ of the
cylinder $\varphi_{i_{1}}\ldots\varphi_{i_{n}}\mu$ is just the projection
of $\mu$ to another line (the line $A_{i_{n}}^{*}\ldots A_{i_{1}}^{*}W$),
but scaled by $\left\Vert \pi_{W}A_{i_{1}}\ldots A_{i_{n}}\right\Vert $.
The subspace $A_{i_{n}}^{*}\ldots A_{i_{1}}^{*}W$, when $i_{1}\ldots i_{n}$
are chosen at random according to $p^{n}$, is asymptotically (as
$n\rightarrow\infty$) distributed like $\eta^{*}$.

To see how $\eta$ enters the picture, note that in order for the
analysis above to be useful we must have control of the norm $t=\left\Vert \pi_{W}A_{i_{1}}\ldots A_{i_{n}}\right\Vert $.
This norm depends on two factors. The first is the norm $\left\Vert A_{i_{1}}\ldots A_{i_{n}}\right\Vert $
of the matrix product, which is a function of the sequence $i_{1}\ldots i_{n}$
(not only of $n$). Because of this, later we will usually not choose
a sequence of constant length $n$, but rather condition the sequence
on the desired norm. This is what the random word $\mathbf{I}(n)$
does (see Section \ref{sub:random-cylinder-measures}).\footnote{Choosing variable length words complicates the equidistribution properties
of $A_{i_{n}}^{*}\ldots A_{i_{1}}^{*}W$ and is the reason we need
Proposition \ref{prop:convergence-to-eta*}.} The second factor controlling the norm $t$ is how the direction
$L(A_{i_{1}}\ldots A_{i_{n}})$ of the cylinder $\varphi_{i_{1}}\ldots\varphi_{i_{n}}\mu$
lies in relation to $W^{\perp}$: if $L(A_{i_{1}}\ldots A_{i_{n}})$
is far from $W^{\perp}$ then the norms of $\pi_{W}A_{i_{1}}\ldots A_{i_{n}}$
and $A_{i_{1}}\ldots A_{i_{n}}$ will be comparable; if they are close,
the former might be far smaller. The directions $L(A_{i_{1}}\ldots A_{i_{n}})$,
when $i_{1}\ldots i_{n}$ is chosen at random according to $p^{n}$,
are asymptotically distributed like $\eta$.

These considerations underlie the following lemmas. Since our ultimate
goal is to compute entropies, they are formulated in that way. Recall
the definition of $\Psi_{n}$ and $\mathbf{I}(n)$ from Section \ref{sub:random-cylinder-measures},
and that $\Psi_{n}(\omega)$ denotes the unique word $w\in\Psi_{n}$
with $\omega\in[w]$.
\begin{lem}
\label{lem:projecting-cylinders-away-from-L-perp}For every $\varepsilon>0$
and $\rho>0$, if $m>M(\varepsilon,\rho)$, the following holds for
every $n\geq1$:

For every $W\in\PR$ and every $u\in\Psi_{n}$ satisfying $d_{\PR}(L(A_{u}),W^{\perp})\geq\rho$,
\[
\left|\frac{1}{m}H(\pi_{W}\varphi_{u}\mu,\mathcal{D}_{n+m})-\frac{1}{m}H(\pi_{A_{u}^{*}W}\mu,\mathcal{D}_{m})\right|<\varepsilon\;.
\]
\end{lem}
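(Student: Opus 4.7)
The plan is to reduce this to a direct application of the scaling relation for projections established earlier in the preliminaries, namely equation (\ref{eq:entropy-of-projection-of-affine-image-of-a-measure}), combined with the definition of $\Psi_n$.

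First, I would unpack what the factorization (\ref{eq:affine-map-to-projection}) says in this situation:
\[
\pi_W \circ \varphi_u = (\pm 1)\,\lVert \pi_W A_u\rVert \cdot \pi_{A_u^* W} + \pi_W(b_{\varphi_u}),
\]
so $\pi_W \varphi_u \mu$ is, up to reflection and translation in $\mathbb{R}$, the image of $\pi_{A_u^* W}\mu$ under scaling by $t := \lVert \pi_W A_u\rVert$. Using the hypothesis $d_{\PR}(L(A_u),W^{\perp})\geq\rho$ together with Lemma \ref{lem:norm-of-projection-composed-with-matrix} and the bound $c_0 2^{-n}\leq \lVert A_u\rVert\leq 2^{-n}$ coming from $u\in\Psi_n$, one gets
\[
c^{-1}c_0\rho\cdot 2^{-n}\;\leq\;t\;\leq\;2^{-n},
\]
so $\log t = -n + r$ with $|r|=O_\rho(1)$.

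Next, I would invoke (\ref{eq:entropy-of-projection-of-affine-image-of-a-measure}) directly with $\varphi=\varphi_u$, $A=A_u$, $c=\rho$, $\nu=\mu$, and replacing the scale $n$ there by $n+m$, to conclude
\[
H(\pi_W\varphi_u\mu,\mathcal{D}_{n+m}) \;=\; H(\pi_{A_u^* W}\mu,\mathcal{D}_{(n+m)+\log\lVert A_u\rVert}) + O_\rho(1).
\]
Since $(n+m)+\log\lVert A_u\rVert = m + O(1)$, and since $\mathcal{D}_{m+O(1)}$ is $O(1)$-commensurable with $\mathcal{D}_m$ so their entropies differ by an absolute $O(1)$, we obtain
\[
\bigl|H(\pi_W\varphi_u\mu,\mathcal{D}_{n+m}) - H(\pi_{A_u^* W}\mu,\mathcal{D}_m)\bigr| \;=\; O_\rho(1),
\]
uniformly in $n$, $u\in\Psi_n$ with $d_{\PR}(L(A_u),W^\perp)\geq\rho$, and $W\in\PR$. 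Dividing by $m$ and choosing $M(\varepsilon,\rho)$ so that the implied constant divided by $M$ is less than $\varepsilon$ finishes the proof.

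There is no substantive obstacle: the whole argument is bookkeeping around the scaling identity and the commensurability of dyadic partitions at scales differing by $O_\rho(1)$. The only point that needs a little care is to make sure the $\rho$-dependence enters only through the constant $|r|=O_\rho(1)$ appearing in the scale shift, and not through an additive $O_\rho(1)$ that would fail to vanish upon division by $m$; but since the shift is absorbed by commensurability of partitions, the final bound is genuinely of the form $O_\rho(1)/m$.
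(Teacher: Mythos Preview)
Your proof is correct and follows essentially the same approach as the paper: both arguments invoke equation~(\ref{eq:entropy-of-projection-of-affine-image-of-a-measure}) together with $\lVert A_u\rVert = 2^{-n+O(1)}$ from $u\in\Psi_n$ and the hypothesis $d_{\PR}(L(A_u),W^\perp)\geq\rho$, obtaining an $O_\rho(1)$ additive error that becomes $O_\rho(1/m)$ after normalization. Your version simply unpacks the bookkeeping in more detail than the paper's two-line proof.
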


\begin{proof}
Using $\left\Vert A_{u}\right\Vert =2^{-n+O(1)}$ (because $u\in\Psi_{n}$)
and the hypothesis $d(L(A_{u}),W^{\perp})\geq\rho$, equation (\ref{eq:entropy-of-projection-of-affine-image-of-a-measure})
implies 
\[
\frac{1}{m}H\left(\pi_{W}\varphi_{u}\mu,\mathcal{D}_{n+m}\right)=\frac{1}{m}H\left(\pi_{A_{u}^{*}W}\mu,\mathcal{D}_{m}\right)+O_{\rho}(\frac{1}{m})\;,
\]
which gives the claim as soon as $m$ is large enough. 
\end{proof}
For this lemma to be useful we must bound the probability that $L(A_{u})$
is close to $W^{\perp}$. We have already observed that when $n$
is large, $L(A_{u})$ is distributed approximately like $\eta$, which
is a continuous measure (has no atoms), and so the probability that
$L(A_{u})$ is within distance $\rho$ of a fixed $W^{\perp}$ is
asymptotically $\eta(B_{\rho}(W^{\perp}))$, which is negligible when
$\rho$ is small. This argument is formalized in the next lemma. 
\begin{lem}
\label{lem:L-of-cylinders-equidistributes}For every $\varepsilon>0$
and every $0<\rho\leq\rho(\varepsilon)$, if $n\geq N(\varepsilon,\rho)$
then for every $W\in\PR$,
\[
\mathbb{P}(d_{\PR}(L(A_{\mathbf{I}(n)}),W^{\perp})\geq\rho)>1-\varepsilon\;.
\]
 
\end{lem}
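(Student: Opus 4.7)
The plan is to reduce the claim to weak convergence of the distribution of $L(A_{\mathbf{I}(n)})$ to the Furstenberg measure $\eta$, and then upgrade pointwise smallness of $\eta$-mass of small balls to a uniform bound via compactness of $\PR$.

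First I would observe that $\mathbf{I}(n)$ has the same distribution as the stopped prefix $\Psi_n(\omega)$ under $\omega\sim\xi$, because $\mathbb{P}(\mathbf{I}(n)=u)=p_u=\xi([u])$ for $u\in\Psi_n$. Since the $\varphi_i$ are contractions, a deterministic upper bound $\|A_{\omega_1\ldots\omega_k}\|\le(\max_i\|A_i\|)^k$ forces $|\Psi_n(\omega)|\to\infty$ uniformly as $n\to\infty$. Combining this with the definition of $L(\omega)$ from Section \ref{sub:furstenberg}, which asserts $L(A_{\omega_1}\cdots A_{\omega_k})\to L(\omega)$ for $\xi$-a.e.\ $\omega$, we conclude that $L(A_{\Psi_n(\omega)})\to L(\omega)$ almost surely. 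Writing $\nu_n$ for the distribution of $L(A_{\mathbf{I}(n)})$, almost sure convergence implies $\nu_n\to\eta$ weakly, since by definition $\eta=L_*\xi$.

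Next I would use the fact that $\eta$ is non-atomic (Theorem \ref{thm:Furstenberg-Oseledets}(4)) to show that $\sup_{V\in\PR}\eta(\overline{B_\rho(V)})\to 0$ as $\rho\to 0$. This is a small compactness argument: if along some sequence $\rho_k\to 0$ one had $\eta(\overline{B_{\rho_k}(V_k)})\ge\varepsilon/2$, pass to a subsequence with $V_k\to V_*$; then for any fixed $\delta>0$ the sets $\overline{B_{\rho_k}(V_k)}$ are eventually contained in $\overline{B_\delta(V_*)}$, giving $\eta(\overline{B_\delta(V_*)})\ge\varepsilon/2$ for all $\delta>0$, contradicting $\eta(\{V_*\})=0$.

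Given $\varepsilon>0$, choose $\rho_0=\rho(\varepsilon)$ so that $\sup_V \eta(\overline{B_{2\rho_0}(V)})<\varepsilon/2$, and take any $0<\rho\le\rho_0$. Cover the compact space $\PR$ by finitely many open balls $B_\rho(V_1),\ldots,B_\rho(V_k)$. By the Portmanteau theorem applied to the closed sets $\overline{B_{2\rho}(V_j)}$,
\[
\limsup_{n\to\infty}\nu_n(\overline{B_{2\rho}(V_j)})\le\eta(\overline{B_{2\rho}(V_j)})<\varepsilon/2
\]
for each $j=1,\ldots,k$. So there exists $N(\varepsilon,\rho)$ such that for $n\ge N(\varepsilon,\rho)$ and all $j$, $\nu_n(\overline{B_{2\rho}(V_j)})<\varepsilon$. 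For an arbitrary $W\in\PR$, pick $j$ with $d_{\PR}(W^\perp,V_j)<\rho$; then $\{V:d_{\PR}(V,W^\perp)<\rho\}\subseteq\overline{B_{2\rho}(V_j)}$, so $\mathbb{P}(d_{\PR}(L(A_{\mathbf{I}(n)}),W^\perp)<\rho)<\varepsilon$, which is the desired uniform bound.

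The only mildly delicate point is the last covering step, which is needed to turn the pointwise weak convergence $\nu_n\to\eta$ into a bound that is uniform in $W$; everything else is a direct unwinding of definitions and of the Furstenberg theory already quoted.
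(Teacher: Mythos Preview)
Your proof is correct and follows essentially the same approach as the paper: identify $\mathbf{I}(n)$ with the stopped prefix $\Psi_n(\omega)$, use the a.s.\ convergence $L(A_{\Psi_n(\omega)})\to L(\omega)$, and combine with the continuity of $\eta=L\xi$. The only difference is in how the uniformity in $W$ is obtained: you pass through weak convergence and use Portmanteau on a finite cover, whereas the paper's choice of $2\rho$ in the bound $\eta(B(W,2\rho))<\varepsilon/2$ suggests the more direct route of first taking $n$ large enough that $d_{\PR}(L(A_{\Psi_n(\omega)}),L(\omega))<\rho$ with probability $>1-\varepsilon/2$ (which is uniform in $W$ since it does not involve $W$), and then using the triangle inequality. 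Both arguments are valid and equally elementary.
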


\begin{proof}
The measure $\eta=L\xi$ is continuous, hence there exists $\rho(\varepsilon)>0$
such that for any $0<\rho\leq\rho(\varepsilon)$ we have $L\xi(B(W,2\rho))<\varepsilon/2$
for all $W\in\PR$. 

By the definition of $L$, the sequence $\{L(A_{\Psi_{n}(\omega)})\}_{n\ge1}$
converges to $L(\omega)$ for $\xi$-a.e. $\omega\in\Lambda^{\mathbb{N}}$.
For each $n\ge1$ and $w\in\Lambda^{*}$, by definition
\[
\mathbb{P}\left(\mathbf{I}(n)=w\right)=\xi\{\omega\::\:\Psi_{n}(\omega)=w\}\:.
\]
It follows that $\{L(A_{\mathbf{I}(n)})\}_{n\ge1}$ converges in distribution
to $L$, where we consider $L$ as a random variable on $(\Lambda^{\mathbb{N}},\xi)$.
Hence for every $n\ge1$ large enough in a manner depending on $\varepsilon$
and $\rho$, and for any $W\in\PR$,
\[
\mathbb{P}\left(L(A_{\mathbf{I}(n)})\in B(W,\rho)\right)<\varepsilon\;,
\]
as claimed.
\end{proof}
What we have done so far shows that $\pi_{W}\varphi_{\mathbf{I}(n)}\mu$
is, with high probability, comparable to $\pi_{A_{\mathbf{I}(n)}^{*}W}\mu$
at another scale. For this to be useful we now must understand the
distribution of $A_{\mathbf{I}(n)}^{*}$. Here we meet the random
matrix product associated of the transpose matrices $A_{i}^{*}$.
These should heuristically converge to $\eta^{*}$, but the equidistribution
properties of this random walk are not as good, due to the fact that
we have only convergence in distribution (and not pointwise, due to
the order of composition), and because we are interested in the behavior
along a certain random sub-sequence of times (those which define the
lengths of $\mathbf{I}(n)$). Nevertheless in the Cesaro sense the
random walk $A_{\mathbf{I}(n)}^{*}W$ does equidistribute to $\eta^{*}$,
allowing us in the next lemma to get information about the projections
of typical cylinders (and hence of $\mu$) in a fixed direction $W$. 
\begin{lem}
\label{lem:uni conv of ent}For every $\varepsilon>0$ and $n\ge N(\varepsilon)\ge1$,
\[
\underset{W\in\PR}{\inf}\:\frac{1}{n}H(\pi_{W}\mu,\mathcal{D}_{n})>\beta-\varepsilon\:.
\]
\end{lem}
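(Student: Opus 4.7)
The plan is to lift the $\eta^*$-a.e.\ convergence $\frac{1}{m} H(\pi_V \mu, \mathcal{D}_m) \to \beta$, which follows from Theorem~\ref{thm:LY} combined with Lemma~\ref{lem:entropy-dimension}, to a lower bound that is \emph{uniform} in the direction $W$. The strategy will be to decompose $\mu = \mathbb{E}(\varphi_{\mathbf{I}(i)}\mu)$, apply $\pi_W$, use concavity of entropy, and then invoke Lemma~\ref{lem:projecting-cylinders-away-from-L-perp} to convert $\pi_W \varphi_u \mu$ into $\pi_{A_u^* W}\mu$ at a smaller scale. The essential input is that $A_{\mathbf{I}(i)}^* W$ equidistributes to $\eta^*$ uniformly in $W$, which is provided by Proposition~\ref{prop:convergence-to-eta*}.

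Given $\varepsilon > 0$, I will first produce an open set $\mathcal{U} \subseteq \PR$ with $\eta^*(\mathcal{U}) > 1 - \varepsilon$ and an integer $M_0$ such that $\frac{1}{m} H(\pi_V \mu, \mathcal{D}_m) > \beta - \varepsilon$ for every $V \in \mathcal{U}$ and every $m \geq M_0$; this will be obtained from Egorov's theorem combined with outer regularity of the Radon measure~$\eta^*$. Since $\eta$ is non-atomic by Theorem~\ref{thm:Furstenberg-Oseledets}(4), I fix $\rho > 0$ with $\eta(B_\rho(W^\perp)) < \varepsilon$ uniformly in $W$. Lemma~\ref{lem:L-of-cylinders-equidistributes} will then yield, for $i \geq N_1(\varepsilon, \rho)$ and every $W$, that $\mathbb{P}(d_{\PR}(L(A_{\mathbf{I}(i)}), W^\perp) \geq \rho) > 1 - 2\varepsilon$; and Proposition~\ref{prop:convergence-to-eta*} will give, for all $k$ sufficiently large, $\mathbb{E}_{1\leq i\leq k}(\delta_{A_{\mathbf{I}(i)}^* W}(\mathcal{U})) > 1 - C\varepsilon$ uniformly in~$W$.

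Fixing such a $k$ and taking $n \gg k$, I will apply concavity of entropy to $\pi_W \mu = \mathbb{E}(\pi_W \varphi_{\mathbf{I}(i)}\mu)$ and average over $i \in \{N_1, \ldots, k\}$, obtaining
\[
H(\pi_W \mu, \mathcal{D}_n) \geq \mathbb{E}_{N_1 \leq i \leq k}\bigl(\mathbb{E}(H(\pi_W \varphi_{\mathbf{I}(i)}\mu, \mathcal{D}_n))\bigr).
\]
Calling $(i,u)$ with $u \in \Psi_i$ \emph{good} when $d_{\PR}(L(A_u), W^\perp) \geq \rho$ and $A_u^* W \in \mathcal{U}$, the good pairs have combined probability $\geq 1 - O(\varepsilon)$ by the previous paragraph. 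For each such pair, Lemma~\ref{lem:projecting-cylinders-away-from-L-perp} with $m = n - i$ (valid since $n - k \geq M_0$) will give $H(\pi_W \varphi_u \mu, \mathcal{D}_n) \geq (n-i)(\beta - 2\varepsilon)$ after invoking the entropy lower bound on~$\mathcal{U}$. Assembling these and choosing $n$ so large that $k/n < \varepsilon$ will yield $\frac{1}{n}H(\pi_W \mu, \mathcal{D}_n) \geq \beta - O(\varepsilon)$, as required.

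The main technical obstacle is producing the \emph{open} set $\mathcal{U}$ on which the entropy bound holds: the bound is obtained on an $\eta^*$-large measurable set via Egorov, but Proposition~\ref{prop:convergence-to-eta*} is stated for open sets, and $V \mapsto H(\pi_V \mu, \mathcal{D}_m)$ fails to be continuous at those directions where $\pi_V \mu$ charges the boundary of $\mathcal{D}_m$. Beyond this regularity issue, the argument is a direct combination of concavity of entropy with Lemma~\ref{lem:projecting-cylinders-away-from-L-perp}, Lemma~\ref{lem:L-of-cylinders-equidistributes}, and Proposition~\ref{prop:convergence-to-eta*}.
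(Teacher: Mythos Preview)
Your approach uses the same key ingredients as the paper's proof (Lemmas~\ref{lem:projecting-cylinders-away-from-L-perp}, \ref{lem:L-of-cylinders-equidistributes}, Proposition~\ref{prop:convergence-to-eta*}, and concavity), but organizes them differently, and this difference creates a genuine gap at precisely the point you flag as the ``main technical obstacle.''

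The paper first applies the multiscale entropy formula (Lemma~\ref{lem:multiscale-entropy-formula}) to write $\frac{1}{n}H(\pi_W\mu,\mathcal{D}_n)$ as an average of $\frac{1}{m}H(\pi_W\mu,\mathcal{D}_{k+m}\mid\mathcal{D}_k)$ over $k=1,\ldots,n$ with a \emph{single fixed} $m$, and only then decomposes into cylinders $\varphi_{\mathbf{I}(k)}\mu$ at each level $k$. Your route applies concavity once, decomposing into cylinders at levels $i\in\{N_1,\ldots,k\}$, and then invokes Lemma~\ref{lem:projecting-cylinders-away-from-L-perp} with the \emph{varying} parameter $m=n-i$. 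This forces you to need the bound $\frac{1}{m}H(\pi_V\mu,\mathcal{D}_m)>\beta-\varepsilon$ on an open set $\mathcal{U}$ \emph{for all} $m\ge M_0$ simultaneously.

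Egorov plus outer regularity does not deliver this. Egorov gives a closed set $K$ with the bound for all $m\ge M_0$; outer regularity gives an open $\mathcal{U}\supseteq K$ with $\eta^*(\mathcal{U}\setminus K)$ small, but says nothing about the entropy on $\mathcal{U}\setminus K$. The almost-continuity you would need is that $|H(\pi_V\mu,\mathcal{D}_m)-H(\pi_{V'}\mu,\mathcal{D}_m)|=O(1)$ only when $d_{\PR}(V,V')\lesssim 2^{-m}$; for a fixed open neighborhood of $K$ this fails uniformly in $m$. Moreover you cannot fall back on Proposition~\ref{prop:convergence-to-eta*} applied to the closed set $K$: its proof uses the Portmanteau inequality $\limsup\nu_n(F)\le\eta^*(F)$ for closed $F$, which for $F=K^c$ gives only an \emph{upper} bound on $\nu_n(K)$, the wrong direction. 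There is also a circularity in trying to patch this with finitely many scales: the relevant scales $m\in\{n-k,\ldots,n-N_1\}$ are determined by $n$ and $k$, but $k$ is chosen from Proposition~\ref{prop:convergence-to-eta*} after $\mathcal{U}$, which must be chosen for those scales.

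The paper's use of Lemma~\ref{lem:multiscale-entropy-formula} is exactly what dissolves this issue: with $m$ fixed before anything else, the $O(1)$-commensurability of $\pi_V^{-1}\mathcal{D}_m$ and $\pi_{V'}^{-1}\mathcal{D}_m$ for $d_{\PR}(V,V')\lesssim 2^{-m}$ lets one enlarge the Egorov set to an open $\mathcal{U}$ at the cost of $O(1/m)$ in the entropy bound, and then Proposition~\ref{prop:convergence-to-eta*} applies cleanly. So the missing step in your argument is precisely Lemma~\ref{lem:multiscale-entropy-formula}; once you insert it (working at a fixed $m$ and averaging the cylinder level $k$ over $\{1,\ldots,n\}$ rather than a bounded window), your proof becomes the paper's.
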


\begin{proof}
Let $\varepsilon>0$, choose $\rho$ suitable for the previous lemma,
and let $n>m\ge1$, with $m$ large with respect to $\varepsilon$
and $\rho$, and $n$ large with respect to all parameters, we shall
see the relations later.

By Lemma \ref{lem:multiscale-entropy-formula} and by assuming that
$n$ is sufficiently large with respect to $m$, it follows that for
$W\in\PR$
\[
\frac{1}{n}H(\pi_{W}\mu,\mathcal{D}_{n})=\frac{1}{n}\sum_{k=1}^{n}\frac{1}{m}H(\pi_{W}\mu,\mathcal{D}_{k+m}\mid\mathcal{D}_{k})+O(\varepsilon)\:.
\]
For each $k\ge1$ we have $\pi_{W}\mu=\mathbb{E}_{i=k}\left(\pi_{W}\varphi_{\mathbf{I}(i)}\mu\right)$,
thus by the concavity of conditional entropy
\[
\frac{1}{n}H(\pi_{W}\mu,\mathcal{D}_{n})\ge\frac{1}{n}\sum_{k=1}^{n}\mathbb{E}\left(\frac{1}{m}H(\pi_{W}\varphi_{\mathbf{I}(k)}\mu,\mathcal{D}_{i+m}\mid\mathcal{D}_{i})\right)-O(\varepsilon)\:.
\]
Since $\diam(\supp(\varphi_{\mathbf{I}(i)}\mu))=\Theta(2^{-i})$ and
by assuming that $m$ is sufficiently large with respect to $\varepsilon$,
we can do away with the conditioning at the expense of a slight increase
to the error term:
\begin{eqnarray*}
\frac{1}{n}H(\pi_{W}\mu,\mathcal{D}_{n}) & \ge & \frac{1}{n}\sum_{k=1}^{n}\mathbb{E}\left(\frac{1}{m}H(\pi_{W}\varphi_{\mathbf{I}(k)}\mu,\mathcal{D}_{i+m})\right)-O(\varepsilon)\\
 & = & \mathbb{E}_{1\le i\leq n}\left(\frac{1}{m}H(\pi_{W}\varphi_{\mathbf{I}(i)}\mu,\mathcal{D}_{i+m})\right)-O(\varepsilon)\;.
\end{eqnarray*}
By Lemma \ref{lem:L-of-cylinders-equidistributes} and Lemma \ref{lem:projecting-cylinders-away-from-L-perp}
above, by our choice of $\rho$ and by assuming $m,n$ are large relative
to $\varepsilon,\rho$, outside an event of probability $<\varepsilon$,
the expression in the last expectation can be replaced with projection
to $A_{\mathbf{I}(n)}^{*}W$ at the expense of another $\varepsilon$
error, hence
\begin{equation}
\frac{1}{n}H(\pi_{W}\mu,\mathcal{D}_{n})\geq\mathbb{E}_{1\le i\leq n}\left(\frac{1}{m}H(\pi_{A_{\mathbf{I}(i)}^{*}W}\mu,\mathcal{D}_{m})\right)-O(\varepsilon)\;.\label{eq:bound-on-arbitrary-projcetion}
\end{equation}
The point now is that, roughly speaking, $A_{\mathbf{I}(n)}^{*}W$
equidistributes to $\eta^{*}$. This is not precisely true; what is
true is that $A_{\mathbf{U}(n)}^{*}W$ equidistributes for $\eta^{*}$.
The two sequences are not quite comparable, but the two distributions
are close enough that they hit high-probability events with roughly
proportional probabilities, and this is enough to complete the proof;
the technical step is given by Proposition \ref{prop:convergence-to-eta*}.
In more detail, observe that, since $\dim\pi_{V}\mu=\beta$ for $\eta^{*}$-a.e.
$V$, if $m$ is large enough then $\frac{1}{m}H(\pi_{V}\mu,\mathcal{D}_{m})>\beta-\varepsilon/2$
for a set of $V$ of $\eta^{*}$-measure greater than $1-\varepsilon$.
Hence, using also the almost-continuity of entropy of projections,
we can find an open set $\mathcal{U}\subseteq\PR$ with $\eta^{*}(\mathcal{U})>1-\varepsilon$,
and such that $\frac{1}{m}H(\pi_{V}\mu,\mathcal{D}_{m})>\beta-\varepsilon$
for all $V\in\mathcal{U}$. Applying Proposition \ref{prop:convergence-to-eta*}
we conclude that for $n$ large relative to $\varepsilon$,
\[
\mathbb{P}_{1\le i\leq n}\left(\frac{1}{m}H(\pi_{A_{\mathbf{I}(i)}^{*}W}\mu,\mathcal{D}_{m})>\beta-\varepsilon\right)\geq1-O(\varepsilon)\;.
\]
Combined with (\ref{eq:bound-on-arbitrary-projcetion}) this completes
the proof.
\end{proof}
Lastly, we obtain a similar result for cylinders:
\begin{lem}
\label{lem:ent of proj of cyl}For every $\varepsilon>0$, for $m\ge M(\varepsilon)$
and $n\ge N(\varepsilon)$, 
\[
\underset{W\in\PR}{\inf}\:\mathbb{P}\left(\frac{1}{m}H\left(\pi_{W}\varphi_{\mathbf{I}(n)}\mu,\mathcal{D}_{i+m}\right)\ge\beta-\varepsilon\right)>1-\varepsilon\:.
\]
\end{lem}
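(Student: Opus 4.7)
The plan is to chain together the three preceding lemmas essentially as they stand. Fix $\varepsilon>0$ and $W\in\PR$. I will show that, for $m$ and $n$ large in a manner depending only on $\varepsilon$, the event on which $\frac{1}{m}H(\pi_{W}\varphi_{\mathbf{I}(n)}\mu,\mathcal{D}_{n+m})\ge\beta-\varepsilon$ contains the event where $L(A_{\mathbf{I}(n)})$ is separated from $W^{\perp}$, whose probability is already bounded below by $1-\varepsilon$ uniformly in $W$.

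First, choose $\rho=\rho(\varepsilon)>0$ supplied by Lemma \ref{lem:L-of-cylinders-equidistributes}, so that for all $n\ge N_{0}(\varepsilon,\rho)$ and every $W\in\PR$,
\[
\mathbb{P}\!\left(d_{\PR}(L(A_{\mathbf{I}(n)}),W^{\perp})\ge\rho\right)>1-\varepsilon.
\]
Next, apply Lemma \ref{lem:uni conv of ent} with tolerance $\varepsilon/2$: for every $m\ge M_{1}(\varepsilon)$ we have the \emph{uniform} lower bound
\[
\inf_{V\in\PR}\frac{1}{m}H(\pi_{V}\mu,\mathcal{D}_{m})\ge\beta-\varepsilon/2.
\]
This is the crucial point: because Lemma \ref{lem:uni conv of ent} holds for \emph{every} $V$ (not merely $\eta^{*}$-a.e.\ $V$), we do not need any equidistribution statement for the random direction $A_{\mathbf{I}(n)}^{*}W$, and can avoid Proposition \ref{prop:convergence-to-eta*} entirely. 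Finally, Lemma \ref{lem:projecting-cylinders-away-from-L-perp}, applied with tolerance $\varepsilon/2$ and the same $\rho$, gives $M_{2}(\varepsilon,\rho)$ so that for every $m\ge M_{2}(\varepsilon,\rho)$ and every $n\ge1$, on the event $d_{\PR}(L(A_{\mathbf{I}(n)}),W^{\perp})\ge\rho$,
\[
\left|\frac{1}{m}H(\pi_{W}\varphi_{\mathbf{I}(n)}\mu,\mathcal{D}_{n+m})-\frac{1}{m}H(\pi_{A_{\mathbf{I}(n)}^{*}W}\mu,\mathcal{D}_{m})\right|<\varepsilon/2.
\]

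Take $M(\varepsilon)=\max\{M_{1}(\varepsilon),M_{2}(\varepsilon,\rho)\}$ and $N(\varepsilon)=N_{0}(\varepsilon,\rho)$. For any $m\ge M(\varepsilon)$, $n\ge N(\varepsilon)$, and any $W\in\PR$, on the event $d_{\PR}(L(A_{\mathbf{I}(n)}),W^{\perp})\ge\rho$ (of probability $>1-\varepsilon$), combining the uniform lower bound for $V=A_{\mathbf{I}(n)}^{*}W$ with the approximation above yields
\[
\frac{1}{m}H(\pi_{W}\varphi_{\mathbf{I}(n)}\mu,\mathcal{D}_{n+m})>\beta-\varepsilon.
\]
Taking the infimum over $W$ gives the claim.

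There is no real obstacle; the only subtle point is recognising that the argument from the proof of Lemma \ref{lem:uni conv of ent} — which needed Cesaro averaging to access $\eta^{*}$-typical directions — has \emph{already} been upgraded to a uniform-in-$W$ lower bound, so when it is used as a black box here, the random direction $A_{\mathbf{I}(n)}^{*}W$ causes no trouble regardless of its distribution. Thus the lemma is essentially a formal consequence of the three preceding ones.
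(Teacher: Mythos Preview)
Your proof is correct and follows essentially the same approach as the paper: both reduce via Lemmas \ref{lem:projecting-cylinders-away-from-L-perp} and \ref{lem:L-of-cylinders-equidistributes} to a lower bound on $\frac{1}{m}H(\pi_{A_{\mathbf{I}(n)}^{*}W}\mu,\mathcal{D}_{m})$, and then invoke the uniform-in-direction bound of Lemma \ref{lem:uni conv of ent}. The paper's proof is simply a two-line compression of exactly the argument you wrote out.
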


\begin{proof}
From Lemmas \ref{lem:L-of-cylinders-equidistributes} and \ref{lem:projecting-cylinders-away-from-L-perp}
again, it is enough to prove (perhaps for another $\varepsilon$)
that 
\[
\underset{W\in\PR}{\inf}\:\mathbb{P}\left(\frac{1}{m}H\left(\pi_{A_{\mathbf{I}(n)}^{*}W}\mu,\mathcal{D}_{m}\right)\ge\beta-\varepsilon\right)>1-\varepsilon\:,
\]
and this follows from the previous lemma.
\end{proof}

\subsection{\label{subsec:Projections-mu-components}Projections of components
of $\mu$}

Another basic method is ``covering'', i.e. decomposition of measures
as convex combinations of well-behaved ones (and possibly a small
remainder). For example, one can cover (the restriction of $\mu$
to) dyadic cells by cylinders of roughly the same diameter. Since
entropy is concave, if in a cell $C\in\mathcal{D}_{n}$ we can express
$\mu$ as a convex combination of measures, most of which are cylinders
which project with large entropy in direction $W\in\PR$, then the
same should be true of the conditional measure $\mu_{C}$. A complication
arises here because there will in general be cylinder measures which
are partly, but not completely, supported on $C$, and then we lose
control of the behavior of the part of them that lies inside $C$.
But by controlling the mass of such cut-off cylinders, we can obtain
good decompositions of $\mu_{C}$ for most choices of $C$. This argument
depends on controlling the mass of small neighborhoods of $\partial C$.
That is the purpose of the following lemma:
\begin{lem}
\label{lem:dyadic-cells-with-small-boundaries}For every $\varepsilon>0$
there is a $\delta>0$ such that for every $W\in\PR$,
\[
\pi_{W}\mu(B_{\delta r}(x))\le\epsilon\cdot\pi_{W}\mu(B_{r}(x))\quad\text{ for all \ensuremath{x\in\mathbb{R}} and \ensuremath{0<r<1}.}
\]
In particular, for every $\varepsilon>0$ there is a $\delta>0$ such
that for all $n\ge1$,
\[
\mu\left(\bigcup_{D\in\mathcal{D}_{n}}(\partial D)^{(2^{-n}\delta)}\right)<\varepsilon\:.
\]
\end{lem}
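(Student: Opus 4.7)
The first statement is a uniform-in-$W$ scale-invariant non-concentration estimate for the family $\{\pi_W\mu\}_{W\in\PR}$. I would prove it in three stages. \emph{First}, I would show that no affine line $\ell\subseteq\mathbb{R}^2$ carries positive $\mu$-mass. Setting $c^*=\sup_\ell\mu(\ell)$ and supposing $c^*>0$, the set $\mathcal{L}^*=\{\ell:\mu(\ell)=c^*\}$ is finite (since $\mu$ is non-atomic distinct lines are $\mu$-almost disjoint, so $|\mathcal{L}^*|\le 1/c^*$). Iterating $\mu(\ell)=\sum_i p_i\mu(\varphi_i^{-1}\ell)$ and using the maximality of $c^*$ together with positivity of all $p_i$, each $\mu(\varphi_i^{-1}\ell)=c^*$, so $\mathcal{L}^*$ is invariant under every $\varphi_i^{-1}$, and the finite set of directions of the lines in $\mathcal{L}^*$ is a finite $\{A_i\}$-invariant subset of $\PR$, contradicting total irreducibility. \emph{Second}, this promotes to uniform non-atomicity of projections,
\[
F(c):=\sup_{W\in\PR,\,x\in\mathbb{R}}\pi_W\mu(B_c(x))\xrightarrow[c\to 0]{}0,
\]
since otherwise a convergent-subsequence argument (using compactness of $\PR$, boundedness of $\supp\mu$, weak continuity of $V\mapsto\pi_V\mu$, and the Portmanteau theorem) would produce a point carrying positive $\pi_{W^*}\mu$-mass, contradicting the first stage.

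\emph{Third} (and main), set
\[
Q(\delta):=\sup_{W,\,x,\,0<r<1}\frac{\pi_W\mu(B_{\delta r}(x))}{\pi_W\mu(B_r(x))},
\]
with sup over triples $(W,x,r)$ for which the denominator is positive. Then $Q$ is submultiplicative, $Q(\delta_1\delta_2)\le Q(\delta_1)Q(\delta_2)$, so it suffices to establish $Q(\delta_0)<1$ for some $\delta_0<1$; iterating then gives $Q(\delta_0^k)\to 0$. For $r$ bounded below by some $r_0>0$ this follows from the second stage combined with a compactness/continuity argument on the compact region $\PR\times[-R,R]\times[r_0,1]$ (where $[-R,R]$ bounds the supports of all projections of $\mu$). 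For small $r$ I would reduce to the bounded-$r$ case via the self-affine decomposition $\mu=\sum_{\mathbf{i}\in\Psi_n}p_\mathbf{i}\varphi_\mathbf{i}\mu$ at level $n$ with $2^{-n}\asymp r$: for any cylinder $\mathbf{i}$ with $d_{\PR}(L(A_\mathbf{i}),W^\perp)\ge\rho$, the projected cylinder $\pi_W\varphi_\mathbf{i}\mu$ is (by~\eqref{eq:affine-map-to-projection} and Lemma~\ref{lem:norm-of-projection-composed-with-matrix}) an affine rescaling of $\pi_{A_\mathbf{i}^*W}\mu$ by a factor $\asymp r$, reducing the local ratio to a bounded-scale one.

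The main obstacle is controlling the exceptions in the third stage: \emph{bad} cylinders with $L(A_\mathbf{i})$ close to $W^\perp$ are made negligible using Lemma~\ref{lem:L-of-cylinders-equidistributes} together with the continuity of $\eta$; \emph{boundary} cylinders (those whose projected support is only partially in $B_r(x)$) are $O(1)$ per side of the ball by the dyadic-like structure of $\Psi_n$. For the second assertion, for each $D\in\mathcal{D}_n$ the boundary $\partial D$ is a union of two horizontal and two vertical segments, so $\bigcup_{D\in\mathcal{D}_n}(\partial D)^{(2^{-n}\delta)}\subseteq\pi_{\overline{e}_1}^{-1}(E_1)\cup\pi_{\overline{e}_2}^{-1}(E_2)$, where $E_i$ is the $(2^{-n}\delta)$-neighborhood of $2^{-n}\mathbb{Z}$ in $\mathbb{R}$. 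Applying the first assertion with $W=\overline{e}_i$, $r=2^{-n}$ and $x=k\cdot 2^{-n}$ for $k\in\mathbb{Z}$, and summing with bounded overlap, gives $\pi_{\overline{e}_i}\mu(E_i)=O(\varepsilon)$, hence $\mu\bigl(\bigcup_D(\partial D)^{(2^{-n}\delta)}\bigr)=O(\varepsilon)$, and the claim follows after adjusting $\delta$.
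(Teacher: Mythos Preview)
Your derivation of the second assertion from the first (decompose the boundary neighborhoods into vertical and horizontal strips, apply the first part to $\pi_{\overline e_1}\mu$ and $\pi_{\overline e_2}\mu$ at the lattice points $k\cdot 2^{-n}$, and sum with bounded overlap) is correct and is exactly the paper's argument. For the first assertion the paper gives no proof at all but simply cites \cite[Lemma~3.13]{BHR}; your first two stages (no affine line carries $\mu$-mass, hence uniform non-atomicity of all projections via compactness of $\PR$ and weak continuity) are correct and are indeed the starting point of the argument in \cite{BHR}.

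The gap is in your third stage, in the reduction from small $r$ to bounded $r$. Decomposing via $\Psi_n$ with $2^{-n}\asymp r$, the ``bad'' cylinders (those with $L(A_\mathbf{i})$ close to $W^\perp$) do have small \emph{total} $p$-mass by Lemma~\ref{lem:L-of-cylinders-equidistributes}, but this does not control the ratio: a bad cylinder's projected support can sit entirely inside $B_{\delta r}(x)$, so its individual ratio equals $1$, and nothing prevents the bad contribution to $\pi_W\mu(B_r(x))$ from being a large \emph{fraction} of that quantity when the latter is itself small. Your ``boundary cylinders are $O(1)$ per side'' claim is also false, since $\Psi_n$ has no dyadic structure in the $\pi_W$-image and the projected cylinders can overlap heavily. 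Both problems disappear if one replaces $\Psi_n$ by the $W$-adapted stopping time
\[
\Psi^W_n=\bigl\{\mathbf{i}:\|\pi_W A_\mathbf{i}\|\le 2^{-n}<\|\pi_W A_{\mathbf{i}|_{|\mathbf{i}|-1}}\|\bigr\}.
\]
With this choice \emph{every} projected cylinder $\pi_W\varphi_\mathbf{i}\mu$ is, by \eqref{eq:affine-map-to-projection}, an affine rescaling of some $\pi_V\mu$ by a factor uniformly comparable to $2^{-n}$, and the elementary inequality $\sum a_\mathbf{i}\big/\sum b_\mathbf{i}\le\max_\mathbf{i} a_\mathbf{i}/b_\mathbf{i}$ (valid here since $B_{\delta r}\subseteq B_r$ forces $a_\mathbf{i}=0$ whenever $b_\mathbf{i}=0$) reduces the full ratio to a supremum of bounded-scale ratios for the family $\{\pi_V\mu\}_V$, with no bad or boundary exceptions to handle. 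This is the route taken in \cite{BHR}.
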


\begin{proof}
The first part is a direct consequence of \cite[Lemma 3.13]{BHR}.
The second follows by decomposing $\cup_{D\in\mathcal{D}_{n}}(\partial D)^{(2^{-n}\delta)}$
into vertical strips and horizontal strips of width $2^{1-n}\delta$
and using the first part to estimate their mass. We omit the details.
\end{proof}
\begin{prop}
\label{prop:lb on ent of proj of comp of mu}For every compact $E\subseteq A_{2,2}$,
$\varepsilon>0$, $m\ge M(E,\varepsilon)$, and $n\ge N(\varepsilon)$,
\[
\underset{h\in E}{\inf}\:\underset{W\in\PR}{\inf}\:\frac{1}{m}H(h\mu,\pi_{W}^{-1}\mathcal{D}_{n+m}\mid\mathcal{D}_{n})\ge\beta-\varepsilon\:.
\]
\end{prop}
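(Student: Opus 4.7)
The plan is to express $h\mu$ as a convex combination of cylinder measures $h\varphi_{\mathbf{i}}\mu$ whose supports lie well inside single cells of $\mathcal{D}_n$, use concavity of entropy to pass from the conditional entropy of $h\mu$ to a weighted average of cylinder entropies, and finally reduce each cylinder entropy to a projection of a cylinder of $\mu$ at a shifted scale, to which the uniform cylinder projection bound of Lemma~\ref{lem:ent of proj of cyl} applies.

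Fix $\varepsilon>0$, $h\in E$, $W\in\PR$. A small extension of Lemma~\ref{lem:dyadic-cells-with-small-boundaries} (using that every projection $\pi_V h\mu$ is a rescaling and translation of $\pi_{A_h^*V}\mu$, with scaling factor $\|\pi_V A_h\|\in[\alpha_2(A_h),\|A_h\|]$ bounded uniformly on the compact $E$) yields $\delta=\delta(E,\varepsilon)>0$ such that $h\mu\bigl(\bigcup_{D\in\mathcal{D}_n}(\partial D)^{(\delta 2^{-n})}\bigr)<\varepsilon$ for all $n\geq 1$ and $h\in E$. Taking $k=k(E,\varepsilon)$ large, every $\mathbf{i}\in\Psi_{n+k}$ satisfies $\diam\supp(h\varphi_{\mathbf{i}}\mu)\leq\|A_h\|\cdot 2^{-n-k}\cdot\diam X<\delta\cdot 2^{-n}$. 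Decomposing $h\mu=\sum_{\mathbf{i}\in\Psi_{n+k}}p_{\mathbf{i}}\,h\varphi_{\mathbf{i}}\mu$, the set $G$ of \emph{good} cylinders---those whose support lies entirely in a single $\mathcal{D}_n$-cell---has total $p$-mass at least $1-\varepsilon$, since any non-good cylinder must be supported in the $\delta 2^{-n}$-boundary neighborhood.

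For each $C\in\mathcal{D}_n$, concavity of entropy gives
\[
h\mu(C)\cdot H\bigl((h\mu)_C,\pi_W^{-1}\mathcal{D}_{n+m}\bigr)\geq\sum_{\mathbf{i}\in G,\,\mathbf{i}\mapsto C}p_{\mathbf{i}}\cdot H\bigl(\pi_W h\varphi_{\mathbf{i}}\mu,\mathcal{D}_{n+m}\bigr),
\]
where $\mathbf{i}\mapsto C$ means $\supp(h\varphi_{\mathbf{i}}\mu)\subseteq C$. Summing over $C$ yields
\[
H\bigl(h\mu,\pi_W^{-1}\mathcal{D}_{n+m}\mid\mathcal{D}_n\bigr)\geq\sum_{\mathbf{i}\in G}p_{\mathbf{i}}\,H\bigl(\pi_W h\varphi_{\mathbf{i}}\mu,\mathcal{D}_{n+m}\bigr).
\]
By~(\ref{eq:affine-map-to-projection}), $\pi_W h=T_{\pi_W b_h}S_{\pm\|\pi_W A_h\|}\pi_{A_h^*W}$, with scaling factor in $[c_E,C_E]$ uniformly on $E$, and the scaling identities in Section~\ref{subsec:Entropy-in-Rd} yield
\[
H(\pi_W h\varphi_{\mathbf{i}}\mu,\mathcal{D}_{n+m})=H(\pi_{A_h^*W}\varphi_{\mathbf{i}}\mu,\mathcal{D}_{n+m+O_E(1)})+O(1).
\]

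Now apply Lemma~\ref{lem:ent of proj of cyl} to the random cylinder $\varphi_{\mathbf{I}(n+k)}\mu$ in direction $A_h^*W$ at scale $m':=m-k+O_E(1)$ (which is $\geq M(\varepsilon)$ once $m\geq M(E,\varepsilon)$): with probability at least $1-\varepsilon$ over $\mathbf{I}(n+k)$,
\[
\frac{1}{m}H(\pi_W h\varphi_{\mathbf{I}(n+k)}\mu,\mathcal{D}_{n+m})\geq\frac{m'}{m}(\beta-\varepsilon)-O\!\left(\tfrac{1}{m}\right)\geq\beta-2\varepsilon.
\]
Intersecting the good-cylinder event (of $p$-mass $\geq 1-\varepsilon$) with this high-entropy event gives a set of cylinders of $p$-mass at least $1-2\varepsilon$ on which both bounds hold, whence
\[
\frac{1}{m}H(h\mu,\pi_W^{-1}\mathcal{D}_{n+m}\mid\mathcal{D}_n)\geq(1-2\varepsilon)(\beta-2\varepsilon)\geq\beta-O(\varepsilon).
\]
All bounds are uniform in $h\in E$ (via compactness, which controls $\|A_h\|$, $\alpha_2(A_h)$, and the doubling constants) and in $W\in\PR$ (uniformity built into Lemma~\ref{lem:ent of proj of cyl}). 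The main technical point is coordinating scales so that Lemma~\ref{lem:ent of proj of cyl} is applied at a scale $m'=m-k+O_E(1)$ that is still large---handled by choosing $k$ first from $E,\varepsilon$ and then taking $m$ large relative to $k$.
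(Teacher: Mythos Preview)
Your argument is correct, but it is considerably more elaborate than necessary. The paper's proof is shorter and avoids both the boundary estimate and the auxiliary parameter $k$. It proceeds in two steps. First, it reduces to $E=\{\id\}$ by a one-line commensurability argument: since $h^{-1}\mathcal{D}_n$ and $h^{-1}\pi_W^{-1}\mathcal{D}_{n+m}$ are $O_E(1)$-commensurable with $\mathcal{D}_n$ and $\pi_{A_h^*W}^{-1}\mathcal{D}_{n+m}$ respectively, one gets $H(h\mu,\pi_W^{-1}\mathcal{D}_{n+m}\mid\mathcal{D}_n)=H(\mu,\pi_{A_h^*W}^{-1}\mathcal{D}_{n+m}\mid\mathcal{D}_n)+O_E(1)$, and it suffices to treat $h=\id$. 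Second, for $h=\id$ the paper uses cylinders from $\Psi_n$ (not $\Psi_{n+k}$) directly: since $\diam(\supp\varphi_{\mathbf{I}(n)}\mu)=O(2^{-n})$, each such cylinder meets only $O(1)$ cells of $\mathcal{D}_n$, so $H(\varphi_{\mathbf{I}(n)}\mu,\pi_W^{-1}\mathcal{D}_{n+m}\mid\mathcal{D}_n)\geq H(\pi_W\varphi_{\mathbf{I}(n)}\mu,\mathcal{D}_{n+m})-O(1)$, and concavity plus Lemma~\ref{lem:ent of proj of cyl} finishes the job. The key observation you are missing is that for an \emph{average} statement like this proposition (as opposed to the componentwise Lemma~\ref{lem:ent of proj of comp mostly large }), there is no need for cylinders to sit inside single dyadic cells; being spread over $O(1)$ cells is good enough. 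Your route---forcing containment via the boundary lemma and a scale shift by $k$---is essentially what the paper later uses for the finer Lemma~\ref{lem:ent of proj of comp mostly large } when $\beta<1$, where one genuinely needs control over individual components; here it is overkill.
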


\begin{proof}
Let $E\subseteq A_{2,2}$ be compact. Given $h\in E$, $W\in\PR$,
and $n,m\ge1$, note that $h^{-1}\mathcal{D}_{n}$ is $O_{E}(1)$-commensurable
with $\mathcal{D}_{n}$, and also $h^{-1}\pi_{W}^{-1}\mathcal{D}_{n+m}$
is $O_{E}(1)$-commensurable with $S_{\left\Vert \pi_{W}A_{h}\right\Vert }^{-1}\pi_{A_{h}^{*}W}^{-1}\mathcal{D}_{n+m}$.
Thus by basic properties of entropy (see Section \ref{sub:entropy})
and the bound $\left\Vert \pi_{W}\circ A_{h}\right\Vert =\Theta_{E}(1)$
(because $E$ is compact), 
\begin{align*}
H(h\mu,\pi_{W}^{-1}\mathcal{D}_{n+m}\mid\mathcal{D}_{n}) & =H(\mu,h^{-1}\pi_{W}^{-1}\mathcal{D}_{n+m}\mid h^{-1}\mathcal{D}_{n})\\
 & =H(\mu,\pi_{A_{h}^{*}(W)}^{-1}\mathcal{D}_{n+m}\mid\mathcal{D}_{n})+O_{E}(1)\:.
\end{align*}
Hence it suffices to prove the proposition with $E=\{Id\}$.

Let $\varepsilon>0$ and let $m\ge M(\varepsilon)$ and $n\ge N(\varepsilon)$
be as in Lemma \ref{lem:ent of proj of cyl}. Fix $W\in\PR$. By the
concavity of conditional entropy and the fact that $\diam(\supp(\varphi_{\mathbf{I}(n)}\mu))=O(2^{-n})$,
\begin{align*}
\frac{1}{m}H(\mu,\pi_{W}^{-1}\mathcal{D}_{n+m}\mid\mathcal{D}_{n}) & \ge\mathbb{E}_{i=n}\left(\frac{1}{m}H(\varphi_{\mathbf{I}(i)}\mu,\pi_{W}^{-1}\mathcal{D}_{n+m}\mid\mathcal{D}_{n})\right)\\
 & \geq\mathbb{E}_{i=n}\left(\frac{1}{m}H(\varphi_{\mathbf{I}(i)}\mu,\pi_{W}^{-1}\mathcal{D}_{n+m})\right)+O(\frac{1}{m})\:.
\end{align*}
The proof is completed by an application of Lemma \ref{lem:ent of proj of cyl}.
\end{proof}
\begin{lem}
\label{lem:ent of proj of comp mostly large }For every $\varepsilon>0$,
$m\ge M(\varepsilon)\ge1$, and $n\ge N(\varepsilon)$,
\[
\underset{W\in\PR}{\inf}\:\mathbb{P}_{i=n}\left(\frac{1}{m}H(\pi_{W}\mu_{x,i},\mathcal{D}_{i+m})>\beta-\varepsilon\right)>1-\varepsilon\:.
\]
\end{lem}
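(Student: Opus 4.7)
The plan is to decompose each component $\mu_{x,n}$, identified with $\mu_D$ for $D = \mathcal{D}_n(x)$, as a convex combination of cylinder measures $\varphi_{\mathbf{i}}\mu$ with $\mathbf{i} \in \Psi_{n+k}$ for a suitably large constant $k$, and then combine concavity of entropy with Lemma \ref{lem:ent of proj of cyl}. Two complications arise: cylinders that \emph{straddle} the boundary of $D$, and cylinders whose projection onto $W$ has scale-$m$ entropy less than $\beta - \varepsilon$ (which I call \emph{entropy-bad}).

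Given $\varepsilon > 0$, I would first use Lemma \ref{lem:dyadic-cells-with-small-boundaries} to pick $\delta > 0$ satisfying $\mu(\bigcup_{D \in \mathcal{D}_n}(\partial D)^{(\delta 2^{-n})}) < \varepsilon$ for every $n$, and then pick $k$ with $c_{0}^{-1} 2^{-k} < \delta$, so that every $\mathbf{i} \in \Psi_{n+k}$ has $\diam \varphi_{\mathbf{i}}\mu < \delta 2^{-n}$. A straddling $\mathbf{i}$ then has $\supp \varphi_{\mathbf{i}}\mu \subseteq (\partial D)^{(\delta 2^{-n})}$ for some $D$, so the total $p$-mass of straddlers is at most $\varepsilon$. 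For $n$ large enough that $n+k \geq N(\varepsilon)$ and for $m \geq M(\varepsilon)$, Lemma \ref{lem:ent of proj of cyl} applied at level $n+k$ shows that the $p$-mass of entropy-bad cylinders $\mathbf{i} \in \Psi_{n+k}$, uniformly in $W$, is also at most $\varepsilon$. Declaring $\mathbf{i}$ \emph{bad} if it is straddling or entropy-bad, the total bad mass is at most $2\varepsilon$.

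For each cell $D$, writing $q_D$ for the fraction of $\mu_D$ accounted for by bad cylinders (including partial mass contributed by straddlers), a Fubini-type exchange of summation gives $\sum_{D} \mu(D)\, q_D \leq 2\varepsilon$, so Markov's inequality yields $\mu\{D : q_D > \sqrt{2\varepsilon}\} \leq \sqrt{2\varepsilon}$. For such a good $D$, decompose $\mu_D = (1 - q_D)\,\nu_D^{\mathrm{good}} + q_D\,\nu_D^{\mathrm{bad}}$, where $\nu_D^{\mathrm{good}}$ is the normalized sum of cylinders $\varphi_{\mathbf{i}}\mu$ with $\mathbf{i}$ good and $\supp \varphi_{\mathbf{i}}\mu \subseteq D$. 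Applying concavity of entropy twice --- first to $\mu_D$ and then to $\nu_D^{\mathrm{good}}$ --- yields
\[
\tfrac{1}{m} H(\pi_W \mu_D, \mathcal{D}_{n+k+m}) \;\geq\; (1 - q_D)(\beta - \varepsilon) \;\geq\; \beta - \varepsilon - \sqrt{2\varepsilon}.
\]

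Finally, to pass from scale $n+k+m$ down to $n+m$, I use $H(\theta, \mathcal{D}_{n+k+m} \mid \mathcal{D}_{n+m}) \leq k + O(1)$ for any measure $\theta$ on $\mathbb{R}$, giving a loss of at most $k/m + O(1/m)$ in the normalized entropy. For $m$ large relative to $k$ (and $k$ depends only on $\varepsilon$), this loss is $\leq \varepsilon$, so $\frac{1}{m} H(\pi_W \mu_D, \mathcal{D}_{n+m}) \geq \beta - C\sqrt{\varepsilon}$ for some absolute constant $C$ and all good $D$; running the argument with $\varepsilon$ replaced by $(\varepsilon/C)^2$ yields the stated inequality. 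The main technical care lies in the parameter bookkeeping among $\varepsilon, \delta, k, m, n$; no single step is deep beyond the concavity-plus-Markov idea and the two prior results invoked.
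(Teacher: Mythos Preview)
Your proposal is correct and follows essentially the same route as the paper's proof: cover level-$n$ cells by cylinders from $\Psi_{n+k}$, control the straddlers via Lemma~\ref{lem:dyadic-cells-with-small-boundaries} and the entropy-bad cylinders via Lemma~\ref{lem:ent of proj of cyl}, then apply Markov and concavity, and finally absorb the $O(k/m)$ scale shift. The paper additionally remarks that when $\beta=1$ there is a shortcut via Proposition~\ref{prop:lb on ent of proj of comp of mu} and Markov (since the summands are bounded above by $1+o(1)$), but for $\beta<1$ it gives exactly your argument; one cosmetic point is that the constant governing $\diam(\supp\varphi_{\mathbf i}\mu)$ should be $\diam(\supp\mu)$ rather than $c_0^{-1}$, which does not affect anything.
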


\begin{proof}
When $\beta=1$ (which is the case under the assumptions of Theorem
\ref{thm:main-reformulated}, and what is needed to prove our main
theorem) the lemma is immediate from the previous proposition by starting
with $E=\{\id\}$ and a smaller $\varepsilon$, observing that
\[
H(\mu,\pi_{W}^{-1}\mathcal{D}_{n+m}\mid\mathcal{D}_{n})=\mathbb{E}_{i=n}(H(\pi_{W}\mu_{x,i},\mathcal{D}_{i+m}))\;,
\]
and applying Markov's inequality.

We include the proof of the case $\beta<1$ for completeness and future
reference. Let $\varepsilon>0$, let $\delta>0$ be small with respect
to $\varepsilon$, let $k\ge1$ be large with respect to $\delta$,
and let $m\ge1$ be large with respect to $k$. Also, let $n\ge1$
be large with respect to $\epsilon$ and fix $W\in\PR$.

By Lemma \ref{lem:dyadic-cells-with-small-boundaries} we may assume
that,
\[
\mu\left(\cup_{D\in\mathcal{D}_{n}}(\partial D)^{(2^{-n}\delta)}\right)<\varepsilon\:.
\]
Let $C=\diam(\supp\mu)$. Since $k$ is large with respect to $\delta$,
we may assume that if $\nu\in\mathcal{P}(\mathbb{R}^{2})$ is such
that $\diam(\supp\nu)\leq C\cdot2^{-n-k}$ and
\[
\#\{D\in\mathcal{D}_{n}\::\:\supp(\nu)\cap D\ne\emptyset\}>1,
\]
then $\supp(\nu)\subseteq\cup_{D\in\mathcal{D}_{n}}(\partial D)^{(2^{-n}\delta)}$.
It follows that
\begin{gather*}
\mathbb{P}_{i=n+k}\left(\varphi_{\mathbf{I}(i)}\mu\text{ is contained in a level-}n\text{ dyadic cell}\right)\\
\begin{aligned}> & 1-\mu(\cup_{D\in\mathcal{D}_{n}}(\partial D)^{(2^{-n}\delta)})\\
> & 1-\varepsilon\;.
\end{aligned}
\end{gather*}
On the other hand, by Lemma \ref{lem:ent of proj of cyl} (applied
with $n+k$ instead of $n$), 
\[
\mathbb{P}_{i=n+k}\left(\frac{1}{m}H\left(\pi_{W}\varphi_{\mathbf{I}(i)}\mu,\mathcal{D}_{i+m}\right)\ge\beta-\varepsilon\right)>1-\varepsilon\:.
\]
From the last two probability bounds and Markov's inequality, for
a $1-O(\sqrt{\varepsilon})$ fraction of dyadic cells $D\in\mathcal{D}_{n}$,
all but a $1-O(\sqrt{\varepsilon})$ fraction of the mass of $\mu_{D}$
can be expressed as a convex combination of cylinders $\varphi_{i}\mu$
whose projection in direction $W$ satisfies $(1/m)H(\pi_{W}\varphi_{i}\mu,\mathcal{D}_{n+k+m})>\beta-\varepsilon$.
For such a component, by concavity of entropy, we have $(1/m)H(\pi_{W}\mu_{D},\mathcal{D}_{n+k+m})>\beta-O(\sqrt{\varepsilon})$,
and, adjusting the scale from $n+k+m$ to $n+m$ at the cost of an
$O(k/m)$ error to entropy, and making $m$ large enough so that it
can be absorbed in the error term, we obtain
\[
\mathbb{P}_{i=n}\left(\frac{1}{m}H(\pi_{W}\mu_{x,i},\mathcal{D}_{i+m})>\beta-O(\sqrt{\varepsilon})\right)>1-O(\sqrt{\varepsilon})\;.
\]
This is what we wanted if we start from a smaller $\varepsilon$.
\end{proof}

\subsection{\label{subsec:Entropy-of-thickened-slices}Entropy of thickened slices}

In this section we use the eccentricity of cylinders in another way,
to control the conditional measures on fibers of an orthogonal projection.
More precisely, we condition the measure on $\pi_{W}^{-1}(I)$ for
a short interval $I$. If $\varphi_{i_{1}}\ldots\varphi_{i_{n}}\mu$
is a cylinder whose ``long'' direction is approximately $W^{\perp}$
then it will be contained in $\pi_{W}^{-1}(I)$ for some interval
$I$ whose length is close to $\alpha_{2}(A_{i_{1}}\ldots A_{i_{n}})$.
Its entropy, at scale $|I|$, will be comparable to the entropy of
its \emph{projection }to $W^{\perp}$, and this we know will be large.
Thus, restricting $\mu$ to the cylinders pointing in direction $W^{\perp}$,
we get good lower bounds on the conditional entropy with respect to
$\pi_{W}^{-1}\mathcal{D}_{n}$.

For $E\subset\Lambda^{\mathbb{N}}$ write $\mu_{E}=\Pi(\xi_{E})$
(recall that $\xi_{E}=\frac{1}{\xi(E)}\xi|_{E}$). 
\begin{lem}
\label{lem:ent of proj of restrictions}For every $\varepsilon,\rho>0$
and every $m\ge M_{1}(\varepsilon,\rho)$, the following holds. Let
$E\subseteq\Lambda^{\mathbb{N}}$ be a Borel set and $J\subset\PR$
be an open interval with $\xi(E\cap L^{-1}(J))>0$. Then for each
$W\in\PR$ with $d_{\PR}(W^{\perp},J)\ge\rho$ and $n\ge N_{1}(\varepsilon,\rho,m,E,J,W)$,
\[
\frac{1}{m}H\left(\mu_{E\cap L^{-1}(J)},\pi_{W}^{-1}\mathcal{D}_{n+m}\mid\mathcal{D}_{n}^{W\oplus W^{\perp}}\right)\ge\beta-\varepsilon\:.
\]
\end{lem}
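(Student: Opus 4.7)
The plan is to decompose $\mu_{E\cap L^{-1}(J)}$ along the cylinder partition $\Psi_n$ of $\Lambda^{\mathbb{N}}$ and to exploit that, for $n$ large, most of its mass is carried by ``good'' cylinders $\mathbf{i}$ whose direction $L(A_{\mathbf{i}})$ already lies in $J$ and on which the conditional probability $\xi_{E\cap L^{-1}(J)\cap[\mathbf{i}]}$ is close in total variation to the unconditioned $\xi_{[\mathbf{i}]}$. Once this decomposition is in hand, each good piece reduces, via the linearisation (\ref{eq:affine-map-to-projection}), to the projection of a measure close to $\mu$ in a direction controlled by $A_{\mathbf{i}}^{*}W$, and the uniform projection bound of Lemma~\ref{lem:uni conv of ent} applies.

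Write $A=E\cap L^{-1}(J)$. Two $\xi$-a.s.\ convergences drive the argument. By martingale convergence applied to the indicator $\mathbf{1}_{A}$ under the filtration $(\sigma(\Psi_n))_{n}$, the ratio $\xi(A\cap[\Psi_n(\omega)])/\xi([\Psi_n(\omega)])$ tends to $\mathbf{1}_{A}(\omega)$; and by Theorem~\ref{thm:Furstenberg-Oseledets}(\ref{enu:convergence-of-L}) together with $|\Psi_n(\omega)|\to\infty$, $L(A_{\Psi_n(\omega)})\to L(\omega)$. By Egorov, applied simultaneously to both (after shrinking $J$ slightly inside itself, if necessary, using the continuity of $\eta=L\xi$), there exist $N_1=N_1(\varepsilon,\rho,E,J)$ and a set $F\subseteq A$ with $\xi(F)\geq(1-\varepsilon)\xi(A)$ such that for every $n\geq N_1$ and every $\omega\in F$,
\[
\xi(A\cap[\Psi_n(\omega)])/\xi([\Psi_n(\omega)])\geq 1-\varepsilon\qquad\text{and}\qquad L(A_{\Psi_n(\omega)})\in J.
\]
Let $I_{good}=\{\mathbf{i}\in\Psi_n:[\mathbf{i}]\cap F\neq\emptyset\}$; each such $\mathbf{i}$ inherits both properties above. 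Decomposing along $\Psi_n$,
\[
\mu_A=\sum_{\mathbf{i}\in\Psi_n}c_\mathbf{i}\cdot\varphi_\mathbf{i}\mu_{B_\mathbf{i}},\qquad c_\mathbf{i}=\xi_A([\mathbf{i}]),\ \ B_\mathbf{i}:=\sigma^{|\mathbf{i}|}(A\cap[\mathbf{i}]),
\]
the product structure of $\xi$ gives $\xi(B_\mathbf{i})=\xi(A\cap[\mathbf{i}])/\xi([\mathbf{i}])$; hence for $\mathbf{i}\in I_{good}$ we have $\xi(B_\mathbf{i})\geq 1-\varepsilon$, whence $d_{TV}(\mu_{B_\mathbf{i}},\mu)\leq\varepsilon$, while $\sum_{\mathbf{i}\in I_{good}}c_\mathbf{i}\geq\xi_A(F)\geq 1-\varepsilon$.

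For $\mathbf{i}\in I_{good}$, the support of $\varphi_\mathbf{i}\mu_{B_\mathbf{i}}$ has diameter $O(2^{-n})$ and hence meets only $O(1)$ cells of $\mathcal{D}_n^{W\oplus W^\perp}$; the conditional entropy appearing in the lemma thus coincides, up to $O(1)$, with the unconditional $H(\pi_W\varphi_\mathbf{i}\mu_{B_\mathbf{i}},\mathcal{D}_{n+m})$. Since $d_{\PR}(L(A_\mathbf{i}),W^\perp)\geq\rho$, Lemma~\ref{lem:norm-of-projection-composed-with-matrix} gives $\|\pi_W A_\mathbf{i}\|=\Theta(\rho\cdot 2^{-n})$, and applying (\ref{eq:affine-map-to-projection}) together with (\ref{eq:entropy-under-transformation-1}) we rewrite this as $H(\pi_{A_\mathbf{i}^*W}\mu_{B_\mathbf{i}},\mathcal{D}_m)$ up to an additive $O(\log(1/\rho))$. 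The total-variation bound combined with the one-dimensional case of (\ref{eq:scale-n-entropy-continuity-in-TV}) then replaces $\mu_{B_\mathbf{i}}$ by $\mu$ at cost $O(\varepsilon)+O(1/m)$, and Lemma~\ref{lem:uni conv of ent} applied at $V=A_\mathbf{i}^*W$ gives $\tfrac{1}{m}H(\pi_V\mu,\mathcal{D}_m)\geq\beta-\varepsilon$. Consequently, for every $\mathbf{i}\in I_{good}$,
\[
\tfrac{1}{m}H(\varphi_\mathbf{i}\mu_{B_\mathbf{i}},\pi_W^{-1}\mathcal{D}_{n+m}\mid\mathcal{D}_n^{W\oplus W^\perp})\geq\beta-O(\varepsilon)-O\!\left(\tfrac{\log(1/\rho)}{m}\right).
\]
Averaging via the concavity of conditional entropy in its measure argument, and using $\sum_{\mathbf{i}\in I_{good}}c_\mathbf{i}\geq 1-\varepsilon$ while the complementary pieces contribute non-negatively, transfers this bound to $\mu_A$; choosing $m\geq M_1(\varepsilon,\rho)$ large enough to absorb the $\log(1/\rho)/m$ error into $\varepsilon$ gives the conclusion.

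The main obstacle is that the shifted measure $\mu_{B_\mathbf{i}}$ is not a copy of $\mu$ and there is no direct handle on its projection entropy; the only control available is the martingale-driven TV-closeness $d_{TV}(\mu_{B_\mathbf{i}},\mu)\leq\varepsilon$. Pushing this through the projection $\pi_{A_\mathbf{i}^*W}$ and invoking the uniform (in direction) lower bound of Lemma~\ref{lem:uni conv of ent} for $\mu$ itself is what makes the argument close, and is the reason $N_1$ must depend on $E$ and $J$ in addition to $\varepsilon,\rho,m$.
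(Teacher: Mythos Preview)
Your argument is correct and follows the same overall strategy as the paper: decompose along $\Psi_n$, isolate the ``good'' cylinders whose direction $L(A_{\mathbf{i}})$ lies in $J$, reduce via (\ref{eq:affine-map-to-projection}) to a projection of (something close to) $\mu$, and invoke Lemma~\ref{lem:uni conv of ent}. The organisation differs in one respect worth noting. The paper first approximates $F=E\cap L^{-1}(J)$ from the outside by an open set $V$ (regularity of $\xi$), takes $\mathcal{U}=\{u\in\Psi_n:[u]\subseteq V,\ L(A_u)\in J\}$, and works with the \emph{clean} cylinder measures $\varphi_u\mu$; a single total-variation comparison $d_{TV}(\mu_U,\mu_F)=O(\varepsilon)$ at the end transfers the entropy bound from $\mu_U$ to $\mu_F$. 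You instead keep the exact decomposition $\mu_A=\sum c_{\mathbf{i}}\varphi_{\mathbf{i}}\mu_{B_{\mathbf{i}}}$ with conditioned pieces $\mu_{B_{\mathbf{i}}}$, obtain $d_{TV}(\mu_{B_{\mathbf{i}}},\mu)\le\varepsilon$ for good $\mathbf{i}$ via martingale convergence and Egorov, and apply the TV-continuity bound (\ref{eq:scale-n-entropy-continuity-in-TV}) piece by piece. Both routes are valid; the paper's is marginally cleaner (one TV step, and the entropy input is literally Lemma~\ref{lem:projecting-cylinders-away-from-L-perp} applied to $\varphi_u\mu$), while yours is more direct and makes transparent why $N_1$ depends on $E,J$.
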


\begin{proof}
Let $m\ge1$ be large in a manner depending on $\varepsilon,\rho$,
let $E\subset\Lambda^{\mathbb{N}}$ be a Borel set, let $J\subset\PR$
be an open interval with $\xi(E\cap L^{-1}(J))>0$, let $W\in\PR$
satisfy $d_{\PR}(W^{\perp},J)\ge\rho$, and let $n$ be large in a
manner depending on all parameters.

Write $F=E\cap L^{-1}(J)$. Since $\xi$ is a Borel probability measure
on $\Lambda^{n}$ it is a regular measure, so there exists an open
set $V\subset\Lambda^{\mathbb{N}}$ with $F\subset V$ and $\xi(V\setminus F)<\varepsilon\cdot\xi(F)$. 

Let $\mathcal{U}\subseteq\Psi_{n}$ be the set\footnote{In the definition of $\mathcal{U}$ we only take $u$ for which $L(A_{u})$
is defined. It may not be defined for all $u$, because it could be
that $A_{u}$ has equal singular values; but the probability of this
with respect to $\xi$ tends to zero as $n\rightarrow\infty$. } 
\[
\mathcal{U}=\{u\in\Psi_{n}\;:\;[u]\subseteq V\text{ and }L(A_{u})\in J\},
\]
 and write 
\[
U=\bigcup_{u\in\mathcal{U}}[u]\;.
\]
Since $V$ and $J$ are open and $L(A_{\omega_{1}\ldots\omega_{n}})\rightarrow L(\omega)$
for $\xi$-a.e. $\omega$, by assuming that $n$ is sufficiently large
we can ensure
\[
\xi_{V}(U)\ge\xi_{V}(F)-\varepsilon\ge1-2\varepsilon\;.
\]
Since $U,F\subseteq V$ and both differ in $\xi$-measure from $V$
by mass at most $2\varepsilon\xi(V)$, we conclude that $F\cap U$
differs from both $F$ and $U$ by at most $4\varepsilon\xi(V)$.
Hence in the sum $\xi|_{F}=\xi|_{F\cap U}+\xi|_{F\setminus U}$ all
but a relative $O(\varepsilon)$ of the mass is in the first term,
and similarly for $\xi|_{U}=\xi|_{F\cap U}+\xi|_{U\setminus F}$.
It follows that 
\[
d_{TV}(\xi_{U},\xi_{F})=O(\varepsilon)\;,
\]
hence
\[
d_{TV}(\mu_{U},\mu_{F})=O(\varepsilon)\;.
\]

By the definition of $\mathcal{U}$ and Lemmas \ref{lem:projecting-cylinders-away-from-L-perp}
and \ref{lem:uni conv of ent}, the fact that $\diam(\supp(\varphi_{u}\mu))=\Theta(2^{-n})$
and $d_{\PR}(W^{\perp},J)\ge\rho$, and assuming $m$ large relative
to $\varepsilon$ and $\rho$, we have 
\begin{align}
\frac{1}{m}H\left(\varphi_{u}\mu,\pi_{W}^{-1}\mathcal{D}_{n+m}\mid\mathcal{D}_{n}^{W\oplus W^{\perp}}\right) & \geq\frac{1}{m}H\left(\pi_{W}\varphi_{u}\mu,\mathcal{D}_{n+m}\right)-O(\frac{1}{m})\label{eq:proj ent big for u in U}\\
 & \ge\beta-O(\varepsilon)\text{ for }u\in\mathcal{U}\:.\nonumber 
\end{align}
Since $U$ is a union of cylinders from $\Psi_{n}$,
\[
\mu_{U}=\mathbb{E}(\varphi_{\mathbf{I}(n)}\mu\;|\;\mathbf{I}(n)\in\mathcal{U})\;,
\]
so by concavity of entropy and the previous inequality,
\[
\frac{1}{m}H\left(\mu_{U},\pi_{W}^{-1}\mathcal{D}_{n+m}\mid\mathcal{D}_{n}^{W\oplus W^{\perp}}\right)\geq\beta-O(\varepsilon)\;.
\]
The result now follows from $d_{TV}(\mu_{U},\mu_{F})=O(\varepsilon)$
combined with (\ref{eq:scale-n-entropy-continuity-in-TV}).
\end{proof}
\begin{lem}
\label{lem:ent of sections of restrictions}Let $\varepsilon>0$.
For every $m\ge M_{2}(\varepsilon)$ there exists $\delta=\delta(\varepsilon,m)>0$
such that the following holds. 

Let $E\subset\Lambda^{\mathbb{N}}$ be a Borel set and $I\subset\PR$
be an open interval with $\diam(I)<\delta$ and $\xi(E\cap L^{-1}(I))>0$.
Then for each $W\in\PR$ with $W^{\perp}\in I$ and $n\ge N_{2}(\varepsilon,m,\delta,E,I,W)$,
\[
\frac{1}{m}H\left(\mu_{E\cap L^{-1}(I)},\mathcal{D}_{n+m}^{W\oplus W^{\perp}}\mid\mathcal{D}_{n}^{W\oplus W^{\perp}}\vee\pi_{W}^{-1}\mathcal{D}_{n+m}\right)\ge\beta-\varepsilon\:.
\]
\end{lem}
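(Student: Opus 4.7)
The plan follows the strategy of Lemma \ref{lem:ent of proj of restrictions}, but exploits the opposite configuration $W^{\perp}\in I$: cylinders $\varphi_u\mu$ with $L(A_u)\in I$ are then aligned almost parallel to $W^{\perp}$, so the conditional fine-scale entropy in the $W^{\perp}$-direction is essentially the entropy of the projection $\pi_{W^{\perp}}\varphi_u\mu$, which we bound uniformly using Lemma \ref{lem:uni conv of ent}. Given $\varepsilon>0$, first choose $m$ large enough that Lemma \ref{lem:uni conv of ent} gives $\tfrac{1}{m}H(\pi_V\mu,\mathcal{D}_m)\ge\beta-\varepsilon$ for every $V\in\PR$, and take $\delta=\delta(\varepsilon,m)\le 2^{-m}$.

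As in the proof of Lemma \ref{lem:ent of proj of restrictions}, for $n$ large we use regularity of $\xi$ to pick an open $V\subseteq\Lambda^{\mathbb{N}}$ with $F:=E\cap L^{-1}(I)\subseteq V$ and $\xi(V\setminus F)<\varepsilon\,\xi(F)$, and set
\[
\mathcal{U}=\bigl\{u\in\Psi_n\,:\,[u]\subseteq V,\ L(A_u)\in I,\ \alpha_2(A_u)\le 2^{-n-m}\bigr\},\qquad U=\bigcup_{u\in\mathcal{U}}[u].
\]
The first two defining conditions hold with $\xi_F$-probability $1-O(\varepsilon)$ once $n$ is large, since $L(A_{\Psi_n(\omega)})\to L(\omega)\in I$ for $\xi$-a.e.\ $\omega\in F$. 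The third, eccentricity condition, holds with probability $1-o(1)$ as $n\to\infty$: for $u\in\Psi_n$ we have $|u|=\Theta(n)$, and Theorem \ref{thm:Furstenberg-Oseledets} shows the ratio $\alpha_2(A_u)/\alpha_1(A_u)=2^{(\chi_2-\chi_1)|u|+o(|u|)}$ decays exponentially in $n$, so $\alpha_2(A_u)\le 2^{-n-m}$ eventually. Hence $d_{TV}(\mu_U,\mu_F)=O(\varepsilon)$.

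Fix $u\in\mathcal{U}$. Since $\diam(\supp\varphi_u\mu)=\Theta(2^{-n})$, the support meets $O(1)$ atoms of $\mathcal{D}_n^{W\oplus W^{\perp}}$. Because $d_{\PR}(L(A_u),W^{\perp})<\delta\le 2^{-m}$, a standard SVD estimate (in the spirit of Lemma \ref{lem:norm-of-projection-composed-with-matrix}) gives $\|\pi_W A_u\|=O(\alpha_2(A_u)+\alpha_1(A_u)\delta)=O(2^{-n-m})$, so the support also meets $O(1)$ atoms of $\pi_W^{-1}\mathcal{D}_{n+m}$. Therefore
\[
H\!\bigl(\varphi_u\mu,\mathcal{D}_{n+m}^{W\oplus W^{\perp}}\mid\mathcal{D}_n^{W\oplus W^{\perp}}\vee\pi_W^{-1}\mathcal{D}_{n+m}\bigr)\ge H(\pi_{W^{\perp}}\varphi_u\mu,\mathcal{D}_{n+m})-O(1).
\]
Using (\ref{eq:affine-map-to-projection}), (\ref{eq:entropy-of-projection-of-affine-image-of-a-measure}), and $\|\pi_{W^{\perp}}A_u\|=\Theta(2^{-n})$ (which follows from $L(A_u)\approx W^{\perp}$), the right-hand side equals $H(\pi_{A_u^*W^{\perp}}\mu,\mathcal{D}_m)+O(1)\ge m(\beta-\varepsilon)+O(1)$ by our choice of $m$. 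Concavity of conditional entropy transfers this lower bound to $\mu_U$, and the TV-continuity estimate (\ref{eq:scale-n-entropy-continuity-in-TV}) passes it to $\mu_F$ with an additional $O(\varepsilon)$ loss. Dividing by $m$ yields the claim.

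The new ingredient relative to Lemma \ref{lem:ent of proj of restrictions} is the eccentricity condition $\alpha_2(A_u)\le 2^{-n-m}$, which forces $n$ to be large relative to $m$ and is only possible because the Lyapunov exponents $\chi_1,\chi_2$ are strictly separated, a consequence of total irreducibility and non-conformality. Making this large-deviation step quantitative, uniformly in $W$ and $I$, is the main technical obstacle; once in place, the remaining steps are routine variants of the argument in Lemma \ref{lem:ent of proj of restrictions}.
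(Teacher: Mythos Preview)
Your proof is correct and follows essentially the same approach as the paper. The only cosmetic differences are that the paper states the eccentricity condition as $\alpha_1(A_u)/\alpha_2(A_u)>2^m$ (equivalent to your $\alpha_2(A_u)\le 2^{-n-m}$ since $u\in\Psi_n$), and phrases the key geometric fact as ``$\varphi_u\mu$ is contained in the $2^{-(n+m)+O(1)}$-neighborhood of a translate of $W^\perp$'' rather than via the norm estimate $\|\pi_W A_u\|=O(\alpha_2+\alpha_1\delta)$; both formulations encode the same SVD computation and lead to the same conclusion that $H(\varphi_u\mu,\pi_W^{-1}\mathcal{D}_{n+m})=O(1)$.
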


\begin{proof}
Let $m\ge1$ be large in a manner depending on $\varepsilon$, let
$\delta>0$ small in a manner depending on $\varepsilon$ and $m$.
Let $E\subset\Lambda^{\mathbb{N}}$, $I\subseteq\PR$ and $W\in\PR$
be as in the statement and let $n$ be large in a manner depending
on all parameters.

Write $F=E\cap L^{-1}(I)$. Since $\xi$ is regular there exists an
open $V\in\Lambda^{\mathbb{N}}$ with $F\subset V$ and $\xi(V\setminus F)<\varepsilon\cdot\xi(F)$.
Let 
\[
\mathcal{U}=\{u\in\Psi_{n}\;:\;[u]\subseteq V\;,\;\frac{\alpha_{1}(A_{u})}{\alpha_{2}(A_{u})}>2^{m}\text{ and }L(A_{u})\in I\}\;,
\]
and write 
\[
U=\bigcup_{u\in\mathcal{U}}[u]\;.
\]
Since $V$ and $I$ are open, and by assuming that $n$ is sufficiently
large,
\[
\xi_{V}(U)\ge\xi_{V}(F)-\varepsilon\ge1-2\varepsilon\:.
\]

For $u\in\mathcal{U}$ we have $L(A_{u})\in I$. Since $W^{\perp}\in I$
and $\diam(I)<\delta$ it follows (assuming $\delta<1/20$, say) that
$d(W,L(A_{u}))>1/100$. Hence by Lemma \ref{lem:projecting-cylinders-away-from-L-perp}
and Lemma \ref{lem:uni conv of ent},
\begin{equation}
\frac{1}{m}H\left(\varphi_{u}\mu,\mathcal{D}_{n+m}^{W\oplus W^{\perp}}\right)\geq\frac{1}{m}H\left(\pi_{W^{\perp}}\varphi_{u}\mu,\mathcal{D}_{n+m}\right)\ge\beta-O(\varepsilon)\:.\label{eq:ent big for u in U}
\end{equation}
Since $\left\Vert A_{u}\right\Vert =2^{-n+O(1)}$ we have $\diam\supp\varphi_{u}\mu=2^{-n+O(1)}$,
so $\frac{1}{m}H(\varphi_{u}\mu,\mathcal{D}_{n})=O(\frac{1}{m})$,
and the last equation implies 
\[
\frac{1}{m}H\left(\varphi_{u}\mu,\mathcal{D}_{n+m}^{W\oplus W^{\perp}}|\mathcal{D}_{n}^{W\oplus W^{\perp}}\right)\geq\beta-O(\varepsilon)\;.
\]
Now assume that $\delta<2^{-m}$. From $L(A_{u})\in I$ it follows
$d_{\PR}(L(A_{u}),W^{\perp})<2^{-m}$. Also, $\alpha_{1}(A_{u})=2^{-n+O(1)}$
and $\alpha_{1}(A_{u})/\alpha_{2}(A_{u})>2^{m}$, hence $\alpha_{2}(A_{u})<2^{-(n+m)+O(1)}$.
This implies that $\varphi_{u}\mu$ is contained in the $2^{-(n+m)+O(1)}$-neighborhood
of a translate of $W^{\perp}$. Hence 
\[
\diam(\supp\pi_{W}\varphi_{u}\mu)=O(2^{-(n+m)}),
\]
and so
\[
\frac{1}{m}H(\varphi_{u}\mu,\pi_{W}^{-1}\mathcal{D}_{n+m})=\frac{1}{m}H(\pi_{W}\varphi_{u}\mu,\mathcal{D}_{n+m})=O(\frac{1}{m})\;.
\]
Combined with the previous bound, it follows that for every $u\in\mathcal{U}$,
\[
\frac{1}{m}H\left(\varphi_{u}\mu,\mathcal{D}_{n+m}^{W\oplus W^{\perp}}|\mathcal{D}_{n}^{W\oplus W^{\perp}}\lor\pi_{W}^{-1}\mathcal{D}_{n+m}\right)=\beta-O(\varepsilon)\;.
\]
Since $\mu_{U}$ is a convex combination of measures $\varphi_{u}\mu$
over $u\in\mathcal{U}$, concavity of entropy implies
\[
\frac{1}{m}H\left(\mu_{U},\mathcal{D}_{n+m}^{W\oplus W^{\perp}}|\mathcal{D}_{n}^{W\oplus W^{\perp}}\lor\pi_{W}^{-1}\mathcal{D}_{n+m}\right)=\beta-O(\varepsilon)\;.
\]
The argument is now completed as in the previous lemma, by showing
that $\mu_{U},\mu_{F}$ are close in total variation.
\end{proof}

\subsection{\label{subsec:Entropy-of-slices}Entropy of slices}

Denote the Borel $\sigma$-algebra by $\mathcal{B}$. For $\nu\in\mathcal{P}(\mathbb{R}^{2})$
and a $\sigma$-algebra $\mathcal{A}\subset\mathcal{B}$ let $\{\nu_{x}^{\mathcal{A}}\}_{x\in\mathbb{R}^{2}}$
be the disintegration of $\nu$ with respect to $\mathcal{A}$. For
$W\in\PR$ we write $\mathcal{B}_{W}\subseteq\mathcal{B}$ for the
$\sigma$-algebra of $W$-saturated sets (that is, sets $E$ such
that if $x\in E$ then $W+x\subseteq E$), and write $\{\nu_{x}^{W}\}_{x\in\mathbb{R}^{2}}$
in place of $\{\nu_{x}^{\mathcal{B}_{W}}\}_{x\in\mathbb{R}^{2}}$,
the family of conditional measures on translates of $W$. The following
is standard equivariance of measure disintegration, we omit the proof:
\begin{lem}
\label{lem:image of slices}Let $\varphi\in A_{2,2}$, $W\in\PR$,
and $\nu\in\mathcal{P}(\mathbb{R}^{2})$ be given. Then for $\nu$-a.e.
$x\in\mathbb{R}^{2}$,
\[
\left(\varphi\nu\right)_{\varphi x}^{W}=\varphi\left(\nu_{x}^{A_{\varphi}^{-1}W}\right)\:.
\]
Equivalently,
\[
\left(\varphi\nu\right)_{\varphi x}^{W^{\perp}}=\varphi\left(\nu_{x}^{(A_{\varphi}^{*}W)^{\perp}}\right)\:.
\]
\end{lem}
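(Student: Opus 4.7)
The plan is to prove the first identity by verifying that the family $\rho_z := \varphi\bigl(\nu_{\varphi^{-1}z}^{A_\varphi^{-1}W}\bigr)$ is a disintegration of $\varphi\nu$ with respect to $\mathcal{B}_W$, and then invoke the $\varphi\nu$-a.e. uniqueness of such a disintegration. The second identity will then be automatic: since $\varphi$ is invertible, one checks the elementary linear-algebra fact $(A_\varphi^* W)^{\perp} = A_\varphi^{-1}(W^{\perp})$, so replacing $W$ by $W^\perp$ in the first identity yields the second.

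First I would verify the two structural properties of $\rho_z$. Because $\nu_x^{A_\varphi^{-1}W}$ is supported on $x + A_\varphi^{-1}W$, and $\varphi$ maps $x + A_\varphi^{-1}W$ onto $\varphi(x) + A_\varphi(A_\varphi^{-1}W) = \varphi(x) + W$, the pushforward $\rho_{\varphi x}$ is supported on $\varphi(x)+W$. Next, if $z,z'$ differ by an element $w \in W$, then $\varphi^{-1}z$ and $\varphi^{-1}z'$ differ by $A_\varphi^{-1}w \in A_\varphi^{-1}W$, so the defining $A_\varphi^{-1}W$-saturation property of the disintegration $\{\nu_x^{A_\varphi^{-1}W}\}$ forces $\nu_{\varphi^{-1}z}^{A_\varphi^{-1}W} = \nu_{\varphi^{-1}z'}^{A_\varphi^{-1}W}$, hence $\rho_z = \rho_{z'}$. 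Thus $z \mapsto \rho_z$ is $\mathcal{B}_W$-measurable.

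Second I would check the integration identity. For any bounded Borel $f$, applying the disintegration of $\nu$ with respect to $\mathcal{B}_{A_\varphi^{-1}W}$ and then changing variables $z = \varphi x$,
\begin{align*}
\int f\,d(\varphi\nu)
&= \int (f\circ\varphi)\,d\nu
= \int\!\!\int f(\varphi y)\,d\nu_x^{A_\varphi^{-1}W}(y)\,d\nu(x) \\
&= \int\!\!\int f\,d\bigl(\varphi\nu_x^{A_\varphi^{-1}W}\bigr)\,d\nu(x)
= \int\!\!\int f\,d\rho_z\,d(\varphi\nu)(z).
\end{align*}
Combined with the first step, this is exactly the characterization of $\{\rho_z\}$ as the disintegration of $\varphi\nu$ with respect to $\mathcal{B}_W$, so uniqueness gives $\rho_{\varphi x} = (\varphi\nu)_{\varphi x}^W$ for $\varphi\nu$-a.e. $\varphi x$, i.e.\ for $\nu$-a.e.\ $x$.

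There is no real obstacle here; the only point that needs a moment's care is the bookkeeping in the change of variables $z = \varphi x$ (using that $\varphi\nu$ is by definition the pushforward of $\nu$) and the identification $(A_\varphi^* W)^\perp = A_\varphi^{-1}(W^\perp)$ used to pass between the two stated forms.
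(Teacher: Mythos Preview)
Your proof is correct and is exactly the standard argument the paper has in mind: the paper omits the proof entirely, stating only that it is ``standard equivariance of measure disintegration.'' Your verification via the characterizing properties of disintegration and its essential uniqueness is the canonical way to justify such statements, and the linear-algebra identity $(A_\varphi^* W)^\perp = A_\varphi^{-1}(W^\perp)$ you use to pass between the two forms is correct.
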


\begin{rem}
The last form is the one we will use. Usually $W$ will be a subspace
onto which we are projecting $\mu$, and since $\pi_{W}^{-1}\mathcal{B}$
consists of lines perpendicular to $W$ the disintegration of $\mu$
over this map is then given by $\{\mu_{x}^{W^{\perp}}\}$.
\end{rem}

Recall the definition of $\Upsilon_{n}$ and $\textbf{\ensuremath{\mathbf{K}}}(n)$
from Section \ref{sub:random-cylinder-measures} and that we write
$\gamma$ for $\alpha-\beta$. As mentioned above, from Theorem \ref{thm:LY}
it follows that
\begin{equation}
\dim\mu_{x}^{W^{\perp}}=\gamma\text{ for \ensuremath{\eta^{*}}-a.e \ensuremath{W} and \ensuremath{\mu}-a.e \ensuremath{x}.}\label{eq:exact dim of slices}
\end{equation}
\begin{lem}
\label{lem:lb for slice rand U}For $\varepsilon>0$, $m\ge M(\varepsilon)\ge1$,
and $n\ge1$,
\begin{equation}
\int\mathbb{E}\left(\varphi_{\textbf{\ensuremath{\mathbf{K}}}(n)}\mu\left\{ x\::\:\frac{1}{m}H\left(\left(\varphi_{\textbf{\ensuremath{\mathbf{K}}}(n)}\mu\right)_{x}^{W^{\perp}},\mathcal{D}_{n+m}\right)>\gamma-\varepsilon\right\} \right)\:d\eta^{*}(W)>1-\varepsilon\:.\label{eq:lb for slice rand U}
\end{equation}
\end{lem}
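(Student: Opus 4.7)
The plan is to convert the statement about slices of the cylinder $\varphi_\mathbf{K}\mu$ (with $\mathbf{K}=\mathbf{K}(n)$) into a statement about slices of $\mu$ itself via the equivariance of disintegration, apply the exact-dimensionality of slices given by the Ledrappier--Young formula, and then handle the induced change of direction through the adjoint random walk. First, by Lemma~\ref{lem:image of slices}, for $\mu$-a.e.\ $y$, $(\varphi_\mathbf{K}\mu)^{W^\perp}_{\varphi_\mathbf{K} y}=\varphi_\mathbf{K}(\mu_y^{V^\perp})$ where $V=A_\mathbf{K}^* W$. Substituting $x=\varphi_\mathbf{K} y$ and applying Fubini reduces (\ref{eq:lb for slice rand U}) to showing, with $(W,\mathbf{K},y)\sim\eta^*\otimes\mathbf{K}(n)\otimes\mu$ independent, that the event $\tfrac{1}{m}H(\varphi_\mathbf{K}(\mu_y^{V^\perp}),\mathcal{D}_{n+m})>\gamma-\varepsilon$ has probability exceeding $1-\varepsilon$.

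Second, $A_\mathbf{K}(V^\perp)\subseteq W^\perp$ (from $V=A_\mathbf{K}^* W$) implies that $\varphi_\mathbf{K}$ sends lines in direction $V^\perp$ to lines in direction $W^\perp$ affinely, with linear part $\|A_\mathbf{K} v\|$ for $v$ unit in $V^\perp$. Using the SVD of $A_\mathbf{K}$ one finds $\|A_\mathbf{K} v\|=\alpha_1\alpha_2/\|A_\mathbf{K}^* w_W\|$ for $w_W$ unit in $W$, and the duality $\|A_\mathbf{K}^* w_W\|=\|\pi_W A_\mathbf{K}\|$ together with Lemma~\ref{lem:norm-of-projection-composed-with-matrix} applied to $A_\mathbf{K}$ yields $\|A_\mathbf{K} v\|=\Theta_\rho(\alpha_2(A_\mathbf{K}))=\Theta_\rho(2^{-n})$ whenever $d_{\PR}(W,L(A_\mathbf{K})^\perp)\ge\rho$ (using $\mathbf{K}\in\Upsilon_n$). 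Then (\ref{eq:entropy-under-transformation-1}) gives
\[
\tfrac{1}{m}H\bigl(\varphi_\mathbf{K}(\mu_y^{V^\perp}),\mathcal{D}_{n+m}\bigr)=\tfrac{1}{m}H\bigl(\mu_y^{V^\perp},\mathcal{D}_m\bigr)+O_\rho(1/m).
\]
Choose $\rho=\rho(\varepsilon)>0$ so that $\eta^*(B_\rho(L))<\varepsilon/4$ uniformly in $L\in\PR$, possible since $\eta^*$ is nonatomic on compact $\PR$ (Theorem~\ref{thm:Furstenberg-Oseledets}); the bad-direction event $d_{\PR}(W,L(A_\mathbf{K})^\perp)<\rho$ then has probability $<\varepsilon/4$ uniformly in $\mathbf{K}$. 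By~(\ref{eq:exact dim of slices}) and Lemma~\ref{lem:entropy-dimension}, $\tfrac{1}{m}H(\mu_y^{V^\perp},\mathcal{D}_m)\to\gamma$ for $(\eta^*\otimes\mu)$-a.e.\ $(V,y)$, so for $m\ge M(\varepsilon)$ the Borel set $G_m=\{(V,y):\tfrac{1}{m}H(\mu_y^{V^\perp},\mathcal{D}_m)>\gamma-\varepsilon/2\}$ satisfies $(\eta^*\otimes\mu)(G_m^c)<\varepsilon^2$.

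The main obstacle is the final step: transferring this $(\eta^*\otimes\mu)$-measure bound to the pushforward distribution of $(V,y)=(A_\mathbf{K}^* W,y)$. The marginal of $V$ is $\sum_{w\in\Upsilon_n}p_w A_w^*\eta^*$, which differs from $\eta^*$ in general, because $\Upsilon_n$ consists of words of variable length and the adjoint maps $A_\cdot^*$ do not commute (so a direct appeal to stationarity of $\eta^*$ fails). To handle this, one combines Theorem~\ref{thm:Furstenberg-Oseledets}(5), which gives $A_w^*\eta^*\to\delta_{L(A_w^*)}$ as $|w|\to\infty$, with the fact that the distribution of $L(A_{\mathbf{K}(n)}^*)$ itself converges to $\eta^*$; this yields the weak convergence $\sum_{w\in\Upsilon_n}p_w A_w^*\eta^*\to\eta^*$. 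Invoking lower semi-continuity of entropy, so that $G_m^c$ is essentially closed, the Portmanteau theorem bounds the pushforward mass of $G_m^c$ by $(\eta^*\otimes\mu)(G_m^c)+o(1)<\varepsilon/4$ for all $n\ge n_0(\varepsilon)$. For the finitely many $n<n_0(\varepsilon)$, the partition $\Upsilon_n$ contains only finitely many words, so each $A_w^*\eta^*$ can be handled individually by enlarging $M(\varepsilon)$, keeping it independent of $n$. Combining with the bad-direction exclusion, the total probability of the bad event is $<\varepsilon$, proving the lemma.
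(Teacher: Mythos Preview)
Your reduction via Lemma~\ref{lem:image of slices} and the change of scale mirrors the paper's argument. One simplification the paper makes: it uses only the one-sided bound
\[
\tfrac{1}{m}H\bigl(\varphi_w(\mu_y^{V^\perp}),\mathcal{D}_{n+m}\bigr)\ge\tfrac{1}{m}H\bigl(\mu_y^{V^\perp},\mathcal{D}_m\bigr)-O(1/m),
\]
valid for \emph{every} $w\in\Upsilon_n$ and $W\in\PR$ (since $\varphi_w^{-1}$ expands by at most $O(2^n)$, each atom of $\varphi_w^{-1}\mathcal{D}_{n+m}$ has diameter $O(2^{-m})$). So your angle exclusion and the precise scaling computation are unnecessary for the direction that matters.

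You are right to flag the final step as the real obstacle. The paper disposes of it by asserting $\eta^*=\sum_{w\in\Upsilon_n}p_w A_w^*\eta^*$, and you correctly observe that this does not follow from stationarity. Iterating $\eta^*=\sum_i p_i A_i^*\eta^*$ along a partition $\Xi$ yields $\eta^*=\sum_{w\in\Xi}p_w\,(A_{w_1}^*\cdots A_{w_{|w|}}^*)\eta^*$, whereas Lemma~\ref{lem:image of slices} forces the transpose $(A_w)^*=A_{w_{|w|}}^*\cdots A_{w_1}^*$ into the argument; for variable-length partitions these differ, and for instance with $\Xi=\{0,10,11\}$ the paper's identity would force $A_1^*\eta^*=\eta^*$. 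So you have correctly located a step that the paper glosses over.

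That said, your proposed repair is incomplete. The Portmanteau step requires $G_m^c$ to be (essentially) closed in $V$, but the disintegration $V\mapsto\mu_y^{V^\perp}$ is only a.e.\ defined and is not weakly continuous, so ``lower semi-continuity of entropy'' does not make $G_m^c$ closed. The weak-convergence claim $\sum_{w\in\Upsilon_n}p_w A_w^*\eta^*\to\eta^*$ is plausible but Theorem~\ref{thm:Furstenberg-Oseledets}(5) does not give it: that result treats i.i.d.\ products of a given length, not products stopped at the random time defining $\Upsilon_n$, and the convergence in distribution of $L(A_{\mathbf{K}(n)}^*)$ to $\eta^*$ is itself a nontrivial claim. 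Your small-$n$ argument via $A_w^*\eta^*\ll\eta^*$ is sound, but absolute continuity alone does not deliver the uniform-in-$n$ bound the lemma requires, since the densities $d(\sum_{w}p_w A_w^*\eta^*)/d\eta^*$ are not bounded independently of $n$.
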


\begin{proof}
Let $\varepsilon>0$, let $m\ge1$ be large with respect to $\varepsilon$,
and let $n\ge1$. By Lemma \ref{lem:image of slices}, for each $W$
and $\mu$-a.e. $x$,
\[
(\varphi_{\textbf{\ensuremath{\mathbf{K}}}(n)}\mu)_{\varphi_{\textbf{\ensuremath{\mathbf{K}}}(n)}x}^{W^{\perp}}=\varphi_{\textbf{\ensuremath{\mathbf{K}}}(i)}\left(\mu_{x}^{(A_{\mathbf{K}(i)}^{*}W)^{\perp}}\right)\;.
\]
For $w\in\Upsilon_{n}$, the map $\varphi_{w}^{-1}$ expands by at
most $O(2^{n})$ in every direction. Therefore there exists constants
$C,C'>0$, independent of $m$ and $n$, such that, for every $w\in\Upsilon_{n}$,
each atom of $\varphi_{w}^{-1}(\mathcal{D}_{n+m})$ is of diameter
at most $C\cdot2^{-m}$, so it intersects at most $C'$ atoms of $\mathcal{D}_{m}$.
Hence
\begin{align*}
\frac{1}{m}H\left((\varphi_{\textbf{\ensuremath{\mathbf{K}}}(n)}\mu)_{\varphi_{\textbf{\ensuremath{\mathbf{K}}}(n)}x}^{W^{\perp}},\mathcal{D}_{m+n}\right) & =\frac{1}{m}H\left(\varphi_{\textbf{\ensuremath{\mathbf{K}}}(n)}\left(\mu_{x}^{(A_{\textbf{\ensuremath{\mathbf{K}}}(n)}^{*}W)^{\perp}}\right),\mathcal{D}_{m+n}\right)\\
 & =\frac{1}{m}H\left(\mu_{x}^{(A_{\textbf{\ensuremath{\mathbf{K}}}(n)}^{*}W)^{\perp}},\varphi_{\textbf{\ensuremath{\mathbf{K}}}(n)}^{-1}\mathcal{D}_{m+n}\right)\\
 & \geq\frac{1}{m}H\left(\mu_{x}^{(A_{\textbf{\ensuremath{\mathbf{K}}}(n)}^{*}W)^{\perp}},\mathcal{D}_{m}\right)+O(\frac{1}{m})\;.
\end{align*}
Assuming that $m$ is large enough with respect to $\varepsilon$,
the left hand side of (\ref{eq:lb for slice rand U}) is at least
as large as
\[
\int\mathbb{E}\left(\mu\left\{ x\::\:\frac{1}{m}H\left(\mu_{x}^{(A_{\textbf{\ensuremath{\mathbf{K}}}(n)}^{*}W)^{\perp}},\mathcal{D}_{m}\right)>\gamma-\frac{\varepsilon}{2}\right\} \right)\:d\eta^{*}(W)\:.
\]
Expanding the expectation as a weighted sum over $w\in\Upsilon(n)$
and using $\eta^{*}=\sum_{w\in\Upsilon(n)}p_{w}\cdot A_{w}^{*}\eta^{*}$,
we get
\[
\ge\int\mu\left\{ x\::\:\frac{1}{m}H\left(\mu_{x}^{W^{\perp}},\mathcal{D}_{m}\right)>\gamma-\frac{\varepsilon}{2}\right\} \:d\eta^{*}(W)\;.
\]
The lemma now follows from (\ref{eq:exact dim of slices}).
\end{proof}
\begin{lem}
\label{lem:ent of slices of comp}For every $\varepsilon>0$, $m\ge M(\varepsilon)\ge1$,
and $n\ge1$,
\[
\int\mathbb{P}_{i=n}\left(\frac{1}{m}H(\mu_{x,i},\mathcal{D}_{i+m}\mid\pi_{W}^{-1}(\mathcal{B}))>\gamma-\varepsilon\right)\:d\eta^{*}(W)>1-\varepsilon\:.
\]
\end{lem}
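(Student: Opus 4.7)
Plan. I would deduce this from Lemma \ref{lem:lb for slice rand U} by a concavity-plus-boundary argument modelled on the proof of Lemma \ref{lem:ent of proj of comp mostly large }. The first step is to recast Lemma \ref{lem:lb for slice rand U} as a conditional entropy bound. Using the identity $H(\nu,\mathcal{Q}\mid\pi_W^{-1}\mathcal{B})=\int H(\nu_y^{W^\perp},\mathcal{Q})\,d\nu(y)$ together with the trivial upper bound $H(\nu_y^{W^\perp},\mathcal{D}_{n+m})\le m+O(1)$ (each 1D slice lives on a $W^\perp$-line segment of length $O(2^{-n})$, meeting $O(2^m)$ atoms of $\mathcal{D}_{n+m}$), a Markov application converts Lemma \ref{lem:lb for slice rand U} into the high-probability form: for $\eta^*$-most $W$ and $\mathbf{K}(n)$-most cylinders,
\[
\tfrac{1}{m}H(\varphi_{\mathbf{K}(n)}\mu,\mathcal{D}_{n+m}\mid\pi_W^{-1}\mathcal{B})>\gamma-\varepsilon.
\]

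Next I would decompose $\mu=\sum_{w\in\Upsilon_{n+k}}p_w\,\varphi_w\mu$ for a large but fixed $k$ and exploit the concavity of conditional entropy in the measure, which here holds with no $H(\lambda)$ correction: with the $\sigma$-algebra $\pi_W^{-1}\mathcal{B}$ fixed, the slice of a convex combination is a convex combination of the corresponding slices (weighted by the Radon--Nikodym derivatives of their $\pi_W$-marginals), so fibrewise concavity of Shannon entropy integrates to
\[
H\Bigl(\sum_i\lambda_i\nu_i,\mathcal{Q}\,\Big|\,\pi_W^{-1}\mathcal{B}\Bigr)\ge\sum_i\lambda_i\,H(\nu_i,\mathcal{Q}\mid\pi_W^{-1}\mathcal{B}).
\]
Applying this to $\mu_{x,n}=\mu|_{\mathcal{D}_n(x)}/\mu(\mathcal{D}_n(x))$ decomposed through the cylinder sum reduces matters to showing that a $(1-\varepsilon)$-fraction of the cylinder pieces contributing to $\mu_{x,n}$ inherit the slice-entropy bound from the first step.

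The substantive step is the boundary argument. I would pass to the $W$-adapted partition $\mathcal{D}_n^{W\oplus W^\perp}$, which is $O(1)$-commensurable with $\mathcal{D}_n$ (so all conditional entropies agree up to $O(1/m)$). A $\mathbf{K}(n+k)$-cylinder has $\alpha_2(A_w)=2^{-(n+k)+O(1)}$ width perpendicular to $L(A_w)$, but length $\alpha_1(A_w)$ possibly much larger than $2^{-n}$. Crucially, because $\pi_W^{-1}\mathcal{B}$ already fully resolves the $W^\perp$-direction, only the cylinder's $W$-extent, bounded by $\alpha_2(A_w)+\alpha_1(A_w)|\sin\angle(L(A_w),W^\perp)|$, needs to be controlled; containment in a single $W$-coordinate strip of $\mathcal{D}_n$ is enough. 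Combining Lemma \ref{lem:L-of-cylinders-equidistributes} (to control how $L(A_w)$ distributes relative to $W^\perp$) with Lemma \ref{lem:dyadic-cells-with-small-boundaries} (applied to $\pi_W\mu$ to control dyadic boundary mass in the $W$-direction) shows that for $\eta^*$-most $W$ and suitably chosen $k$, outside an event of $\mu$-measure $O(\varepsilon)$ the cylinders contributing most of the mass of $\mu_{x,n}$ fit in a single $W$-strip.

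The hard part will be coordinating the geometry of $\mathbf{K}$-cylinders with that of $\mathcal{D}_n$-cells: a naive "cylinder contained in cell" boundary argument fails, since cylinders with long axes transverse to $W^\perp$ can have $W$-extent as large as $\alpha_1(A_w)\gg 2^{-n}$, yet we cannot restrict to the small $\eta$-mass event where $L(A_w)$ is close to $W^\perp$ (those cylinders do not cover a positive fraction of $\mu$). The resolution must exploit the asymmetry of $\pi_W^{-1}\mathcal{B}$, which factors out the long direction of the cylinders entirely, so that a cylinder spanning many $W^\perp$-cells still contributes its full slice-entropy along every $W^\perp$-fibre within its narrow $W$-projection; tracking how these contributions assemble after restriction to $\mathcal{D}_n(x)$ is the delicate point. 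Once this is in place, a two-stage Markov argument (first over $W\sim\eta^*$ to select directions where the averaged bound is close to $\gamma m$, then over $x\sim\mu$ to select components realising it) yields the stated probability bound.
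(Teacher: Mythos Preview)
Your overall architecture (Lemma~\ref{lem:lb for slice rand U} $\to$ concavity $\to$ boundary argument) matches the paper's, but the geometric analysis in your ``hard part'' is inverted, and this leads you to chase the wrong obstruction.

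You claim that after conditioning on $\pi_W^{-1}\mathcal{B}$, only the cylinder's $W$-extent matters, and that containment in a single $W$-strip of $\mathcal{D}_n$ suffices. This is backwards. Conditioning on $\pi_W^{-1}\mathcal{B}$ fixes the $W$-coordinate; the conditional measure is the slice $(\varphi_w\mu)_y^{W^\perp}$, which lives on a $W^\perp$-line. When you restrict $\varphi_w\mu$ to a cell $D\in\mathcal{D}_n$, the $W$-restriction merely selects which slices survive (harmless for a lower bound), but the $W^\perp$-restriction \emph{cuts each surviving slice}, potentially destroying its entropy. So what must be controlled is the \emph{slice diameter} (its $W^\perp$-extent), not the cylinder's $W$-extent. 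And the slice diameter is $O_\delta(\alpha_2(A_w))=O_\delta(2^{-(n+k)})$ precisely when $L(A_w)\notin B(W^\perp,\delta)$ --- the \emph{typical} case, not the rare one. Your paragraph identifying ``$L(A_w)$ close to $W^\perp$'' as the desirable (but rare) event has it the wrong way around.

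The paper resolves this by never working at the cylinder level at all. It decomposes $\mu$ one step further into \emph{slices of cylinders}:
\[
\mu=\mathbb{E}\left(\int(\varphi_{\mathbf{K}(n+k)}\mu)_x^{W^\perp}\,d\varphi_{\mathbf{K}(n+k)}\mu(x)\right),
\]
and applies the boundary argument (Lemma~\ref{lem:dyadic-cells-with-small-boundaries}) to these slices, which (after discarding the small-probability event $L(A_w)\in B(W^\perp,\delta)$) have diameter $O_\delta(2^{-(n+k)})$ and hence fit in single $\mathcal{D}_n$-cells. Each slice is supported on a single atom of $\pi_W^{-1}\mathcal{B}$, so its conditional entropy equals its unconditional entropy, and Lemma~\ref{lem:lb for slice rand U} already gives the entropy bound at the slice level --- no detour through the cylinder-level conditional entropy is needed. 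Concavity then assembles these into the bound for $\mu_{x,n}$. Your proposed route via cylinder-level conditional entropy and $W$-strips does not close; the missing move is to pass to slices.
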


\begin{proof}
Let $\varepsilon>0$ be small, let $\delta>0$ be small with respect
to $\varepsilon$, let $k\ge1$ be large with respect to $\delta$,
let $m\ge1$ be large with respect to $k$, and let $n\ge1$. The
measure $L\xi$ is continuous, hence we can assume that
\begin{equation}
\mathbb{P}\left(L(A_{\textbf{\ensuremath{\mathbf{K}}}(n+k)})\in B(W,\delta)\right)<\varepsilon\text{ for each }W\in\PR\:.\label{eq:small measure near a ball}
\end{equation}
It is not hard to see that for each $W\in\PR$, $w\in\Upsilon_{n+k}$
with $L(A_{w})\notin B(W^{\perp},\delta)$, and $\varphi_{w}\mu$-a.e.
$x\in\mathbb{R}^{2}$,
\begin{equation}
\diam(\supp\left(\varphi_{w}\mu\right)_{x}^{W^{\perp}})=O_{\delta}(2^{-n-k})\:.\label{eq:ub on diam of supp of slices}
\end{equation}

Let $Z$ be the set of all $W\in\PR$ such that,
\begin{equation}
\mathbb{E}\left(\varphi_{\textbf{\ensuremath{\mathbf{K}}}(n+k)}\mu\left\{ x\::\:\frac{1}{m}H\left(\left(\varphi_{\textbf{\ensuremath{\mathbf{K}}}(n+k)}\mu\right)_{x}^{W^{\perp}},\mathcal{D}_{n+k+m}\right)>\gamma-\varepsilon\right\} \right)>1-\varepsilon\:.\label{eq:big ent of slices}
\end{equation}
By Lemma \ref{lem:lb for slice rand U} we may assume that $\eta^{*}(Z)>1-\varepsilon$.
Fix $W\in Z$ for the remainder of the proof.

Define $\Gamma\in\mathcal{P}(\Upsilon_{n+k}\times\mathbb{R}^{2})$
by
\[
\Gamma=\sum_{w\in\Upsilon_{n+k}}p_{w}\cdot\delta_{\{w\}}\times\varphi_{w}\mu\:.
\]
Let $F$ be the set of all $(w,x)\in\Upsilon_{n+k}\times\mathbb{R}^{2}$
such that (\ref{eq:ub on diam of supp of slices}) holds and
\begin{equation}
\frac{1}{m}H\left(\left(\varphi_{w}\mu\right)_{x}^{W^{\perp}},\mathcal{D}_{n+m}\right)>\gamma-\varepsilon\:.\label{eq:ent > beta - epsilon}
\end{equation}
By (\ref{eq:small measure near a ball}) and (\ref{eq:big ent of slices}),
by recalling that $m$ is large with respect to $k$, and by replacing
$\varepsilon$ with a larger quantity which is still small without
changing the notation, we may assume that $\Gamma(F)>1-\varepsilon$.

By Lemma \ref{lem:dyadic-cells-with-small-boundaries},
\[
\mu\left(\cup_{D\in\mathcal{D}_{n}}(\partial D)^{(2^{-n}\delta)}\right)<\varepsilon\:.
\]
Since $k$ is large with respect to $\delta$, we may assume that
if $\nu\in\mathcal{P}(\mathbb{R}^{2})$ is such that $\diam(\supp(\nu))=O_{\delta}(2^{-n-k})$
and
\[
\#\{D\in\mathcal{D}_{n}\::\:\supp(\nu)\cap D\ne\emptyset\}>1,
\]
then $\supp(\nu)\subset\cup_{D\in\mathcal{D}_{n}}(\partial D)^{(2^{-n}\delta)}$.
Also, it is possible to write $\mu$ as
\begin{equation}
\mu=\mathbb{E}\left(\varphi_{\textbf{\ensuremath{\mathbf{K}}}(n+k)}\mu\right)=\mathbb{E}\left(\int\left(\varphi_{\textbf{\ensuremath{\mathbf{K}}}(n+k)}\mu\right)_{x}^{W^{\perp}}\:d\varphi_{\textbf{\ensuremath{\mathbf{K}}}(n+k)}\mu(x)\right)\:.\label{eq:decomp of mu to slices}
\end{equation}
By these facts, since (\ref{eq:ub on diam of supp of slices}) holds
for $(w,x)\in F$, and by replacing $\varepsilon$ with a larger quantity
without changing the notation, we may assume that for each $(w,x)\in F$
\begin{equation}
\exists\:D\in\mathcal{D}_{n}\text{ with }\supp\:\left(\varphi_{w}\mu\right)_{x}^{W^{\perp}}\subset D\;,\label{eq:cont in atom}
\end{equation}
while still having $\Gamma(F)>1-\varepsilon$. 

Let $E$ be set of all $x\in\mathbb{R}^{2}$ for which there exist
a probability space $(\Omega_{x},\theta_{x})$, $\{\nu_{x,\omega}\}_{\omega\in\Omega_{x}}\subset\mathcal{P}(\mathbb{R}^{2})$,
$0\le\rho_{x}<\varepsilon$, and $\nu'_{x}\in\mathcal{P}(\mathbb{R}^{2})$,
such that
\begin{itemize}
\item $\mu_{x,n}=(1-\rho_{x})\int\nu_{x,\omega}\:d\theta_{x}(\omega)+\rho_{x}\nu'_{x};$
\item $\frac{1}{m}H\left(\nu_{x,\omega},\mathcal{D}_{n+m}\right)>\gamma-\varepsilon$
for $\omega\in\Omega_{x}$;
\item $\nu_{x,\omega}$ is supported on a single atom of $\pi_{W}^{-1}(\mathcal{B})$
for $\omega\in\Omega_{x}$.
\end{itemize}
From the decomposition $\mu=\mathbb{E}_{i=n}\left(\mu_{x,i}\right)$,
by (\ref{eq:decomp of mu to slices}), since (\ref{eq:ent > beta - epsilon})
and (\ref{eq:cont in atom}) hold for $(w,x)\in F$, since $\Gamma(F)>1-\varepsilon$,
and by replacing $\varepsilon$ with a larger quantity without changing
the notation, we may assume that $\mu(E)>1-\varepsilon$.

Let $x\in E$, then by concavity of conditional entropy,
\[
\frac{1}{m}H(\mu_{x,n},\mathcal{D}_{n+m}\mid\pi_{W}^{-1}(\mathcal{B}))\ge(1-\varepsilon)\int\frac{1}{m}H(\nu_{x,\omega},\mathcal{D}_{n+m}\mid\pi_{W}^{-1}(\mathcal{B}))\:d\theta_{x}(\omega)\:.
\]
For $\omega\in\Omega_{x}$,
\[
\frac{1}{m}H\left(\nu_{x,\omega},\mathcal{D}_{n+m}\right)>\gamma-\varepsilon
\]
and $\nu_{x,\omega}$ is supported on a single atom of $\pi_{W}^{-1}(\mathcal{B})$.
Hence,
\[
\frac{1}{m}H(\mu_{x,n},\mathcal{D}_{n+m}\mid\pi_{W}^{-1}(\mathcal{B}))\ge(1-\varepsilon)(\gamma-\varepsilon)\:.
\]
Since $\eta^{*}(Z)>1-\varepsilon$ and $\mu(E)>1-\varepsilon$ this
completes the proof of the lemma.
\end{proof}

\subsection{\label{subsec:Uniform-entropy-dimension}Uniform entropy dimension}

In this section we show that typical components of $\mu$ have normalized
entropy close to $\alpha=\dim\mu$, a property referred to in \cite{Ho}
as uniform entropy dimension. This will be used later on to conclude
that typical components cannot look like uniform measure on a dyadic
cell, which we use to rule out one of the alternatives that one gets
from the entropy inverse theorem in $\mathbb{R}^{2}$ (See Section
\ref{subsec:Entropy-growth-in-Rd}). 
\begin{defn}
\label{def:uniform-entropy-dim}We say that $\nu\in\mathcal{P}(\mathbb{R}^{d})$
has uniform entropy dimension $t$ if for every $\varepsilon>0$,
$m\ge M(\varepsilon)\ge1$, and $n\ge N(\varepsilon,m)\ge1$,
\[
\mathbb{P}_{0\le i\le n}\left(|H_{m}(\mu^{x,i})-t|<\varepsilon\right)>1-\varepsilon\:.
\]
 
\end{defn}

This property implies a uniformity among the components of the measure.
If $\nu$ has uniform entropy dimension $t$, then it follows from
Equation (\ref{lem:multiscale-entropy-formula}) that its entropy
dimension is well defined and $\edim\nu=t$. The converse is false,
i.e. the existence of entropy dimension does not imply existence of
uniform entropy dimension. 
\begin{prop}
\label{prop:uniform ent dim}$\mu$ has uniform entropy dimension
$\alpha$.
\end{prop}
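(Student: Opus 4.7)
The plan combines three ingredients: exact dimensionality of $\mu$, the entropy decomposition along an orthogonal splitting, and a reverse Markov argument. By Theorem \ref{thm:LY}, $\mu$ is exact dimensional with $\dim\mu = \alpha$, hence by Lemma \ref{lem:entropy-dimension}, $\tfrac{1}{n}H(\mu,\mathcal{D}_n) \to \alpha$. The multi-scale formula (Lemma \ref{lem:multiscale-entropy-formula}), together with (\ref{eq:entropy-under-transformation-1}) to pass from $\tfrac{1}{m}H(\mu_{x,i},\mathcal{D}_{i+m})$ to $H_m(\mu^{x,i})$, then gives, for any $m$ fixed and $n \to \infty$,
\[
\mathbb{E}_{0 \leq i \leq n}\bigl(H_m(\mu^{x,i})\bigr) \;=\; \tfrac{1}{n}H(\mu, \mathcal{D}_n) + O(m/n) + O(1/m) \;\longrightarrow\; \alpha.
\]
So the expected value is already correct; the task is to upgrade this to concentration. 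My plan is to first establish a matching lower bound $H_m(\mu^{x,i}) \geq \alpha - \varepsilon$ with high probability using earlier lemmas of this section, and then combine it with the expected value via a reverse Markov argument to force the matching upper bound.

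For the lower bound, fix $\varepsilon > 0$ and let $\delta$ be much smaller (say $\delta = \varepsilon^4$); take $m$ and $n$ large. By Lemma \ref{lem:ent of slices of comp} applied with parameter $\delta$, the stated $\eta^*$-integral exceeds $1 - \delta$, so a Markov inequality yields a set $Z \subseteq \PR$ with $\eta^*(Z) > 1 - \sqrt\delta$ such that every $W \in Z$ satisfies
\[
\mathbb{P}_{i=n}\bigl(\tfrac{1}{m}H(\mu_{x,i}, \mathcal{D}_{i+m} \mid \pi_W^{-1}\mathcal{B}) > \gamma - \delta\bigr) > 1 - \sqrt\delta.
\]
Fixing any such $W$, Lemma \ref{lem:ent of proj of comp mostly large} (which holds for every $W$) also gives
\[
\mathbb{P}_{i=n}\bigl(\tfrac{1}{m}H(\pi_W \mu_{x,i}, \mathcal{D}_{i+m}) > \beta - \delta\bigr) > 1 - \delta.
\]
On the intersection of these two events (probability $\geq 1 - 2\sqrt\delta$), combining the orthogonal decomposition
\[
H(\mu_{x,i}, \mathcal{D}_{i+m}^{W \oplus W^\perp}) = H(\pi_W \mu_{x,i}, \mathcal{D}_{i+m}) + H(\mu_{x,i}, \pi_{W^\perp}^{-1}\mathcal{D}_{i+m} \mid \pi_W^{-1}\mathcal{D}_{i+m}),
\]
the $O(1)$-commensurability of $\mathcal{D}_{i+m}$ and $\mathcal{D}_{i+m}^{W \oplus W^\perp}$ (equation (\ref{eq:cond on comm partitions})), and the monotonicity $H(\cdot \mid \pi_W^{-1}\mathcal{B}) \leq H(\cdot \mid \pi_W^{-1}\mathcal{D}_{i+m})$, one obtains $H_m(\mu^{x,i}) \geq (\beta - \delta) + (\gamma - \delta) + O(1/m) = \alpha - 2\delta + O(1/m)$.

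For the upper bound, let $p = \mathbb{P}_{i=n}(H_m(\mu^{x,i}) > \alpha + \varepsilon)$. The lower bound event (probability $\geq 1 - 2\sqrt\delta$) together with the trivial bound $H_m \leq 2$ and the expected value $\alpha + o(1)$ will give
\[
\alpha + o(1) \;=\; \mathbb{E}(H_m(\mu^{x,i})) \;\geq\; (\alpha - 2\delta)(1 - 2\sqrt\delta - p) + (\alpha + \varepsilon)\,p,
\]
which rearranges to $p \leq (2\delta + 2\sqrt\delta \cdot \alpha + o(1))/\varepsilon = O(\sqrt\delta \cdot \alpha/\varepsilon) = O(\varepsilon)$ since $\sqrt\delta \leq \varepsilon^2$. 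Combined with the lower bound event, this will complete the proof that $\mu$ has uniform entropy dimension $\alpha$.

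The main obstacle will be arranging for a \emph{single} direction $W \in \PR$ on which both the slice estimate of Lemma \ref{lem:ent of slices of comp} (given only on $\eta^*$-average) and the uniform-in-$W$ projection estimate of Lemma \ref{lem:ent of proj of comp mostly large} hold simultaneously with high probability; this is what forces the Fubini/Markov step above. The overall scheme relies crucially on the identity $\alpha = \beta + \gamma$ matching the pair of lower bounds supplied by these two lemmas, after which the reverse Markov step is a soft consequence of mean convergence and the trivial ambient-dimension ceiling $H_m \leq 2$.
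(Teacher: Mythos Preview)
Your approach is essentially the same as the paper's: both decompose $H(\mu_{x,i},\mathcal{D}_{i+m}^{W\oplus W^\perp})$ into a projection term and a conditional (slice) term, invoke Lemmas~\ref{lem:ent of proj of comp mostly large } and~\ref{lem:ent of slices of comp} to bound each from below by $\beta$ and $\gamma$ respectively, and then combine the resulting high-probability lower bound $\alpha-O(\varepsilon)$ with the multiscale identity $\mathbb{E}_{0\le i\le n}(H_m(\mu^{x,i}))=\alpha+o(1)$ to force the matching upper bound. The only stylistic difference is that you extract a single good $W$ via Markov, whereas the paper keeps the $\eta^*$-integral and removes the $W$-dependence at the end using the $O(1)$-commensurability of $\mathcal{D}_{i+m}^{W\oplus W^\perp}$ with $\mathcal{D}_{i+m}$.

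One small inconsistency to fix: you state the lower bound and define $p$ in terms of $\mathbb{P}_{i=n}$ (a fixed level), but the expected value you feed into the reverse Markov step is $\mathbb{E}_{0\le i\le n}$. These must match, and the fixed-level expectation $\tfrac{1}{m}H(\mu,\mathcal{D}_{n+m}\mid\mathcal{D}_n)$ is not known a priori to converge to $\alpha$ for fixed $m$. The fix is immediate: since Lemma~\ref{lem:ent of slices of comp} holds for every $n\ge 1$ and Lemma~\ref{lem:ent of proj of comp mostly large } for every $n\ge N(\varepsilon)$, first average both over $0\le i\le n$ (absorbing an $O(N(\varepsilon)/n)$ error), then apply your Markov step to the averaged slice estimate to select $W$, and run the rest of the argument with $\mathbb{P}_{0\le i\le n}$ throughout.
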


\begin{proof}
Let $\varepsilon>0$, let $m\ge1$ be large with respect to $\varepsilon$,
and let $n\ge1$ be large with respect to $m$. Recall that for $W\in\PR$
and $k\ge1$,
\[
\mathcal{D}_{k}^{W\oplus W^{\perp}}=\left(\pi_{W}^{-1}\mathcal{D}_{k}\right)\vee\left(\pi_{W^{\perp}}^{-1}\mathcal{D}_{k}\right)\;,
\]
and that $\mathcal{D}_{k}$ and $\mathcal{D}_{k}^{W\oplus W^{\perp}}$
are commensurable partitions. Write
\begin{align*}
\delta & =\int\mathbb{P}_{0\le i\le n}\left(\frac{1}{m}H(\mu_{x,i},\mathcal{D}_{i+m}^{W\oplus W^{\perp}})\le\alpha-\varepsilon\right)\:d\eta^{*}(W)\\
\delta_{1} & =\int\mathbb{P}_{0\le i\le n}\left(\frac{1}{m}H(\mu_{x,i},\pi_{W}^{-1}\mathcal{D}_{i+m})\le\beta-\frac{\varepsilon}{2}\right)\:d\eta^{*}(W)\;,
\end{align*}
and
\[
\delta_{2}=\int\mathbb{P}_{0\le i\le n}\left(\frac{1}{m}H(\mu_{x,i},\mathcal{D}_{i+m}^{W\oplus W^{\perp}}\mid\pi_{W}^{-1}\mathcal{D}_{i+m})\le\gamma-\frac{\varepsilon}{2}\right)\:d\eta^{*}(W)\:.
\]
Since for each $W\in\PR$, $0\le i\le n$, and $x\in\mathbb{R}^{2}$,
\[
H(\mu_{x,i},\mathcal{D}_{i+m}^{W\oplus W^{\perp}})=H(\mu_{x,i},\pi_{W}^{-1}\mathcal{D}_{i+m})+H(\mu_{x,i},\mathcal{D}_{i+m}^{W\oplus W^{\perp}}\mid\pi_{W}^{-1}\mathcal{D}_{i+m})\;,
\]
any component that belongs to the event defining $\delta$ must also
belong to one of the events defining $\delta_{1}$ or $\delta_{2}$,
hence $\delta\le\delta_{1}+\delta_{2}$.

By Lemma \ref{lem:ent of proj of comp mostly large } we can assume
that $\delta_{1}<\varepsilon/2$. By Lemma \ref{lem:ent of slices of comp}
we can assume that $\delta_{2}<\varepsilon/2$. Hence $\delta\le\delta_{1}+\delta_{2}<\varepsilon$,
and so
\[
\int\mathbb{P}_{0\le i\le n}\left(\frac{1}{m}H(\mu_{x,i},\mathcal{D}_{i+m}^{W\oplus W^{\perp}})>\alpha-\varepsilon\right)\;d\eta^{*}(W)>1-\varepsilon\:.
\]
Since $\mathcal{D}_{i+m}^{W\oplus W^{\perp}}$ and $\mathcal{D}_{i+m}$
are commensurable, the entropy above depends on $W$ only up to an
additive $O(1)$ constant, so we can eliminate the outer integral
by introducing an additive $O(1/m)$ error. Therefore, assuming $m$
is large enough relative to $\varepsilon$, 
\begin{equation}
\mathbb{P}_{0\le i\le n}\left(\frac{1}{m}H(\mu_{x,i},\mathcal{D}_{i+m})>\alpha-2\varepsilon\right)>1-\varepsilon\:.\label{eq:large ent of comp}
\end{equation}
By Lemma \ref{lem:multiscale-entropy-formula} and since we can assume
that $\frac{m}{n}<\varepsilon$,
\[
\alpha=\mathbb{E}_{0\le i\le n}\left(\frac{1}{m}H(\mu_{x,i},\mathcal{D}_{i+m})\right)+O(\varepsilon)\:.
\]
This together with (\ref{eq:large ent of comp}) completes the proof
of the proposition (by starting from a smaller $\varepsilon$).
\end{proof}

\section{\label{sec:the function L}The function $L$ factors through $\Pi$}

In this section we assume that $\Phi$ is non-conformal and totally
irreducible. We also assume that $\dim\mu<2$. Exponential separation
is not needed.

\subsection{\label{subsec:Bourgain's-projection-theorem}Bourgain's projection
theorem (entropy variant)}

In the next sections we prove a result which requires, in its most
general form, the following theorem, whose proof will appear in more
quantitative form separately. It is an entropy version of Bourgain's
projection theorem, in which $\bdim$ denotes box (Minkowski) dimension
(see e.g. \cite{Ma}) and uniform entropy dimension is understood
in the sense of Definition \ref{def:uniform-entropy-dim}. 
\begin{thm}
\label{thm:bourgain-projection}For every $\delta>0$ there exists
a $\tau=\tau(\delta)>0$ such that the following holds. Let $\nu\in\mathcal{P}(\mathbb{R}^{2})$
have uniform entropy dimension $t\in(\delta,2-\delta)$, and let $E\subseteq\PR$
satisfy $\dim_{B}E>\delta$. Then for every $n>N(\delta,\nu,E)$ there
exists $W\in E$ (depending perhaps on $n$) such that 
\[
\frac{1}{n}H(\pi_{W}\nu,\mathcal{D}_{n})>\frac{1}{2}\cdot\frac{1}{n}H(\nu,\mathcal{D}_{n})+\tau\;.
\]
\end{thm}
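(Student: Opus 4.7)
The plan is to follow Bourgain's strategy for the original discretized projection theorem, substituting the Katz--Tao discretized sum-product estimate with the multidimensional inverse theorem for convolutions from \cite{HO2}. The argument splits into a multiscale reduction, followed by a single-scale core estimate.

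I argue by contrapositive. Assume that for every $W \in E$ we have
\[
\frac{1}{n} H(\pi_W \nu, \mathcal{D}_n) \leq \frac{1}{2n} H(\nu, \mathcal{D}_n) + \tau,
\]
with $\tau > 0$ to be chosen small in terms of $\delta$, and write $t = \frac{1}{n} H(\nu, \mathcal{D}_n) \in (\delta, 2-\delta)$. Apply Lemma \ref{lem:multiscale-entropy-formula} to $\pi_W \nu$ and combine it with Lemma \ref{lem:distribution-of-components-of-components} to identify components of $\pi_W \nu$ with $\pi_W$-images of components of $\nu$. Together with the uniform entropy dimension of $\nu$, this yields: for each $W \in E$ there is a $(1 - O(\sqrt{\tau}))$-fraction of pairs $(x,i)$ for which simultaneously $\frac{1}{m} H(\nu^{x,i}, \mathcal{D}_m) \in (t-\varepsilon, t+\varepsilon)$ and $\frac{1}{m} H(\pi_W \nu^{x,i}, \mathcal{D}_m) \leq t/2 + O(\sqrt{\tau})$. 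A Fubini/pigeonhole argument over $W \in E$ then extracts a single component $\theta = \nu^{x,i}$ for which the subset of $W \in E$ still verifying the small-projection-entropy bound has box dimension $> \delta/2$.

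The problem reduces to a single-scale discretized projection estimate: there exists $\tau_0 = \tau_0(\delta) > 0$ such that no $\theta \in \mathcal{P}([0,1]^2)$ with $\frac{1}{m} H(\theta, \mathcal{D}_m) \in (\delta, 2 - \delta)$ can satisfy
\[
\frac{1}{m} H(\pi_W \theta, \mathcal{D}_m) \leq \frac{1}{2m} H(\theta, \mathcal{D}_m) + \tau_0
\]
for every $W$ in a subset of $\PR$ of box dimension exceeding $\delta$. I would establish this by applying the inverse theorem of \cite{HO2} to the self-convolution $\theta * \check{\theta}$: the projection-entropy hypothesis, pushed through $\pi_W$ (which commutes with convolution on $\mathbb{R}$), forces $\pi_W(\theta * \check\theta)$ to have entropy close to that of $\pi_W \theta$, which by the inverse theorem means that at most intermediate scales the components of $\theta * \check\theta$ concentrate near translates of the line $W^{\perp}$. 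Requiring this simultaneously for a positive-box-dimensional family of $W$'s would constrain the local structure of $\theta * \check\theta$ to a set of arbitrarily small dimension at most scales, contradicting $\frac{1}{m} H(\theta * \check\theta, \mathcal{D}_m) \geq t$ (via \eqref{eq:entorpy-increase-by-convolution}).

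The main obstacle is the single-scale estimate above, which is the genuine entropy analog of Bourgain's theorem. The delicate bookkeeping required to extract a single subspace direction consistent across both a positive proportion of intermediate scales and a positive-box-dimensional subset of $E$, and to obtain an explicit quantitative dependence $\tau_0 = \tau_0(\delta)$, is the substantive technical work; this is precisely what the authors defer to a separate quantitative paper.
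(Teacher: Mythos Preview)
The paper does not prove this theorem: it is stated in Section~\ref{subsec:Bourgain's-projection-theorem} with the explicit remark that ``the proof will appear in more quantitative form separately.'' So there is no argument in the paper to compare your proposal against; you are, as your last paragraph acknowledges, sketching what such a proof might look like.

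Your multiscale reduction to a single-scale statement is reasonable and standard. The substantive gap is in the single-scale core. You propose to apply the inverse theorem of \cite{HO2} to the self-convolution $\theta*\check\theta$, saying that the hypothesis $\frac{1}{m}H(\pi_W\theta,\mathcal{D}_m)\le t/2+\tau_0$ ``forces $\pi_W(\theta*\check\theta)$ to have entropy close to that of $\pi_W\theta$.'' This step is not justified: from $\pi_W(\theta*\check\theta)=(\pi_W\theta)*(\pi_W\check\theta)$ and $\frac{1}{m}H(\pi_W\theta,\mathcal{D}_m)\approx t/2$ you only get a \emph{lower} bound $\frac{1}{m}H(\pi_W(\theta*\check\theta),\mathcal{D}_m)\gtrsim t/2$; nothing in the hypotheses prevents the convolution from increasing the projection entropy substantially (up to $\min\{1,t\}$). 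Without near-equality you cannot invoke the inverse theorem to get concentration of components near $W^\perp$, and the argument stalls. Bourgain's original proof (and its entropy reformulations) does not proceed via a direct self-convolution inverse-theorem application; it uses a more delicate iterative/tree-structured analysis to exploit the positive-dimensional family of bad directions simultaneously, and that is the genuinely hard content you have not supplied.
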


\begin{cor}
\label{cor:bourgain-projection-of-SA-measures}If $\mu$ is a self-affine
measure defined by a non-conformal, totally irreducible system, and
if $\dim\mu<2$, then there exists $\tau>0$ such that for all large
enough $n$, for all $W\in\PR$, 
\[
\frac{1}{n}H(\pi_{W}\mu,\mathcal{D}_{n})>\frac{1}{2}\dim\mu+\tau\;.
\]
\end{cor}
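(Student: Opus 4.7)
The plan is to upgrade Bourgain's projection theorem (Theorem \ref{thm:bourgain-projection}) to the \emph{dimensional} statement $\beta \geq \tfrac{1}{2}\alpha + \tau_0$ for some $\tau_0 > 0$, and then invoke Lemma \ref{lem:uni conv of ent} to propagate this uniform lower bound to every $W \in \PR$.

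Assume $\alpha = \dim\mu \in (0,2)$ (the case $\alpha=0$ is outside the scope of the statement, since projection entropies tend to $0$ and no $\tau>0$ can satisfy the conclusion). By Proposition \ref{prop:uniform ent dim}, $\mu$ has uniform entropy dimension $\alpha$, so it satisfies the hypotheses of Theorem \ref{thm:bourgain-projection}. By Theorem \ref{thm:Furstenberg-Oseledets} we have $\dim \eta^* > 0$, and hence every set of positive $\eta^*$-measure has positive Hausdorff, and therefore positive upper box, dimension. By Theorem \ref{thm:LY} combined with Lemma \ref{lem:entropy-dimension}, $\tfrac{1}{n} H(\pi_W \mu, \mathcal{D}_n) \to \beta$ for $\eta^*$-a.e.\ $W$, so Egoroff's theorem produces a measurable set $E_0 \subseteq \PR$ with $\eta^*(E_0) \geq 1/2$ on which this convergence is uniform; in particular $\dim_B E_0 > 0$.

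Applying Theorem \ref{thm:bourgain-projection} with $\nu = \mu$ and $E = E_0$ yields $\tau_0 > 0$ and $N_0 \in \mathbb{N}$ such that for every $n > N_0$ there is $W_n \in E_0$ with $\tfrac{1}{n} H(\pi_{W_n} \mu, \mathcal{D}_n) > \tfrac{\alpha}{2} + \tau_0$. Uniform convergence on $E_0$ forces $\tfrac{1}{n} H(\pi_{W_n} \mu, \mathcal{D}_n) - \beta \to 0$ along \emph{any} sequence $\{W_n\} \subseteq E_0$, so passing to the limit gives $\beta \geq \tfrac{\alpha}{2} + \tau_0$. Finally, Lemma \ref{lem:uni conv of ent} applied with $\varepsilon := \tau_0/2$ gives, for every sufficiently large $n$ and every $W \in \PR$,
\[
\tfrac{1}{n} H(\pi_W \mu, \mathcal{D}_n) > \beta - \tfrac{\tau_0}{2} \geq \tfrac{\alpha}{2} + \tfrac{\tau_0}{2},
\]
so the corollary holds with $\tau := \tau_0/2$.

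The main technical point is the second paragraph: translating Bourgain's single-$W$-per-scale conclusion into a dimensional bound on $\beta$. The Egoroff argument handles this cleanly; alternatively one could invoke a stronger form of Theorem \ref{thm:bourgain-projection} (standard in the literature) asserting the entropy lower bound for $\nu$-a.e.\ $W$ under any positive-dimensional $\nu$, applied directly to $\nu = \eta^*$. Every other ingredient, namely the uniform entropy dimension of $\mu$, the positivity of $\dim \eta^*$, and the Ledrappier--Young formula, has already been established earlier in the paper.
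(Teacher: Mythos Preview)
Your proof is correct and follows essentially the same route as the paper: both arguments use Theorem~\ref{thm:bourgain-projection} together with $\dim\eta^*>0$ to deduce $\beta\ge\tfrac12\alpha+\tau_0$, and then invoke Lemma~\ref{lem:uni conv of ent} for the uniformity in $W$. The only cosmetic difference is in how the bound on $\beta$ is extracted: you use Egoroff to get uniform convergence of $\tfrac1n H(\pi_W\mu,\mathcal D_n)\to\beta$ on a set $E_0$ of positive $\eta^*$-measure and then read off the inequality from the Bourgain witnesses $W_n\in E_0$, whereas the paper argues that for every positive-$\eta^*$-measure set $E$ the Bourgain inequality holds for some $W\in E$ and all large $n$, hence for $\eta^*$-a.e.\ $W$ it holds for arbitrarily large $n$, and then passes to the limit pointwise. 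Both are clean and equivalent; your version has the minor advantage of making the use of Proposition~\ref{prop:uniform ent dim} (needed to apply Theorem~\ref{thm:bourgain-projection}) explicit.
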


\begin{proof}
Since $\frac{1}{n}H(\mu,\mathcal{D}_{n})\rightarrow\dim\mu$ as $n\rightarrow\infty$,
and since $\dim\eta^{*}>0$, it follows that for every set $E\subseteq\PR$
of positive $\eta^{*}$-measure, for every $n$ large enough (depending
on $E$), there are $W\in E$ such that the inequality in the statement
above holds. This implies that for $\eta^{*}$-a.e. $W$ there exist
arbitrarily large $n$ for which the inequality holds. But for $\eta^{*}$-a.e.
$W$ we have $\frac{1}{n}H(\pi_{W}\mu,\mathcal{D}_{n})\rightarrow\beta$,
where $\beta\geq0$ is the $\eta^{*}$-a.s. constant dimension of
$\dim\pi_{W}\mu$ of $W$; therefore $\beta\geq\frac{1}{2}\dim\mu+\tau$.
The fact that one can take $n$ uniformly in $W\in\PR$ now follows
from Lemma \ref{lem:uni conv of ent} (at the cost of a slight loss
in $\tau$).
\end{proof}
\begin{rem*}
In the case that exponential separation holds, the conclusion of the
last corollary follows easily from Theorem \ref{thm:BHR-projections}
since when $\dim\mu<2$ we certainly have 
\[
\dim\pi_{W}\mu=\min\{1,\dim\mu\}>\frac{1}{2}\dim\mu\qquad\eta^{*}\text{-a.e. }W\;.
\]
Thus, Corollary \ref{cor:bourgain-projection-of-SA-measures} will
be used only when exponential separation is not assumed.
\end{rem*}

\subsection{\label{subsec:Transversality-of-cylinders}Transversality of cylinders}
\begin{prop}
\label{prop:singular pulbacks}Let $\mu$ be a self-affine measure
defined by a non-conformal and totally irreducible system, and suppose
that $\dim\mu<2$. Then for every $\rho>0$ there exists $\delta=\delta(\mu,\rho)>0$
such that the following holds.

Let $I,J\subset\PR$ be such that $I,J$ are open intervals, $L\xi(I),L\xi(J)>0$,
$d_{\PR}(I,J)>\rho$, and $\diam(I)<\delta$. Then the measures $\mu_{L^{-1}(I)}$
and $\mu_{L^{-1}(J)}$ are singular.
\end{prop}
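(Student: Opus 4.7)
The plan is a proof by contradiction using Corollary \ref{cor:bourgain-projection-of-SA-measures}, which supplies $\tau>0$ with $\beta \ge \tfrac12\alpha + \tau$. Suppose $\mu_{L^{-1}(I)}$ and $\mu_{L^{-1}(J)}$ were not mutually singular, and let $\sigma$ be their measure-theoretic minimum. Then $\nu:=\sigma/\|\sigma\|$ is a probability measure with $\nu \le c\,\mu_{L^{-1}(I)}$ and $\nu \le c\,\mu_{L^{-1}(J)}$ for some constant $c$, so $\nu \ll \mu$ with bounded Radon--Nikodym derivative. Since $\mu$ is exact dimensional, Lebesgue differentiation forces the local dimensions of $\nu$ to coincide $\nu$-a.e.\ with those of $\mu$, whence $\nu$ is exact dimensional with $\dim\nu = \alpha$. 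The contradiction will come from a lower bound $\dim\nu \ge 2\beta - O(\varepsilon)$ for a suitably small $\varepsilon$; combined with $\dim\nu = \alpha$ this forces $\beta \le \tfrac12\alpha + O(\varepsilon)$, impossible once $\varepsilon<\tau$.

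Fix a small $\varepsilon$ and a large $m$, and set $\delta:=\min(\delta(\varepsilon,m),\rho/4)$ where $\delta(\varepsilon,m)$ is provided by Lemma \ref{lem:ent of sections of restrictions}. If $\diam J \ge \delta$, cover $J$ by finitely many sub-intervals $J_k$ of diameter $<\delta$, write $\mu_{L^{-1}(J)}$ as the convex combination $\sum_k a_k\,\mu_{L^{-1}(J_k)}$, and note that singularity with respect to $\mu_{L^{-1}(I)}$ is preserved under convex combinations on one side; this reduces to the case $\diam J < \delta$. Fix $W_1^\perp \in I$ and $W_2^\perp \in J$; the hypothesis $d_{\PR}(I,J) > \rho$ together with $\diam I,\diam J < \delta \le \rho/4$ gives $d_{\PR}(W_1^\perp, W_2^\perp) > \rho/2$. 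Applying Lemma \ref{lem:ent of sections of restrictions} to $\mu_{L^{-1}(I)}$ with $W_1$ (and analogously to $\mu_{L^{-1}(J)}$ with $W_2$) yields, for all sufficiently large $n$,
\[
\tfrac{1}{m}H\bigl(\mu_{L^{-1}(I)},\mathcal{D}_{n+m}^{W_1\oplus W_1^\perp}\bigm|\mathcal{D}_n^{W_1\oplus W_1^\perp}\vee\pi_{W_1}^{-1}\mathcal{D}_{n+m}\bigr) \;\ge\; \beta - \varepsilon,
\]
and an analogous bound with $J,W_2$ in place of $I,W_1$. Both fiber-entropy bounds transfer to $\nu$ at the cost of an additional $O(\varepsilon)$ loss: by Lebesgue differentiation of the Radon--Nikodym derivative, for $\nu$-a.e.\ $x$ and large $n$ the component $\nu_{C_n(x)}$ is close in total variation to the corresponding component of the parent, and (\ref{eq:scale-n-entropy-continuity-in-TV}) then propagates the bound.

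It remains to combine the two transverse fiber bounds for $\nu$. By the entropy chain rule,
\[
H\bigl(\nu,\mathcal{D}_{n+m}^{W_1\oplus W_1^\perp}\bigm|\mathcal{D}_n^{W_1\oplus W_1^\perp}\bigr)
= H\bigl(\nu,\pi_{W_1}^{-1}\mathcal{D}_{n+m}\bigm|\mathcal{D}_n^{W_1\oplus W_1^\perp}\bigr)
+ H\bigl(\nu,\mathcal{D}_{n+m}^{W_1\oplus W_1^\perp}\bigm|\mathcal{D}_n^{W_1\oplus W_1^\perp}\vee\pi_{W_1}^{-1}\mathcal{D}_{n+m}\bigr),
\]
whose second summand is already $\ge (\beta-O(\varepsilon))m$ by the $W_1^\perp$-fiber bound for $\nu$. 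For the first summand (projection entropy in direction $W_1$), disintegrate $\nu$ along its $W_2^\perp$-fibers. The $W_2^\perp$-fiber bound implies that a generic fiber $\nu_y^{W_2^\perp}$ carries entropy $\ge (\beta-O(\varepsilon))m$ at the relevant scales; since $d_{\PR}(W_1^\perp, W_2^\perp) > \rho/2$, the restriction of $\pi_{W_1}$ to each $W_2^\perp$-line is bi-Lipschitz with constants depending only on $\rho$, so the projected fibers retain the same entropy bound modulo an additive $O_\rho(1)$ absorbed by taking $m$ large. Concavity of entropy transfers the bound to $\pi_{W_1}\nu$. Summing, $H(\nu,\mathcal{D}_{n+m}\mid\mathcal{D}_n) \ge (2\beta-O(\varepsilon))m$ for every large $n$, and iterating along an arithmetic progression of scales (together with Lemma \ref{lem:multiscale-entropy-formula}) yields $\dim_e\nu \ge 2\beta-O(\varepsilon)$, completing the contradiction. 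The principal technical delicacy is the transfer of the fiber bounds from the parents to $\nu$; the slice--projection step is routine once the bi-Lipschitz control is in place.
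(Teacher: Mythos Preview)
Your overall architecture matches the paper's: assume non-singularity, produce a common piece absolutely continuous with respect to $\mu$, show it has entropy dimension at least $2\beta - O(\varepsilon)$, and contradict $\beta \ge \tfrac12\alpha + \tau$ from Corollary~\ref{cor:bourgain-projection-of-SA-measures}. The differences are in the execution, and one of them is a real gap.

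\medskip
\textbf{The transfer step does not go through as written.} You claim that Lebesgue differentiation of the bounded density $f_I = d\nu/d\mu_{L^{-1}(I)}$ makes $\nu_{C_n(x)}$ close in total variation to $(\mu_{L^{-1}(I)})_{C_n(x)}$ for $\nu$-typical $x$, and that equation~(\ref{eq:scale-n-entropy-continuity-in-TV}) then ``propagates the bound''. The first part is correct, but the second is not: Lemma~\ref{lem:ent of sections of restrictions} gives a lower bound on the \emph{$\mu_{L^{-1}(I)}$-weighted average} of the component entropies $H\bigl((\mu_{L^{-1}(I)})_C,\mathcal{D}_{n+m}\bigr)$, whereas what you need is a lower bound on the \emph{$\nu$-weighted average}. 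Since $\nu = f_I\cdot\mu_{L^{-1}(I)}$ with $f_I$ merely bounded, the two averages can differ by as much as $(1-\beta)m$ when $\beta<1$: nothing prevents the components with large entropy from sitting where $f_I$ is small. (When $\beta=1$ a Markov argument saves you, but the proposition must cover $\beta<1$.) The fix is to exploit the fact, which you do not use, that Lemma~\ref{lem:ent of sections of restrictions} (and Lemma~\ref{lem:ent of proj of restrictions}) apply to $\mu_{E\cap L^{-1}(I)}$ for \emph{any} Borel $E$: approximate $f_I$ by a simple function and apply the lemma to each level set, or---more simply---do what the paper does.

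\medskip
\textbf{Comparison with the paper's route.} The paper avoids the transfer issue entirely. From non-singularity it extracts a Borel set $B$ on which the Radon--Nikodym derivative $d(\mu_{L^{-1}(I)})_E/d(\mu_{L^{-1}(J)})_E$ is nearly constant, so that $\mu^I:=\mu_{\Pi^{-1}(B)\cap L^{-1}(I)}$ and $\mu^J:=\mu_{\Pi^{-1}(B)\cap L^{-1}(J)}$ are $\varepsilon$-close in total variation. It then works in a \emph{single} coordinate system: pick $W$ with $W^\perp\in I$, apply Lemma~\ref{lem:ent of sections of restrictions} to $\mu^I$ (sections along $W^\perp$) and Lemma~\ref{lem:ent of proj of restrictions} to $\mu^J$ (projection to $W$, valid since $d_{\PR}(W^\perp,J)\ge\rho$). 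The TV-closeness transfers the projection bound from $\mu^J$ to $\mu^I$ at the level of the \emph{global} conditional entropy, via~(\ref{eq:scale-n-entropy-continuity-in-TV}), and the chain rule in $W\oplus W^\perp$ gives $\tfrac1m H(\mu^I,\mathcal{D}_{n+m}^{W\oplus W^\perp}\mid\mathcal{D}_n^{W\oplus W^\perp})\ge 2\beta-O(\varepsilon)$. This bypasses both your transfer step and your coordinate-switching: you apply Lemma~\ref{lem:ent of sections of restrictions} twice in two directions $W_1,W_2$ and then argue that $W_2^\perp$-section entropy becomes $W_1$-projection entropy by bi-Lipschitz projection of the thickened slices, which is correct but needlessly roundabout (and your write-up conflates the limit fibers $\nu_y^{W_2^\perp}$ with the thickened slices that Lemma~\ref{lem:ent of sections of restrictions} actually controls). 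The paper's use of Lemma~\ref{lem:ent of proj of restrictions} gives the projection bound directly.
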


\begin{proof}
We first give the proof under the simplifying assumptions (which are
the ones used in the proof of Theorem \ref{thm:main}) that exponential
separation holds and $\dim\mu\geq1$. In this case, $\dim\pi_{W}\mu=1$
for all $W\in\PR$.

Assume by contradiction that there exists $\rho>0$ for which the
proposition fails. We will show that this leads to a contradiction
with the assumption $\dim\mu<2$. Let $\epsilon>0$, $M_{1}=M_{1}(\varepsilon,\rho)$
as in Lemma \ref{lem:ent of proj of restrictions}, $M_{2}=M_{2}(\varepsilon)$
as in Lemma \ref{lem:ent of sections of restrictions}, $m\ge\max\{M_{1},M_{2}\}$,
and $\delta=\delta(\varepsilon,m)$ as in Lemma \ref{lem:ent of sections of restrictions}.
Since the proposition fails for $\rho$ there exist open intervals
$I,J\subset\PR$ such that $L\xi(I),L\xi(J)>0$, $d_{\PR}(I,J)>\rho$,
$\diam(I)<\delta$, and $\mu_{L^{-1}(I)},\mu_{L^{-1}(J)}$ are not
singular.

Since $\mu_{L^{-1}(I)},\mu_{L^{-1}(J)}$ are not singular, there exists
a Borel set $E\subset\mathbb{R}^{2}$ with $\mu_{L^{-1}(I)}(E)>0$
on which the measures are equivalent, that is $(\mu_{L^{-1}(I)})_{E}\sim(\mu_{L^{-1}(J)})_{E}$.
Therefore there exists a Borel set $B\subset E$ with,
\[
\mu_{L^{-1}(I)}(B),\mu_{L^{-1}(J)}(B)>0\text{ and }d_{TV}((\mu_{L^{-1}(I)})_{B},(\mu_{L^{-1}(J)})_{B})<\varepsilon\:.
\]
(we can take $B\subseteq E$ to be any Borel set of positive $(\mu_{L^{-1}(J)})_{E}$-measure
on which the Radon-Nikodym derivative $f=d\,(\mu_{L^{-1}(I)})_{E}/d\,(\mu_{L^{-1}(J)})_{E}$
is positive and sufficiently concentrated around one value, e.g. if
$f(B)\subseteq(c-\varepsilon',c+\varepsilon')$ for some $c>0$ and
$\varepsilon'>0$ that is small relative to $c$ and $\varepsilon$).
Set $\mu^{I}=\mu_{\Pi^{-1}(B)\cap L^{-1}(I)}$ and $\mu^{J}=\mu_{\Pi^{-1}(B)\cap L^{-1}(J)}$,
then $\mu^{I}=(\mu_{L^{-1}(I)})_{B}$ and $\mu^{J}=(\mu_{L^{-1}(J)})_{B}$,
and so $d_{TV}(\mu^{I},\mu^{J})<\varepsilon$.

Fix $W\in\PR$ with $W^{\perp}\in I$, let $N_{1}=N_{1}(\varepsilon,\rho,m,\Pi^{-1}(B),J,W)$
as in Lemma \ref{lem:ent of proj of restrictions}, $N_{2}=N_{2}(\varepsilon,m,\delta,\Pi^{-1}(B),I,W)$
as in Lemma \ref{lem:ent of sections of restrictions}, and $N\ge\max\{N_{1},N_{2}\}$.
By $d_{\PR}(W^{\perp},J)\ge\rho$ and our choices of parameters,
\begin{equation}
\frac{1}{m}H\left(\mu^{J},\pi_{W}^{-1}\mathcal{D}_{n+m}\mid\mathcal{D}_{n}^{W\oplus W^{\perp}}\right)\ge1-\varepsilon\text{ for \ensuremath{n\ge N}.}\label{eq:ent of proj of mu^J}
\end{equation}
Similarly, since $W^{\perp}\in I$,
\begin{equation}
\frac{1}{m}H\left(\mu^{I},\mathcal{D}_{n+m}^{W\oplus W^{\perp}}\mid\mathcal{D}_{n}^{W\oplus W^{\perp}}\vee\pi_{W}^{-1}\mathcal{D}_{n+m}\right)\ge1-\varepsilon\text{ for \ensuremath{n\ge N}.}\label{eq:ent of sections of mu^I}
\end{equation}

By (\ref{eq:ent of proj of mu^J}), (\ref{eq:ent of sections of mu^I}),
$d_{TV}(\mu^{I},\mu^{J})<\varepsilon$, and Equation (\ref{eq:scale-n-entropy-continuity-in-TV})
(see also note after it), it follows that for $n\ge N$ with $N$
sufficiently large,
\begin{eqnarray}
\frac{1}{m}H\left(\mu^{I},\mathcal{D}_{n+m}^{W\oplus W^{\perp}}\mid\mathcal{D}_{n}^{W\oplus W^{\perp}}\right) & = & \frac{1}{m}H\left(\mu^{I},\pi_{W}^{-1}\mathcal{D}_{n+m}\mid\mathcal{D}_{n}^{W\oplus W^{\perp}}\right)\nonumber \\
 & + & \frac{1}{m}H\left(\mu^{I},\mathcal{D}_{n+m}^{W\oplus W^{\perp}}\mid\mathcal{D}_{n}^{W\oplus W^{\perp}}\vee\pi_{W}^{-1}\mathcal{D}_{n+m}\right)\nonumber \\
 & \ge & \frac{1}{m}H\left(\mu^{J},\pi_{W}^{-1}\mathcal{D}_{n+m}\mid\mathcal{D}_{n}^{W\oplus W^{\perp}}\right)+1-O(\epsilon)\nonumber \\
 & \ge & 2-O(\epsilon)\:.\label{eq:ent close to 2}
\end{eqnarray}

Since $\mu^{I}\ll\mu$ and $\mu$ has exact dimension $\alpha$, it
follows that $\mu^{I}$ also has exact dimension $\alpha$. From this
and Lemma \ref{lem:multiscale-entropy-formula} it follows that for
$k$ large enough,
\[
\alpha\ge\frac{1}{k}H\left(\mu^{I},\mathcal{D}_{k}^{W}\right)-\varepsilon\ge\mathbb{E}_{0\le n\le k}\left(\frac{1}{m}H\left(\mu^{I},\mathcal{D}_{n+m}^{W\oplus W^{\perp}}\mid\mathcal{D}_{n}^{W\oplus W^{\perp}}\right)\right)-O(\varepsilon)\:.
\]
This together with (\ref{eq:ent close to 2}) shows that $\alpha\ge2-O(\epsilon)$.
Since this holds for every $\epsilon>0$ it implies a contradiction
with $\alpha<2$, which is what we wanted.

We now explain how to modify the proof for the general case, i.e.
without exponential separation. As above, assume by contradiction
that there exists $\rho>0$ for which the proposition fails. Let $\tau>0$
be as in Corollary \ref{cor:bourgain-projection-of-SA-measures},
so that $\dim\pi_{W}\mu>\frac{\alpha}{2}+\tau$ for all $W\in\PR$.
Let $\epsilon>0$ and carry out the argument above. Then on the right-hand
side of (\ref{eq:ent of proj of mu^J}) and (\ref{eq:ent of sections of mu^I})
we will have $\frac{\alpha}{2}+\tau-\varepsilon$; proceeding from
there we eventually get $\alpha\geq\alpha+2\tau-O(\varepsilon)$.
This holds for every $\epsilon>0$ and so yields the required contradiction.
\end{proof}

\subsection{\label{subsec:L-factors-through-Pi}$L$ factors through $\Pi$}
\begin{prop}
\label{prop:const directions}Let $\mu$ be a self-affine measure
defined by non-conformal and totally irreducible system, and suppose
that $\dim\mu<2$. Let $\xi=\int\xi_{x}d\mu(x)$ denote the decomposition
of $\xi$ with respect to the partition $\{\Pi^{-1}(x)\}_{x\in X}$.
Then for $\mu$-a.e. $x$, the function $L|_{\Pi^{-1}(x)}$ is $\xi_{x}$-a.s.
constant.
\end{prop}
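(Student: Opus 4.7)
Let $\nu_x := L\xi_x \in \mathcal{P}(\PR)$ denote the push-forward of $\xi_x$ by $L$. The conclusion is equivalent to saying $\nu_x$ is a Dirac mass for $\mu$-a.e. $x$. I will argue by contradiction, using Proposition~\ref{prop:singular pulbacks} as the only substantive input. Suppose the assertion fails; then the set
\[
E_0 = \{x \in X : \nu_x \text{ is not a Dirac mass}\}
\]
has positive $\mu$-measure.

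First I would build a countable family of ``witness'' pairs of intervals. Fix a countable basis $\{U_n\}_{n\ge 1}$ of open intervals for $\PR$, and let $\mathcal{F}$ be the countable collection of ordered pairs $(U,V)$ drawn from this basis for which there exists some $\rho > 0$ with $d_{\PR}(U,V) > \rho$ and $\max\{\diam U, \diam V\} < \delta(\rho)$, where $\delta(\cdot)$ is the function from Proposition~\ref{prop:singular pulbacks}. For each $(U,V)\in\mathcal{F}$ set
\[
E_{U,V} = \{x \in X : \nu_x(U) > 0 \text{ and } \nu_x(V) > 0\}.
\]
I claim $E_0 \subseteq \bigcup_{(U,V)\in\mathcal{F}} E_{U,V}$. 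Indeed, for $x \in E_0$ choose distinct $p,q \in \supp\nu_x$, put $\rho = d_{\PR}(p,q)/3$, and pick basis elements $U\ni p$, $V\ni q$ with $\max\{\diam U,\diam V\} < \min\{\rho, \delta(\rho)\}$; then $d_{\PR}(U,V) > d_{\PR}(p,q) - 2\rho = \rho$, so $(U,V)\in\mathcal{F}$, and $\nu_x(U),\nu_x(V)>0$ since $U,V$ are open neighborhoods of points in $\supp\nu_x$. By countability, some pair $(I,J)\in\mathcal{F}$ satisfies $\mu(E_{I,J}) > 0$.

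The contradiction is then immediate from the disintegration $\xi = \int \xi_x\, d\mu(x)$. For any Borel $A \subseteq X$,
\[
\mu_{L^{-1}(I)}(A) = \frac{\xi(L^{-1}(I)\cap \Pi^{-1}(A))}{\xi(L^{-1}(I))} = \frac{1}{\xi(L^{-1}(I))}\int_A \nu_x(I)\, d\mu(x),
\]
so $\mu_{L^{-1}(I)} \ll \mu$ with density $f_I(x) = \nu_x(I)/\xi(L^{-1}(I))$, and analogously $\mu_{L^{-1}(J)}$ has density $f_J(x) = \nu_x(J)/\xi(L^{-1}(J))$. Two measures absolutely continuous with respect to $\mu$ are mutually singular iff the product of their densities vanishes $\mu$-a.e. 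Both $f_I$ and $f_J$ are strictly positive on $E_{I,J}$, hence $\mu\{f_I f_J > 0\} \ge \mu(E_{I,J}) > 0$, so $\mu_{L^{-1}(I)}$ and $\mu_{L^{-1}(J)}$ are not mutually singular. This contradicts Proposition~\ref{prop:singular pulbacks} applied to $(I,J)$, which satisfies its hypotheses by construction of $\mathcal{F}$.

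The only genuine difficulty in this argument is the simultaneous fulfillment of the separation condition $d_{\PR}(U,V) > \rho$ and the smallness condition $\diam U < \delta(\rho)$ required by Proposition~\ref{prop:singular pulbacks}; this is handled by first fixing $\rho$ in terms of the distance between two points of $\supp \nu_x$ and then shrinking the basis neighborhoods around them. Everything else is routine manipulation of the disintegration, and the deep content lies entirely in Proposition~\ref{prop:singular pulbacks}.
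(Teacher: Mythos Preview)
Your proof is correct and follows essentially the same approach as the paper: both rely on Proposition~\ref{prop:singular pulbacks} as the sole substantive input and use the disintegration $\xi=\int\xi_x\,d\mu(x)$ to translate the singularity of $\mu_{L^{-1}(I)}$ and $\mu_{L^{-1}(J)}$ into the statement that $\xi_x$ cannot simultaneously charge $L^{-1}(I)$ and $L^{-1}(J)$. Your packaging via Radon--Nikodym densities (noting $d\mu_{L^{-1}(I)}/d\mu = \nu_x(I)/\xi(L^{-1}I)$) is slightly more direct than the paper's version, which instead builds a countable separating family $\{(I_k,J_k)\}$ in advance and argues pointwise that for $\xi$-a.e.\ $\omega$ one of $\xi_\omega(L^{-1}I_k),\xi_\omega(L^{-1}J_k)$ vanishes for every $k$; but the content is the same.
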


\begin{rem}
This implies that there is a Borel function $\widehat{L}:X\rightarrow\PR$,
defined $\mu$-a.e., such that $\widehat{L}(\Pi\omega)=L(\omega)$
$\xi$-a.s. We shall write $L$ instead of $\widehat{L}$ from now
on, which one is intended will be clear from the context.
\end{rem}

\begin{proof}
For $\omega\in\Lambda^{\mathbb{N}}$ let $\xi_{\omega}=\xi_{\Pi\omega}$,
which is defined $\xi$-a.e. It suffices to show that for $\xi$-a.e.
$\omega\in\Lambda^{\mathbb{N}}$ the measure $L\xi_{\omega}$ is a
mass point. It follows by Proposition \ref{prop:singular pulbacks}
that there exist sequences $\{I_{k}\}_{k=1}^{\infty}$ and $\{J_{k}\}_{k=1}^{\infty}$
such that,
\begin{enumerate}
\item $I_{k},J_{k}\subset\PR$ are open intervals with $L\xi(I_{k}),L\xi(J_{k})>0$
for $k\ge1$;
\item For every distinct $\overline{x},\overline{y}\in\supp(L\xi)$ there
exists $k\ge1$ with $\overline{x}\in I_{k}$ and $\overline{y}\in J_{k}$;
\item $\mu_{L^{-1}(I_{k})}$ and $\mu_{L^{-1}(J_{k})}$ are singular for
$k\ge1$.
\end{enumerate}
For each $k\ge1$ there exists a Borel set $E_{k}\subset\mathbb{R}^{2}$
with $\mu_{L^{-1}(I_{k})}(E_{k})=0$ and $\mu_{L^{-1}(J_{k})}(E_{k}^{c})=0$.
It holds that,
\begin{align*}
0 & =\xi(L^{-1}(I_{k}))\cdot\mu_{L^{-1}(I_{k})}(E_{k})\\
 & =\xi(L^{-1}(I_{k})\cap\Pi^{-1}(E_{k}))\\
 & =\int_{\Pi^{-1}(E_{k})}\xi_{\omega}(L^{-1}(I_{k}))\:d\xi(\omega)\;,
\end{align*}
and similarly
\[
\int_{\Pi^{-1}(E_{k}^{c})}\xi_{\omega}(L^{-1}(J_{k}))\:d\xi(\omega)=0\:.
\]
It follows that for $\xi$-a.e. $\omega\in\Lambda^{\mathbb{N}}$,
for each $k\ge1$
\begin{equation}
\xi_{\omega}(L^{-1}(I_{k}))=0\text{ or }\xi_{\omega}(L^{-1}(J_{k}))=0\:.\label{eq:zero measure of pulbacks}
\end{equation}
Additionally, it is clear that for $\xi$-a.e. $\omega\in\Lambda^{\mathbb{N}}$
\begin{equation}
\supp(L\xi_{\omega})\subset\supp(L\xi)\:.\label{eq:containment of supports}
\end{equation}
Fix $\omega\in\Lambda^{\mathbb{N}}$ which satisfies (\ref{eq:zero measure of pulbacks})
and (\ref{eq:containment of supports}). Assume by contradiction that
$L\xi_{\omega}$ is not a mass point. Then there exist distinct
\[
\overline{x},\overline{y}\in\supp(L\xi_{\omega})\subset\supp(L\xi)\;,
\]
and so there exists $k\ge1$ with $\overline{x}\in I_{k}$ and $\overline{y}\in J_{k}$.
Since $\overline{x},\overline{y}\in\supp(L\xi_{\omega})$ and $I_{k},J_{k}$
are open,
\[
\xi_{\omega}(L^{-1}(I_{k}))>0\text{ and }\xi_{\omega}(L^{-1}(J_{k}))>0\;,
\]
which contradicts (\ref{eq:zero measure of pulbacks}). This shows
that $L\xi_{\omega}$ is a mass point, which completes the proof of
the proposition.
\end{proof}

\subsection{\label{subsec:Projection-of-components-revisited}Projection of components,
revisited}

We continue to assume non-conformality, total irreducibility, and
$\dim\mu<2$.

As we discussed in Section \ref{subsec:Projections-of-mu-cylinders},
with the assumptions above, most cylinders of $\mu$ project well
in most directions $W\in\PR$ at the scale of their long axis. In
fact, they project well in a direction $W$ precisely when $W^{\perp}$
isn't too close to the long axis of the cylinder; that is an obstruction
because in that case, at the scale of their long axis, the cylinder
projects to essentially a point mass on $W$. 

Recall that $\beta$ is the dimension of the projection of $\mu$
to $\eta^{*}$-typical subspaces. We saw in Section \ref{subsec:Projections-mu-components}
that for a fixed $W\in\PR$, with high probability, a random component
projects well to $W$ in the sense that its normalized entropy at
small scales is close to $\beta$. This was proved essentially by
covering dyadic cells with cylinders. We now want to get finer information
and identify, for most components, which directions are the exceptions.
This is made possible by the result of the previous section: $\mu_{x,n}$
will project well to all lines except those that are close to $L(x)^{\perp}$.
This is basically proved by applying Lusin's theorem to $L:X\rightarrow\PR$
to conclude~that for most small enough cells $\mathcal{D}_{n}(x)$,
the function $L(x)$ is almost constant on the cell. This means that
most cylinders that cover the cell project well to every line except
those that are close to $L(x)^{\perp}$. 

Recall the definition of $\Psi_{n}$ from Section \ref{sub:random-cylinder-measures},
and that for $\omega\in\Lambda^{\mathbb{N}}$ we write $\Psi_{n}(\omega)$
for the unique $w\in\Psi_{n}$ for which $\omega\in[w]$.
\begin{lem}
\label{lem:without comp of comp}For $\varepsilon>0$, $m\ge M(\varepsilon)\ge1$,
and $n\ge N(\varepsilon,m)\ge1$,
\[
\mathbb{P}_{i=n}\left(\underset{W\notin B(L(x),\varepsilon)}{\inf}\;\frac{1}{m}H(\pi_{W^{\perp}}\mu_{x,i},\mathcal{D}_{i+m})>\beta-\varepsilon\right)>1-\varepsilon\:.
\]
\end{lem}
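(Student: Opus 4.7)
The strategy refines Lemma \ref{lem:ent of proj of comp mostly large } by using Proposition \ref{prop:const directions} (which says $L$ descends to a measurable function $X\to\PR$) to \emph{locate} the exceptional directions for each component: they must lie in the small neighborhood $B(L(x),\varepsilon)$ of the direction $L(x)$ attached to the base point. Fix $\varepsilon>0$, take $m$ large relative to $\varepsilon$, and $n$ large relative to $m$. By Lusin's theorem applied to the $\mu$-a.e.\ defined function $L$, there is a compact $K\subseteq X$ with $\mu(K)>1-\varepsilon^{2}$ on which $L$ is uniformly continuous; pick $\delta>0$ such that $x,y\in K$ and $\|x-y\|<\delta$ imply $d_{\PR}(L(x),L(y))<\varepsilon/10$, and choose an integer $k$ with $2^{-k}\ll\delta$ and $k\ll m$.

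Next, decompose $\mu=\mathbb{E}(\varphi_{\mathbf{I}(n+k)}\mu)$; each contributing cylinder has diameter $\Theta(2^{-n-k})$. Call a cylinder $\varphi_{\mathbf{i}}\mu$ \emph{good} (with witness $x$) if it meets $K$, is contained in a single level-$n$ dyadic cell $\mathcal{D}_n(x)$ with $x\in K$, and satisfies $d_{\PR}(L(A_{\mathbf{i}}),L(x))<\varepsilon/5$. The total $\mu$-mass of non-good cylinders is $O(\varepsilon^{2})$: escape from $K$ contributes $\varepsilon^{2}$; straddling a cell boundary contributes $O(\varepsilon^{2})$ by Lemma \ref{lem:dyadic-cells-with-small-boundaries} applied with parameter $2^{-k}$; and non-stabilization of $L(A_{\Psi_{n+k}(\omega)})$ to within $\varepsilon/10$ of $L(\omega)$ contributes $<\varepsilon^{2}$ for $n$ large, by Theorem \ref{thm:Furstenberg-Oseledets}(\ref{enu:convergence-of-L}). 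By Markov, for a $1-O(\varepsilon)$ fraction of components $\mu_{x,n}$ (with $x\in K$), all but an $O(\varepsilon)$ fraction of the mass of $\mu_{x,n}$ is a convex combination of good cylinders witnessed by this particular $x$.

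For such a typical $x$ and any $W\notin B(L(x),\varepsilon)$, the triangle inequality in $\PR$ gives $d_{\PR}(L(A_{\mathbf{i}}),W)>\varepsilon/2$ for every good cylinder contributing to $\mu_{x,n}$, i.e.\ $d_{\PR}(L(A_{\mathbf{i}}),(W^{\perp})^{\perp})>\varepsilon/2$. Lemma \ref{lem:projecting-cylinders-away-from-L-perp}, applied with projection direction $W^{\perp}$ and separation $\rho=\varepsilon/2$, followed by Lemma \ref{lem:uni conv of ent}, yields
\[
\tfrac{1}{m}H\bigl(\pi_{W^{\perp}}\varphi_{\mathbf{i}}\mu,\mathcal{D}_{n+k+m}\bigr)\ge \tfrac{1}{m}H\bigl(\pi_{A_{\mathbf{i}}^{*}W^{\perp}}\mu,\mathcal{D}_m\bigr)-O(\varepsilon)\ge\beta-O(\varepsilon),
\]
uniformly in $W$ and in the good cylinder. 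Concavity of entropy over the good decomposition of $\mu_{x,n}$ transfers the same bound to $\mu_{x,n}$ (the $O(\varepsilon)$ exceptional mass contributes only a lower-order additive loss), and the elementary inequality $H(\nu,\mathcal{D}_{n+k+m})-H(\nu,\mathcal{D}_{n+m})\le k$ (refining a $1$-dimensional partition by $k$ dyadic levels) lets us drop the scale to $n+m$ at a cost of $k/m<\varepsilon$. Starting from a suitably smaller $\varepsilon$ concludes the proof.

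The only real difficulty is the simultaneous control of three distinct small events at the cylinder scale (escape from $K$, straddling a cell boundary, and non-stabilization of $L(A_{\mathbf{i}})$); each must be made small enough that the subsequent Markov step on the distribution of components still leaves a $1-O(\varepsilon)$ fraction of $x$ for which the decomposition of $\mu_{x,n}$ is dominated by good cylinders with the \emph{same} witness $x$. The crucial new input beyond Section \ref{subsec:Projections-of-mu-cylinders} is that Proposition \ref{prop:const directions} makes $L(\omega)$ depend only on $\Pi(\omega)=x$, so the ``direction'' of the component is well-defined and the estimate becomes uniform over all $W$ outside a small ball around that single direction.
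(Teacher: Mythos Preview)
Your proposal is correct and follows essentially the same route as the paper's proof: apply Lusin's theorem to $L$, cover components by $\Psi_{n+k}$-cylinders that avoid cell boundaries (via Lemma~\ref{lem:dyadic-cells-with-small-boundaries}), use the a.s.\ convergence $L(A_{\Psi_{n+k}(\omega)})\to L(\omega)$ together with Proposition~\ref{prop:const directions} to pin each good cylinder's direction near $L(x)$, and then invoke Lemmas~\ref{lem:projecting-cylinders-away-from-L-perp} and~\ref{lem:uni conv of ent} uniformly over $W\notin B(L(x),\varepsilon)$ before applying concavity and absorbing the $k/m$ scale shift. The only differences from the paper are cosmetic: the paper introduces an auxiliary parameter $\rho$ (playing the role of your $\varepsilon/10$) and handles the three bad events sequentially rather than bundling them into a single ``good cylinder'' definition, but the logical structure and the ingredients used are identical.
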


\begin{proof}
Let $\varepsilon>0$, let $\rho>0$ be small with respect to $\varepsilon$,
let $k\ge1$ be large with respect to $\rho$, let $m\ge1$ be large
with respect to $k$, and let $n\ge1$ be large with respect to $m$.
By Lemma \ref{lem:dyadic-cells-with-small-boundaries}, for each $\delta>0$
there exists $\sigma>0$, which does not depend on $n$, such that
\[
\mu\left(\cup_{D\in\mathcal{D}_{n}}(\partial D)^{(2^{-n}\sigma)}\right)<\delta\:.
\]
From this and by assuming that $k$ is sufficiently large with respect
to $\rho$, it follows that $\mu(E)>1-\rho$, where $E$ is the set
of all $x\in\mathbb{R}^{2}$ for which there exist distinct $w_{x,1},...,w_{x,\ell_{x}}\in\Psi_{n+k}$,
$\theta_{x}\in\mathcal{P}(\mathbb{R}^{2})$, $c_{x}>0$ and $0\le c'_{x}<\rho$,
such that
\begin{equation}
\mu_{x,n}=c_{x}\sum_{j=1}^{\ell_{x}}p_{w_{x,j}}\cdot\varphi_{w_{x,j}}\mu+c'_{x}\theta_{x},\label{eq:conv comb of comp}
\end{equation}
(here $c_{x}=(1-c'_{x})/\sum_{j=1}^{\ell_{x}}p_{w_{x,j}}$ is a normalizing
constant). 

By the definition of $L$ and by assuming that $n$ is large enough,
\[
\xi\left\{ \omega\::\:d_{\PR}(L(A_{\Psi_{n+k}(\omega)}),L(\omega))<\rho\right\} >1-\rho^{4}\:.
\]
From this we get $\sum_{w\in\mathcal{W}}p_{w}>1-\rho^{2}$, where
$\mathcal{W}$ is the set of all $w\in\Psi_{n+k}$ with
\[
\xi_{[w]}\left\{ \omega\::\:d_{\PR}(L(A_{w}),L(\omega))<\rho\right\} >1-\rho\:.
\]
Now since $\Pi\xi_{[w]}=\varphi_{w}\mu$ and $L$ factors through
$\Pi$,
\[
\varphi_{w}\mu\left\{ x\::\:d_{\PR}(L(A_{w}),L(x))<\rho\right\} >1-\rho\text{ for }w\in\mathcal{W}\:.
\]
Hence by $\sum_{w\in\mathcal{W}}p_{w}>1-\rho^{2}$ we can also require,
\begin{equation}
\varphi_{w_{x,j}}\mu\left\{ y\::\:d_{\PR}(L(A_{w_{x,j}}),L(y))<\rho\right\} >1-\rho\text{ for }x\in E\text{ and }1\le j\le l_{x}\;,\label{eq:close to dir of cyl}
\end{equation}
and still have $\mu(E)>1-O(\rho)$ and $c'_{x}=O(\rho)$ for $x\in E$.

Since $L$ is Borel measurable and by Lusin's theorem, for every $\delta>0$
there exists a Borel set $F\subset\mathbb{R}^{2}$ such that $\mu(F)>1-\delta$
and $L|_{F}$ is uniformly continuous. From this, since $\supp(\varphi_{w_{x,j}}\mu)\subset\overline{\mathcal{D}_{n}(x)}$
for $x\in E$ and $1\le j\le l_{x}$, and by assuming that $n$ is
large enough, we may also require
\begin{equation}
\varphi_{w_{x,j}}\mu\left\{ y\::\:d_{\PR}(L(x),L(y))<\rho\right\} >1-\rho\text{ for }x\in E\text{ and }1\le j\le l_{x}\;,\label{eq:close to dir of x}
\end{equation}
and still have $\mu(E)>1-O(\rho)$ and $c'_{x}=O(\rho)$ for $x\in E$.

Since $\mu(E)>1-O(\rho)$ it suffices to show that 
\[
\frac{1}{m}H(\pi_{W^{\perp}}\mu_{x,n},\mathcal{D}_{n+m})>\beta-O(\rho)\text{ for all }x\in E\text{ and }W\notin B(L(x),\varepsilon)\:.
\]
Let $x\in E$, $W\notin B(L(x),\varepsilon)$, and $1\le j\le l_{x}$.
By (\ref{eq:close to dir of cyl}) and (\ref{eq:close to dir of x})
it follows that $d_{\PR}(L(A_{w_{x,j}}),L(x))<2\rho$, and so $W\notin B(L(A_{w_{x,j}}),\frac{\varepsilon}{2})$.
Now by Lemmas \ref{lem:projecting-cylinders-away-from-L-perp} and
\ref{lem:uni conv of ent}, and by assuming that $m$ is large enough
with respect to $k$ and $\epsilon$,
\[
\frac{1}{m}H\left(\pi_{W^{\perp}}\varphi_{w_{x,j}}\mu,\mathcal{D}_{n+m}\right)\ge\frac{1}{m}H\left(\pi_{W^{\perp}}\varphi_{w_{x,j}}\mu,\mathcal{D}_{n+k+m}\right)-\rho\ge\beta-O(\rho)\:.
\]
From this, the decomposition (\ref{eq:conv comb of comp}), the estimate
$c'_{x}=O(\rho)$, and the concavity of entropy, we get
\[
\frac{1}{m}H(\pi_{W^{\perp}}\mu_{x,n},\mathcal{D}_{n+m})>\beta-O(\rho)\;,
\]
which completes the proof of the lemma.
\end{proof}
We reformulate this as a statement which holds for components of components.
Recall from Section \ref{sub:q-adic-partitions} that $\lambda_{n}$
is the uniform measure on $\mathcal{N}_{n}=\{1,\ldots,n\}$.
\begin{lem}
\label{lem:proj of comp of comp}For $\varepsilon>0$, $m\ge M(\varepsilon)\ge1$,
$k\ge1$, and $n\ge N(\varepsilon,m,k)\ge1$ we have $\lambda_{n}\times\mu(F)>1-\varepsilon$,
where $F$ is the set of all $(i,x)\in\mathcal{N}_{n}\times\mathbb{R}^{2}$
such that
\begin{equation}
\mathbb{P}_{i\le j\le i+k}\left(\underset{W\notin B(L(x),\varepsilon)}{\inf}\;\frac{1}{m}H\left(\pi_{W^{\perp}}((\mu_{x,i})_{y,j}),\mathcal{D}_{j+m}\right)>\beta-\varepsilon\right)>1-\varepsilon\:.\label{eq:proj of comp of comp}
\end{equation}
\end{lem}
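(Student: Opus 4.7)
The plan is to reduce to Lemma \ref{lem:without comp of comp} (which already handles single components) by exploiting two facts: (a) a component-of-a-component $(\mu_{x,i})_{y,j}$ is $\mu$-a.e.\ just the ordinary component $\mu_{y,j}$ (since $\mathcal{D}_j(y)\subseteq\mathcal{D}_i(x)$ for $j\ge i$), and (b) the distribution of $(y,j)$ arising this way is $O(k/n)$-close in total variation to the usual distribution of scale-$j$ components with $j$ uniform in $\{1,\dots,n+k\}$, by Lemma \ref{lem:distribution-of-components-of-components}. The only real new issue is that Lemma \ref{lem:without comp of comp} gives an entropy bound valid for $W\notin B(L(y),\varepsilon')$ (where $y$ is the center of the inner component), whereas the present lemma asks for the bound on $W\notin B(L(x),\varepsilon)$ (where $x$ is the center of the outer component). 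Bridging this gap is the content of step \textbf{(III)} below and is the main technical point.

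\textbf{(I) Averaged single-scale bound.} Fix $\varepsilon''\ll\varepsilon$ (say $\varepsilon''=\varepsilon/C$ for a suitable constant $C$), and let $m\ge M(\varepsilon'')$ be as in Lemma \ref{lem:without comp of comp}. Applying that lemma at each scale $j$ with $j\ge N(\varepsilon'',m)$ and averaging over $j\in\{1,\dots,n+k\}$, we obtain, for $n$ large,
\[
(\lambda_{n+k}\times\mu)(E)\ge 1-2\varepsilon'',
\]
where $E$ consists of those pairs $(j,y)$ for which
\[
\inf_{W\notin B(L(y),\varepsilon'')}\tfrac{1}{m}H(\pi_{W^{\perp}}\mu_{y,j},\mathcal{D}_{j+m})>\beta-\varepsilon''.
\]

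\textbf{(II) Transfer to components of components.} By Lemma \ref{lem:distribution-of-components-of-components}, the distribution on $(j,y)$ obtained by first sampling $i\in\mathcal{N}_n$ uniformly, then a component $\mu_{x,i}$, then $j\in\{i,\dots,i+k\}$ uniformly, then a component $(\mu_{x,i})_{y,j}$, differs in total variation from $\lambda_{n+k}\times\mu$ (re-expressed as the distribution of scale-$j$ components) by $O(k/n)$. Provided $n\gg k/\varepsilon''$, the resulting distribution still puts mass $\ge 1-3\varepsilon''$ on the event that $(\mu_{x,i})_{y,j}=\mu_{y,j}$ (i.e., $\mathcal{D}_j(y)\subseteq\mathcal{D}_i(x)$) \emph{and} $(j,y)\in E$.

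\textbf{(III) Replacing $L(y)$ by $L(x)$ via Lusin.} By Proposition \ref{prop:const directions}, $L$ descends to a Borel function $L\colon X\to\PR$ defined $\mu$-a.e. By Lusin's theorem, for any $\delta>0$ there is a compact set $A\subseteq X$ with $\mu(A)>1-\delta$ on which $L$ is uniformly continuous; thus there exists $i_0=i_0(\delta,\varepsilon)$ such that
\[
x,y\in A,\ |x-y|\le\sqrt{2}\,2^{-i_0}\ \Longrightarrow\ d_{\PR}(L(x),L(y))<\varepsilon/2.
\]
Take $\delta$ small enough (say $\delta=\varepsilon''$); then by Markov's inequality applied to $\mu(A)=\int\mu_{x,i}(A)\,d\mu(x)$, outside a set of $(i,x)$ of $\lambda_n\times\mu$-measure $O(\sqrt{\delta})$ we have both $x\in A$ and $\mu_{x,i}(A)\ge 1-\sqrt{\delta}$, so that $y\in A$ with high conditional probability. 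For $n$ large we may also discard the contribution from $i<i_0$ since it has measure $i_0/n=o(1)$. For all $(i,x,j,y)$ surviving these restrictions, $y\in\mathcal{D}_i(x)$ gives $|x-y|\le\sqrt{2}\,2^{-i_0}$, hence $d_{\PR}(L(x),L(y))<\varepsilon/2$, and therefore $B(L(y),\varepsilon'')\subseteq B(L(x),\varepsilon)$ as soon as $\varepsilon''\le\varepsilon/2$. Combining with \textbf{(II)}, the entropy inequality in the statement holds for the inner component $(\mu_{x,i})_{y,j}$ with $L(x)$ in place of $L(y)$.

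\textbf{(IV) Conclusion.} The set of ``good'' $(i,x,j,y)$ has probability $\ge 1-O(\varepsilon'')$ under the four-step sampling of \textbf{(II)}. Marginalizing over $(j,y)$ (i.e.\ applying Fubini and Markov once more) yields that, outside a set of $(i,x)$ of $\lambda_n\times\mu$-measure $<\varepsilon$, the inner probability $\mathbb{P}_{i\le j\le i+k}$ in \eqref{eq:proj of comp of comp} exceeds $1-\varepsilon$, which is the claim. Choosing the constant $C$ in $\varepsilon''=\varepsilon/C$ large enough absorbs all the losses of the form $O(\varepsilon''),O(\sqrt{\varepsilon''}),O(k/n)$ into the single parameter $\varepsilon$.
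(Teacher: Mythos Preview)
Your proof is correct and follows essentially the same approach as the paper: reduce to the $L(y)$ version via Lusin's theorem, then invoke Lemma~\ref{lem:without comp of comp} together with Lemma~\ref{lem:distribution-of-components-of-components}. The paper's own argument is terser and orders the steps slightly differently (it first uses Lusin to show most pairs $(i,x)$ have $d_{\PR}(L(x),L(y))<\varepsilon$ for most $(j,y)$, thereby reducing to the statement with $L(y)$ in place of $L(x)$, and then appeals to the two lemmas), but the substance is identical.
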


\begin{proof}
As noted above, since $L$ is Borel measurable and by Lusin's theorem,
for every $\varepsilon>0$ there exists a Borel set $E\subset\mathbb{R}^{2}$
such that $\mu(E)>1-\varepsilon$ and $L|_{E}$ is uniformly continuous.
From this it follows easily that for every $\varepsilon>0$, $k\ge1$,
and $n\ge1$ large enough,
\[
\lambda_{n}\times\mu\left\{ (i,x)\::\:\nu_{i,i+k}\times\mu_{x,i}\left\{ (j,y)\::\:d_{\PR}(L(x),L(y))<\varepsilon\right\} >1-\varepsilon\right\} >1-\varepsilon\:.
\]
Hence it suffices to prove the lemma with $L(y)$ appearing in (\ref{eq:proj of comp of comp})
instead of $L(x)$. This together with Lemmas \ref{lem:without comp of comp}
and \ref{lem:distribution-of-components-of-components} completes
the proof.
\end{proof}

\section{\label{sec:Concentration-near-lines}Some algebraic considerations}

This section collects some algebraic facts that will play a role in
the proof of the entropy growth theorem in the next section. We assume
that $\Phi$ is non-conformal and totally irreducible.

Throughout this section we work in the vector space of all affine
maps $A_{2,2}^{vec}$, which contains the group $A_{2,2}$ of invertible
affine maps as a proper subset. We fix a norm on $A_{2,2}^{vec}$
and refer to it whenever we speak of bounded sets of affine maps,
the diameter of such sets, etc.

Recall that for $x\in\mathbb{R}^{2}\setminus\{0\}$ we write $\overline{x}=\mathbb{R}x\in\PR$
for the line (or direction) determined by it, and sometimes write
the elements of $\PR$ as $\overline{v}$ even when $v$ is not specified.Smilarly,
for a map $f:Y\rightarrow\mathbb{R}^{2}$ we write $\overline{f}:Y\setminus f^{-1}(0)\rightarrow\PR$
for the map $\overline{f}(x)=\overline{f(x)}$, and sometimes write
$\overline{f}$ for a function whose range is $\PR$ even if it does
not arise in this way from a map $f$ with range $\mathbb{R}^{2}$. 

\subsection{\label{subsec:Families-of-affine-maps-evaluating-to-line}Families
of affine maps which evaluate to lines}

In this section, which is essentially linear algebra, we consider
the evaluation operation $\psi\rightarrow\psi(x)$ which for a fixed
$x\in\mathbb{R}^{2}$ sends an affine map $\psi\in A_{2,2}^{vec}$
to a point in $\mathbb{R}^{2}$. We study the situation where a family
$\Psi$ of affine maps is mapped by the evaluation operation into
an affine line (which may depend on $x$), and show that if this is
the case, then the direction of the line must depend on $x$ in an
affine manner. We then obtain approximate versions of this statement.

For $\Psi\subseteq A_{2,2}^{vec}$ and $x\in\mathbb{R}^{2}$ we write
$\Psi x=\{\psi x\,:\,\psi\in\Psi\}$.
\begin{lem}
\label{lem:sets-which-evaluate-to-lines}Let $\emptyset\neq\Psi\subseteq A_{2,2}^{vec}$
be a family of affine maps and $Y\subseteq\mathbb{R}^{2}$. Suppose
that the set $\Psi x$ is contained in an affine line for every $x\in Y$.
Then there is an affine map $0\ne\psi\in A_{2,2}^{vec}$ such that
$\Psi x$ is contained in an affine line in direction $\overline{\psi}(x)$
for all $x\in Y\setminus\psi^{-1}(0)$. 
\end{lem}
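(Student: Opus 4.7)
The plan is to linearize the problem by passing from the (affine) set $\Psi$ to the linear span of its differences, and then simply read off $\psi$ as any nonzero element of that span.

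Concretely, fix any $\psi_{0}\in\Psi$ and set
\[
V=\mathrm{span}\{\psi-\psi_{0}:\psi\in\Psi\}\subseteq A_{2,2}^{vec}.
\]
This subspace does not depend on the choice of $\psi_0$ and coincides with $\mathrm{span}\{\psi_1-\psi_2:\psi_1,\psi_2\in\Psi\}$. The key observation is that if $\Psi x\subseteq\ell_{x}$ for an affine line $\ell_x$, and $W_{x}\leq\mathbb{R}^{2}$ denotes the $1$-dimensional linear subspace parallel to $\ell_{x}$ (picking any line through $\psi_0(x)$ if $\Psi x$ is a single point), then every difference $(\psi-\psi_{0})(x)=\psi(x)-\psi_{0}(x)$ lies in $W_{x}$, since both $\psi(x)$ and $\psi_{0}(x)$ lie in $\ell_{x}$. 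Because evaluation at $x$ is a linear map $A_{2,2}^{vec}\to\mathbb{R}^{2}$, it follows that $v(x)\in W_{x}$ for every $v\in V$, so $\dim Vx\leq1$ for all $x\in Y$.

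Now split into two cases. If $V=\{0\}$ then $\Psi=\{\psi_{0}\}$, so $\Psi x$ is a single point and we may take $\psi$ to be any nonzero affine map (e.g.\ the constant map $x\mapsto e_{1}$); the conclusion is then immediate as a single point is contained in any affine line through it. If $V\neq\{0\}$, choose any $\psi\in V$ with $\psi\neq0$. For $x\in Y\setminus\psi^{-1}(0)$ the vector $\psi(x)$ is a nonzero element of the $1$-dimensional space $W_{x}$, hence $W_{x}=\mathbb{R}\psi(x)$, and therefore $\ell_{x}=\psi_{0}(x)+\mathbb{R}\psi(x)$ has direction $\overline{\psi(x)}=\overline{\psi}(x)$, which is exactly the claim.

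There is no real obstacle here: the entire content of the lemma is that the evaluation map $v\mapsto v(x)$ is linear on $A_{2,2}^{vec}$, so the span of the displacement vectors $\psi(x)-\psi_{0}(x)$ (which lies in $W_{x}$ by hypothesis) is already the image of $V$ at $x$, and any single nonzero $\psi\in V$ witnesses the direction of $\ell_{x}$ wherever $\psi(x)\neq 0$.
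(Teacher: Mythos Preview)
Your proof is correct and follows essentially the same approach as the paper: the paper simply picks $\psi=\psi_{2}-\psi_{1}$ for two distinct $\psi_{1},\psi_{2}\in\Psi$, which is just a specific nonzero element of your subspace $V$. Your passage through the span $V$ is a mild abstraction of the same idea; note also that your parenthetical about ``picking any line if $\Psi x$ is a single point'' is unnecessary, since whenever $\Psi x$ is a singleton one has $Vx=\{0\}$ and hence $x\in\psi^{-1}(0)$ automatically.
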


\begin{proof}
If $\Psi=\{\psi_{0}\}$ consists of a single map then $\Psi x=\{\psi_{0}(x)\}$
is a point and so lies on a line in every direction; so any affine
map $\psi$ will satisfy the conclusion. Otherwise, let $\psi_{1},\psi_{2}\in\Psi$
be distinct maps, and define $\psi(x)=\psi_{2}(x)-\psi_{1}(x)$, so
$\psi\neq0$. Then for any $x\in Y\setminus\psi^{-1}(0)$, the set
$\Psi x$ contains the distinct points $\psi_{1}(x),\psi_{2}(x)$,
so if it is contained in a line this line must have direction $\overline{\psi}(x)$.
This proves the claim.
\end{proof}
\begin{rem}
It is possible to say more about the situation in the lemma: Assuming
also $|\Psi|\geq2$, one of the following possibilities must hold:
\begin{enumerate}
\item The set $\Psi$ lies on an affine line in the space of affine maps,
i.e. there exist $\psi_{1},\psi_{2}\in A_{2,2}^{vec}$ such that $\Psi\subseteq\psi_{1}+\mathbb{R}\psi_{2}$. 
\item There are vectors $0\neq b\in\mathbb{R}^{2}$ and $c\in\mathbb{R}^{2}$
and matrices $A,B$ with $\im(B)\subseteq\mathbb{R}b$, such that
every $\varphi\in\Psi$ is of the form $\varphi(x)=Ax+sBx+tb+c$ for
some $s,t\in\mathbb{R}$.
\end{enumerate}
\end{rem}

We next replace the pointwise version with one for measures. Recall
that for $\theta\in\mathcal{P}(A_{2,2}^{vec})$ and $x\in\mathbb{R}^{2}$
we write $\theta\conv x=\theta\conv\delta_{x}$ for the push-forward
of $\theta$ by the map $g\rightarrow g(x)$. 
\begin{lem}
\label{lem:measures-which-evaluate-to-lines}Let $\nu\in\mathcal{P}(\mathbb{R}^{2})$,
and let $\theta\in\mathcal{P}(A_{2,2}^{vec})$ be a measure satisfying
\[
\nu(x\,:\,\theta\conv x\;\text{is supported on an affine line})=1
\]
(the set is easily seen to be measurable, even closed). Then there
exists an affine map $0\neq\psi\in A_{2,2}^{vec}$ such that $\theta\conv x$
is supported on an affine line in direction $\overline{\psi}(x)$
for $\nu$-a.e. $x\in\mathbb{R}^{2}\setminus\psi^{-1}(0)$. 
\end{lem}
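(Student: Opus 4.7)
The plan is to reduce the measure-theoretic statement directly to the pointwise version, Lemma \ref{lem:sets-which-evaluate-to-lines}, by passing from $\theta$ to its topological support. Set $\Psi := \supp\theta \subseteq A_{2,2}^{vec}$, which is closed and nonempty. For each fixed $x \in \mathbb{R}^{2}$, the evaluation map $\mathrm{ev}_{x}\colon g \mapsto g(x)$ is continuous on $A_{2,2}^{vec}$, so
\[
\supp(\theta \conv x) = \supp\bigl((\mathrm{ev}_{x})_{*}\theta\bigr) = \overline{\mathrm{ev}_{x}(\Psi)} = \overline{\Psi x}\,.
\]
Hence, whenever $\theta \conv x$ is supported on some affine line $L_{x}$, the closedness of $L_{x}$ forces $\Psi x \subseteq L_{x}$.

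With this observation in hand, I would let $Y := \{x \in \mathbb{R}^{2} : \Psi x \text{ lies in some affine line}\}$. The hypothesis on $\theta$ and $\nu$ gives $\nu(Y) = 1$. Applying Lemma \ref{lem:sets-which-evaluate-to-lines} to the pair $(\Psi, Y)$ produces a nonzero affine map $\psi \in A_{2,2}^{vec}$ with the property that for every $x \in Y \setminus \psi^{-1}(0)$, the set $\Psi x$ lies on an affine line of direction $\overline{\psi}(x)$. Since that line is closed, $\supp(\theta \conv x) = \overline{\Psi x}$ is contained in the same line, and therefore $\theta \conv x$ itself is supported on an affine line of direction $\overline{\psi}(x)$. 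Because $\nu(Y)=1$, this holds for $\nu$-a.e.\ $x \in \mathbb{R}^{2}\setminus \psi^{-1}(0)$, which is exactly the conclusion.

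The argument is short because the geometric content already lives in the pointwise lemma; all that is needed here is the transfer from $\theta$ to $\supp\theta$, which is a triviality once one notes that evaluation is continuous and affine lines are closed. I do not anticipate any real obstacle. The only mild caveat is the degenerate case $|\Psi| = 1$, where $\theta \conv x$ is a point mass for every $x$; but then the conclusion is vacuous in content and any nonzero $\psi$ works, exactly as in the proof of Lemma \ref{lem:sets-which-evaluate-to-lines}.
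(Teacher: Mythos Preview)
Your proof is correct and follows essentially the same approach as the paper: both pass from $\theta$ to $\Psi=\supp\theta$, observe that for $\nu$-a.e.\ $x$ the set $\Psi x$ lies on an affine line (the paper phrases this via a Fubini/closedness argument, you via the identity $\supp(\theta\conv x)=\overline{\Psi x}$), and then apply Lemma~\ref{lem:sets-which-evaluate-to-lines} to $(\Psi,Y)$.
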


\begin{proof}
The hypothesis on $\nu,\theta$ is that for $\nu$-a.e. $x$, there
exists an affine line $\ell_{x}$ (which can be chosen to vary measurably
with $x$) such that 
\[
\varphi(x)\in\ell_{x}~~~~~~\text{for \ensuremath{\nu}-a.e. \ensuremath{x} and \ensuremath{\theta}-a.e. \ensuremath{\varphi}}\:.
\]
Write $Y\subseteq\mathbb{R}^{2}$ for the set of $x$ for which $\varphi(x)\in\ell_{x}$
for $\theta$-a.e. $\varphi$. The last equation and Fubini imply
that $\nu(Y)=1$. Fix $x\in Y$ and note that the condition $\varphi(x)\in\ell_{x}$
is closed in the variable $\varphi$, so, since it holds for $\theta$-a.e.
$\varphi$, it holds for every $\varphi\in\supp\theta$. Thus, $\varphi(x)\in\ell_{x}$
is true for every pair $(x,\varphi)\in Y\times\supp\theta$. We can
now apply the previous lemma to the sets $Y$ and $\Psi=\supp\theta$
and we obtain the desired map $\psi$.
\end{proof}
The next variant replaces the exact assumptions above by approximate
versions: We assume that $\theta\conv x$ is mostly supported close
to a line $\ell_{x}$ (rather than entirely supported on the line
itself). We conclude that, up to some deterioration of the constants,
$x\rightarrow\ell_{x}$ is given by an affine map at a positive proportion
of points.
\begin{defn}
\label{def:concentration-on-subspace}Let $W\leq\mathbb{R}^{2}$ be
a linear subspace and $\delta>0$. A measure $\nu\in\mathcal{P}(\mathbb{R}^{2})$
is $(W,\delta)$-concentrated if there is a translate $W+v$ of $W$
such that $1-\delta$ of the mass of $\nu$ lies within a $\delta$-distance
of $W+v$.
\end{defn}

Note that for $\overline{v}\in\PR$, saying that $\nu$ is $(\overline{v},\varepsilon)$-concentrated
does not mean that $\nu$ is supported mostly near the line $\overline{v}$,
but rather, near some translate of $\overline{v}$.
\begin{prop}
\label{prop:measures-which-evaluate-to-approximate-lines}Let $\nu\in\mathcal{P}(\mathbb{R}^{2})$
be a measure that gives mass zero to every affine line. Then for every
$\varepsilon,R>0$ there exists a $\delta=\delta(\varepsilon,R)>0$
such that the following holds. 

Let $\theta\in\mathcal{P}(A_{2,2}^{vec})$ be a measure supported
on a set of diameter $R$ (with respect to the norm on $A_{2,2}^{vec}$).
Let $\{\overline{v}_{x}\}_{x\in\mathbb{R}^{2}}\subseteq\PR$ be a
family of lines such that $x\mapsto\overline{v}_{x}$ is measurable,
and
\begin{equation}
\nu(x\,:\,\theta\conv x\;\text{is }(\overline{v}_{x},\delta)\text{-concentrated})>1-\delta\;.\label{eq:almost-concentration-on-line}
\end{equation}
Then there exists an affine map $0\neq\psi\in A_{2,2}^{vec}$ such
that
\begin{equation}
\nu(x\,:\,\theta\conv x\text{ is }(\overline{\psi}(x),\varepsilon)\text{-concentrated})>1-\varepsilon\;,\label{eq:actual-concentration-on-line}
\end{equation}
and
\begin{gather}
\nu(x\,:\,d_{\PR}(\overline{v}_{x},\overline{\psi}(x))<\varepsilon)\qquad\qquad\qquad\qquad\qquad\qquad\nonumber \\
\qquad\qquad>\;\nu(x\,:\,\theta\conv x\text{ is not }(\{0\},\varepsilon)\text{-concentrated})-\varepsilon\;.\label{eq:non-almost-concentration-on points}
\end{gather}
\end{prop}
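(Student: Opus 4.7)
The plan is to argue by contradiction, reducing the approximate statement to Lemma~\ref{lem:measures-which-evaluate-to-lines} via a weak-$*$ compactness argument. Suppose the conclusion fails for some $\varepsilon,R>0$. Then for each $n\ge1$ I can select $\theta_{n}\in\mathcal{P}(A_{2,2}^{vec})$ with $\diam(\supp\theta_{n})\le R$ and a measurable family $\{\overline{v}_{x}^{n}\}_{x}\subseteq\PR$ satisfying
\[
\nu\bigl\{x\,:\,\theta_{n}\conv x\text{ is }(\overline{v}_{x}^{n},2^{-n})\text{-concentrated}\bigr\}>1-2^{-n},
\]
but such that no affine map verifies both conclusions for $\theta=\theta_{n}$. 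Translating each $\theta_{n}$ in the vector space $A_{2,2}^{vec}$ by some $\phi_{n}\in\supp\theta_{n}$ only translates each $\theta_{n}\conv x$ in $\mathbb{R}^{2}$ by $\phi_{n}(x)$, which preserves both the hypothesis and the desired conclusions; so I may assume all $\supp\theta_{n}$ lie in a fixed closed ball of $A_{2,2}^{vec}$.

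Next I would pass to a weak limit. By Prokhorov there is a subsequence with $\theta_{n}\to\theta_{\infty}$ weakly, and continuity of $\phi\mapsto\phi(x)$ gives $\theta_{n}\conv x\to\theta_{\infty}\conv x$ weakly for every $x$. Since $\sum 2^{-n}<\infty$, Borel--Cantelli implies that for $\nu$-a.e.~$x$ the concentration hypothesis holds for all sufficiently large $n$. For such $x$ the supporting lines all meet a common bounded set, so along a further subsequence they converge to a limit line carrying $\theta_{\infty}\conv x$. Thus $\theta_{\infty}\conv x$ is supported on an affine line for $\nu$-a.e.~$x$, and Lemma~\ref{lem:measures-which-evaluate-to-lines} produces an affine map $0\ne\psi\in A_{2,2}^{vec}$ such that $\theta_{\infty}\conv x$ is supported on a line in direction $\overline{\psi}(x)$ for $\nu$-a.e.~$x\notin\psi^{-1}(0)$; the hypothesis that $\nu$ gives no mass to affine lines forces $\nu(\psi^{-1}(0))=0$.

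I would then derive~\eqref{eq:actual-concentration-on-line} for $\theta_{n}$ with $n$ large. The weak convergence $\theta_{n}\conv x\to\theta_{\infty}\conv x$ and the fact that the limit is concentrated on a translate of $\overline{\psi}(x)$ imply that, for any preassigned $\varepsilon'>0$, the measure $\theta_{n}\conv x$ is $(\overline{\psi}(x),\varepsilon')$-concentrated once $n\ge N(x)$. Egorov's theorem then makes $N$ uniform on a set of $\nu$-measure exceeding $1-\varepsilon$. I would take $\varepsilon'\ll\varepsilon^{2}$ with a view to the final step.

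Finally, for~\eqref{eq:non-almost-concentration-on points} I would invoke the elementary geometric fact that if $\mu\in\mathcal{P}(\mathbb{R}^{2})$ is simultaneously $(\overline{u},\alpha)$- and $(\overline{w},\alpha)$-concentrated with $d_{\PR}(\overline{u},\overline{w})\ge\beta$, then the two $\alpha$-neighborhoods of the supporting translates intersect in a set of diameter $O(\alpha/\sin\beta)$, so $\mu$ is $(\{0\},O(\alpha/\sin\beta))$-concentrated. Taking the contrapositive with $\alpha=\max(2^{-n},\varepsilon')$ and using $d_{\PR}\le c\sin(\cdot)$ shows that wherever $\theta_{n}\conv x$ is not $(\{0\},\varepsilon)$-concentrated but both direction-concentrations hold, one has $d_{\PR}(\overline{v}_{x}^{n},\overline{\psi}(x))<\varepsilon$; this is why the relation $\varepsilon'\ll\varepsilon^{2}$ is forced. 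Combining with the Egorov step, both conclusions then hold for $\theta=\theta_{n}$ once $n$ is large enough, contradicting the choice of $\theta_{n}$. The main subtlety I anticipate is balancing the parameters in the geometric step; the translation normalization and the weak-limit extraction are standard compactness.
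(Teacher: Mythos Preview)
Your approach is correct and matches the paper's: translate the $\theta_{n}$ to a fixed ball, extract a weak limit, use Borel--Cantelli to show $\theta_{\infty}\conv x$ is a.s.\ supported on a line, invoke Lemma~\ref{lem:measures-which-evaluate-to-lines} to produce $\psi$, then push back to $\theta_{n}$ by weak convergence and finish \eqref{eq:non-almost-concentration-on points} with the strip-intersection estimate. The only organizational difference is that the paper splits the argument in two: it first proves \eqref{eq:actual-concentration-on-line} by the contradiction/compactness argument, and then proves \eqref{eq:non-almost-concentration-on points} \emph{directly} by applying the already-established \eqref{eq:actual-concentration-on-line} with an auxiliary parameter $\sigma=O(\varepsilon^{2})$ in place of $\varepsilon$, followed by the same geometric step you describe. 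Your single-pass version is equivalent; just make sure the Egorov set has measure a bit above $1-\varepsilon$ (e.g.\ $>1-\varepsilon/2$) so that after subtracting the exceptional sets from the hypothesis and from Egorov you still land strictly above $\nu(C)-\varepsilon$ in \eqref{eq:non-almost-concentration-on points}.
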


\begin{rem}
The reason that the probability on the right hand side of (\ref{eq:non-almost-concentration-on points})
appears is that if $x$ is a point for which $\theta\conv x$ is $(\{0\},\varepsilon)$-concentrated,
then $\theta\conv x$ is $(\overline{v},\varepsilon)$-concentrated
for every $\overline{v}\in\PR$, which means that $\overline{v}_{x}$
is not determined, and there is no reason for the given function $x\mapsto\overline{v}_{x}$
to agree with any affine map $\psi$. More concretely, fix $x_{0}\in\mathbb{R}^{2}$,
let $\theta$ be some non-trivial measure on the stabilizer of $x_{0}$
in $A_{2,2}^{vec}$. Thus $\theta\conv x_{0}=\delta_{x_{0}}$ is a
point mass. Now replace $x_{0}$ by the uniform measure $\nu$ on
a small ball around $x_{0}$; by making the ball small, we ensure
that $\theta\conv x$ is still supported on a $\delta$-ball for all
$x\in\supp\nu$. Thus $\theta\conv x$ is $(\overline{v},\delta)$-concentrated
for any $\overline{v}\in\PR$, and any choice of the function $x\mapsto\overline{v}_{x}$
will satisfy the assumptions in the proposition above, and any affine
map $\psi$ will satisfy the first conclusion. But many choices of
the initial function $x\mapsto\overline{v}_{x}$ will be far from
every affine map on $\nu$-most points.
\end{rem}

\begin{proof}
If the conclusion (\ref{eq:actual-concentration-on-line}) were false,
then there would exist an $\varepsilon_{0}>0$ such that the statement
fails for every $\delta>0$. Let $\theta_{n}$ and $\overline{v}_{n,x}\in\PR$
be witnesses of this failure for $\delta_{n}=1/n$; thus, 
\begin{itemize}
\item $\theta_{n}$ is supported on a set of diameter $R$.
\item With $\nu$-probability at least $1-\delta_{n}$ over the choice of
$x$, the measure $\theta_{n}\conv x$ is $(\overline{v}_{n,x},\delta_{n})$-concentrated.
\item There is no affine map $\psi_{n}$ such that $\theta_{n}\conv x$
is $(\overline{\psi}_{n}(x),\varepsilon_{0})$-concentrated with $\nu$-probability
$>1-\varepsilon_{0}$. 
\end{itemize}
We can further assume that the $\theta_{n}$ are supported on the
ball of radius $R$ at the origin of the normed space $A_{2,2}^{vec}$,
since otherwise we can fix $\varphi_{n}\in\supp\theta_{n}$ and replace
$\theta_{n}$ by the translate $T_{\varphi_{n}}\theta_{n}$ (Note
that we are translating in the vector space $A_{2,2}^{vec}$, not
in the group $A_{2,2}$).

Since all the $\theta_{n}$ are now supported on a common compact
set, by passing to a sub-sequence, we can assume that $\theta_{n}\rightarrow\theta$
weakly for some $\theta\in\mathcal{P}(A_{2,2}^{vec})$. 

Fix $\rho>0$. For large enough $n_{0}$, we have that
\[
\nu(x\,:\,\theta_{n_{0}}\conv x\text{ is }(\overline{v}_{n_{0},x},\rho)\text{-concentrated})>1-\rho
\]
(this holds as soon as $1/n_{0}<\rho$). If $n_{0}$ is also large
enough (in a manner depending on $\rho$), then for all $n>n_{0}$
the measures $\theta_{n},\theta_{n_{0}}$ will be sufficiently close
in the weak topology that the previous equation implies 
\[
\nu(x\,:\,\theta_{n}\conv x\text{ is }(\overline{v}_{n_{0},x},2\rho)\text{-concentrated})>1-2\rho\;.
\]
Taking $n\rightarrow\infty$ and using $\theta_{n}\rightarrow\theta$,
we conclude that for every $\rho>0$, if $n_{0}=n_{0}(\rho)$ is large
enough, then
\[
\nu(x\,:\,\theta\conv x\text{ is }(\overline{v}_{n_{0},x},3\rho)\text{-concentrated})>1-3\rho\;.
\]

Choose $\rho_{k}=3\cdot2^{-k}$ and write $\overline{w}_{k,x}=\overline{v}_{n_{0}(2^{-k}),x}$.
By the last equation and Borel-Cantelli, for $\nu$-a.e. $x$ there
is a sequence of affine lines $\ell_{k,x}$ in direction $\overline{w}_{k,x}$,
intersecting a common compact set in $\mathbb{R}^{2}$, such that
for all large enough $k$ (depending on $x$),
\[
(\theta\conv x)(\ell_{k,x}^{(\rho_{k})})>1-\rho_{k}\;.
\]
Fix such an $x\in\supp\nu$, let $\ell_{x}=\lim_{i\rightarrow\infty}\ell_{k(i),x}$
be an accumulation point of the affine lines $\ell_{k,x}$, and let
$\overline{w}_{x}$ denote the direction of $\ell_{x}$, so $\overline{w}_{x}=\lim\overline{w}_{k(i),x}$.
Let $K_{x}=\supp\theta\conv x$; then it is easily seen that $K_{x}\cap\ell_{k(i),x}^{(\rho_{k(i)})}\subseteq\ell_{x}^{(\varepsilon)}$
for all $\varepsilon>0$ and all sufficiently large $i$ (depending
on $\varepsilon$), hence $\theta\conv x(\ell_{x}^{(\varepsilon)})=1$
for every $\varepsilon>0$, and so $\theta\conv x(\ell_{x})=1$.

Since this holds for $\nu$-a.e. $x$ we can apply the previous lemma
to $\nu,\theta$, and find that there exists an affine map $0\neq\psi\in A_{2,2}^{vec}$
such that $\theta\conv x$ is supported on a line in direction $\overline{\psi}(x)$
for $\nu$-a.e. $x\in\mathbb{R}^{2}\setminus\psi^{-1}(0)$; since
$\nu$ gives mass zero to every affine line, it holds unconditionally
for $\nu$-a.e. $x$.

Write $\widetilde{\ell}_{x}$ for the line in direction $\overline{\psi}(x)$
that supports $\theta\conv x$; this is defined for $\nu$-a.e. $x$
(if $\theta\conv x$ is not a point mass, we will have $\overline{\psi}(x)=\overline{w}_{x}$
and $\ell_{x}=\widetilde{\ell}_{x}$, but if $\theta\conv x$ is a
point mass $\overline{w}_{x}$ is not determined). Since $\theta_{n}\rightarrow\theta$
weakly, also $\theta_{n}\conv x\rightarrow\theta\conv x$ weakly for
every $x$. For $\nu$-a.e. $x$, from $\theta\conv x(\widetilde{\ell}_{x})=1$,
we conclude that for large enough $n$ we have $\theta_{n}\conv x(\widetilde{\ell}_{x}^{(\varepsilon_{0})})>1-\varepsilon_{0}$.
Thus for all large enough $n$, with $\nu$-probability $>1-\varepsilon_{0}$
over $x$, we have $\theta_{n}\conv x(\widetilde{\ell}_{x}^{(\varepsilon_{0})})>1-\varepsilon_{0}$.
This contradicts our choice of $\theta_{n}$ and completes the proof
of the first part of the statement.

We now turn to the proof of (\ref{eq:non-almost-concentration-on points}).
Let $\epsilon,R>0$ be given, let $\sigma>0$ be small with respect
to $\epsilon$ (we assume $\sigma=O(\epsilon^{2})$), and let $\delta>0$
be small with respect to $\sigma$ and $R$. Suppose that $\theta\in\mathcal{P}(A_{2,2}^{vec})$
is supported on a set of diameter $R$ and that $\{\overline{v}_{x}\}_{x\in\mathbb{R}^{2}}\subseteq\PR$
is a family of lines with,
\[
\nu(x\,:\,\theta\conv x\;\text{is }(\overline{v}_{x},\delta)\text{-concentrated})>1-\delta\;.
\]
By the first part, we may assume that there exists an affine map $0\neq\psi\in A_{2,2}^{vec}$
such that,
\[
\nu(x\,:\,\theta\conv x\text{ is }(\overline{\psi}(x),\sigma)\text{-concentrated})>1-\sigma\;.
\]

Let $E$ be the set of all $x\in\mathbb{R}^{2}$ for which $\theta\conv x$
is both $(\overline{v}_{x},\sigma)$-concentrated and $(\overline{\psi}(x),\sigma)$-concentrated,
then $\nu(E)>1-2\sigma$. Fix $x\in E$ and suppose that $\theta\conv x$
is not $(\{0\},\epsilon)$-concentrated. Since $x\in E$ there exist
$a_{x},b_{x}\in\mathbb{R}^{2}$ such that,
\[
\theta\conv x(a_{x}+\overline{v}_{x}^{(\sigma)})\ge1-\sigma\text{ and }\theta\conv x(b_{x}+\overline{\psi}(x)^{(\sigma)})\ge1-\sigma\:.
\]
Write
\[
Q:=(a_{x}+\overline{v}_{x}^{(\sigma)})\cap(b_{x}+\overline{\psi}(x)^{(\sigma)}),
\]
then $\theta\conv x(Q)\ge1-2\sigma$. Since $\theta\conv x$ is not
$(\{0\},\epsilon)$-concentrated it follows that $\diam(Q)\ge\epsilon$.
On the other hand, by elementary trigonometry and (\ref{eq:compar-dPR-and-sine-of-angle}),
\[
\diam(Q)=O\left(\frac{\sigma}{\sin(\varangle(\overline{v}_{x},\overline{\psi}(x)))}\right)=O\left(\frac{\sigma}{d_{\PR}(\overline{v}_{x},\overline{\psi}(x))}\right)\:.
\]
Hence, since $\sigma$ is assumed to be small relative to $\epsilon$,
\[
d_{\PR}(\overline{v}_{x},\overline{\psi}(x))\le O(\frac{\sigma}{\epsilon})<\epsilon,
\]
which gives
\[
\nu(x\in\mathbb{R}^{2}\,:\,d_{\PR}(\overline{v}_{x},\overline{\psi}(x))<\varepsilon)\ge\nu(x\in E\::\:\theta\conv x\text{ is not }(\{0\},\varepsilon)\text{-concentrated})\:.
\]
Since $\nu(E)>1-2\sigma>1-\epsilon$, this completes the proof of
the proposition.
\end{proof}
\begin{cor}
\label{cor:condition-for-L-to-be-affine}Let $\nu\in\mathcal{P}(\mathbb{R}^{2})$
be a measure that gives mass zero to every affine line and let $\overline{M}:\mathbb{R}^{2}\rightarrow\PR$
be measurable and defined $\nu$-a.e. Suppose that for some $\varepsilon,R>0$
and every $\delta>0$ there exists a measure $\theta\in\mathcal{P}(A_{2,2}^{vec})$
that is supported on a set of norm diameter $R$, and such that 
\begin{align*}
\nu(x\,:\,\theta\conv x\;\text{is }(\overline{M}(x),\delta)\text{-concentrated}) & >1-\delta,\\
\nu(x\,:\,\theta\conv x\text{ is not }(\{0\},\varepsilon)\text{-concentrated}) & >\varepsilon\;.
\end{align*}
Then there is an affine map $0\neq\psi\in A_{2,2}^{vec}$ such that
$\overline{M}=\overline{\psi}$ on a set of $\nu$-measure at least
$\varepsilon$.
\end{cor}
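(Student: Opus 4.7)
The plan is to apply Proposition \ref{prop:measures-which-evaluate-to-approximate-lines} with a shrinking tolerance $\varepsilon' \to 0$ and extract a limit of the resulting affine maps by a compactness argument in $A_{2,2}^{vec}$. Two monotonicity observations will be used: if $\theta \conv x$ is $(\overline{M}(x),\delta)$-concentrated then it is also $(\overline{M}(x),\delta')$-concentrated for every $\delta' \ge \delta$; and failing to be $(\{0\},\varepsilon)$-concentrated implies failing to be $(\{0\},\varepsilon')$-concentrated for every $\varepsilon' \le \varepsilon$, since the target point would still witness $(\{0\},\varepsilon)$-concentration otherwise.

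Fix an integer $n$ with $1/n < \varepsilon$, let $\delta_n = \delta(1/n,R)$ be the threshold furnished by Proposition \ref{prop:measures-which-evaluate-to-approximate-lines} for $\varepsilon' = 1/n$, and take $\theta_n$ from the hypothesis of the corollary with any $\delta \le \min(\delta_n, 1/n)$. The proposition then applies to $\theta_n$ together with the assignment $\overline{v}_x = \overline{M}(x)$, yielding an affine map $0 \neq \psi_n \in A_{2,2}^{vec}$ such that
\[
\nu\big(\{x : d_{\PR}(\overline{M}(x), \overline{\psi_n}(x)) < 1/n\}\big) \;>\; \varepsilon - 1/n.
\]
I then normalize each $\psi_n$ so that $\|\psi_n\| = 1$ in the chosen norm on $A_{2,2}^{vec}$; this rescaling does not alter $\overline{\psi_n}$. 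By compactness of the unit sphere in the finite-dimensional space $A_{2,2}^{vec}$, extract a subsequence along which $\psi_n \to \psi$ with $\|\psi\| = 1$, so in particular $\psi \neq 0$.

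Setting $F_n = \{x : d_{\PR}(\overline{M}(x), \overline{\psi_n}(x)) < 1/n\}$ and $F = \limsup_n F_n$, downward continuity of the finite measure $\nu$ applied to the nested sets $\bigcup_{n \ge N} F_n$ gives $\nu(F) \ge \lim_N (\varepsilon - 1/N) = \varepsilon$. Since $\psi \neq 0$ is affine, its zero set $\psi^{-1}(0)$ is empty, a single point, or an affine line, and so $\nu(\psi^{-1}(0)) = 0$ by the standing hypothesis on $\nu$. For any $x \in F \setminus \psi^{-1}(0)$, norm convergence $\psi_n \to \psi$ gives $\psi_n(x) \to \psi(x) \neq 0$, hence $\overline{\psi_n}(x) \to \overline{\psi}(x)$ in $\PR$; simultaneously $x \in F_n$ for infinitely many $n$, so $\overline{\psi_n}(x) \to \overline{M}(x)$ along that subsequence. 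Uniqueness of limits forces $\overline{M}(x) = \overline{\psi}(x)$ on $F \setminus \psi^{-1}(0)$, a set of $\nu$-measure at least $\varepsilon$.

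The argument is essentially a compactness-plus-limsup routine once Proposition \ref{prop:measures-which-evaluate-to-approximate-lines} is in hand, so there is no serious obstacle. The one mild subtlety is to perform the normalization and take the limit in the vector space $A_{2,2}^{vec}$ rather than in the group $A_{2,2}$, so that the limiting $\psi$ is guaranteed to be a nonzero affine map (possibly not of full rank), which is all that the conclusion requires.
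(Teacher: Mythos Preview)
Your proof is correct and follows essentially the same route as the paper's: apply Proposition~\ref{prop:measures-which-evaluate-to-approximate-lines} with tolerance $\varepsilon_n\searrow 0$, normalize the resulting $\psi_n$ in $A_{2,2}^{vec}$, pass to a subsequential limit $\psi\neq 0$, and use a $\limsup$ (Borel--Cantelli) argument together with $\nu(\psi^{-1}(0))=0$ to identify $\overline M$ with $\overline\psi$ on a set of measure at least $\varepsilon$. Your monotonicity observations about $(\overline M(x),\delta)$- and $(\{0\},\varepsilon)$-concentration are exactly what is needed to feed the hypothesis of the corollary into the proposition at each stage, and the paper uses them implicitly in the same way.
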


\begin{proof}
Fix a positive sequence $\varepsilon_{n}\searrow0$, and apply the
previous proposition to get corresponding $\delta_{n}$, which we
may assume satisfies $\delta_{n}\leq\varepsilon_{n}$. Let $\theta_{n}$
be the measure corresponding to $\delta_{n}$ in the hypothesis of
the present corollary (we start with $n$ large enough that $\varepsilon_{n}<\varepsilon$).
We obtain affine maps $\psi_{n}\neq0$ such that 
\[
\nu(x\,:\,d_{\PR}(\overline{\psi}_{n}(x),\overline{M}(x))<\varepsilon_{n})>\varepsilon-\varepsilon_{n}\;.
\]

We can assume that $\left\Vert \psi_{n}\right\Vert =1$ (in the norm
on $A_{2,2}^{vec}$), since $\psi_{n}$ and $\psi_{n}/\left\Vert \psi_{n}\right\Vert $
induce the same map $\mathbb{R}^{2}\rightarrow\PR$. Thus, passing
to a sub-sequence if necessary, we can assume that $\psi_{n}\rightarrow\psi\in A_{2,2}^{vec}$
in the norm metric on $A_{2,2}^{vec}$, in particular $\left\Vert \psi\right\Vert =1$,
so $\psi\neq0$. By the last equation, there is a set $E\subseteq\mathbb{R}^{2}$
with $\nu(E)\geq\varepsilon$ and such that every $x\in E$ belongs
to the event above for infinitely many $n$. Thus, for $x\in E$ there
is a subsequence $n(i,x)$, $i=1,2,3,\ldots$ along which $d_{\PR}(\overline{\psi}_{n(i,x)}(x),\overline{M}(x))\rightarrow0$,
i.e. $\overline{\psi}_{n(i,x)}(x)\rightarrow\overline{M}(x)$; but
also $\psi_{n}(x)\rightarrow\psi(x)$ as $n\rightarrow\infty$ in
$\mathbb{R}^{2}$, and hence for $x\in E\setminus\psi^{-1}(0)$, which
includes $\nu$-a.e. $x\in E$, we have $\overline{\psi}_{n}(x)\rightarrow\overline{\psi}(x)$
in $\PR$. Thus for $\nu$-a.e. $x\in E$, both $\overline{\psi}(x)$
and $\overline{M}(x)$ are limits of the same subsequence of $\overline{\psi}_{n}(x)$,
so they are equal, as desired. 
\end{proof}

\subsection{\label{subsec:The-mu-measure-of-quadratic-curves}The $\mu$-measure
of quadratic curves}

Let $X$ be the attractor of the affine system $\Phi=\{\varphi_{i}\}_{i\in\Lambda}$.
In this section we show that non-conformality and total irreducibility
of $\Phi$ imply that $X$ is not contained in a quadratic curve,
and that $\mu$ gives mass zero to every such curve. Here, by a quadratic
curve we mean the zero set $p^{-1}(0)$ of a quadratic polynomial
$0\ne p\in\mathbb{R}[x,y]$.
\begin{lem}
Let $C$ be a quadratic curve containing $X$. For $x\in X$ let $C_{x}$
denote the connected component of $C$ which contains $x$. Then for
every $x\in X$ and $i\in\Lambda$ we have $\varphi_{i}C_{x}=C_{\varphi_{i}(x)}$.
\end{lem}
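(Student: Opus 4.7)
The plan is to reduce the statement to the claim that $\varphi_i(C)=C$ as subsets of $\mathbb{R}^2$. Once this is established, since $\varphi_i$ is an invertible affine map and hence a homeomorphism of $\mathbb{R}^2$, it permutes the (finitely many) connected components of $C$; the image of $C_x$ is then a connected component of $C$ containing $\varphi_i(x)$, which is exactly $C_{\varphi_i(x)}$.

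To prove $\varphi_i(C)=C$, first note that $\varphi_i(C)$ is itself a quadratic curve, being the zero set of $p\circ\varphi_i^{-1}$ when $C=\{p=0\}$, since affine substitution preserves the degree. The self-affine inclusion $\varphi_i(X)\subseteq X\subseteq C$ yields $\varphi_i(X)\subseteq C\cap\varphi_i(C)$, and since $\Phi$ has no common fixed point and consists of proper contractions the attractor $X$ is uncountable, so $\varphi_i(X)$ is infinite. By B\'ezout's theorem, two plane curves of degree two that share more than four points must share an irreducible factor in $\mathbb{R}[x,y]$, so $C$ and $\varphi_i(C)$ do.

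If $C$ is irreducible over $\mathbb{R}$, this immediately forces $\varphi_i(C)=C$. Otherwise $C$ decomposes as a union of two real affine lines $C=L_1\cup L_2$ (the only other option yielding an infinite real zero set; a pair of complex conjugate linear factors would give at most one real point, contradicting that $C\supseteq X$ is infinite). At least one of the lines, say $L_1$, contains infinitely many points of $X$. Then $\varphi_i(X\cap L_1)\subseteq X\subseteq L_1\cup L_2$ is an infinite subset of the single line $\varphi_i(L_1)$; since a line distinct from both $L_1$ and $L_2$ meets $L_1\cup L_2$ in at most two points, this forces $\varphi_i(L_1)\in\{L_1,L_2\}$. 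The analogous conclusion for $L_2$, combined with the bijectivity of $\varphi_i$, shows that $\varphi_i$ permutes $\{L_1,L_2\}$, whence $\varphi_i(C)=C$.

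The main obstacle is the reducible subcase in which $X\cap L_2$ is a priori only finite, so one cannot directly deduce $\varphi_i(L_2)\in\{L_1,L_2\}$. I would dispose of this via total irreducibility of $\Phi$: if $X\cap L_2$ were finite then for every $j\in\Lambda$ the image $\varphi_j(L_1)$ could not equal $L_2$ (that would send the infinite set $X\cap L_1$ into the finite set $X\cap L_2$), so $\varphi_j(L_1)=L_1$ for every $j$. Passing to linear parts, every $A_j$ would preserve the direction $\overline{L_1}\in\PR$, exhibiting a finite $\{A_j\}$-invariant collection of lines and contradicting total irreducibility. Hence both $X\cap L_1$ and $X\cap L_2$ are infinite, the reducible case closes, and the lemma follows.
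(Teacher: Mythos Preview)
Your proof is correct and takes a genuinely different route from the paper's. The paper works directly with a single connected component: fixing $x_0\in X$ and $i\in\Lambda$, it lets $D\subseteq C_{x_0}$ be the set of non-isolated points of $C_{x_0}\cap\varphi_i^{-1}C$, observes that $D$ contains $C_{x_0}\cap X$ (using that $X$ is perfect), and shows $D$ is clopen in $C_{x_0}$ by locally parametrizing $C_{x_0}$ by an analytic curve $\gamma$ and applying the identity principle to the polynomial $p\circ\varphi_i\circ\gamma$. This yields $\varphi_i C_{x_0}\subseteq C_{\varphi_i(x_0)}$, and the reverse inclusion follows by running the argument with $\varphi_i^{-1}$.

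Your approach instead establishes the stronger global statement $\varphi_i(C)=C$, via B\'ezout in the irreducible case and a direct line-permutation argument in the reducible case, and then reads off the component statement from the fact that a homeomorphism permutes connected components. The trade-offs: your argument is more algebraic and gives a cleaner global conclusion, but it invokes total irreducibility (a standing hypothesis of the section) to close the reducible subcase, whereas the paper's analytic argument does not need it. As a small simplification, note that once you have $\varphi_j(L_1)\in\{L_1,L_2\}$ and $\varphi_j(L_2)\in\{L_1,L_2\}$ for all $j$, the set of directions $\{\overline{L_1},\overline{L_2}\}$ is already $\{A_j\}$-invariant, contradicting total irreducibility outright; so under the standing assumptions the reducible case is in fact vacuous, and you could dispense with the finer subcase analysis.
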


\begin{proof}
Let $C=p^{-1}(0)$ for a quadratic polynomial $p$. Fix $x_{0}\in X$
and $i\in\Lambda$, and let $D=D_{x_{0},i}\subseteq C_{x_{0}}$ denote
the set of points $x\in C_{x_{0}}\cap\varphi_{i}^{-1}C$ which are
not isolated in $C_{x_{0}}\cap\varphi_{i}^{-1}C$. This is a non-empty
set because it contains $C_{x_{0}}\cap X$, which is relatively open
in the perfect set $X$. 

We claim that $D$ is open and closed in $C_{x_{0}}$, and hence $D=C_{x_{0}}$.
It is clear that it is closed so we need only show that it is open.
To this end fix $x\in D$. Then we can find $\delta>0$ such that
$B_{\delta}(x)\cap C_{x_{0}}$ is parameterized by an analytic (or
even polynomial) curve $\gamma:(-a,b)\rightarrow\mathbb{R}^{2}$.
Then $p(\varphi_{i}\gamma(t))=0$ whenever $\gamma(t)\in D$, which
happens on a non-discrete set, by the definition of $D$. Thus $p\varphi_{i}\gamma\equiv0$,
which means that the image of $\varphi_{i}\gamma$ lies in $C$; hence
the image of $\gamma$ lies in $D$, and constitutes a neighborhood
of $x$ in $D$. This shows that $D$ is open in $C_{x_{0}}$, as
claimed.

We have shown that every $x\in C_{x_{0}}$ is also in $\varphi_{i}^{-1}C$,
i.e. that $\varphi_{i}C_{x_{0}}\subseteq C$. Since $\varphi_{i}C_{x_{0}}$
is connected and contains $\varphi_{i}(x_{0})$, it follows that $\varphi_{i}C_{x_{0}}\subseteq C_{\varphi_{i}(x_{0})}$.
Now apply the same argument to $C_{\varphi_{i}(x_{0})}$ and $\varphi_{i}^{-1}$;
note that although $X$ is not guaranteed to be mapped into $C$ by
$\varphi_{i}^{-1}$, certainly $\varphi_{i}X$ is, which is enough
for the argument to go through. We conclude that $\varphi_{i}^{-1}C_{\varphi_{i}(x_{0})}\subseteq C_{x_{0}}$,
and altogether, we have shown that $\varphi_{i}C_{x_{0}}=C_{\varphi_{i}(x_{0})}$.
\end{proof}
\begin{cor}
Let $C_{X}$ be the union of connected components of $C$ that intersect
$X$. Then for each $i\in\Lambda$ the map $\varphi_{i}$ is a bijection
of $C_{X}$.
\end{cor}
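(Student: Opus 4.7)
The plan is to bootstrap the previous lemma, which already gives $\varphi_i C_x = C_{\varphi_i(x)}$ for each $x \in X$, into the global statement by exploiting two facts: the invariance $\varphi_i(X) \subseteq X$, and the fact that a quadratic curve in $\mathbb{R}^2$ has only finitely many connected components.

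First I would show that $\varphi_i(C_X) \subseteq C_X$. Indeed, an arbitrary point of $C_X$ lies in some $C_x$ with $x \in X$, so by the previous lemma its image lies in $C_{\varphi_i(x)}$; and since $\varphi_i(x) \in \varphi_i(X) \subseteq X$, the component $C_{\varphi_i(x)}$ is itself one of the components comprising $C_X$. Thus $\varphi_i$ maps $C_X$ into $C_X$, and the induced map $\sigma_i$ on the set of connected components of $C_X$ is well-defined by $\sigma_i(C_x) = C_{\varphi_i(x)}$.

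Next, since $\varphi_i$ is an invertible affine map on $\mathbb{R}^2$, it is injective, and it sends connected sets to connected sets. Hence if $C_{x_1}$ and $C_{x_2}$ are two distinct (and therefore disjoint) components of $C_X$, then $\varphi_i(C_{x_1})$ and $\varphi_i(C_{x_2})$ are disjoint connected subsets of $C$, so they lie in distinct components of $C$. Thus $\sigma_i$ is injective on the set of components of $C_X$. Now the key input: because $C$ is the zero set of a nonzero quadratic polynomial, it has only finitely many connected components, and the same is therefore true of $C_X$. An injective self-map of a finite set is a bijection, so $\sigma_i$ is a bijection on the components of $C_X$.

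Finally, on each individual component the previous lemma already shows $\varphi_i : C_x \to C_{\varphi_i(x)}$ is a bijection (its inverse being the restriction of $\varphi_i^{-1}$, which takes $C_{\varphi_i(x)}$ back to $C_x$). Combining the componentwise bijectivity with the bijectivity of $\sigma_i$ yields that $\varphi_i$ is a bijection of $C_X$ onto itself. No step here seems to present a real obstacle — the only substantive input beyond the preceding lemma is the finiteness of the component set of a planar quadratic curve, which is elementary.
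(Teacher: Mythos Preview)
Your proof is correct and follows essentially the same approach as the paper: both rely on the previous lemma together with the finiteness of the set of connected components of a planar quadratic curve (the paper notes there are at most two). The paper's proof is simply the one-line remark ``Immediate since there are finitely many (in fact, at most two) connected components,'' whereas you have spelled out the details of why finiteness plus injectivity yields a bijection; the underlying argument is the same.
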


\begin{proof}
Immediate since there are finitely many (in fact, at most two) connected
components.
\end{proof}
\begin{prop}
\label{prop:measure 0 for alg cur}Assume that $\mu$ is a self-affine
measure generated by a non-conformal and totally irreducible system
$\Phi$ without a common fixed point and a positive probability vector.
Then $\mu$ gives mass zero to every quadratic curve.
\end{prop}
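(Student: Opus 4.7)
The plan is to reduce everything to a contradiction with total irreducibility: from $\mu(C) > 0$ for some quadratic curve $C$ I will extract a non-empty finite subset of $\PR$ invariant under $\{A_i\}_{i \in \Lambda}$. Parameterizing quadratic curves by their defining polynomials modulo scalars gives a compact space $\mathbb{P}^5$. Within the bounded region $\supp \mu$, the function $C \mapsto \mu(C)$ is upper semi-continuous: if polynomials $q_n$ converge to $q$ in $\mathbb{P}^5$, then $q_n \to q$ uniformly on compacts, so the zero sets $q_n^{-1}(0) \cap \supp\mu$ eventually lie in any $\varepsilon$-neighborhood of $q^{-1}(0)$, and Radonness of $\mu$ yields $\limsup \mu(q_n^{-1}(0)) \le \mu(q^{-1}(0))$. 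Hence $c^* := \sup_C \mu(C)$ is attained on a non-empty set $\mathcal{C}$ of maximizers, and from $\mu(C) = \sum_i p_i \mu(\varphi_i^{-1} C)$ together with $\mu(\varphi_i^{-1} C) \le c^*$, maximality forces each $\varphi_i^{-1}$ to permute $\mathcal{C}$.

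Next I would rule out atoms in the same spirit. If $m^* := \max_x \mu(\{x\}) > 0$, the finite set $F$ of maximal atoms is $\varphi_i^{-1}$-invariant, and each $\varphi_i$ permutes $F$. A strict contraction that permutes a finite set must fix each element pointwise, for otherwise some iterate $\varphi_i^k$ would have two distinct fixed points, impossible for a contraction. This produces a common fixed point of all $\varphi_i$, contradicting our standing assumption. With $\mu$ non-atomic, B\'ezout's theorem (distinct quadratic curves meet in at most four points) gives $\mu(C_1 \cap C_2) = 0$ for distinct $C_1, C_2 \in \mathcal{C}$, so $|\mathcal{C}| \le 1/c^* < \infty$.

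The heart of the argument is the following dichotomy. Fix $C \in \mathcal{C}$ and $i \in \Lambda$; since $\mathcal{C}$ is finite, some iterate $\varphi_i^\ell$ preserves $C$ setwise. Because $\varphi_i^\ell$ is a strict contraction, its forward iterates compress any bounded closed invariant set into arbitrarily small neighborhoods of the unique fixed point, so if $C$ is bounded then $C$ must be a single point --- contradicting the previous step. Hence every $C \in \mathcal{C}$ is unbounded: a parabola, a hyperbola, or a (possibly reducible) union of one or two lines. I attach to each such $C$ a non-empty finite set $\mathcal{L}_C \subseteq \PR$ of distinguished directions --- axis for a parabola, asymptotic directions for a hyperbola, line directions in the reducible case --- and verify the algebraic claim that every affine preserver of $C$ has linear part permuting $\mathcal{L}_C$. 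Writing the preservation condition as $q \circ \varphi = \lambda q$ and comparing quadratic parts gives $A^T Q A = \lambda Q$, from which the claim follows by case analysis on the rank and signature of $Q$.

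Finally, since $\varphi_i$ maps $\varphi_i^{-1} C$ onto $C$, the linear part $A_i$ carries $\mathcal{L}_{\varphi_i^{-1} C}$ onto $\mathcal{L}_C$, so $\mathcal{L} := \bigcup_{C \in \mathcal{C}} \mathcal{L}_C$ is a non-empty finite $\{A_i\}$-invariant subset of $\PR$, directly contradicting total irreducibility. The hardest part will be the case-by-case verification that each geometric type of unbounded quadratic curve admits a finite non-empty set of distinguished directions preserved by its affine stabilizer; the semi-continuity argument justifying that $c^*$ is attained should go through routinely.
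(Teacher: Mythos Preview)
Your argument is correct and takes a genuinely different route from the paper. The paper first reduces to the case $X\subseteq C$ for a single quadratic curve $C$ (by a density/limit argument on the curves $\varphi_{\omega|_n}^{-1}C$), and then studies the tangent-direction map $M:C\to\PR$, which satisfies $M(\varphi_i^{-1}x)=A_i^{-1}M(x)$. Iterating along a random word, one side is a random walk on $\PR$ driven by $\{A_i^{-1}\}$ and hence equidistributes for its Furstenberg measure, which is continuous by non-conformality and total irreducibility; the other side accumulates on a finite set of directions as the orbit escapes to infinity along $C$. Your approach bypasses both the reduction to a single curve and the random-walk step: it is purely algebraic/combinatorial and, notably, never uses non-conformality. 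It therefore handles the conformal case in the same breath, whereas the paper remarks that the conformal case requires a separate re-scaling argument.

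One gap to patch: B\'ezout gives a finite intersection only when the two conics share no common component. Two reducible members of $\mathcal{C}$ could share a line $L$, and then $\mu(C_1\cap C_2)\ge\mu(L)$ need not vanish. The fix is to first run the identical (and simpler) maximality argument for lines: if $\sup_L\mu(L)>0$, the set of maximal lines is finite (distinct lines meet in at most a point, and you have already shown $\mu$ is non-atomic) and permuted by each $\varphi_i^{-1}$, so their directions form a finite $\{A_i\}$-invariant subset of $\PR$, already a contradiction. Once $\mu$ annihilates every line, your B\'ezout step goes through even when a linear factor is shared. Relatedly, distinct points of $\mathbb{P}^5$ can have the same zero locus (e.g.\ $[\ell]$ and $[\ell^2]$); it is cleanest to take $\mathcal{C}$ as a finite set of zero loci and to define $\mathcal{L}_C$ as the set of points at infinity of $C$, so that the covariance $A_i(\mathcal{L}_{\varphi_i^{-1}C})=\mathcal{L}_C$ is immediate from the fact that affine maps fix the line at infinity.
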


\begin{proof}
Suppose otherwise. Then there is a quadratic curve $C=p^{-1}(0)$
such that $\mu(C)>0$. We claim that then $\mu$ is supported on (a
possibly different) quadratic curve. Indeed, choose a $\xi$-typical
$\omega\in\Pi^{-1}(C)$ (note that $\xi(\Pi^{-1}C)=\mu(C)>0$). Then
with probability one, 
\begin{align*}
\mu(\varphi_{\omega_{1}\ldots\omega_{n}}^{-1}C) & =\varphi_{\omega_{1}\ldots\omega_{n}}\mu(C)\\
 & =\mu_{[\omega_{1}\ldots\omega_{n}]}(C)\\
 & =\xi_{[\omega_{1}\ldots\omega_{n}]}(\Pi^{-1}C)\\
 & \rightarrow1\qquad\text{as }n\rightarrow\infty\;.
\end{align*}
Now, $\varphi_{\omega_{1}\ldots\omega_{n}}^{-1}C=p_{n}^{-1}(0)$ for
$p_{n}=p\circ\varphi_{\omega_{1}\ldots\omega_{n}}$. Normalize each
$p_{n}$ to be a unit vector in the vector space of quadratic polynomials
(normalization does not affect the zero set), and pass to a subsequence
along which $p_{n}$ converge to some quadratic polynomial $p$, and
also such that $p_{n}^{-1}(0)$ converge to a set $C'$ in the Hausdorff
metric on a ball in $\mathbb{R}^{2}$ that supports $\mu$. Then $C'\subseteq p^{-1}(0)$
and $\mu(C')=1$. We can thus replace $C$ by $p^{-1}(0)$, and assume
from the outset that $\mu(C)=1$.

Since $X=\supp\mu$ and $\mu(C)=1$, we have $X\subseteq C$. Let
$C_{X}$ denote the union of those connected components of $C$ that
intersect $X$. We have seen that $\varphi_{i}C_{X}=C_{X}$ for every
$i\in\Lambda$.

Let $M:C\rightarrow\PR$ denote the map (defined at all but at most
finitely many singular points) that takes $x\in C$ to the direction
of the tangent line to $C$ at $x$. Clearly each $\varphi_{i}|_{C_{X}}^{-1}$
induces a map of tangent vectors of $C_{X}$, hence for all but finitely
many $x\in C_{X}$, 
\[
M(\varphi_{i}^{-1}x)=A_{i}^{-1}M(x)\;.
\]
Iterating this for a sequence $i_{1},\ldots,i_{n},\ldots$ we have
\begin{equation}
M(\varphi_{i_{n}}^{-1}\ldots\varphi_{i_{1}}^{-1}x)=A_{i_{n}}^{-1}\ldots A_{i_{1}}^{-1}M(x)\;.\label{eq:tangent-vector-iterates}
\end{equation}

Choosing $i_{1},i_{2}\ldots$ to be i.i.d. with marginal $p$, for
fixed $x$, it is easy to see that the sequence $\varphi_{i_{n}}^{-1}\ldots\varphi_{i_{1}}^{-1}x\rightarrow\infty$
a.s., due to the expanding nature of the maps $\varphi_{i}^{-1}$
(and the fact that they do not have a common fixed point). It is also
elementary that as one escapes to infinity, the tangent vectors to
$C$ accumulate on a finite set of directions (namely, on a single
direction for a parabola or line, and a pair of directions for a hyperbola).
Thus the distribution of the left hand side of (\ref{eq:tangent-vector-iterates}),
with the indices chosen randomly, accumulates only on atomic measures.

On the other hand, the right hand side of the last equation is a random
walk on $\PR$ whose steps are chosen from $\{A_{i}\}_{i\in\Lambda}^{-1}$,
a non-conformal and totally irreducible system, and thus is attracted
to the Furstenberg measure, which under our assumptions has no atoms,
in contradiction to the previous paragraph.
\end{proof}
\begin{rem*}
It follows also from the work of Feng and K\"aenmaki \cite{FK} that
the only algebraic curves which can support a non-trivial planar self-affine
set are quadratic curves; thus, the proposition above holds for any
algebraic curve in the plane. 
\end{rem*}
\begin{rem*}
The last proposition actually holds also in the conformal case (i.e.
when $\Phi$ is conjugate to a system of similarities) using a more
direct re-scaling argument: if the measure gave positive mass to a
smooth curve, then, by re-scaling cylinder measures which are increasingly
supported on this curve, we would find that the measure is supported
on a line (the re-scaling of the tangent line to the curve), contradicting
irreducibility.
\end{rem*}

\subsection{\label{subsec:non-affinity-of L}The non-affinity of $L$}

In this section we assume again non-conformality and total irreducibility,
and also that $\dim\mu<2$, which ensures that $L$ is well defined
as a function on $X$ at $\mu$-a.e. point (Theorem \ref{thm:L-descends-to-mu}). 

We prove that the function $L:X\rightarrow\PR$ from Section \ref{subsec:L-factors-through-Pi}
does not arise from an affine map. More precisely, we show that there
does not exist an affine map $0\neq\psi\in A_{2,2}^{vec}$ such that
$L(x)=\overline{\psi}(x)$ for $\mu$-a.e. $x$. Here $\overline{\psi}:\mathbb{R}^{2}\setminus\psi^{-1}(0)\rightarrow\PR$
is the map $x\mapsto\overline{\psi(x)}$. It is defined $\mu$-a.e.
because, by total irreducibility, $\mu$ does not give mass to any
affine line.

Recall that $\varphi_{i}(x)=A_{i}x+b_{i}$ for $i\in\Lambda$ and
$x\in\mathbb{R}^{2}$, and more generally for $\psi\in A_{2,2}^{vec}$
we write $\psi(x)=A_{\psi}x+b_{\psi}$.

Given $i\in\Lambda$ and $\omega\in\Lambda^{\mathbb{N}}$ denote the
concatenation of $i$ with $\omega$ by $i\omega$. 

Also let $\nu$ denote the uniform (rotation-invariant) probability
measure on $\PR$.
\begin{lem}
\label{lem:L is equivariant}Let $i\in\Lambda$, then $L(i\omega)=A_{i}(L\omega)$
for $\xi$-a.e. $\omega\in\Lambda^{\mathbb{N}}$.
\end{lem}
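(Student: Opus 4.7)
The plan is to prove the equivariance identity directly from the definition
\[
L(\omega)=\lim_{n\to\infty} L(B_n),\qquad L(i\omega)=\lim_{n\to\infty}L(A_iB_n),
\]
where $B_n:=A_{\omega_1}A_{\omega_2}\cdots A_{\omega_n}$. By Theorem \ref{thm:Furstenberg-Oseledets}\,(\ref{enu:Lyapunov exponents}) and (\ref{enu:convergence-of-L}), for $\xi$-a.e.\ $\omega$ the limit $v:=L(\omega)\in\PR$ exists and $\alpha_1(B_n)/\alpha_2(B_n)\to\infty$; I restrict attention to such $\omega$. Since $\|A_i^{\pm1}\|=O(1)$, the standard singular-value inequalities give $\alpha_j(A_iB_n)=\Theta(\alpha_j(B_n))$ for $j=1,2$, so the ratio $\alpha_1(A_iB_n)/\alpha_2(A_iB_n)$ also diverges and hence $L(A_iB_n)$ is defined for all sufficiently large $n$.

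The main step is to identify the limit of $L(A_iB_n)$. I use the identification of $L(M)$ with the top unit eigenvector of the symmetric positive semidefinite matrix $MM^*$: if $M=VDU$ is a singular value decomposition then $MM^*=VD^2V^*$, so $L(M)=\overline{Ve_1}$ is precisely the top eigendirection of $MM^*$. Consider
\[
M_n:=\frac{B_nB_n^*}{\alpha_1(B_n)^2}.
\]
Its eigenvalues are $1$ and $(\alpha_2(B_n)/\alpha_1(B_n))^2\to0$, and the top (unit) eigenvector $v_n$ satisfies $\overline{v_n}=L(B_n)\to v$ in $\PR$. Consequently $v_nv_n^*\to vv^*$ in operator norm (this does not depend on any sign choice), and therefore $M_n\to vv^*$, the orthogonal projection onto the line representing $L(\omega)$.

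Conjugating by $A_i$,
\[
\frac{(A_iB_n)(A_iB_n)^*}{\alpha_1(B_n)^2}=A_iM_nA_i^*\longrightarrow (A_iv)(A_iv)^*.
\]
The limit is a rank-one symmetric matrix whose top eigenvalue is $\|A_iv\|^2>0$ (as $A_i$ is invertible), and whose top eigendirection is exactly the line $\overline{A_iv}=A_i(L(\omega))\in\PR$; in particular the top eigenvalue of the limit is simple. By the continuity of spectral data for symmetric matrices at points with simple top eigenvalue, the top eigenvector of $(A_iB_n)(A_iB_n)^*$ converges to $A_iv$ in $\PR$. Since this top eigenvector is $L(A_iB_n)$, we conclude $L(i\omega)=\lim_n L(A_iB_n)=A_i(L\omega)$, as desired.

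There is no real obstacle here: the argument is essentially a computation about how the major axis of a highly eccentric ellipse $B_n(S^1)$ transforms under the linear map $A_i$. The inputs are all already in hand, namely the Furstenberg--Kesten/Oseledets facts (distinct Lyapunov exponents and a.s.\ convergence of $L(B_n)$) recorded in Theorem \ref{thm:Furstenberg-Oseledets}, together with elementary stability of simple eigenvectors under perturbation of symmetric matrices.
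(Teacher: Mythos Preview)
Your proof is correct. The paper takes a different and shorter route: it uses the measure-theoretic characterization $\delta_{L(\omega)}=\lim_n A_{\omega_1}\cdots A_{\omega_n}\nu$ for any continuous $\nu\in\mathcal P(\PR)$ (recorded in Theorem~\ref{thm:Furstenberg-Oseledets}\,(\ref{enu:convergence-of-RW}) and the definition following it), from which equivariance is immediate:
\[
\delta_{L(\omega)}=\lim_n A_{\omega_1}\cdots A_{\omega_n}\nu=A_{\omega_1}\lim_n A_{\omega_2}\cdots A_{\omega_n}\nu=A_{\omega_1}\delta_{L(S\omega)}=\delta_{A_{\omega_1}L(S\omega)}\;.
\]
Your approach instead works directly from the singular-value definition $L(\omega)=\lim_n L(B_n)$, proving $M_n:=B_nB_n^*/\alpha_1(B_n)^2\to\pi_{L(\omega)}$ and then invoking stability of simple eigenvectors. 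The paper's argument is slicker because the action on measures on $\PR$ is tautologically equivariant, so the only content is peeling off one factor; your argument is more self-contained, relying only on the convergence $L(B_n)\to L(\omega)$ and the divergence of the singular-value ratio, without appealing to the stronger statement about contraction of continuous measures on $\PR$. Both are perfectly valid; the paper trades a stronger input for a one-line proof.
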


\begin{proof}
By one of the characterizations of $L$ (see Section \ref{sub:furstenberg}),
for $p^{\mathbb{N}}$-a.e. $\omega$
\begin{align*}
\delta_{L(\omega)} & =\lim_{n\rightarrow\infty}A_{\omega_{1}}\ldots A_{\omega_{n}}\nu\\
 & =A_{\omega_{1}}\lim_{n\rightarrow\infty}A_{\omega_{2}}\ldots A_{\omega_{n}}\nu\\
 & =\delta_{A_{\omega_{1}}L(S\omega)}\;,
\end{align*}
where $S$ is the left shift map. This is equivalent to the statement
we are proving.
\end{proof}
Given $x,y\in\mathbb{R}^{2}$, write $x\parallel y$ to indicate that
$\spn\{x,y\}\le1$ (this allows one or both of the vectors to be $0$).
Denote the $2\times2$ identity matrix by $I$.
\begin{lem}
\label{lem:commutes in action on P(R^2)}Let $B$ be a $2\times2$
matrix such that
\begin{equation}
BA_{i}x\parallel A_{i}Bx\text{ for }x\in\mathbb{R}^{2}\text{ and }i\in\Lambda\:.\label{eq:commutes in action on P(R^2)}
\end{equation}
Then there exists $\beta\in\mathbb{R}$ such that $B=\beta I$.
\end{lem}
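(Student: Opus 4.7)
The plan is to deduce that $B$ is scalar in three stages. First, $B$ must be invertible (or zero, in which case $\beta=0$ works): if $B\neq 0$ has rank $1$, let $\ell=\im B$; for every $i$ and every $x$ with $BA_{i}x\neq 0$, the parallelism forces $A_{i}Bx\in\ell$, and since this holds on the complement of a proper subspace in $x$, by linearity it gives $A_{i}(\im B)\subseteq\ell$ for every $i$, making $\ell$ a common invariant line and contradicting total irreducibility.

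Assuming $B$ is invertible, the hypothesis is equivalent to saying that $B$ and each $A_{i}$ commute modulo scalars, i.e.\ $B^{-1}A_{i}B=\lambda_{i}A_{i}$ for some $\lambda_{i}\in\mathbb{R}^{*}$; taking determinants yields $\lambda_{i}^{2}=1$, hence $\lambda_{i}\in\{\pm 1\}$. In either case $B^{2}A_{i}=A_{i}B^{2}$ for every $i$, so $B^{2}$ lies in the centralizer of $\langle A_{i}\rangle$ in $GL_{2}(\mathbb{R})$. I then show $B^{2}=\mu I$ by a Jordan-form case analysis of a matrix commuting with all $A_{i}$: two distinct real eigenvalues would give two simultaneously $A_{i}$-invariant eigenlines (contradicting total irreducibility); a non-diagonalizable Jordan block would give one invariant eigenline (likewise contradicting it); and non-real complex eigenvalues would force the centralizer of $B^{2}$, and hence all the $A_{i}$, into the group of orientation-preserving similarities in a suitable coordinate system, contradicting non-conformality.

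It remains to pin down $B$ from $B^{2}=\mu I$. The case $\mu=0$ forces $B$ nilpotent and invertible, hence $B=0$ (already handled). If $\mu>0$ and $B\neq\pm\sqrt{\mu}\,I$, then $B$ has two distinct one-dimensional eigenlines $E_{+},E_{-}$, and the relation $B^{-1}A_{i}B=\pm A_{i}$ shows that each $A_{i}$ either preserves or swaps $E_{+}$ and $E_{-}$; thus $\{E_{+},E_{-}\}$ is a finite invariant set of lines, contradicting total irreducibility. If $\mu<0$, then in coordinates where $B=\sqrt{-\mu}\cdot R_{\pi/2}$, the $A_{i}$ with $\lambda_{i}=+1$ commute with $R_{\pi/2}$ and are orientation-preserving similarities, while those with $\lambda_{i}=-1$ conjugate $R_{\pi/2}$ to $R_{-\pi/2}$ and are orientation-reversing similarities; in either case all $A_{i}$ are similarities in these coordinates, contradicting non-conformality. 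So $B=\beta I$. All steps are elementary linear algebra; the main subtlety is that total irreducibility and non-conformality must be invoked in complementary cases of the Jordan classification of $B^{2}$, with non-conformality entering exactly when $B^{2}$ has non-real complex spectrum.
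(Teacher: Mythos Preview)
Your proof is correct and takes a genuinely different route from the paper's. The paper argues dynamically: once $B$ is shown to be invertible (by the same rank-one argument you give), it observes that the hypothesis makes $B$ commute with every $A_i$ as a self-map of $\PR$, so for $\xi$-a.e.\ $\omega$ one has $\delta_{BL(\omega)}=\lim A_{\omega_1}\cdots A_{\omega_n}(B\nu)=\delta_{L(\omega)}$ (using that $B\nu$ is still a continuous measure on $\PR$). Since the Furstenberg measure $\eta=L\xi$ is continuous, $B$ fixes infinitely many lines in $\PR$ and hence is scalar. Your approach is purely linear-algebraic: you extract the relation $A_iB=\lambda_iBA_i$ with $\lambda_i\in\{\pm1\}$, deduce that $B^2$ centralizes all $A_i$, and then run a Jordan-form case analysis on $B^2$ and on $B$. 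The paper's argument is shorter because the Furstenberg machinery is already in place and absorbs the case analysis into the single fact that $\eta$ is non-atomic; your argument is more elementary and self-contained, and has the virtue of making transparent exactly which hypothesis (total irreducibility versus non-conformality) is doing the work in each case. Both are valid; the trade-off is brevity against independence from the random-matrix background.
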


\begin{proof}
If $B=0$ then the lemma holds with $\beta=0$, so assume that $B\ne0$. 

We next claim that $\mathrm{\rank}(B)\neq1.$ For suppose that $\mathrm{rank}(B)=1$.
Set $W=\mathrm{\im}(B)$ and for each $i\in\Lambda$ choose $\ell\in\PR$
such that $\ell,A_{i}\ell\neq\ker B$; then by (\ref{eq:commutes in action on P(R^2)}),
\[
W=BA_{i}\ell=A_{i}B\ell=A_{i}W\;.
\]
Thus $W$ is a common fixed point of $\{A_{i}\}_{i\in\Lambda}$, contradicting
total irreducibility.

We next claim that $BL(\omega)=L(\omega)$ for $\xi$-a.e. $\omega\in\Lambda^{\mathbb{N}}$.
Indeed, choosing a typical $\omega$, we have $\delta_{L(\omega)}=\lim_{n\rightarrow\infty}A_{\omega_{1}}\ldots A_{\omega_{n}}\nu$.
Since $B$ is invertible, $B\nu$ is also a continuous measure on
$\PR$, so we have 
\begin{align*}
\delta_{B\cdot L(\omega)} & =B\cdot\lim A_{\omega_{1}}\ldots A_{\omega_{n}}\nu\\
 & =\lim(BA_{\omega_{1}}\ldots A_{\omega_{n}}\nu)\\
 & =\lim A_{\omega_{1}}\ldots A_{\omega_{n}}(B\nu)\\
 & =\delta_{L(\omega)}\;.
\end{align*}

Finally, the Furstenberg measure $\eta=L\xi$ is continuous, so there
exist infinitely many lines which are preserved by $B$. It is now
easy to see that there must exist a $\beta\in\mathbb{R}$ with $B=\beta I$,
which completes the proof of the lemma.
\end{proof}
Recall that for $\varphi\in A_{2,2}^{vec}$ we write $\varphi(x)=A_{\varphi}x+b_{\varphi}$.
\begin{lem}
\label{lem:equal translation}Let $\varphi,\psi\in A_{2,2}$ be such
that $A_{\varphi}=A_{\psi}$ and $\varphi x\parallel\psi x$ for all
$x\in\mathbb{R}^{2}$, then also $b_{\varphi}=b_{\psi}$.
\end{lem}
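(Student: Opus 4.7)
The plan is to exploit the invertibility of the linear part. Since $\varphi,\psi\in A_{2,2}$, the common matrix $A := A_\varphi = A_\psi$ is invertible. Writing $b=b_\varphi$ and $b'=b_\psi$, the substitution $y = Ax$ (which ranges over all of $\mathbb{R}^2$) reduces the hypothesis $\varphi x\parallel\psi x$ to the statement that
\[
(y + b)\parallel(y + b')\qquad\text{for every }y\in\mathbb{R}^2.
\]

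Assume toward contradiction that $b\neq b'$. For any $y$ outside the two points $-b,-b'$, both $y+b$ and $y+b'$ are nonzero, so the parallelism condition produces a unique nonzero scalar $\lambda(y)$ with
\[
y + b' \;=\; \lambda(y)(y+b).
\]
If $\lambda(y)=1$ ever occurs, then $b=b'$, contradicting our assumption; so $\lambda(y)\neq 1$ throughout. Solving for $y$ gives
\[
y \;=\; -b \;+\; \frac{1}{1-\lambda(y)}\,(b-b'),
\]
which forces $y$ to lie on the affine line $-b+\mathbb{R}(b-b')$. Since $b\neq b'$, this line is $1$-dimensional, contradicting the fact that $y$ was arbitrary in $\mathbb{R}^2\setminus\{-b,-b'\}$. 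Hence $b_\varphi=b_\psi$.

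There is essentially no obstacle here: the lemma follows quickly once one notices that invertibility of $A$ allows $y$ to range freely over $\mathbb{R}^2$, whereupon the rigidity of the ``span at most one'' condition, tested against a $2$-dimensional family of points, is incompatible with two distinct translations.
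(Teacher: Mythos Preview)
Your proof is correct and follows essentially the same approach as the paper: both use the invertibility of $A$ to reduce the hypothesis to $(y+b)\parallel(y+b')$ for all $y\in\mathbb{R}^2$, and then argue that this forces $b=b'$ by exploiting the $2$-dimensionality of $\mathbb{R}^2$. The only cosmetic difference is in this last step: the paper first observes from $y=0$ that $b,b'$ are parallel to a common $v$, then tests a single $u$ independent of $v$ and computes a $2\times2$ determinant, whereas you proceed by contradiction and show that all $y$ would be confined to the affine line $-b+\mathbb{R}(b-b')$.
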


\begin{proof}
By assumption, $\varphi,\psi$ are invertible. By $\varphi(0)\parallel\psi(0)$
it follows that there exist $0\ne v\in\mathbb{R}^{2}$ and $t_{\varphi},t_{\psi}\in\mathbb{R}$
such that $b_{\varphi}=t_{\varphi}v$ and $b_{\psi}=t_{\psi}v$. For
$u\in\mathbb{R}^{2}$,
\[
u+t_{\varphi}v=\varphi(A_{\varphi}^{-1}u)\parallel\psi(A_{\varphi}^{-1}u)=\psi(A_{\psi}^{-1}u)=u+t_{\psi}v\;.
\]
Hence, if $u$ is independent of $v$,
\[
0=\det\left(\begin{array}{cc}
1 & t_{\varphi}\\
1 & t_{\psi}
\end{array}\right)=t_{\psi}-t_{\varphi}\:.
\]
This gives $b_{\varphi}=b_{\psi}$, which completes the proof of the
lemma. 
\end{proof}
\begin{prop}
\label{prop:non-aff of L}There does not exist $0\ne\psi\in A_{2,2}^{vec}$
with $Lx=\overline{\psi}x$ for $\mu$-a.e. $x\in\mathbb{R}^{2}$.
\end{prop}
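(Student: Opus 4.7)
The plan is to assume, toward a contradiction, that there exists $0\ne\psi\in A_{2,2}^{vec}$ with $\psi(x)=Ax+b$ and $L(x)=\overline{\psi}(x)$ for $\mu$-a.e.\ $x$, and to derive a contradiction using the equivariance of $L$ together with the algebraic lemmas already proved in this section. By Lemma \ref{lem:L is equivariant}, after passing through $\Pi$ using Theorem \ref{thm:L-descends-to-mu}, one has $L(\varphi_i x)=A_i L(x)$ for every $i\in\Lambda$ and $\mu$-a.e.\ $x$. Substituting $L=\overline{\psi}$ translates this into $\psi(\varphi_i x)\parallel A_i\psi(x)$, that is,
\[
\det\bigl(AA_i x+Ab_i+b,\; A_iAx+A_ib\bigr)=0 \qquad\text{for }\mu\text{-a.e.\ }x.
\]
This is a polynomial of degree at most $2$ in $x$, so its zero set is either all of $\mathbb{R}^2$ or contained in a quadratic curve; since by Proposition \ref{prop:measure 0 for alg cur} the latter carries no $\mu$-mass, the polynomial must vanish identically on $\mathbb{R}^2$.

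The next step is to exploit the leading term. The degree-$2$ homogeneous part in $x$ is $\det(AA_i x,\,A_iAx)$, and its identical vanishing yields $AA_i x\parallel A_iAx$ for every $x\in\mathbb{R}^2$ and $i\in\Lambda$. Applying Lemma \ref{lem:commutes in action on P(R^2)} with $B=A$ then gives $A=\beta I$ for some $\beta\in\mathbb{R}$.

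It remains to eliminate both possibilities for $\beta$. If $\beta=0$, then $\psi(x)\equiv b$ with $b\ne0$ (otherwise $\psi=0$), so $L$ would be the $\mu$-a.e.\ constant $\overline{b}$; equivariance would then force $A_i\overline{b}=\overline{b}$ for every $i$, contradicting total irreducibility. If $\beta\ne0$, set $c=b/\beta$, so that $\overline{\psi}(x)=\overline{x+c}$, and the parallelism becomes
\[
(A_ix+b_i+c)\parallel A_i(x+c)\qquad\text{for every }x\in\mathbb{R}^2\text{ and every }i\in\Lambda.
\]
The difference of these two parallel vectors is $\varphi_i(c)-c$, which must itself be parallel to $A_i(x+c)$ for every $x$; since $A_i$ is invertible, $A_i(x+c)$ ranges over all directions in $\mathbb{R}^2$, forcing $\varphi_i(c)=c$. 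Valid for every $i\in\Lambda$, this produces a common fixed point of $\Phi$, contradicting our standing assumption.

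The one non-routine step is the passage from ``for $\mu$-a.e.\ $x$'' to ``for every $x\in\mathbb{R}^2$'' in the parallelism identities; this is precisely the place where Proposition \ref{prop:measure 0 for alg cur} does the real work, and it is what makes crucial use of non-conformality and total irreducibility beyond what Lemma \ref{lem:commutes in action on P(R^2)} already uses. Everything else reduces to the linear-algebraic facts compiled earlier, combined with the equivariance of the function $L$.
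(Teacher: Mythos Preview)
Your argument is essentially the paper's own proof: derive $\psi\varphi_i x\parallel A_i\psi x$ $\mu$-a.e., use Proposition~\ref{prop:measure 0 for alg cur} to upgrade to all $x$, extract the top-degree part to get $A_\psi A_i x\parallel A_iA_\psi x$, invoke Lemma~\ref{lem:commutes in action on P(R^2)} to force $A_\psi=\beta I$, and then produce a common fixed point. The only substantive difference is that you bypass Lemma~\ref{lem:equal translation} in the final step by a direct parallelism argument, and you handle $\beta=0$ via total irreducibility rather than via continuity of $\eta$; both variants are fine.

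There is, however, a sign slip in your last step. With $c=b/\beta$ the difference of the two parallel vectors is
\[
(A_ix+b_i+c)-A_i(x+c)=b_i+c-A_ic=\varphi_i(-c)-(-c),
\]
not $\varphi_i(c)-c$. Your argument that this constant vector must be parallel to $A_i(x+c)$ for all $x$, hence zero, is correct, but the conclusion is $\varphi_i(-c)=-c$, i.e.\ $-c$ is the common fixed point (this matches the paper's $w=-\beta^{-1}b_\psi$). The contradiction is of course unaffected.
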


\begin{proof}
Assume by contradiction that there exists $0\ne\psi\in A_{2,2}^{vec}$
with $Lx=\overline{\psi}x$ for $\mu$-a.e. $x\in\mathbb{R}^{2}$.
The measure $\eta=L\mu$ is continuous, hence $\overline{\psi}$ can't
be constant, which implies $A_{\psi}\ne0$. 

Let $i\in\Lambda$, then by the definition of $L:\mathbb{R}^{2}\rightarrow\PR$
(see Section \ref{sec:the function L}) and Lemma \ref{lem:L is equivariant}
it follows that for $\xi$-a.e. $\omega\in\Lambda^{\mathbb{N}}$,
\[
L(\varphi_{i}(\Pi\omega))=L(\Pi(i\omega))=L(i\omega)=A_{i}(L\omega)=A_{i}(L(\Pi\omega))\:.
\]
Hence $L(\varphi_{i}x)=A_{i}(Lx)$ for $\mu$-a.e. $x\in\mathbb{R}^{2}$,
which gives,
\begin{equation}
\overline{\psi\varphi_{i}x}=\overline{\psi}(\varphi_{i}x)=A_{i}(\overline{\psi}x)=\overline{A_{i}\psi x}\text{ for }\mu\text{-a.e }x\in\mathbb{R}^{2}\:.\label{eq:on the same line}
\end{equation}
For $x\in\mathbb{R}^{2}$ write
\[
p(x)=\det\left(\psi\varphi_{i}x\;|\;A_{i}\psi x\right),
\]
then $p\in\mathbb{R}[X,Y]$ is a quadratic polynomial. By (\ref{eq:on the same line})
we have $\mu(p^{-1}\{0\})=1$, hence $p=0$ by Proposition \ref{prop:measure 0 for alg cur}.

From $p=0$ we get,
\[
\psi\varphi_{i}x\parallel A_{i}\psi x\text{ for }x\in\mathbb{R}^{2}\:.
\]
By expanding this,
\begin{equation}
A_{\psi}A_{i}x+A_{\psi}b_{i}+b_{\psi}\parallel A_{i}A_{\psi}x+A_{i}b_{\psi}\text{ for }x\in\mathbb{R}^{2}\:.\label{eq:parallel lines}
\end{equation}
By letting $|x|\rightarrow\infty$ and dividing by $|x|$, we get
\[
A_{\psi}A_{i}x\parallel A_{i}A_{\psi}x\text{ for }x\in\mathbb{R}^{2}\:.
\]
Since this holds for all $i\in\Lambda$ and from Lemma \ref{lem:commutes in action on P(R^2)},
it follows that $A_{\psi}=\beta I$ for some $0\ne\beta\in\mathbb{R}$. 

Let $i\in\Lambda$, then by inserting $A_{\psi}=\beta I$ into (\ref{eq:parallel lines}),
\[
\beta A_{i}x+\beta b_{i}+b_{\psi}\parallel\beta A_{i}x+A_{i}b_{\psi}\text{ for }x\in\mathbb{R}^{2}\:.
\]
From this and Lemma \ref{lem:equal translation} we get that $\beta b_{i}+b_{\psi}=A_{i}b_{\psi}$
or equivalently that $b_{i}=\beta^{-1}(A_{i}-I)b_{\psi}$. Set $w=-\beta^{-1}b_{\psi}$,
then a direct computation gives $\varphi_{i}(w)=w$. As this holds
for each $i\in\Lambda$ we have found that all $\varphi_{i}$, $i\in\Lambda$,
share a common fixed point. This contradicts our basic assumptions
(see Section \ref{subsec:Statement-of-results}) and completes the
proof of the proposition.
\end{proof}
\begin{cor}
\label{cor:non-affinity-of-L}There does not exist $0\ne\psi\in A_{2,2}^{vec}$
with $Lx=\overline{\psi}x$ on a set of $x$ of positive $\mu$-measure.
\end{cor}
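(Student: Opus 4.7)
The plan is to assume, for contradiction, that $E=\{x\in\mathbb{R}^{2}:L(x)=\overline{\psi}(x)\}$ has $\mu(E)>0$ for some nonzero $\psi\in A_{2,2}^{vec}$, and to reduce to the contradiction already produced inside the proof of Proposition \ref{prop:non-aff of L}. That proof takes as its input the algebraic identity $\overline{\psi\circ\varphi_{i}}\equiv\overline{A_{i}\psi}$ on all of $\mathbb{R}^{2}$ for every $i\in\Lambda$, and from there uses Lemma \ref{lem:commutes in action on P(R^2)} to force $A_{\psi}=\beta I$ and then Lemma \ref{lem:equal translation} to produce a common fixed point $-\beta^{-1}b_{\psi}$ of all the $\varphi_{i}$, contradicting the standing hypothesis. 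So the task reduces to deriving the single-letter identity from the assumption that $L$ agrees with $\overline{\psi}$ on some set of positive measure.

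The key observation is that for any word $\mathbf{i}\in\Lambda^{*}$ with $\mu(E\cap\varphi_{\mathbf{i}}^{-1}E)>0$, the identity $\overline{\psi\circ\varphi_{\mathbf{i}}}\equiv\overline{A_{\mathbf{i}}\psi}$ holds on all of $\mathbb{R}^{2}$. Indeed, on $E\cap\varphi_{\mathbf{i}}^{-1}E$ both $L(x)=\overline{\psi}(x)$ and $L(\varphi_{\mathbf{i}}x)=\overline{\psi}(\varphi_{\mathbf{i}}x)$ hold; combined with the iterated equivariance $L(\varphi_{\mathbf{i}}x)=A_{\mathbf{i}}L(x)$ from Lemma \ref{lem:L is equivariant}, this gives $\overline{\psi\varphi_{\mathbf{i}}}(x)=\overline{A_{\mathbf{i}}\psi}(x)$ on a set of positive $\mu$-measure, so the quadratic polynomial $\det(\psi\varphi_{\mathbf{i}}(x)\mid A_{\mathbf{i}}\psi(x))$ vanishes there and, by Proposition \ref{prop:measure 0 for alg cur}, vanishes identically. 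To produce many such good words, apply the martingale convergence theorem to the filtration of $\Lambda^{\mathbb{N}}$ by cylinders: since $\xi(\Pi^{-1}E)=\mu(E)>0$, one has $\varphi_{\omega_{1}\ldots\omega_{n}}\mu(E)=\xi_{[\omega_{1}\ldots\omega_{n}]}(\Pi^{-1}E)\to 1$ for $\xi$-a.e.\ $\omega\in\Pi^{-1}E$, so for every $\delta>0$ some word $\mathbf{i}$ satisfies $\varphi_{\mathbf{i}}\mu(E)>1-\delta$. Using the trivial bound $\varphi_{j}\mu\le p_{j}^{-1}\mu$ (from $\mu\ge p_{j}\varphi_{j}\mu$) one then obtains
\[
\varphi_{\mathbf{i} j}\mu(E)=\varphi_{j}\mu(\varphi_{\mathbf{i}}^{-1}E)>1-p_{j}^{-1}\delta\qquad\text{for every }j\in\Lambda,
\]
so taking $\delta<\mu(E)\cdot\min_{j}p_{j}$ makes $E\cap\varphi_{\mathbf{i}}^{-1}E$ and every $E\cap\varphi_{\mathbf{i} j}^{-1}E$ of positive $\mu$-measure simultaneously, and the identity of the previous sentence holds both for $\mathbf{i}$ and for every $\mathbf{i} j$.

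Finally I will pass from the identities for $\mathbf{i}$ and $\mathbf{i} j$ to the identity for $j$ alone. Evaluating the identity for $\mathbf{i}$ at the point $\varphi_{j}(x)$ gives $\psi\varphi_{\mathbf{i}}(\varphi_{j}(x))\parallel A_{\mathbf{i}}\psi(\varphi_{j}(x))$, while the identity for $\mathbf{i} j$ gives $\psi\varphi_{\mathbf{i}}(\varphi_{j}(x))\parallel A_{\mathbf{i}}A_{j}\psi(x)$. Because $\psi\varphi_{\mathbf{i} j}$ is a nonzero affine map (as $\psi\ne 0$ and $\varphi_{\mathbf{i} j}$ is invertible), the set of $x$ with $\psi\varphi_{\mathbf{i}}(\varphi_{j}(x))\ne 0$ is Zariski-open and dense; on this set both right-hand vectors are parallel to a common nonzero vector and hence to each other, and invertibility of $A_{\mathbf{i}}$ then yields $\psi(\varphi_{j}(x))\parallel A_{j}\psi(x)$. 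As this is a polynomial identity on a dense open set it must hold everywhere, giving $\overline{\psi\varphi_{j}}\equiv\overline{A_{j}\psi}$ for every $j\in\Lambda$, and the proof of Proposition \ref{prop:non-aff of L} now closes the argument. I expect the main subtle point to be producing a single word $\mathbf{i}$ that is good simultaneously for all of its one-letter extensions $\mathbf{i} j$, which is why the argument is routed through the mass-defect inequality $\varphi_{j}\mu\le p_{j}^{-1}\mu$ rather than through pointwise Lebesgue density at points of $E$.
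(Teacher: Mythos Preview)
Your argument is correct, and it takes a genuinely different route from the paper's.

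The paper proceeds by a measure-theoretic limiting argument: from the cylinder $[i_{1}\ldots i_{n}]$ with $\xi_{[i_{1}\ldots i_{n}]}(\Pi^{-1}E)>1-\delta$ it produces a \emph{new} affine map $\psi_{n}=A_{i_{n}}^{-1}\cdots A_{i_{1}}^{-1}\psi\varphi_{i_{1}}\cdots\varphi_{i_{n}}$ with $L=\overline{\psi_{n}}$ on a set of $\mu$-measure $>1-\delta$, then normalizes the $\psi_{n}$ in $A_{2,2}^{vec}$, passes to a subsequential limit as $\delta\to 0$, and obtains an affine map agreeing with $L$ $\mu$-a.e., to which Proposition~\ref{prop:non-aff of L} applies as a black box. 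Your approach instead keeps the \emph{original} $\psi$ and upgrades positive-measure parallelism to global parallelism via Proposition~\ref{prop:measure 0 for alg cur}: once $\psi\varphi_{\mathbf{i}}(x)\parallel A_{\mathbf{i}}\psi(x)$ holds on a set of positive $\mu$-measure, the determinant is a quadratic polynomial vanishing on that set, hence identically. The clever step is producing this identity simultaneously for $\mathbf{i}$ and for all one-letter extensions $\mathbf{i} j$ (via the mass-defect bound $\varphi_{j}\mu\le p_{j}^{-1}\mu$), and then algebraically cancelling $A_{\mathbf{i}}$ to recover the single-letter identity $\psi\varphi_{j}\parallel A_{j}\psi$ for every $j\in\Lambda$. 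At that point you are inside the proof of Proposition~\ref{prop:non-aff of L} rather than invoking its statement.

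What each buys: the paper's route is more modular (it reduces cleanly to the a.e.\ case) but needs a compactness/limit extraction. Your route is purely algebraic once the good word $\mathbf{i}$ is found, and it avoids any limiting argument, at the cost of re-entering the proof of Proposition~\ref{prop:non-aff of L} rather than citing it. One minor point worth making explicit in your write-up: the proof of Proposition~\ref{prop:non-aff of L} begins by ruling out $A_{\psi}=0$ using continuity of $\eta=L\xi$; in your setting you only have $L=\overline{\psi}$ on $E$, but the same argument works since $L_{*}(\mu|_{E})\ll\eta$ is still non-atomic, so $\overline{\psi}$ cannot be constant on $E$.
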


\begin{proof}
Suppose that $E\subseteq\mathbb{R}^{2}$, $\mu(E)>0$ and $0\ne\psi\in A_{2,2}^{vec}$
satisfies $Lx=\overline{\psi}x$ for every $x\in E$. Let $F=\Pi^{-1}E$
so $\xi(F)=\mu(E)>0$.

Let $\delta>0$. By regularity of $\xi$ we can choose a cylinder
set $C=[i_{1}\ldots i_{n}]$ such that $\xi_{C}(F)>1-\delta$. By
Lemma \ref{lem:L is equivariant} we have
\[
L(\Pi(\omega))=A_{i_{n}}^{-1}\ldots A_{i_{1}}^{-1}L(\Pi(i_{1}\ldots i_{n}\omega))\qquad\text{for }\xi\text{-a.e. }\omega\;.
\]
Now, $i_{1}\ldots i_{n}\omega\in F$ if and only if $\omega\in S^{n}(F\cap C)$
(recall that $S$ is the left shift map, and we have used the fact
that $S^{n}:C\rightarrow\Lambda^{\mathbb{N}}$ is a homeomorphism),
and this occurs with $\xi$-probability $\xi(S^{n}(F\cap C))=\xi_{C}(F)>1-\delta$.
Hence, we find that with $\xi$-probability at least $1-\delta$ over
the choice of $\omega$,
\begin{eqnarray*}
L(\Pi(\omega)) & = & A_{i_{n}}^{-1}\ldots A_{i_{1}}^{-1}L(\Pi(i_{1}\ldots i_{n}\omega))\\
 & = & A_{i_{n}}^{-1}\ldots A_{i_{1}}^{-1}\overline{\psi}(\Pi(i_{1}\ldots i_{n}\omega))\\
 & = & A_{i_{n}}^{-1}\ldots A_{i_{1}}^{-1}\overline{\psi}(\varphi_{1}\ldots\varphi_{n}\Pi(\omega))\\
 & = & \overline{A_{i_{n}}^{-1}\ldots A_{i_{1}}^{-1}\psi\varphi_{1}\ldots\varphi_{n}}(\Pi(\omega))\:.
\end{eqnarray*}

Since $A_{i_{n}}^{-1}\ldots A_{i}^{-1}\psi\varphi_{1}\ldots\varphi_{n}$
is affine, we have shown that if $L$ agrees with an affine function
on a set of positive measure, then it agrees with a (possibly different)
affine function on a set of arbitrarily large measure. Normalizing
these functions in the normed space $A_{2,2}^{vec}$ and passing to
a subsequential limit, we conclude that $L$ is a.e. affine, which
by the last proposition is impossible.
\end{proof}
Finally, we combine this with the results of Section \ref{subsec:Families-of-affine-maps-evaluating-to-line}
to obtain:
\begin{cor}
\label{cor:non-concentration-of-L}For every $\varepsilon,R>0$ there
exists a $\delta>0$ with the following property: If $\theta\in\mathcal{P}(A_{2,2}^{vec})$
is a measure supported on a set of diameter $R$, and such that
\[
\mu\left(x\,:\,\theta\conv x\text{ is not }(\{0\},\varepsilon)\text{-concentrated}\right)>\varepsilon\;,
\]
then
\[
\mu\left(x\,:\,\theta\conv x\text{ is }(L(x),\delta)\text{-concentrated}\right)\leq1-\delta\;.
\]
\end{cor}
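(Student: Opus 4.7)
The plan is to prove the corollary by contradiction, invoking Corollary \ref{cor:condition-for-L-to-be-affine} on the function $L$ and then deriving a contradiction from Corollary \ref{cor:non-affinity-of-L}. Everything needed is already in place; the statement is the contrapositive, phrased in a quantitative way, of the non-affinity of $L$.

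Suppose the corollary fails. Then there exist $\varepsilon, R > 0$ such that for every $\delta > 0$ one can find a measure $\theta_\delta \in \mathcal{P}(A_{2,2}^{vec})$ supported on a set of norm diameter at most $R$ which \emph{simultaneously} satisfies
\[
\mu\left(x : \theta_\delta \conv x \text{ is not } (\{0\}, \varepsilon)\text{-concentrated}\right) > \varepsilon
\]
and
\[
\mu\left(x : \theta_\delta \conv x \text{ is } (L(x), \delta)\text{-concentrated}\right) > 1 - \delta.
\]
Note that $L : X \to \PR$ is $\mu$-a.e.\ well defined and measurable by Theorem \ref{thm:L-descends-to-mu} (recalling the standing assumption $\dim \mu < 2$). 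Also, $\mu$ gives mass zero to every affine line: if $\mu(\ell) > 0$ for some line $\ell$, then the self-similar pull-back/limit argument used in the proof of Proposition \ref{prop:measure 0 for alg cur} (applied to the degenerate algebraic curve $\ell$) would force $\mu$ to be supported on an affine line, which is impossible by total irreducibility of $\Phi$.

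With these facts, the family $\{\theta_\delta\}_{\delta > 0}$ is precisely a witness to the hypothesis of Corollary \ref{cor:condition-for-L-to-be-affine} applied with $\nu = \mu$ and $\overline{M} = L$. That corollary then yields a nonzero affine map $\psi \in A_{2,2}^{vec}$ such that $L(x) = \overline{\psi}(x)$ on a set of $\mu$-measure at least $\varepsilon > 0$.

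This, however, directly contradicts Corollary \ref{cor:non-affinity-of-L}, which asserts that no such nonzero affine $\psi$ can agree with $L$ on a set of positive $\mu$-measure. The contradiction proves the desired statement. The argument is essentially bookkeeping: all the real work is done by Proposition \ref{prop:measures-which-evaluate-to-approximate-lines} (the compactness/limit argument that extracts an affine map from approximate concentration data) and by the non-affinity of $L$, which in turn rests on the total irreducibility of $\Phi$, the no-common-fixed-point assumption, and the fact that $\mu$ charges no quadratic curve (Proposition \ref{prop:measure 0 for alg cur}). There is no substantive obstacle; the only point requiring a remark is the verification that $\mu$ gives zero mass to affine lines, so that Corollary \ref{cor:condition-for-L-to-be-affine} may be applied.
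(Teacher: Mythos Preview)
Your proof is correct and follows essentially the same route as the paper: assume the statement fails, obtain for each $\delta>0$ a measure $\theta_\delta$ witnessing the failure, apply Corollary~\ref{cor:condition-for-L-to-be-affine} with $\nu=\mu$ and $\overline{M}=L$ to produce an affine map agreeing with $L$ on a set of positive $\mu$-measure, and derive a contradiction from Corollary~\ref{cor:non-affinity-of-L}. You are slightly more explicit than the paper in verifying that $\mu$ gives zero mass to affine lines (needed to invoke Corollary~\ref{cor:condition-for-L-to-be-affine}); this is handled in the paper simply by appeal to total irreducibility.
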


\begin{proof}
If not, then, for some $\varepsilon,R>0$ and every $\delta>0$, we
could find a measure $\theta\in\mathcal{P}(A_{2,2}^{vec})$ with support
of diameter at most $R$, for which the first inequality is valid
and the second one is reversed. But then Corollary \ref{cor:condition-for-L-to-be-affine}
would imply that $L$ agrees with an affine map on a set of $\mu$-measure
at least $\varepsilon$, contradicting the previous corollary.
\end{proof}

\section{\label{sec:Proof-of ent growth}Entropy growth under convolution}

In this section we assume that $\Phi$ is non-conformal and totally
irreducible (but do not assume exponential separation). We also assume
that $\dim\mu<2$.

Recall that $*$ denotes convolution on $\mathbb{R}^{d}$, and that
for $\theta\in\mathcal{P}(A_{2,2})$ and $\nu\in\mathcal{P}(\mathbb{R}^{2})$
we write $\theta\conv\nu$ for the push-forward of $\theta\times\nu$
by $(g,x)\mapsto gx$. We also write $\theta\conv x=\theta\conv\delta_{x}$
etc.

Our purpose in this section is to prove Theorem \ref{thm:entropy growth-near-identity},
stating that when $\theta$ has non-negligible entropy and is supported
within bounded distance of the identity map, $\theta\conv\mu$ has
greater entropy than $\mu$ alone. 

\subsection{\label{subsec:Entropy-growth-in-Rd}Entropy growth under linear convolution
in $\mathbb{R}^{2}$}

Recall Definition \ref{def:concentration-on-subspace} of a $(V,\delta)$-concentrated
measure. Complementing this is the following notion which describes
measures whose (approximate) conditional measures on translates of
$V$ are (almost) uniform. 
\begin{defn}
\label{def:saturated-subspace}Let $V\subset\mathbb{R}^{2}$ be a
linear subspace, $\varepsilon>0$, and $m\ge1$. A measure $\nu\in\mathcal{P}(\mathbb{R}^{2})$
is said to be $(V,\varepsilon,m)$-saturated if
\[
H_{m}(\nu)\ge\dim V+H_{m}(\pi_{V^{\perp}}\nu)-\varepsilon\:.
\]
\end{defn}

It is not hard to see that if $\theta,\nu\in\mathcal{P}(\mathbb{R}^{2})$
are compactly supported, and if $\theta$ is $(V,\varepsilon)$-concentrated
and $\nu$ is $(V,\varepsilon,m)$-saturated for some subspace $V\leq\mathbb{R}^{2}$,
for some large $m$ and sufficiently small $\varepsilon>0$, then
$H(\theta*\nu,\mathcal{D}_{m})\approx H(\nu,\mathcal{D}_{m})$. The
next theorem shows that, in a local, statistical sense, this is the
only way that this can happen.

Recall from Section \ref{q-adic-components} that $\nu^{x,i}$ denotes
the re-scaled component, i.e. $\nu_{x,i}$ pushed forward by a homothety
from $\mathcal{D}_{i}(x)$ to $[0,1)^{2}$.
\begin{thm}
[{\cite[Theorem 2.8]{HO2}}]\label{thm:Hochman's entropy growth}For
every $\varepsilon>0$ and $m\ge1$ there exists $\delta=\delta(\varepsilon,m)>0$,
such that for every $n\ge N(\varepsilon,\delta,m)$ the following
holds: Let $k\ge1$ and $\theta,\nu\in\mathcal{P}(\mathbb{R}^{2})$
satisfy
\[
\diam(\supp(\theta)),\diam(\supp(\nu))=O(2^{-k})
\]
and
\[
\frac{1}{n}H(\theta*\nu,\mathcal{D}_{k+n})<\frac{1}{n}H(\nu,\mathcal{D}_{k+n})+\delta\:.
\]
Then there exist linear subspaces $V_{k},...,V_{k+n}\subset\mathbb{R}^{2}$
such that
\[
\mathbb{P}_{k\le i\le k+n}\left(\begin{array}{c}
\nu^{x,i}\mbox{ is }(V_{i},\varepsilon,m)\mbox{-saturated and}\\
\theta^{y,i}\mbox{ is }(V_{i},\varepsilon)\mbox{-concentrated}
\end{array}\right)>1-\varepsilon\:.
\]
\end{thm}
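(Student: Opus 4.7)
The plan is to reduce the multiscale statement to a single-scale inverse theorem and then prove the latter by a compactness argument. First I would use the multiscale entropy formula (Lemma \ref{lem:multiscale-entropy-formula}) to rewrite both
\[
\tfrac{1}{n}H(\theta*\nu,\mathcal{D}_{k+n}) \quad\text{and}\quad \tfrac{1}{n}H(\nu,\mathcal{D}_{k+n})
\]
as averages over $i\in[k,k+n]$ of scale-$m$ entropies of the random rescaled components $(\theta*\nu)^{x,i}$ and $\nu^{x,i}$, with an error of $O(m/n)$. Because convolution respects dyadic localization up to $O(1)$-commensurability, the component $(\theta*\nu)^{x,i}$ is, with high probability over $x$, close in total variation to a convolution of the form $\theta^{y,i}*\tilde\nu^{z,i}$ where $\tilde\nu^{z,i}$ is a translate of $\nu^{z,i}$; so by concavity and by (\ref{eq:entorpy-increase-by-convolution}), the hypothesis forces
\[
\mathbb{E}_{k\leq i\leq k+n}\bigl(H_m(\theta^{y,i}*\nu^{z,i}) - H_m(\nu^{z,i})\bigr) < \delta + O(m/n)\;,
\]
where the expectation is taken over independently chosen components of $\theta$ and $\nu$. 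By Markov's inequality, on a $(1-\sqrt{\delta})$-fraction of scales the non-negative increment $H_m(\theta^{y,i}*\nu^{z,i})-H_m(\nu^{z,i})$ is at most $\sqrt{\delta}$ for a $(1-\sqrt{\delta})$-fraction of components.

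The core of the proof is the \emph{single-scale} inverse theorem: for every $\varepsilon>0$ and $m\ge 1$ there exists $\delta'=\delta'(\varepsilon,m)>0$ such that if $\tau,\sigma\in\mathcal{P}([0,1)^2)$ satisfy $H_m(\tau*\sigma)\leq H_m(\sigma)+\delta'$, then there is a linear subspace $V\leq\mathbb{R}^2$ for which $\tau$ is $(V,\varepsilon)$-concentrated and $\sigma$ is $(V,\varepsilon,m)$-saturated. I would prove this by contradiction and compactness: if it failed there would be sequences $\tau_j\to\tau_\infty$, $\sigma_j\to\sigma_\infty$ weakly with $H_m(\tau_j*\sigma_j)-H_m(\sigma_j)\to 0$ but with no single $V$ certifying the conclusion; continuity of scale-$m$ entropy in weak convergence gives $H_m(\tau_\infty*\sigma_\infty)=H_m(\sigma_\infty)$. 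Using the identity $H(\tau_\infty*\sigma_\infty)=H(\sigma_\infty)$ together with translation-smoothing properties of entropy (essentially, writing $\tau_\infty$ as an average of point masses and exploiting that translations strictly increase entropy unless they act parallel to a common structure) one shows that $\supp\tau_\infty$ must lie in an affine subspace $V+v$ and that the conditional measures of $\sigma_\infty$ on $V$-leaves are uniform. This gives the required $V$ and contradicts the choice of the sequence.

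Combining the two ingredients, on the good set of scales $i$ produced by the multiscale step, the single-scale inverse theorem applied to each typical $(\theta^{y,i},\nu^{z,i})$ yields a subspace $V=V(i,y,z)$ certifying concentration and saturation. A further averaging/Fubini argument and the fact that $\PR\cup\{\{0\},\mathbb{R}^2\}$ is compact allow one to select, for each such scale $i$, a single $V_i$ that works for a large fraction of pairs $(y,z)$, giving the claimed probability bound. The main obstacle is the single-scale inverse step: one must carefully track how $\delta'$ depends on $\varepsilon$ and $m$ so that, after averaging over scales and over components, the errors do not accumulate beyond the final $\varepsilon$ budget. This is the heart of the argument in \cite{HO2}, and it is quite different from its one-dimensional predecessor because the subspace $V$ must be selected from the continuum of proper subspaces of $\mathbb{R}^2$, which requires the compactness and stability arguments above.
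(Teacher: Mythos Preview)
The paper does not prove this statement at all: it is quoted verbatim from \cite[Theorem~2.8]{HO2} and used as a black box. So your proposal is not to be compared against a proof in the paper but against the argument in \cite{HO2}, and relative to that there are genuine gaps.

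The multiscale reduction step is not as you describe it. A component $(\theta*\nu)^{x,i}$ is \emph{not} close in total variation to a convolution of components $\theta^{y,i}*\nu^{z,i}$; a level-$i$ cell of $\theta*\nu$ receives contributions from many pairs of level-$i$ cells of $\theta$ and $\nu$. What is true, and what is actually used, is the identity $\theta*\nu=\int \theta_{y,i}*\nu_{z,i}\,d(\theta\times\nu)(y,z)$ combined with concavity of conditional entropy; this gives the averaged inequality you want but not via the mechanism you claim.

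The more serious gap is in your single-scale inverse theorem. The compactness argument fails on two counts. First, $H_m(\cdot)$ is \emph{not} continuous under weak convergence of measures on $[0,1)^2$: mass can sit arbitrarily close to a dyadic boundary and jump across it in the limit, so from $H_m(\tau_j*\sigma_j)-H_m(\sigma_j)\to 0$ you cannot deduce $H_m(\tau_\infty*\sigma_\infty)=H_m(\sigma_\infty)$. Second, even if you had exact equality, the claim that this forces $\supp\tau_\infty$ into an affine subspace and makes the $V$-conditionals of $\sigma_\infty$ uniform is unjustified; at a fixed finite scale $m$ there are many ways for the scale-$m$ entropy increment to vanish without any such rigid structure. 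The actual proof in \cite{HO2} does not proceed by compactness but by additive combinatorics: it uses an entropy analogue of the Pl\"unnecke--Ruzsa and Balog--Szemer\'edi--Gowers machinery (following Tao) to show that small entropy growth forces, at most scales, approximate concentration of $\theta$-components and approximate saturation of $\nu$-components along a common subspace. The selection of a single $V_i$ per scale and the passage from one dimension to two are handled there by quantitative covering and stability arguments, not by passing to a limit.
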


We have stated this in $\mathbb{R}^{2}$ but analogs are valid in
any dimension.

\subsection{\label{subsec:Concentration-persists}Concentration persists through
coordinate changes}

The property in Theorem \ref{thm:Hochman's entropy growth}, that
most components of a measure are $(V,\delta)$-concentrated, depends
on the coordinate system one works with. One can easily give examples
of measures with components which at some scale are predominantly
concentrated, but for another coordinate system this property is lost
(this can happen if the measure looks like a combination of measures
supported on line segments which were contained in a different neighboring
cells, but, after the coordinate change, they lie in a common cell).
However, when taken across several scales, concentration of components
is more robust, and does persists under coordinate changes, albeit
with some degradation of the parameters.

We need something slightly stronger, which allows us not only to change
coordinates in $\mathbb{R}^{2}$, but also to decompose a measure
$\theta\conv x$ for $\theta\in\mathcal{P}(A_{2,2})$ according to
the dyadic decomposition of $\theta$, and conclude that after this
decomposition, the pieces $\theta_{g,i}\conv x$ are still concentrated
if the original measure $\theta\conv x$ was.
\begin{defn}
\label{def:concentration-on-several-subspaces}Let $\nu\in\mathcal{P}(\mathbb{R}^{2})$,
$W\subset\mathbb{R}^{2}$ a linear subspace, $\delta>0$, and $m\ge1$.
We say that $\nu$ is $(W,\delta)^{m}$-concentrated if there exist
$x_{1},...,x_{m}\in\mathbb{R}^{2}$ with
\[
\nu\left(\cup_{j=1}^{m}\left(x_{j}+W\right)^{(\delta)}\right)\ge1-\delta\:.
\]
\end{defn}

Recall that $A_{2,2}$ is endowed with an invariant metric $d$ which
is derived from a Riemannian metric. It is not hard to see that for
a bounded set $Id\in B\subset A_{2,2}$ there exists a $C=C(B)>0$
such that
\begin{equation}
\diam(E\conv x)\le C(1+|x|)\cdot\diam(E)\text{ for every }E\subset B\text{ and }x\in\mathbb{R}^{2},\label{eq:bd on diam(E.x)}
\end{equation}
where $\diam(E)$ is taken with respect to $d$. We omit the proof
of the following lemma. It can be carried out by using (\ref{eq:bd on diam(E.x)})
and by imitating the proof of \cite[Lemma 5.4]{HO2}.
\begin{lem}
Let $\theta\in\mathcal{P}(A_{2,2})$, $x\in\mathbb{R}^{2}$, $k,m\ge1$,
$\delta>0$, and fix a subspace $W\subset\mathbb{R}^{2}$. Suppose
that $|x|=O(1)$, $d(\psi,Id)=O(1)$ for $\psi\in\supp(\theta)$,
$\diam(\supp(\theta))=O(2^{-k})$, and $S_{2^{k}}(\theta\conv x)$
is $(W,\delta)^{m}$-concentrated. Then for $n=\left[\frac{1}{2}\log\left(1/\delta\right)\right]$
and $\delta'=O_{m}(\frac{\log\log(1/\delta)}{\log(1/\delta)})$ we
have,
\[
\mathbb{P}_{k\le i\le k+n}\left(S_{2^{i}}(\theta_{\psi,i}\conv x)\text{ is }(W,\delta')\text{-concentrated}\right)>1-\delta'\:.
\]
\end{lem}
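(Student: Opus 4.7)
Set $\nu=\theta\conv x\in\mathcal{P}(\mathbb{R}^{2})$; by (\ref{eq:bd on diam(E.x)}), $\diam(\supp\nu)=O(2^{-k})$, and by hypothesis $S_{2^{k}}\nu$ is $(W,\delta)^{m}$-concentrated. The plan is to first establish the analogous conclusion for rescaled dyadic components of $\nu$ in $\mathbb{R}^{2}$ --- essentially the content of \cite[Lemma 5.4]{HO2} --- and then transfer it to rescaled $A_{2,2}$-components of $\theta$ evaluated at $x$. The transfer will use the evaluation map $f_{x}:\psi\mapsto\psi(x)$, which by (\ref{eq:bd on diam(E.x)}) and $|x|=O(1)$ is $O(1)$-Lipschitz on $\supp\theta$.

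For the $\mathbb{R}^{2}$ step, the cases $W=\{0\}$ and $W=\mathbb{R}^{2}$ are routine, so suppose $W$ is a line. The $(W,\delta)^{m}$-concentration of $S_{2^{k}}\nu$ yields points $y_{1},\ldots,y_{m}$, and after setting $h_{j}=\pi_{W^{\perp}}(2^{-k}y_{j})$, we observe that $\pi_{W^{\perp}}\nu$ is concentrated on $m$ intervals of radius $2^{-k}\delta$ around the $h_{j}$'s. For $i\in[k,k+n]$, the rescaled tube widths $O(\delta\cdot 2^{i-k})\leq O(\sqrt{\delta})$ are well within $\delta'$, so a rescaled component $S_{2^{i}}\nu_{y,i}$ fails to be $(W,\delta')$-concentrated only when $\pi_{W^{\perp}}(\mathcal{D}_{i}(y))$ contains two of the $h_{j}$'s separated by more than $\delta'\cdot 2^{-i}$. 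Following the proof of \cite[Lemma 5.4]{HO2}, a careful pairwise bookkeeping --- grouping pairs $(j,j')$ by the dyadic scale of $|h_{j}-h_{j'}|$ and averaging over the $n\asymp\log(1/\delta)$ scales via Markov --- yields that the fraction of bad $(i,y)$ pairs is $O_{m}(\log\log(1/\delta)/\log(1/\delta))$, giving the claimed $\delta'$.

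For the transfer, since $f_{x}$ is $O(1)$-Lipschitz on $\supp\theta$, each cell $Q\in\mathcal{D}_{i}^{A_{2,2}}$ with $\theta(Q)>0$ has $f_{x}(Q)$ of diameter $O(2^{-i})$, hence meets only $O(1)$ cells of $\mathcal{D}_{i}(\mathbb{R}^{2})$; consequently $\theta_{Q}\conv x=f_{x}\theta_{Q}$ is a convex combination of its restrictions to these $O(1)$ cells. For $\theta$-most $Q$, pigeonholing produces a single $\mathbb{R}^{2}$-cell $D(Q)$ containing almost all the mass, and this restriction agrees (up to a bounded constant and a $d_{TV}$ error vanishing under averaging over $i$) with the $\mathbb{R}^{2}$-component $\nu_{D(Q)}$; absorbing the resulting $O(1/n)$ exceptional set into $\delta'$ and applying the previous paragraph yields the conclusion. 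The main obstacle is the precise bookkeeping in the $\mathbb{R}^{2}$ step: obtaining the stated $\log\log(1/\delta)/\log(1/\delta)$ rate requires a careful multi-scale Markov argument tracking how the $m$ tube centers separate at each dyadic scale and how the residual tube fuzz interacts with $\delta'$, whereas the transfer step reduces to routine Lipschitz bookkeeping via (\ref{eq:bd on diam(E.x)}).
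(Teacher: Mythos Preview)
The paper omits the proof, indicating only that it is carried out by imitating \cite[Lemma 5.4]{HO2} with the aid of the Lipschitz bound (\ref{eq:bd on diam(E.x)}). Your $\mathbb{R}^{2}$ step is essentially a restatement of that lemma, and your identification of the key combinatorial mechanism---for each pair of tube centers there are only $O(\log(1/\delta'))$ scales at which a cell can see both at separation exceeding $\delta'\cdot 2^{-i}$---is correct.

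The transfer step, however, has a genuine gap. The claim that the restriction of $\theta_{Q}\conv x$ to $D(Q)$ ``agrees (up to a bounded constant and a $d_{TV}$ error) with the $\mathbb{R}^{2}$-component $\nu_{D(Q)}$'' is false: since $f_{x}:A_{2,2}\to\mathbb{R}^{2}$ drops four dimensions and is far from injective even on a bounded neighborhood of the identity, many level-$i$ cells $Q'\in\mathcal{D}_{i}^{A_{2,2}}$ can have $f_{x}(Q')$ land in the same $D$, so $\nu_{D}$ is a mixture over all of them and need not resemble any single $\theta_{Q}\conv x$. In particular, even when $\nu_{D}$ is $(W,\delta')$-concentrated, its $\delta'$-exceptional mass could be contributed entirely by one $\theta_{Q}\conv x$, leaving that measure badly non-concentrated. (Also, pigeonholing among $O(1)$ cells only guarantees that one cell carries a $1/O(1)$ fraction of the mass, not ``almost all''.)

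The fix is to bypass the component identification and run the argument directly in $A_{2,2}$, which is what ``imitating'' means here. Pull the tubes back: set $\widetilde{T}_{j}=f_{x}^{-1}\bigl(\ell_{j}^{(\delta\cdot2^{-k})}\bigr)$, so that $\theta\bigl(\bigcup_{j}\widetilde{T}_{j}\bigr)\ge1-\delta$. A level-$i$ cell $Q\in\mathcal{D}_{i}^{A_{2,2}}$ yields a non-concentrated $S_{2^{i}}(\theta_{Q}\conv x)$ only if either $\theta_{Q}\bigl(\bigcup_{j}\widetilde{T}_{j}\bigr)<1-\delta'/2$ (handled by Markov, contributing $O(\delta/\delta')\ll\delta'$), or $f_{x}(Q)$ meets two tubes whose $W^{\perp}$-separation exceeds $\tfrac{1}{2}\delta'\cdot2^{-i}$. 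For the second alternative, (\ref{eq:bd on diam(E.x)}) gives $\diam f_{x}(Q)=O(2^{-i})$, so for a fixed pair $(j,j')$ this can occur only when $\tfrac{1}{2}\delta'\cdot2^{-i}<|h_{j}-h_{j'}|\le O(2^{-i})$, i.e.\ for $O(\log(1/\delta'))$ values of $i$; summing over $\binom{m}{2}$ pairs and averaging over the $n+1$ scales gives the claimed $O_{m}(\log(1/\delta')/n)=O_{m}(\delta')$. This is exactly your $\mathbb{R}^{2}$ bookkeeping, but applied to the $A_{2,2}$-cells via the Lipschitz bound rather than transferred after the fact.
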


The proof of the following proposition is also omitted. It can be
carried out by using the previous lemma and (\ref{eq:bd on diam(E.x)}),
and by imitating the proof of \cite[Proposition 5.5]{HO2}.
\begin{prop}
\label{prop:from conc on eu to cont on A_2,2}For every $\varepsilon>0$
there exist $n=n(\varepsilon)\ge1$ and $\delta=\delta(\varepsilon)>0$,
with $n\rightarrow\infty$ and $\delta\rightarrow0$ as $\varepsilon\rightarrow0$,
such that the following holds. Let $\theta\in\mathcal{P}(A_{2,2})$,
$x\in\mathbb{R}^{2}$, $k\ge1$, and fix a subspace $W\subset\mathbb{R}^{2}$.
Suppose that $|x|=O(1)$, $d(\psi,Id)=O(1)$ for $\psi\in\supp(\theta)$,
and
\[
\mathbb{P}_{i=k}\left((\theta\conv x)^{y,i}\text{ is }(W,\delta)\text{-concentrated}\right)>1-\delta\:.
\]
Then,
\[
\mathbb{P}_{k\le i\le k+n}\left(S_{2^{i}}\left(\theta_{\psi,i}\conv x\right)\text{ is }(W,\varepsilon)\text{-concentrated}\right)>1-\varepsilon\:.
\]
\end{prop}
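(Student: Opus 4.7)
The plan is to imitate the proof of \cite[Proposition 5.5]{HO2} and reduce the statement to the preceding lemma. The central task is to show that the hypothesis — most Euclidean scale-$k$ components of $\theta\conv x$ are $(W,\delta)$-concentrated — forces the $A_{2,2}$-scale-$k$ pieces $\theta_Q\conv x$ to be $(W,\delta')^{m}$-concentrated in the sense of Definition~\ref{def:concentration-on-several-subspaces}, with bounded $m=O(1)$ and $\delta'=O(\sqrt{\delta})$, for a $\theta$-typical $Q\in\mathcal{D}_{k}^{A_{2,2}}$. Once this is established, the preceding lemma applies to each such $\theta_Q$ and yields concentration of further pieces at scales $i\in[k,k+n]$; averaging over $Q$ produces the desired conclusion.

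To establish the reduction, write $\nu=\theta\conv x$ and let $\mathcal{G}\subseteq\mathcal{D}_{k}$ denote the scale-$k$ cells $D$ on which, after rescaling by $S_{2^{k}}$, the component $\nu_{D}$ is $(W,\delta)$-concentrated; by hypothesis $\nu(\bigcup\mathcal{G})>1-\delta$, and for each $D\in\mathcal{G}$ there is a translate $W+u_{D}$ such that the $\delta 2^{-k}$-tube $T_{D}$ around $W+u_{D}$ inside $D$ carries at least $(1-\delta)\nu(D)$ of the mass. Set $T=\bigcup_{D\in\mathcal{G}}T_{D}$, so $\nu(T)\geq 1-2\delta$. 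Decomposing $\theta$ at scale $k$ of $A_{2,2}$ gives
\[
2\delta>\nu(T^{c})=\sum_{Q}\theta(Q)\,(\theta_{Q}\conv x)(T^{c}),
\]
and Markov's inequality produces a collection $\mathcal{Q}^{\ast}$ of scale-$k$ cells of $A_{2,2}$ with $\theta(\bigcup\mathcal{Q}^{\ast})>1-O(\sqrt{\delta})$ such that $(\theta_{Q}\conv x)(T)>1-O(\sqrt{\delta})$ for every $Q\in\mathcal{Q}^{\ast}$. Now the bound $\diam(E\conv x)\leq C(1+|x|)\diam(E)$ from (\ref{eq:bd on diam(E.x)}), together with $|x|=O(1)$ and $d(\psi,\mathrm{Id})=O(1)$ on $\supp\theta$, forces $\supp(\theta_{Q}\conv x)$ to have Euclidean diameter $O(2^{-k})$ and hence to meet only $O(1)$ cells of $\mathcal{D}_{k}$. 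Consequently $T\cap\supp(\theta_{Q}\conv x)$ lies in a union of $O(1)$ tubes of width $\delta 2^{-k}$ around translates of $W$, and $S_{2^{k}}(\theta_{Q}\conv x)$ is $(W,O(\sqrt{\delta}))^{m}$-concentrated with an absolute $m=O(1)$.

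Finally, I apply the preceding lemma to each $\theta_{Q}$, $Q\in\mathcal{Q}^{\ast}$, with $\delta'=O(\sqrt{\delta})$ in place of $\delta$ and the absolute $m$ fixed. The lemma produces $n'=\lfloor\tfrac{1}{2}\log(1/\delta')\rfloor$ and $\delta''=O(\log\log(1/\delta')/\log(1/\delta'))$ such that, for every $Q\in\mathcal{Q}^{\ast}$,
\[
\mathbb{P}_{k\leq i\leq k+n'}\bigl(S_{2^{i}}((\theta_{Q})_{\psi,i}\conv x)\text{ is }(W,\delta'')\text{-concentrated}\bigr)>1-\delta''.
\]
Since $(\theta_{Q})_{\psi,i}=\theta_{\psi,i}$ whenever $\psi\in Q$ and $i\geq k$, averaging over $Q$ with weights $\theta(Q)$ and absorbing the $O(\sqrt{\delta})$ loss from discarding $Q\notin\mathcal{Q}^{\ast}$ yields the statement with $n=n'$ and total error $O(\sqrt{\delta})+\delta''$, both of which tend to $0$ as $\delta\to 0$, while $n'\to\infty$. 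The main obstacle is the combinatorial bookkeeping in the Markov-type reduction above: one has to pass from the Euclidean-coordinate hypothesis on $\nu$ to a uniform statement about pieces of $\theta$ while keeping the number of translates $m$ bounded independent of $k$ and $\delta$. The linear bound on $\diam(E\conv x)$ from (\ref{eq:bd on diam(E.x)}) is precisely what makes this possible, and in turn renders the $O_{m}$ loss in the preceding lemma harmless.
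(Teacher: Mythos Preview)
Your proposal is correct and follows exactly the approach the paper indicates: the paper omits the proof entirely, saying only that it ``can be carried out by using the previous lemma and (\ref{eq:bd on diam(E.x)}), and by imitating the proof of \cite[Proposition 5.5]{HO2}'', which is precisely what you do. Your Markov-inequality reduction from Euclidean components of $\theta\conv x$ to $(W,O(\sqrt{\delta}))^{m}$-concentration of the $A_{2,2}$-pieces $\theta_{Q}\conv x$ with $m=O(1)$ (via the diameter bound (\ref{eq:bd on diam(E.x)})), followed by an application of the preceding lemma to each $\theta_{Q}$ and averaging using $(\theta_{Q})_{\psi,i}=\theta_{\psi,i}$ for $i\ge k$, is the intended argument.
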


\subsection{\label{subsec:Linearization}Linearization }

The action operation $f:A_{2,2}\times\mathbb{R}^{2}\rightarrow\mathbb{R}^{2}$,
$f(\varphi,x)=\varphi(x)$, induces the convolution operation $\theta\conv\nu=f(\theta\times\nu)$
on measures. Because $f$ is differentiable, this action can be linearized:
if $I\subseteq A_{2,2}$ and $J\subseteq\mathbb{R}^{2}$ are small
sets of diameter $\delta$, then $f|_{I\times J}$ will be close to
linear: Specifically for $(\varphi_{0},x_{0}),(\varphi,x)\in I\times J$,
we will have 
\begin{align*}
f(\varphi,x) & =(\varphi_{0}+(\varphi-\varphi_{0}))(x_{0}+(x-x_{0}))\\
 & \approx\varphi_{0}x_{0}+(\varphi-\varphi_{0})x_{0}+\varphi_{0}(x-x_{0})+(\varphi-\varphi_{0})(x-x_{0})\\
 & =\varphi x_{0}+\varphi_{0}x-\varphi_{0}x_{0}+O(\delta^{2})\;.
\end{align*}
Letting $\theta\in\mathcal{P}(I)$ and $\nu\in\mathcal{P}(J)$ and
choosing $(\varphi,x)$ at random according to $\theta\times\nu$,
this tells us that $\theta\conv\nu=f(\theta\times\nu)$ is equal,
up to some translations and a small error term, to the distribution
of the sum of $\varphi x$ and $\varphi_{0}x$; which is nothing other
than $(\theta\conv x)*(\varphi_{0}\nu)$. This is, essentially, the
proof of the following lemma (except for verifying that the error
term is small enough to affect entropy negligibly). The formal proof
is similar to the proof of \cite[Lemma 4.2]{BHR}, and is omitted.
\begin{thm}
\label{thm:linearization}Let $Z\subset A_{2,2}\times\mathbb{R}^{2}$
be a compact set. For every $\varepsilon>0$, $k>K(\varepsilon)$,
and $0<\delta<\delta(Z,\varepsilon,k)$ the following holds. Let $(\psi_{0},x_{0})\in Z$,
$\theta\in\mathcal{P}(B_{\delta}(\psi_{0}))$, and $\tau\in\mathcal{P}(B_{\delta}(x_{0}))$,
then
\[
\left|\frac{1}{k}H(\theta\conv\tau,\mathcal{D}_{k-\log\delta})-\frac{1}{k}H((\theta\conv x)*(\psi_{0}\tau),\mathcal{D}_{k-\log\delta})\right|<\varepsilon\:.
\]
\end{thm}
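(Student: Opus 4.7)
The plan is to exploit the smoothness (in fact, bilinearity up to translation) of the action map $f\colon A_{2,2}\times\mathbb{R}^2\to\mathbb{R}^2$, $f(\varphi,x)=\varphi(x)$, by linearizing it on the product neighborhood $B_\delta(\psi_0)\times B_\delta(x_0)$. Recall that for $\varphi\in A_{2,2}$ we write $\varphi(x)=A_\varphi x+b_\varphi$. First I would write, for $\varphi\in B_\delta(\psi_0)$ and $x\in B_\delta(x_0)$,
\[
\varphi(x)=\varphi(x_0)+A_\varphi(x-x_0)=\varphi(x_0)+\psi_0(x)-\psi_0(x_0)+(A_\varphi-A_{\psi_0})(x-x_0).
\]
Since $(\psi_0,x_0)$ ranges in a compact set $Z$, the norm $\|A_\varphi-A_{\psi_0}\|$ is $O_Z(\delta)$ (this uses that the left-invariant Riemannian metric $d$ is locally bi-Lipschitz equivalent on compact sets to the norm on $A_{2,2}^{vec}$), and $|x-x_0|\le\delta$, so the error term $E(\varphi,x):=(A_\varphi-A_{\psi_0})(x-x_0)$ satisfies $|E(\varphi,x)|=O_Z(\delta^2)$ uniformly.

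Next, define the approximate action $\widetilde f(\varphi,x):=\varphi(x_0)+\psi_0(x)-\psi_0(x_0)$, which is precisely the sum of the two independent affine coordinates. A direct check gives
\[
\widetilde f(\theta\times\tau)=T_{-\psi_0(x_0)}\bigl((\theta\conv x_0)*(\psi_0\tau)\bigr),
\]
i.e., the pushforward under $\widetilde f$ is the desired Euclidean convolution, translated by $-\psi_0(x_0)$. By the previous paragraph, $\sup|f-\widetilde f|=O_Z(\delta^2)$ on $B_\delta(\psi_0)\times B_\delta(x_0)$.

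Now I would convert this geometric closeness into an entropy estimate. The target scale in the theorem is $k-\log\delta$, corresponding to dyadic cells of side $\sim\delta\cdot 2^{-k}$. Choosing $\delta$ so small (depending on $Z$ and $k$) that the error satisfies $\delta^2\ll\delta\cdot 2^{-k}$, i.e., $\delta=o_Z(2^{-k})$, equation \eqref{eq:entropy-under-transformation-3} applied with $n=k-\log\delta$ gives
\[
\bigl|H(f(\theta\times\tau),\mathcal{D}_{k-\log\delta})-H(\widetilde f(\theta\times\tau),\mathcal{D}_{k-\log\delta})\bigr|=O(1),
\]
where the implicit constant is absolute. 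Combining this with the translation-invariance \eqref{eq:entropy-under-translation} to absorb the translation by $-\psi_0(x_0)$, we conclude
\[
\bigl|H(\theta\conv\tau,\mathcal{D}_{k-\log\delta})-H((\theta\conv x_0)*(\psi_0\tau),\mathcal{D}_{k-\log\delta})\bigr|=O(1).
\]
Dividing by $k$ yields an $O(1/k)$ error, which is below $\varepsilon$ once $k>K(\varepsilon)$.

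The main obstacle is purely bookkeeping: tracking how the constants in $O_Z(\delta^2)$ depend on $Z$ through bounds on $|x_0|$, $\|A_{\psi_0}\|$, and the equivalence constants between $d$ and the norm on $A_{2,2}^{vec}$ near $Z$, and then choosing $\delta$ small enough relative to both $Z$ and $k$ so that the geometric error is negligible at the fine scale $2^{-(k-\log\delta)}$. No use of total irreducibility, non-conformality, or properties of $\mu$ is needed — this is a purely local statement about smoothness of the action.
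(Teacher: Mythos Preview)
Your proof is correct and follows exactly the approach the paper sketches in the paragraph preceding the theorem (the paper itself omits the formal proof, referring to \cite[Lemma 4.2]{BHR}). The decomposition $\varphi(x)=\varphi(x_0)+\psi_0(x)-\psi_0(x_0)+(A_\varphi-A_{\psi_0})(x-x_0)$, the $O_Z(\delta^2)$ bound on the error, and the comparison at scale $\delta\cdot 2^{-k}$ via \eqref{eq:entropy-under-transformation-3} and \eqref{eq:entropy-under-translation} are precisely what is intended; your remark that the result is purely local and independent of the structural hypotheses on $\Phi$ is also correct.
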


The next proposition is needed to show that if $\theta\in\mathcal{P}(A_{2,2})$
has substantial entropy then so do measures $\theta\conv x$ obtained
by ``pushing it down'' to $\mathbb{R}^{2}$. This is, actually,
not true: It may be that $\theta$ is supported on the stabilizer
of $x$, a condition which still allows it to have large entropy,
but in which case $\theta\conv x=\delta_{x}$ is as concentrated as
possible. However, for a given $\theta$ this cannot happen too often,
because the stabilizers of any three non-colinear points in $\mathbb{R}^{2}$
intersect trivially (equivalently, the action on three such points
determine an affine map). One can make this more quantitative and
show that if a set of points in $\mathbb{R}^{2}$ is far enough from
being contained in an affine line, then the entropy of $\theta\conv x$
will be a constant fraction of the entropy of $\theta$ for most points
in the collection. This is the idea behind the next result; we omit
the formal proof which is very similar to the proof of \cite[Lemma 4.5]{BHR}.

In what follows we rely on the fact that $\mu$ is not supported on
a line. This follows from our assumptions that $\Phi$ is totally
irreducible and that its members don't all have the same fixed points.
\begin{prop}
\label{prop:from ent in A_2,2 to ent in R^2}For every compact $Z\subset A_{2,2}$
there exists a constant $C=C(Z,\mu)>1$ such that for every $\theta\in\mathcal{P}(A_{2,2})$
supported on $Z$ and every $k,i\ge1$,
\[
\mu\left\{ x\::\:\frac{1}{k}H\left(\theta\conv x,\mathcal{D}_{i+k}\right)\ge\frac{1}{Ck}H\left(\theta,\mathcal{D}_{i+k}\right)-\frac{C}{k}\right\} \ge C^{-1}\:.
\]
\end{prop}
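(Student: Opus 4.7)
The plan is to reduce the entropy of $\theta$ on $A_{2,2}$ to entropies of its pushforwards $\theta\conv x$ at finitely many well-chosen base points, using the fact that an affine map of $\mathbb{R}^{2}$ is determined by its values on three affinely independent points. Since $\Phi$ is totally irreducible and has no common fixed point, $\mu$ assigns measure zero to every affine line (this also follows from Proposition~\ref{prop:measure 0 for alg cur}). Hence by regularity I can fix three closed balls $B_1,B_2,B_3\subseteq\mathbb{R}^{2}$ of positive $\mu$-measure such that any selection $(x_1,x_2,x_3)\in B_1\times B_2\times B_3$ yields three points whose triangle has area bounded below by a positive constant depending only on $\mu$. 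Set $\eta=\min_{j}\mu(B_j)>0$.

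For such $x_1,x_2,x_3$, consider the evaluation map
\[
\Phi_{x_1,x_2,x_3}:A_{2,2}^{vec}\rightarrow(\mathbb{R}^{2})^{3}\,,\qquad\varphi\mapsto(\varphi(x_1),\varphi(x_2),\varphi(x_3))\,.
\]
Both source and target are six-dimensional, and $\Phi_{x_1,x_2,x_3}$ is linear and injective (an affine map is determined by its values on an affinely independent triple), hence a bijection. Because $(x_1,x_2,x_3)$ ranges over the compact product $B_1\times B_2\times B_3$ on which affine independence is bounded away from degeneracy, the operator norms of $\Phi_{x_1,x_2,x_3}$ and $\Phi_{x_1,x_2,x_3}^{-1}|_{\Phi(Z^{(1)})}$ are uniformly bounded by a constant depending only on $Z$ and $\mu$ (where $Z^{(1)}$ is a fixed bounded neighborhood of $Z$ in $A_{2,2}^{vec}$, and the $d$-metric and the norm metric are comparable on it). Consequently, for every $\theta\in\mathcal{P}(Z)$ and every $i,k\geq1$,
\[
H(\theta,\mathcal{D}_{i+k})\leq H\bigl(\Phi_{x_1,x_2,x_3}\theta,\mathcal{D}_{i+k}\bigr)+O_{Z,\mu}(1)\leq\sum_{j=1}^{3}H(\theta\conv x_j,\mathcal{D}_{i+k})+C_0\,,
\]
the second inequality being subadditivity applied to the three $\mathbb{R}^{2}$-marginals of $\Phi_{x_1,x_2,x_3}\theta$, which are exactly $\theta\conv x_1,\theta\conv x_2,\theta\conv x_3$. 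Here $C_0=C_0(Z,\mu)$ is independent of $\theta,i,k$.

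Now I apply pigeonhole: for each triple $(x_1,x_2,x_3)\in B_1\times B_2\times B_3$, at least one index $j$ satisfies $H(\theta\conv x_j,\mathcal{D}_{i+k})\geq\tfrac{1}{3}(H(\theta,\mathcal{D}_{i+k})-C_0)$. Let $G$ denote the set of $x\in\mathbb{R}^{2}$ for which this bound holds. Integrating the indicator of ``at least one coordinate lies in $G$'' against the product measure $\mu|_{B_1}\times\mu|_{B_2}\times\mu|_{B_3}$ and applying Fubini, some $j$ must satisfy $\mu(G\cap B_j)\geq\mu(B_j)/3$, so $\mu(G)\geq\eta/3$. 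Dividing by $k$ and setting $C=\max\{3,\,C_0,\,3/\eta\}+1$ yields
\[
\mu\Bigl\{x\,:\,\tfrac{1}{k}H(\theta\conv x,\mathcal{D}_{i+k})\geq\tfrac{1}{Ck}H(\theta,\mathcal{D}_{i+k})-\tfrac{C}{k}\Bigr\}\geq C^{-1}\,,
\]
as required.

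The only delicate point is securing the uniform bi-Lipschitz control of $\Phi_{x_1,x_2,x_3}^{\pm1}$ as the base points vary over $B_1\times B_2\times B_3$ and as $\varphi$ ranges over $Z$; this is what forces the choice of $B_j$ to enforce a uniform quantitative affine independence, and it is where the topological assumptions on $\Phi$ enter (via the non-collinearity of $\supp\mu$). Everything else reduces to subadditivity and a Fubini/pigeonhole argument.
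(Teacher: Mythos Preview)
Your proof is correct and follows essentially the same route the paper indicates (the paper omits the proof, referring to \cite[Lemma~4.5]{BHR}, but the paragraph preceding the proposition spells out exactly the idea you implement: three affinely independent points determine an affine map, so the entropy of $\theta$ is controlled by the entropies of $\theta\conv x_j$ at three well-chosen points, and one passes to a set of positive $\mu$-measure by pigeonhole). One cosmetic remark: the symbol $\eta$ is already used in the paper for the Furstenberg measure, so you may wish to rename $\min_j\mu(B_j)$.
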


We use this to prove that, roughly, if $\theta\in\mathcal{P}(A_{2,2})$
has non-trivial entropy, then a non-negligible fraction of its components
$\theta_{\psi,i}$ are not too close to being an atom, at least after
re-scaling and translation by $\psi^{-1}$.

Recall that $\lambda_{n}$ denotes the uniform measure on $\mathcal{N}_{n}=\{1,\ldots,n\}$
(Section \ref{sub:q-adic-partitions}).
\begin{lem}
\label{lem:comp not conc}For every $\varepsilon,R>0$ there exists
$\delta=\delta(\varepsilon,R)>0$ such that for $k\ge K(\varepsilon,R,\delta)\ge1$
and $n\ge N(\varepsilon,R,\delta,k)\ge1$ the following holds. Let
$\theta\in\mathcal{P}(A_{2,2})$ be such that $\diam(\supp(\theta))\leq R$
with respect to $d$ and $\frac{1}{n}H(\theta,\mathcal{D}_{n})>\varepsilon$.
Then $\lambda_{n}\times\theta(F)>\delta$, where $F=F(\theta)$ is
the set of all $(i,\psi)\in\mathcal{N}_{n}\times A_{2,2}$ such that
\[
\mathbb{P}_{i\le j\le i+k}\left(\mu\left\{ x\::\:\begin{array}{c}
S_{2^{j}}((\psi^{-1}\theta_{\psi,i})_{\varphi,j})\conv x\text{ is}\\
\mbox{not }(\{0\},\delta)\mbox{-concentrated}
\end{array}\right\} >\delta\right)>\delta\:.
\]
\end{lem}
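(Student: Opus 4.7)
The idea is to propagate the entropy hypothesis on $\theta$ through two levels of dyadic decomposition in $A_{2,2}$, transfer the resulting entropy from the affine group to $\mathbb{R}^{2}$ via Proposition \ref{prop:from ent in A_2,2 to ent in R^2}, and finally convert positive entropy into a failure of $(\{0\},\delta)$-concentration. Fix an auxiliary scale $k_{0}=k_{0}(\varepsilon,R)$ large with respect to $1/\varepsilon$ and the constant $C(Z,\mu)$ appearing below; set $\delta=\delta(\varepsilon,R)$ small enough that $\delta\le c\cdot 2^{-k_{0}}$ for a suitable $c$ and $\delta\le\varepsilon/100$; then take $k\ge K(\varepsilon,R,\delta)$ large relative to $k_{0}$ and $n\ge N$ large relative to $k$.

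First, by Lemma \ref{lem:multiscale-entropy-formula} applied to $\theta$ on $A_{2,2}$ (whose support has $d$-diameter $\le R=O(1)$) together with the hypothesis $\tfrac{1}{n}H(\theta,\mathcal{D}_{n})>\varepsilon$, one has
\[
\mathbb{E}_{0\le i\le n}\bigl(\tfrac{1}{k}H(\theta_{\psi,i},\mathcal{D}_{i+k})\bigr)>\varepsilon-O_{R}(k/n)>\varepsilon/2,
\]
so by Markov a set $\Omega_{1}\subseteq\mathcal{N}_{n}\times A_{2,2}$ of $\lambda_{n}\times\theta$-measure $\ge\varepsilon/4$ satisfies $\tfrac{1}{k}H(\theta_{\psi,i},\mathcal{D}_{i+k})>\varepsilon/4$. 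Because $d$ is left-invariant and the partition $\mathcal{D}_{m}^{A_{2,2}}$ consists of cells of diameter $\Theta(2^{-m})$ with bounded refinement multiplicity (Lemma \ref{lem:Qn-has-bounded-degree}), the partitions $\mathcal{D}_{i+k}$ and $\psi\cdot\mathcal{D}_{i+k}$ are $O(1)$-commensurable on bounded subsets of $A_{2,2}$, so also $\tfrac{1}{k}H(\psi^{-1}\theta_{\psi,i},\mathcal{D}_{i+k})>\varepsilon/5$ on $\Omega_{1}$.

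Second, apply Lemma \ref{lem:multiscale-entropy-formula} once more to $\psi^{-1}\theta_{\psi,i}$, supported within $d$-distance $O(2^{-i})$ of $\mathrm{Id}$; since $k_{0}\ll k$,
\[
\mathbb{E}_{i\le j\le i+k}\bigl(\tfrac{1}{k_{0}}H((\psi^{-1}\theta_{\psi,i})_{\varphi,j},\mathcal{D}_{j+k_{0}})\bigr)>\varepsilon/5-O(k_{0}/k)>\varepsilon/6,
\]
so a sub-event $\Omega_{2}(i,\psi)$ of probability $\ge\varepsilon/12$ carries $\tfrac{1}{k_{0}}H((\psi^{-1}\theta_{\psi,i})_{\varphi,j},\mathcal{D}_{j+k_{0}})>\varepsilon/12$. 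Each component $(\psi^{-1}\theta_{\psi,i})_{\varphi,j}$ is supported within $d$-distance $O(1)$ of $\mathrm{Id}$, hence in a fixed compact $Z\subseteq A_{2,2}$ independent of the parameters, so Proposition \ref{prop:from ent in A_2,2 to ent in R^2} furnishes a constant $C=C(Z,\mu)>0$ and, for a set of $x$ of $\mu$-measure $\ge C^{-1}$,
\[
\tfrac{1}{k_{0}}H\bigl((\psi^{-1}\theta_{\psi,i})_{\varphi,j}\conv x,\mathcal{D}_{j+k_{0}}\bigr)\ge\tfrac{1}{Ck_{0}}\cdot\tfrac{\varepsilon k_{0}}{12}-\tfrac{C}{k_{0}}>\tfrac{\varepsilon}{24C},
\]
provided $k_{0}\gg C$.

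Third, translate entropy into non-concentration. The component $(\psi^{-1}\theta_{\psi,i})_{\varphi,j}$ has $d$-diameter $O(2^{-j})$ and $\supp\mu$ is bounded, so $(\psi^{-1}\theta_{\psi,i})_{\varphi,j}\conv x$ has Euclidean diameter $O(2^{-j})$; applying the homothety $S_{2^{j}}$ produces a measure on $\mathbb{R}^{2}$ of diameter $O(1)$ whose scale-$k_{0}$ entropy equals the pre-rescaling scale-$(j+k_{0})$ entropy up to an additive $O(1)$, by \eqref{eq:entropy-under-transformation-1}. Were this rescaled measure $(\{0\},\delta)$-concentrated for $\delta\le c\cdot 2^{-k_{0}}$, nearly all of its mass would lie in $O(1)$ cells of $\mathcal{D}_{k_{0}}$, making its scale-$k_{0}$ entropy at most $O(1)$ and hence $o_{k_{0}}(1)$ after normalization, which contradicts the lower bound $\varepsilon/(24C)$ once $k_{0}$ is large enough. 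Combining the three steps and taking $\delta$ to be the minimum of $\varepsilon/12$, $C^{-1}$, and $c\cdot 2^{-k_{0}}$ yields the conclusion. The main technical obstacle is purely bookkeeping: coordinating the three auxiliary scales $k_{0}\ll k\ll n$ and verifying that the non-invariance of the dyadic partition on $A_{2,2}$ under left multiplication by $\psi$ contributes only bounded additive errors; the key substantive input is Proposition \ref{prop:from ent in A_2,2 to ent in R^2}, which prevents degeneracies (such as $\theta$ concentrating on stabilizers) from causing substantial affine-group entropy to vanish when pushed to $\mathbb{R}^{2}$.
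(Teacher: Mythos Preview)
Your argument is correct and follows essentially the same route as the paper's proof: two applications of the multiscale entropy formula (Lemma~\ref{lem:multiscale-entropy-formula}), then Proposition~\ref{prop:from ent in A_2,2 to ent in R^2} to push entropy down to $\mathbb{R}^{2}$, then the contrapositive ``$(\{0\},\delta)$-concentration $\Rightarrow$ small scale-$k_{0}$ entropy''. Your auxiliary scale $k_{0}$ plays the role of the paper's $m$, and your left-translation step $\theta_{\psi,i}\mapsto\psi^{-1}\theta_{\psi,i}$ via commensurability is what the paper does implicitly in writing~(\ref{eq:E(ent of comp)>=00003Depsilon}). One small correction: in your Markov step you claim $\lambda_{n}\times\theta(\Omega_{1})\ge\varepsilon/4$, but this requires an \emph{upper} bound on $\tfrac{1}{k}H(\theta_{\psi,i},\mathcal{D}_{i+k})$, which is $O(1)$ by Lemma~\ref{lem:Qn-has-bounded-degree}; the resulting measure bound is $\ge c\varepsilon$ for a structural constant $c$, not literally $\varepsilon/4$ (the paper writes $\varepsilon/C_{0}$). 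This is cosmetic and does not affect the argument.
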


\begin{proof}
Let $C>1$ be a large global constant, which will be determined during
the proof of the lemma. Let $\varepsilon,R>0$, let $m\ge1$ large
with respect to $\varepsilon$ and $R$, $\delta>0$ small with respect
to $m$, and let $k\ge1$ large with respect to $\delta$, and $n\ge1$
large with respect to $k$. Suppose that $m$ is so large with respect
to $\varepsilon$ and that $\delta$ is so small with respect to $\varepsilon$
and $m$, that for every $\nu\in\mathcal{P}(\mathbb{R}^{2})$ with
$\diam(\supp(\nu))\leq C$,
\begin{equation}
\nu\text{ is }(\{0\},\delta)\mbox{-concentrated implies that }\frac{1}{m}H\left(\nu,\mathcal{D}_{m}\right)<\frac{\varepsilon}{C}\:.\label{eq:conc implies small ent}
\end{equation}

Let $\theta\in\mathcal{P}(A_{2,2})$ satisfy $\diam(\supp(\theta))\leq R$
and $\frac{1}{n}H(\theta,\mathcal{D}_{n})>\varepsilon$. By $\frac{1}{n}H(\theta,\mathcal{D}_{n})>\varepsilon$
and Lemma \ref{lem:multiscale-entropy-formula},
\begin{equation}
\mathbb{E}_{0\le i\le n}\left(\frac{1}{k}H\left(\psi^{-1}\theta_{\psi,i},\mathcal{D}_{i+k}\right)\right)\ge\varepsilon-O(\frac{k}{n}+\frac{1}{k})>\frac{\varepsilon}{2}\:.\label{eq:E(ent of comp)>=00003Depsilon}
\end{equation}
By Lemma \ref{lem:Qn-has-bounded-degree}, the integrand on the left
hand side of (\ref{eq:E(ent of comp)>=00003Depsilon}) is $O(1)$.
Hence for some global constant $C_{0}>1$,
\[
\mathbb{P}_{1\le i\le n}\left(\frac{1}{k}H\left(\psi^{-1}\theta_{\psi,i},\mathcal{D}_{i+k}\right)\ge\frac{\varepsilon}{C_{0}}\right)\ge\frac{\varepsilon}{C_{0}}\:.
\]
From this and by applying Lemma \ref{lem:multiscale-entropy-formula}
once more we get that $\lambda_{n}\times\theta(F')>\frac{\varepsilon}{C_{0}}$,
where $F'$ is the set of all $(i,\psi)\in\mathcal{N}_{n}\times A_{2,2}$
such that
\[
\mathbb{E}_{i\le j\le i+k}\left(\frac{1}{m}H\left(\left(\psi^{-1}\theta_{\psi,i}\right)_{\varphi,j},\mathcal{D}_{j+m}\right)\right)\ge\frac{\varepsilon}{C_{0}}-O(\frac{m}{k})\ge\frac{\varepsilon}{2C_{0}}\:.
\]
As above, the integrand on the left hand side of the last inequality
is $O(1)$. Hence there exists a global constant $C_{1}>1$ such that
for $(i,\psi)\in F'$,
\[
\mathbb{P}_{i\le j\le i+k}\left(\frac{1}{m}H\left(\left(\psi^{-1}\theta_{\psi,i}\right)_{\varphi,j},\mathcal{D}_{j+m}\right)\ge\frac{\varepsilon}{C_{1}}\right)\ge\frac{\varepsilon}{C_{1}}\:.
\]

Now by Proposition \ref{prop:from ent in A_2,2 to ent in R^2}, by
assuming that $C$ is large enough, and by assuming that $m$ is sufficiently
large with respect to $\varepsilon$, it follows that for $(i,\psi)\in F'$,
\[
\mathbb{P}_{i\le j\le i+k}\left(\mu\left\{ x\::\:\frac{1}{m}H\left(S_{2^{j}}\left(\left(\psi^{-1}\theta_{\psi,i}\right)_{\varphi,j}\right)\conv x,\mathcal{D}_{m}\right)\ge\frac{\varepsilon}{C}\right\} >C^{-1}\right)\ge\frac{\varepsilon}{C}\:.
\]
Assume that $C$ is large enough that the supports of the measures,
appearing inside the entropy in the last expression, almost surely
have diameter at most $C$. By (\ref{eq:conc implies small ent})
and by assuming that $\delta<\frac{\varepsilon}{C}$ it now follows
that $F'\subset F$, where $F$ is the set defined in the statement
of the Lemma. Since $\lambda_{n}\times\theta(F')>\frac{\varepsilon}{C_{0}}>\delta$
this completes the proof.
\end{proof}
The following is a variant of Lemma \ref{lem:multiscale-entropy-formula}:
\begin{lem}
\label{lem:ent of comp in act conv}Let $R>0$, $\theta\in\mathcal{P}(A_{2,2})$
supported within distance $R$ of the identity, and $\nu\in\mathcal{P}(\mathbb{R}^{2})$
supported within distance $R$ of the origin. Then for every $1\le k\le n$,
\[
\frac{1}{n}H(\theta\conv\nu,\mathcal{D}_{n})\ge\mathbb{E}_{1\le i\le n}\left(\frac{1}{k}H(\theta_{\psi,i}\conv\nu_{x,i},\mathcal{D}_{i+k})\right)-O_{R}(\frac{k}{n}+\frac{1}{k})\;.
\]
\end{lem}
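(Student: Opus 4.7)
The plan is to reduce this to Lemma \ref{lem:multiscale-entropy-formula} applied to $\theta\conv\nu$ itself, and then use concavity of conditional entropy to pass from the abstract components $(\theta\conv\nu)_{z,i}$ to the structured ones $\theta_{\psi,i}\conv\nu_{x,i}$. First I would observe that $\theta\conv\nu$ has Euclidean support of diameter $O_R(1)$, because $|\psi(x)|=O_R(1)$ uniformly for $\psi\in\supp\theta\subseteq B_R(\id)$ and $x\in\supp\nu\subseteq B_R(0)$ (left-invariance of $d$ gives a uniform bound on $\|A_\psi\|$ and $|b_\psi|$). Therefore Lemma \ref{lem:multiscale-entropy-formula} applied with starting scale $0$ and inner parameter $k$ yields
\[
\frac{1}{n}H(\theta\conv\nu,\mathcal{D}_n)=\mathbb{E}_{0\le i\le n}\!\left(\tfrac{1}{k}H((\theta\conv\nu)_{z,i},\mathcal{D}_{i+k})\right)+O_R(k/n),
\]
and the averaged summand is, by definition and by (\ref{eq:component-entropy-is-conditional-entropy}), equal to $\tfrac{1}{k}H(\theta\conv\nu,\mathcal{D}_{i+k}\mid\mathcal{D}_i)$.

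Next, for each fixed $i\ge1$, I write $\theta\conv\nu$ as an average of product components by decomposing both factors along $\mathcal{D}_i^{A_{2,2}}$ and $\mathcal{D}_i$ respectively:
\[
\theta\conv\nu=\sum_{Q,D}\theta(Q)\nu(D)\,(\theta_Q\conv\nu_D)=\mathbb{E}(\theta_{\psi,i}\conv\nu_{x,i}),
\]
with $\psi,x$ chosen independently. Concavity of conditional entropy (which follows from concavity of entropy applied cell-by-cell in $\mathcal{D}_i$) gives
\[
H(\theta\conv\nu,\mathcal{D}_{i+k}\mid\mathcal{D}_i)\ \ge\ \mathbb{E}(H(\theta_{\psi,i}\conv\nu_{x,i},\mathcal{D}_{i+k}\mid\mathcal{D}_i)).
\]

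The step that requires the most care, and which I expect to be the main technical point, is the following diameter estimate: each sampled component $\theta_{\psi,i}\conv\nu_{x,i}$ has Euclidean support of diameter $O_R(2^{-i})$. This follows from bi-Lipschitzness of the action map on the bounded region $B_{R+1}(\id)\times B_{R+1}(0)$ (essentially the content of (\ref{eq:bd on diam(E.x)}), with the added observation that $\|A_g\|=O_R(1)$ for $g$ in a bounded $d$-neighborhood of $\id$, so that the $\nu_{x,i}$-direction of variation is likewise controlled). Given this, $\theta_{\psi,i}\conv\nu_{x,i}$ meets only $O_R(1)$ cells of $\mathcal{D}_i$, so $H(\theta_{\psi,i}\conv\nu_{x,i},\mathcal{D}_i)=O_R(1)$, and consequently
\[
H(\theta_{\psi,i}\conv\nu_{x,i},\mathcal{D}_{i+k}\mid\mathcal{D}_i)\ \ge\ H(\theta_{\psi,i}\conv\nu_{x,i},\mathcal{D}_{i+k})-O_R(1).
\]

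Combining the three displays, dividing by $k$, and averaging over $0\le i\le n$ turns the per-scale $-O_R(1)$ loss into a $-O_R(1/k)$ term; the switch between $\mathbb{E}_{0\le i\le n}$ and $\mathbb{E}_{1\le i\le n}$ costs only $O_R(1/n)$, which is absorbed into $O_R(k/n)$. This yields the stated inequality. The proof is thus essentially a routine unwinding of the multi-scale decomposition once the Lipschitz/diameter bound on the action is in place; no new ideas beyond those already available in Section \ref{sub:entropy} and \eqref{eq:bd on diam(E.x)} are needed.
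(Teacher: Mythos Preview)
Your proof is correct and follows essentially the same route as the paper's: a multi-scale telescoping of $H(\theta\conv\nu,\mathcal{D}_n)$ into conditional entropies at scales $i$, concavity to pass from $\theta\conv\nu$ to the product components $\theta_{\psi,i}\conv\nu_{x,i}$, and the diameter estimate $\diam(\supp(\theta_{\psi,i}\conv\nu_{x,i}))=O_R(2^{-i})$ to remove the conditioning on $\mathcal{D}_i$ at cost $O_R(1)$. The only organizational difference is that you invoke Lemma~\ref{lem:multiscale-entropy-formula} for the single measure $\theta\conv\nu$ and then apply concavity separately, whereas the paper cites \cite[Lemma~4.3]{BHR}, which performs the telescoping and the passage to product components simultaneously; the substance is the same.
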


\begin{proof}
Let $\ell$ be the integral part of $\frac{n}{k}$. As in the proof
of \cite[Lemma 4.3]{BHR}, for each $0\le r<k$
\[
H(\theta\conv\nu,\mathcal{D}_{n})\ge\sum_{m=0}^{\ell-2}\mathbb{E}_{i=mk+r}\left(H(\theta_{\psi,i}\conv\nu_{x,i},\mathcal{D}_{k+i}\mid\mathcal{D}_{i})\right)\:.
\]
Note that
\[
\diam(\supp((\theta_{\psi,i})\conv\nu_{x,i}))=O_{R}(2^{-i})\:.
\]
Hence $\supp(\theta_{\psi,i}\conv\nu_{x,i})$ intersects $O_{R}(1)$
elements of $\mathcal{D}_{i}$, and so
\[
H(\theta\conv\nu,\mathcal{D}_{n})\ge\sum_{m=0}^{\ell-2}\mathbb{E}_{i=mk+r}\left(H(\theta_{\psi,i}\conv\nu_{x,i},\mathcal{D}_{k+i})\right)-O_{R}(\ell)\:.
\]
The rest of the proof proceeds exactly as in \cite[Lemma 4.3]{BHR}.
\end{proof}

\subsection{\label{subsec:Proof-of-ent growth thm}Entropy growth near the identity}

Out main goal in this section is to prove our main entropy growth
result, Theorem \ref{thm:entropy growth-near-identity}. We recall
the statement:
\begin{thm*}
Let $\mu$ be a self-affine measure in $\mathbb{R}^{2}$ defined by
a non-conformal, totally irreducible system $\Phi$ and satisfying
$\dim\mu<2$. Then for every $\varepsilon,R>0$ there is a $\delta=\delta(\mu,\varepsilon,R)>0$
such that for every $n>N(\mu,\varepsilon,R)$, the following holds:

If $\theta$ is a probability measure on the affine group supported
within distance $R$ of the identity, then 
\[
\frac{1}{n}H(\theta,\mathcal{D}_{n})>\varepsilon\qquad\implies\qquad\frac{1}{n}H(\theta\conv\mu,\mathcal{D}_{n})>\frac{1}{n}H(\mu,\mathcal{D}_{n})+\delta\;.
\]
\end{thm*}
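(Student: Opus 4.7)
The proof will go by contradiction. Suppose the conclusion fails for some $\varepsilon, R > 0$. Then there exist sequences $n_k \to \infty$, $\delta_k \to 0$, and $\theta_k \in \mathcal{P}(A_{2,2})$, each supported within distance $R$ of the identity, with
\[
\tfrac{1}{n_k}H(\theta_k, \mathcal{D}_{n_k}) > \varepsilon \quad\text{but}\quad \tfrac{1}{n_k}H(\theta_k \conv \mu, \mathcal{D}_{n_k}) - \tfrac{1}{n_k}H(\mu, \mathcal{D}_{n_k}) < \delta_k.
\]
The strategy is to pass from the group-theoretic convolution $\theta \conv \mu$ to Euclidean convolutions at every scale via linearization, apply the multidimensional inverse theorem to extract a subspace $V$ witnessing each non-growing component, identify $V$ with $\psi L(x)$ by playing the saturation of $\mu_{x,i}$ against the projection estimates of Section \ref{sec:Entropy}, and finally invoke the non-affinity of $L$ (Section \ref{sec:Concentration-near-lines}) to derive a contradiction.

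Fix $\theta = \theta_k$ with $n = n_k$ large and a large intermediate scale $m$. By Lemma \ref{lem:ent of comp in act conv}, the entropy $\tfrac{1}{n}H(\theta \conv \mu, \mathcal{D}_n)$ is bounded below by the expected normalized entropy of the component convolutions $\theta_{\psi,i} \conv \mu_{x,i}$ averaged over scales $i \in \mathcal{N}_n$, and Theorem \ref{thm:linearization} replaces each such piece, up to $o(1)$ error, by the Euclidean convolution $(\theta_{\psi,i} \conv x) * (\psi \mu_{x,i})$. The failure of entropy growth forces, for a positive proportion of triples $(\psi, x, i)$, that $(\theta_{\psi,i} \conv x) * (\psi \mu_{x,i})$ has entropy at scale $i+m$ essentially equal to that of $\psi \mu_{x,i}$. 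Applying Theorem \ref{thm:Hochman's entropy growth} to each such piece produces subspaces $V = V(\psi, x, i) \leq \mathbb{R}^2$ such that, after rescaling by $2^i$, the measure $\theta_{\psi,i} \conv x$ is $(V, \varepsilon)$-concentrated and $\psi \mu_{x,i}$ is $(V, \varepsilon, m)$-saturated.

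The heart of the argument is to determine $V$. The case $V = \mathbb{R}^2$ cannot occur with positive frequency: saturation by $\mathbb{R}^2$ forces $\psi \mu_{x,i}$, and hence $\mu_{x,i}$, to have normalized scale-$m$ entropy near $2$, but Proposition \ref{prop:uniform ent dim} says typical components have entropy near $\alpha = \dim \mu < 2$. The case $V = \{0\}$ cannot occur almost everywhere either: Lemma \ref{lem:comp not conc}, applied to $\theta$ and $\mu$, provides a positive-measure set of triples on which $S_{2^i}(\psi^{-1}\theta_{\psi,i} \conv x)$ is \emph{not} $(\{0\}, \delta)$-concentrated. So $\dim V = 1$ on a set of non-trivial frequency. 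Saturation of $\psi \mu_{x,i}$ in direction $V$ is the same as saturation of $\mu_{x,i}$ in direction $\psi^{-1}V$, which by the chain-rule identity for entropy translates to $\pi_{(\psi^{-1}V)^{\perp}} \mu_{x,i}$ having entropy strictly below $\beta$. Lemma \ref{lem:proj of comp of comp} guarantees that for typical components, $\pi_{W^{\perp}} \mu_{x,i}$ has normalized entropy exceeding $\beta - \varepsilon$ for every $W \notin B(L(x), \varepsilon)$; thus $\psi^{-1} V \in B(L(x), O(\varepsilon))$, i.e.\ $V = \psi L(x)$ up to a small error.

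Having pinned down $V$, the first conclusion of the inverse theorem says that $S_{2^i}(\theta_{\psi,i} \conv x)$ is $(\psi L(x), \varepsilon)$-concentrated, equivalently $S_{2^i}(\psi^{-1}\theta_{\psi,i} \conv x)$ is $(L(x), \varepsilon)$-concentrated, with non-trivial frequency in $(\psi, x, i)$. Proposition \ref{prop:from conc on eu to cont on A_2,2} lets me lift this multiscale Euclidean concentration to a measure $\theta'$ on $A_{2,2}$, supported on a bounded set, such that $\theta' \conv x$ is $(L(x), \delta)$-concentrated for a $1-\delta$ proportion of $x$, while simultaneously $\theta' \conv x$ fails to be $(\{0\}, \varepsilon)$-concentrated on a set of $x$ of $\mu$-measure at least $\varepsilon$ (inherited from Lemma \ref{lem:comp not conc}). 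This is precisely the configuration forbidden by Corollary \ref{cor:non-concentration-of-L}, which in turn rests on Corollary \ref{cor:non-affinity-of-L}: $L$ cannot agree with an affine map on a set of positive $\mu$-measure. The main obstacle is the identification $V = \psi L(x)$ in the previous paragraph; it requires simultaneously excluding the saturation direction from being arbitrary and exploiting both the refined projection-of-components estimate Lemma \ref{lem:proj of comp of comp} and, crucially, the fact (Proposition \ref{prop:const directions}) that $L$ descends to a well-defined $\mu$-a.e.\ function of $x$, without which the target direction $L(x)$ would not even be defined.
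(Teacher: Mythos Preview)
Your strategy is exactly the paper's: linearize, apply the inverse theorem, rule out $\dim V\in\{0,2\}$ via uniform entropy dimension and Lemma~\ref{lem:comp not conc}, pin $V$ to $L(x)$ via Lemma~\ref{lem:proj of comp of comp}, transfer concentration to $A_{2,2}$-components via Proposition~\ref{prop:from conc on eu to cont on A_2,2}, and contradict Corollary~\ref{cor:non-concentration-of-L}.

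Two places where the write-up is loose enough to matter. First, the output of Theorem~\ref{thm:Hochman's entropy growth} is not a single subspace $V(\psi,x,i)$ attached to the pair $(\theta_{\psi,i}\conv x,\,\psi\mu_{x,i})$; it is a sequence $V_j$, $i\le j\le i+k$, and the saturation/concentration are for the \emph{rescaled sub-components} $(\mu_{x,i})^{y,j}$ and $(\psi^{-1}\theta_{\psi,i}\conv x)^{z,j}$ at those finer scales. This is why Lemma~\ref{lem:proj of comp of comp} (projections of components-of-components) and Lemma~\ref{lem:distribution-of-components-of-components} are the right tools, and why the final object $\theta'$ is a further sub-component $S_{2^j}((\psi^{-1}\theta_{\psi,i})_{\varphi,j})$, not $\theta_{\psi,i}$ itself. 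Second, the failure of growth forces the ``no-increase'' set $E_0$ to have measure $>1-\delta$, not merely positive measure; this is what allows you to intersect it with the positive-measure set $F_0$ from Lemma~\ref{lem:comp not conc}. Finally, the step ``saturation forces $H_m(\pi_{V^\perp}\mu_{x,i})<\beta$'' needs the strict inequality $1+\beta>\alpha$, which is where Corollary~\ref{cor:bourgain-projection-of-SA-measures} ($\beta\ge\alpha/2$) together with $\alpha<2$ enters; you should make that explicit.
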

We begin the proof.

Recall from Section \ref{sub:q-adic-partitions} the definition $\mathcal{N}_{n}=\{1,\ldots,n\}$
and $\mathcal{N}_{n,n+k}=\{n,n+1,\ldots,n+k\}$ with the associated
uniform measures $\lambda_{n}$ and $\lambda_{n,n+k}$ on them.

Let $0<\varepsilon<1$ and $R>0$, let $k\ge1$ be large with respect
to $\varepsilon,R$, and let $n\ge1$ be large with respect to $k$.
Let $\theta\in\mathcal{P}(A_{2,2})$ be supported within $R$ of the
identity in $A_{2,2}$, and assume that $\frac{1}{n}H(\theta,\mathcal{D}_{n})>\varepsilon$. 

By Lemma \ref{lem:comp not conc} and by replacing $\varepsilon$
with a smaller quantity without changing the notation, we may assume
that $\lambda_{n}\times\theta(F_{0})>\varepsilon$, where $F_{0}$
is the set of all $(i,\psi)\in\mathcal{N}_{n}\times A_{2,2}$ such
that
\[
\mathbb{P}_{i\le j\le i+k}\left(\mu\left\{ x\::\:\begin{array}{c}
S_{2^{j}}((\psi^{-1}\theta_{\psi,i})_{\varphi,j})\conv x\text{ is}\\
\mbox{not }(\{0\},\varepsilon)\mbox{-concentrated}
\end{array}\right\} >\varepsilon\right)>\varepsilon\:.
\]

Let $\delta>0$ be small with respect to $\varepsilon,R$ and suppose
that $k$ is large with respect to $\delta$. By Lemma \ref{lem:ent of comp in act conv},
\begin{align*}
\frac{1}{n}H(\theta\conv\mu,\mathcal{D}_{n}) & \ge\mathbb{E}_{1\le i\le n}\left(\frac{1}{k}H(\theta_{\psi,i}\conv\mu_{x,i},\mathcal{D}_{i+k})\right)-O_{R}(\frac{k}{n}+\frac{1}{k})\\
 & \ge\mathbb{E}_{1\le i\le n}\left(\frac{1}{k}H(\theta_{\psi,i}\conv\mu_{x,i},\mathcal{D}_{i+k})\right)-\frac{\delta^{2}}{5}\:.
\end{align*}
By this and Theorem \ref{thm:linearization},
\[
\frac{1}{n}H(\theta\conv\mu,\mathcal{D}_{n})\ge\mathbb{E}_{1\le i\le n}\left(\frac{1}{k}H((\theta_{\psi,i}\conv x)*\psi\mu_{x,i},\mathcal{D}_{i+k})\right)-\frac{2\delta^{2}}{5}\:.
\]
Since $\theta$ is supported on an $R$-neighborhood of the identity
the partitions $\mathcal{D}_{i+k}$ and $\psi^{-1}\mathcal{D}_{i+k}$
are $O_{R}(1)$-commensurable, so taking $k$ large relative to $R$
and $\delta$ we get
\begin{equation}
\frac{1}{n}H(\theta\conv\mu,\mathcal{D}_{n})\ge\mathbb{E}_{1\le i\le n}\left(\frac{1}{k}H((\psi^{-1}\theta_{\psi,i}\conv x)*\mu_{x,i},\mathcal{D}_{i+k})\right)-\frac{3\delta^{2}}{5}\:.\label{eq:lb by conv of comp}
\end{equation}

Write $\Gamma=\lambda_{n}\times\mu\times\theta$ and set,
\[
E_{0}=\left\{ (i,x,\psi)\in\mathcal{N}_{n}\times\mathbb{R}^{2}\times A_{2,2}\::\begin{array}{c}
\frac{1}{k}H((\psi^{-1}\theta_{\psi,i}\conv x)*\mu_{x,i},\mathcal{D}_{i+k})\\
\qquad<\frac{1}{k}H(\mu_{x,i},\mathcal{D}_{i+k})+\delta
\end{array}\right\} \:.
\]
Assuming as we are that $k$ large relative to $\delta$, we have
\begin{equation}
\frac{1}{k}H((\psi^{-1}\theta_{\psi,i}\conv x)*\mu_{x,i},\mathcal{D}_{i+k})\ge\frac{1}{k}H(\mu_{x,i},\mathcal{D}_{i+k})-\frac{\delta^{2}}{10}\:.\label{eq:by conc we always have}
\end{equation}
By $\dim\mu=\alpha$ and by Lemmas \ref{lem:entropy-dimension} and
\ref{lem:multiscale-entropy-formula}, since $n$ is large,
\begin{equation}
\mathbb{E}_{1\le i\le n}\left(\frac{1}{k}H(\mu_{x,i},\mathcal{D}_{i+k})\right)\ge\alpha-\frac{\delta^{2}}{5}\:.\label{eq:alpha is lb}
\end{equation}
Now if $\Gamma(E_{0})\le1-\delta$, then by (\ref{eq:lb by conv of comp}),
(\ref{eq:by conc we always have}), and (\ref{eq:alpha is lb}),
\[
\frac{1}{n}H(\theta\conv\mu,\mathcal{D}_{n})\ge\mathbb{E}_{1\le i\le n}\left(\frac{1}{k}H(\mu_{x,i},\mathcal{D}_{i+k})\right)+\delta\Gamma(E_{0}^{c})-\frac{7\delta^{2}}{10}\ge\alpha+\frac{\delta^{2}}{10}\:,
\]
which completes the proof of the Theorem. Hence it suffices to prove
that $\Gamma(E_{0})\le1-\delta$.

Assume by contradiction that $\Gamma(E_{0})>1-\delta$. Let $\sigma>0$
be small with respect to $\varepsilon,R$ and suppose that $\delta$
is small with respect to $\sigma$. Let $m\ge1$ be large with respect
to $\sigma$ and suppose that $\delta$ is small with respect to $m$.
By Theorem \ref{thm:Hochman's entropy growth} it follows that for
each $u=(i,x,\psi)\in E_{0}$ there exist linear subspaces $V_{i}^{u},...,V_{i+k}^{u}\subset\mathbb{R}^{2}$
such that\footnote{In (\ref{eq:cor of hochman's thm}) and later, $x,\psi$ and $i$
are fixed, and the randomness is over $y,z$ and $j$.}
\begin{equation}
\mathbb{P}_{i\le j\le i+k}\left(\begin{array}{c}
(\mu_{x,i})^{y,j}\mbox{ is }(V_{j}^{u},\sigma,m)\mbox{-saturated and}\\
(\psi^{-1}\theta_{\psi,i}\conv x)^{z,j}\mbox{ is }(V_{j}^{u},\sigma)\mbox{-concentrated}
\end{array}\right)>1-\sigma\:.\label{eq:cor of hochman's thm}
\end{equation}
\begin{lem}
We can assume that $\Gamma(E_{1})>1-\sigma$, where $E_{1}$ is the
set of all $(i,x,\psi)\in\mathcal{N}_{n}\times\mathbb{R}^{2}\times A_{2,2}$
with
\begin{equation}
\mathbb{P}_{i\le j\le i+k}\left((\psi^{-1}\theta_{\psi,i}\conv x)^{z,j}\mbox{ is }(L(x),\sigma)\mbox{-concentrated}\right)>1-\sigma\:.\label{eq:concentration near L(x)}
\end{equation}
\end{lem}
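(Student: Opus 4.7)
The plan is to perform a case analysis on the subspaces $V_j^u$ supplied by equation \eqref{eq:cor of hochman's thm}: for $\Gamma$-most $(u,j)$ with $u=(i,x,\psi)$, we show that either $V_j^u=\{0\}$ or $V_j^u$ is $1$-dimensional and approximately equal to $L(x)$. In both cases, $(V_j^u,\sigma)$-concentration of the $\theta$-component $(\psi^{-1}\theta_{\psi,i}\conv x)^{z,j}$ directly implies $(L(x),O(\sigma))$-concentration (since concentration near a point or near a line close to $L(x)$ trivially implies concentration near $L(x)$), yielding $\Gamma(E_1)>1-\sigma$.

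To rule out $V_j^u=\mathbb{R}^2$ on a non-negligible set, saturation would force the $m$-scale normalized entropy of $(\mu_{x,i})^{y,j}$ to be at least $2-\sigma$. But the uniform entropy dimension of $\mu$ (Proposition \ref{prop:uniform ent dim}), combined with Lemma \ref{lem:distribution-of-components-of-components} to pass from components to components-of-components, shows that this normalized entropy is close to $\alpha=\dim\mu<2$ on $\Gamma$-most $(i,x,j,y)$. Choosing $\sigma$ small relative to $2-\alpha$ eliminates this case.

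To rule out $V_j^u$ being $1$-dimensional and far from $L(x)$, apply Proposition \ref{prop:from conc on eu to cont on A_2,2} to transfer the $V_j^u$-concentration of $(\psi^{-1}\theta_{\psi,i}\conv x)^{z,j}$ back to $V_j^u$-concentration of $S_{2^\ell}((\psi^{-1}\theta_{\psi,i})_{\varphi,\ell}\conv x)$ at many scales $\ell$ in a range above $j$. Simultaneously Lemma \ref{lem:comp not conc}, whose hypothesis $\frac{1}{n}H(\theta,\mathcal{D}_n)>\varepsilon$ was assumed at the start of the section, produces a non-negligible set of $(i,\psi,\ell,\varphi)$ on which this rescaled measure fails to be $(\{0\},\delta)$-concentrated on a positive-$\mu$-measure set of $x$. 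Now Proposition \ref{prop:measures-which-evaluate-to-approximate-lines}, applied to the measure $S_{2^\ell}((\psi^{-1}\theta_{\psi,i})_{\varphi,\ell})$ on $A_{2,2}^{vec}$, forces $V_j^u(x)$ to agree with a non-trivial affine map $\overline{\psi_0}(x)$ on this non-point-concentrated set. Since $V_j^u$ is assumed far from $L(x)$, Lemma \ref{lem:proj of comp of comp} (applied with $W=V_j^u$) gives $\frac{1}{m}H(\pi_{V_j^{u,\perp}}(\mu_{x,i})^{y,j},\mathcal{D}_m)\ge\beta-\varepsilon$ for typical $(y,j)$; combined with the saturation hypothesis, this forces the total normalized entropy of the component to exceed $1+\beta-O(\sigma+\varepsilon)$. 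Comparing against the uniform entropy-dimension upper bound $\approx\alpha$ yields $\gamma=\alpha-\beta\ge 1-O(\sigma+\varepsilon)$; the remaining slack is closed by invoking Corollary \ref{cor:non-affinity-of-L} (non-affinity of $L$), which shows that the affine direction $\overline{\psi_0}\ne L$ obtained above is incompatible with the slice structure furnished by Lemma \ref{lem:ent of slices of comp} on an $\eta^*$-typical direction.

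The main obstacle is the borderline regime $\gamma$ close to $1$, in which the saturation–projection–entropy calculation only narrowly rules out $V_j^u$ being far from $L(x)$, requiring the additional algebraic input from Corollary \ref{cor:non-affinity-of-L} combined with the Ledrappier–Young slice structure to close the gap. A secondary difficulty is coordinating the parameters $\sigma,\delta,m,k,n$ and the various nested random choices — over $(i,x,\psi)$, $(j,z)$, and $(\ell,\varphi)$ — so that each intermediate lemma is applicable on a set of sufficient $\Gamma$-measure and the resulting error terms telescope correctly.
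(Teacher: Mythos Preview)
Your first two paragraphs are essentially the paper's argument: do a case analysis on $\dim V_j^u$, ruling out $\dim V_j^u=2$ via uniform entropy dimension (Proposition~\ref{prop:uniform ent dim} together with Lemma~\ref{lem:distribution-of-components-of-components}), and observing that $\dim V_j^u\in\{0,1\}$ with $V_j^u$ close to $L(x)$ immediately gives $(L(x),O(\sigma))$-concentration. You also correctly derive, in the third paragraph, the inequality $\alpha > 1+\beta-O(\sigma)$ from saturation combined with Lemma~\ref{lem:proj of comp of comp}.

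The gap is in how you close the case ``$\dim V_j^u=1$ and $V_j^u\notin B(L(x),\sigma)$''. Having obtained $\alpha>1+\beta-O(\sigma)$, the paper simply invokes Corollary~\ref{cor:bourgain-projection-of-SA-measures}, which gives $\beta\ge\tfrac12\alpha$; substituting yields $\alpha>2-O(\sigma)$, contradicting the standing assumption $\alpha<2$ once $\sigma$ is small. That is the entire argument --- no further machinery is needed. Your proposal instead treats the inequality $\gamma\ge 1-O(\sigma)$ as a ``borderline regime'' and tries to close it by importing Proposition~\ref{prop:from conc on eu to cont on A_2,2}, Proposition~\ref{prop:measures-which-evaluate-to-approximate-lines}, Corollary~\ref{cor:non-affinity-of-L}, and the slice lemma. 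This is both unnecessary and, as written, does not work: the subspaces $V_j^u$ depend on the full triple $u=(i,x,\psi)$ and on $j$, not on $x$ as a free variable with the other data held fixed, so Proposition~\ref{prop:measures-which-evaluate-to-approximate-lines} cannot be applied to conclude that ``$V_j^u(x)$ agrees with an affine map'' in the way you suggest. The non-affinity machinery is indeed used in the overall proof of Theorem~\ref{thm:entropy growth-near-identity}, but only \emph{after} this lemma and the next one have been established and a Fubini argument has fixed $(i,\psi)$ while letting $x$ vary --- you are conflating that later stage with the present lemma.
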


\begin{proof}
Let $Z$ be the set of all $(i,x,\psi)\in\mathcal{N}_{n}\times\mathbb{R}^{2}\times A_{2,2}$
such that,
\[
\mathbb{P}_{i\le j\le i+k}\left(\left|H_{m}\left((\mu_{x,i})^{y,j}\right)-\alpha\right|<\sigma\right)>1-\sigma/2\:.
\]
Then by Proposition \ref{prop:uniform ent dim} and Lemma \ref{lem:distribution-of-components-of-components}
it follows that $\Gamma(Z)>1-\sigma$. By Lemma \ref{lem:proj of comp of comp}
it follows that $\Gamma(Y)>1-\sigma$, where $Y$ is the set of all
$(i,x,\psi)$ with
\[
\mathbb{P}_{i\le j\le i+k}\left(\underset{W\notin B(L(x),\sigma)}{\inf}\;H_{m}(\pi_{W^{\perp}}((\mu_{x,i})^{y,j}))>\beta-\sigma\right)>1-\sigma\:.
\]
Note that $\Gamma(E_{0}\cap Z\cap Y)>1-3\sigma$, hence it suffices
to show that (\ref{eq:concentration near L(x)}) is satisfied for
$(i,x,\psi)\in E_{0}\cap Z\cap Y$ with $\sigma$ replaced by $O(\sigma)$.

Fix $u=(i,x,\psi)\in E_{0}\cap Z\cap Y$ and let $F_{u}$ be the set
of all $(j,y)\in\mathcal{N}_{i,i+k}\times\mathbb{R}^{2}$ such that,
\begin{itemize}
\item $(\mu_{x,i})^{y,j}\mbox{ is }(V_{j}^{u},\sigma,m)$-saturated;
\item $\left|H_{m}\left((\mu_{x,i})^{y,j}\right)-\alpha\right|<\sigma$;
\item $\underset{W\notin B(L(x),\sigma)}{\inf}\;H_{m}(\pi_{W^{\perp}}((\mu_{x,i})^{y,j}))>\beta-\sigma$.
\end{itemize}
Since $u\in E_{0}\cap Z\cap Y$ we have $\nu_{i,i+k}\times\mu_{x,i}(F_{u})>1-3\sigma$.
Let $(j,y)\in F_{u}$ and assume by contradiction that $\dim V_{j}^{u}=2$
or $\dim V_{j}^{u}=1$ with $V_{j}^{u}\notin B(L(x),\sigma)$, then
\begin{align}
\alpha & >H_{m}\left((\mu_{x,i})^{y,j}\right)-\sigma\nonumber \\
 & \ge\dim V_{j}^{u}+H_{m}(\pi_{(V_{j}^{u})^{\perp}}(\mu_{x,i})^{y,j})-2\sigma\label{eq:triangular-issue}\\
 & >1+\beta-3\sigma\:.\nonumber 
\end{align}
We have assumed that $0<\alpha<2$, and by\footnote{In fact here we only want $\geq\alpha/2$, not $\geq\alpha/2+\tau$,
so this is a much easier result which does not require Bourgain's
theorem.} Corollary \ref{cor:bourgain-projection-of-SA-measures} we have $\beta\geq\frac{1}{2}\alpha$,
hence, by assuming that $\sigma$ is small enough, we get a contradiction.
It follows that we must have,
\begin{equation}
\dim V_{j}^{u}=0\text{ or }\dim V_{j}^{u}=1\text{ with }V_{j}^{u}\in B(L(x),\sigma)\:.\label{eq:0 dim or 1 dim and in ball}
\end{equation}

Write
\[
S=\{j\in\mathcal{N}_{i,i+k}\::\:\mu_{x,i}\{y\::\:(j,y)\in F_{u}\}>0\},
\]
then $\nu_{i,i+k}(S)>1-3\sigma$ since $\nu_{i,i+k}\times\mu_{x,i}(F_{u})>1-3\sigma$.
Note that (\ref{eq:0 dim or 1 dim and in ball}) holds for each $j\in S$.
Let $(j,z)\in\mathcal{N}_{i,i+k}\times\mathbb{R}^{2}$ be such that
$j\in S$ and $\nu:=(\psi^{-1}\theta_{\psi,i}\conv x)^{z,j}$ is $(V_{j}^{u},\sigma)$-concentrated.
If $\dim V_{j}^{u}=0$ then $\nu$ is clearly $(L(x),\sigma)$-concentrated.
If $\dim V_{j}^{u}=1$ with $V_{j}^{u}\in B(L(x),\sigma)$ then $\nu$
is $(L(x),O(\sigma))$-concentrated. Hence in any case we have that
$\nu$ is $(L(x),O(\sigma))$-concentrated. From this, $\nu_{i,i+k}(S)>1-3\sigma$,
and (\ref{eq:cor of hochman's thm}), it follows that (\ref{eq:concentration near L(x)})
is satisfied for $u=(i,x,\psi)$ with $\sigma$ replaced by $O(\sigma)$.
This completes the proof of the lemma.
\end{proof}
\begin{lem}
We can assume that $\Gamma(E_{2})>1-\sigma$, where $E_{2}$ is the
set of all $(i,x,\psi)\in\mathcal{N}_{n}\times\mathbb{R}^{2}\times A_{2,2}$
with
\begin{equation}
\mathbb{P}_{i\le j\le i+k}\left(\begin{array}{c}
S_{2^{j}}((\psi^{-1}\theta_{\psi,i})_{\varphi,j}\conv x)\mbox{ is }\\
(L(x),\sigma)\mbox{-concentrated}
\end{array}\right)>1-\sigma\:.\label{eq:conc in A_2,2 near L(x)}
\end{equation}
\end{lem}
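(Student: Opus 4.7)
The plan is to derive the conclusion of this lemma directly from the previous lemma ($\Gamma(E_1) > 1 - \sigma$) by invoking Proposition \ref{prop:from conc on eu to cont on A_2,2}, which converts Euclidean concentration of $(\theta\conv x)^{y,i}$ into affine-group concentration of $S_{2^i}(\theta_{\psi,i}\conv x)$, with a controlled loss in parameters. The only cost is a careful cascade of parameter choices plus a short averaging argument to pass from per-scale windows $[j,j+n]$ to the full range $[i,i+k]$.

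First I would fix $(i,x,\psi)\in E_1$ and verify the hypotheses of Proposition \ref{prop:from conc on eu to cont on A_2,2} for the measure $\widetilde\theta := \psi^{-1}\theta_{\psi,i}$ at the point $x$. Since $d$ is left-invariant and $\theta_{\psi,i}$ is supported in a $\mathcal{D}_i$-cell around $\psi$, we have $d(\varphi,\mathrm{Id})=O(1)$ for $\varphi\in\supp\widetilde\theta$; and $|x|=O(1)$ because $x\in\supp\mu$ is compact. Thus the proposition is applicable at any scale $j$.

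Next, given a target tolerance $\sigma_0$ for the $E_2$-concentration, let $n=n(\sigma_0)$ and $\delta=\delta(\sigma_0)$ be as in Proposition \ref{prop:from conc on eu to cont on A_2,2}. I would run the previous lemma with its parameter taken at most $\delta$ (by shrinking $\sigma$ at the very start of the cascade), so that for $(i,x,\psi)\in E_1$ and at least a $(1-\delta)$-fraction of scales $j\in[i,i+k]$, the Euclidean hypothesis
\[
\mathbb{P}\bigl((\widetilde\theta\conv x)^{z,j}\text{ is }(L(x),\delta)\text{-concentrated}\bigr)>1-\delta
\]
holds. For each such good $j$, the proposition then delivers
\[
\mathbb{P}_{j\le j'\le j+n}\bigl(S_{2^{j'}}(\widetilde\theta_{\varphi,j'}\conv x)\text{ is }(L(x),\sigma_0)\text{-concentrated}\bigr)>1-\sigma_0.
\]

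The remaining step is a Fubini-style aggregation over the windows $[j,j+n]$: writing $G\subseteq[i,i+k]$ for the set of good (Euclidean) $j$ and $G'\subseteq[i,i+k+n]$ for the set of $j'$ enjoying the affine concentration, a double count gives
\[
(1-\delta)(1-\sigma_0)(k+1)(n+1)\le \sum_{j\in G}\#\{j'\in[j,j+n]\cap G'\}\le (n+1)|G'|,
\]
since each $j'$ lies in at most $n+1$ such windows. Hence $|G'|\ge(1-\delta-\sigma_0)(k+1)$, and taking $k$ much larger than $n(\sigma_0)$ absorbs the $n/k$ boundary error from $G'\subseteq[i,i+k+n]$ into the tolerance. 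Starting the whole chain of lemmas with $\sigma$ chosen small enough that $\sigma_0+\delta+O(n/k)<\sigma$, we obtain $\Gamma(E_2)>1-\sigma$.

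The only delicate point is the ordering of the quantifiers: the desired final tolerance $\sigma$ in $E_2$ fixes $\sigma_0$, which fixes $n(\sigma_0)$ and $\delta(\sigma_0)$; the previous lemma must be invoked with its tolerance $\le\delta(\sigma_0)$, and $k$ must be taken large relative to $n(\sigma_0)$. All of these are free parameters in the surrounding argument, so no conceptual obstacle arises; this is a routine but careful unwinding of the proposition together with an averaging over scales.
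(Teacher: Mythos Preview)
Your proposal is correct and follows essentially the same approach as the paper: fix $(i,x,\psi)\in E_1$, apply Proposition~\ref{prop:from conc on eu to cont on A_2,2} at each good Euclidean scale to obtain affine concentration on a window of length $n=n(\sigma_0)$, and then average over the windows to cover $[i,i+k]$ with boundary error $O(n/k)$. The paper carries out the identical strategy, phrasing the averaging step as ``a statement similar to Lemma~\ref{lem:distribution-of-components-of-components}'' rather than the explicit double count you give; one small point is that your set $G'$ should really be weighted by the $\varphi$-probability of concentration (the event in $E_2$ is joint in $j$ and $\varphi$), but with that reading your inequality is exactly right.
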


\begin{proof}
Fix $(i,x,\psi)\in E_{1}$ with $x\in X$, write $\tau=\psi^{-1}\theta_{\psi,i}$,
and set
\[
S=\{j\in\mathcal{N}_{i,i+k}\::\:\mathbb{P}_{l=j}\left((\tau\conv x)^{y,l}\mbox{ is }(L(x),\sigma)\mbox{-concentrated}\right)\ge1-\sqrt{\sigma}\}\:.
\]
By (\ref{eq:concentration near L(x)}) it follows that $\nu_{i,i+k}(S)\ge1-\sqrt{\sigma}$.
Let $\sigma'>0$ be small with respect to $\varepsilon>0$ and suppose
that $\sigma$ is small with respect to $\sigma'$. By Proposition
\ref{prop:from conc on eu to cont on A_2,2} there exists an integer
$q=q(\sigma')\ge1$ such that, by assuming that $\sigma$ is small
enough with respect to $\sigma'$, we have
\begin{equation}
\mathbb{P}_{j\le l\le j+q}\left(S_{2^{l}}\left(\tau_{\varphi,l}\conv x\right)\text{ is }(L(x),\sigma')\text{-concentrated}\right)\ge1-\sigma'\text{ for }j\in S\:.\label{eq:conc for j in S}
\end{equation}
Let $\sigma''>0$ be small with respect to $\varepsilon>0$ and suppose
that $\sigma'$ is small with respect to $\sigma''$. From $\nu_{i,i+k}(S)\ge1-\sqrt{\sigma}$
and (\ref{eq:conc for j in S}), by assuming that $\sigma,\sigma'$
are sufficiently small with respect to $\sigma''$, and by assuming
that $k$ is sufficiently large with respect to $q$, it follows by
a statement similar to Lemma \ref{lem:distribution-of-components-of-components}
that (\ref{eq:conc in A_2,2 near L(x)}) is satisfied with $\sigma''$
in place of $\sigma$. This completes the proof of the lemma.
\end{proof}
By the previous lemma, by Fubini's theorem, and by replacing $\sigma$
with a larger quantity which is still small with respect to $\varepsilon$
(without changing the notation), we may assume that $\lambda_{n}\times\theta(F_{1})>1-\sigma$,
where $F_{1}$ is the set of all $(i,\psi)\in\mathcal{N}_{n}\times A_{2,2}$
such that
\[
\mathbb{P}_{i\le j\le i+k}\left(\mu\left\{ x\::\:\begin{array}{c}
S_{2^{j}}((\psi^{-1}\theta_{\psi,i})_{\varphi,j})\conv x\mbox{ is }\\
(L(x),\sigma)\mbox{-concentrated}
\end{array}\right\} >1-\sigma\right)>1-\sigma\:.
\]

Recall the set $F_{0}$ from the beginning of the proof. Since $\sigma$
is small with respect to $\varepsilon$, $\lambda_{n}\times\theta(F_{0})>\varepsilon$,
and $\lambda_{n}\times\theta(F_{1})>1-\sigma$, we have $\lambda_{n}\times\theta(F_{0}\cap F_{1})>0$.
In particular there exists $(i,\psi)\in F_{0}\cap F_{1}$. Similarly,
since $\sigma$ is small with respect to $\varepsilon$, there exist
$i\le j\le i+k$ and $\varphi\in A_{2,2}$ such that for $\theta':=S_{2^{j}}((\psi^{-1}\theta_{\psi,i})_{\varphi,j})$
we have,
\begin{equation}
\mu\left\{ x\::\:\theta'\conv x\mbox{ is }(L(x),\sigma)\mbox{-concentrated}\right\} >1-\sigma\label{eq:conc on L(x)}
\end{equation}
and
\begin{equation}
\mu\left\{ x\::\:\theta'\conv x\mbox{ is not }(\{0\},\varepsilon)\mbox{-concentrated}\right\} >\varepsilon\:.\label{eq:not conc on =00007B0=00007D}
\end{equation}
Also, observe that $\theta'$ is the re-scaling by $2^{j}$ of a level-$j$
component $(\psi^{-1}\theta_{\psi,i})_{\varphi,j}$ of the measure
$\psi^{-1}\theta_{\psi,i}$, and $\psi^{-1}\theta_{\psi,i}$ is contained
in an $O(1)$-ball (with respect to the invariant metric $d$) around
the identity. On the intersection of $A_{2,2}$ with this ball, the
invariant metric and the norm metric of $A_{2,2}^{vec}$ are bi-Lipschitz
equivalent. The diameter of the support of $(\psi^{-1}\theta_{\psi,i})_{\varphi,j}$
is $O(2^{-j})$ in the invariant metric, so it also has diameter $O(2^{-j})$
in norm; hence after re-scaling by $2^{j}$, the diameter of the support
of $\theta'$ is $O(1)$ with respect to the norm metric. 

In view of the last few paragraphs, and since $\sigma$ can be taken
arbitrarily small compared to $\varepsilon$, we have a contradiction
to Corollary \ref{cor:non-concentration-of-L}. This completes the
proof of the theorem.

Finally, we prove the more basic fact that entropy does not decrease
(a special case of which is (\ref{eq:entorpy-increase-by-convolution})):
\begin{prop}
\label{prop:entropy-non-decrease-near-identity}Let $R>0$ and let
$\nu\in\mathcal{P}(\mathbb{R}^{2})$, $\theta\in\mathcal{P}(A_{2,2})$
be supported on $R$-neighborhoods of the identities of $\mathbb{R}^{2},A_{2,2}$,
respectively. Then for every $n$,
\[
H(\theta\conv\nu,\mathcal{D}_{n})\geq H(\nu,\mathcal{D}_{n})+O_{R}(1)\;.
\]
\end{prop}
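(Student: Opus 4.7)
The plan is to write $\theta\conv\nu$ as a mixture $\int\varphi\nu\,d\theta(\varphi)$ and apply concavity of entropy, reducing the problem to a uniform pointwise bound $H(\varphi\nu,\mathcal{D}_n)\geq H(\nu,\mathcal{D}_n)+O_R(1)$ valid for every $\varphi$ in the $R$-ball around $\id$ in the left-invariant metric $d$ on $A_{2,2}$.

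First I would observe that by definition of the action-convolution, $\theta\conv\nu=\int\varphi\nu\,d\theta(\varphi)$, and concavity of the functional $\nu'\mapsto H(\nu',\mathcal{D}_n)$ (see Section~\ref{sub:entropy}) yields
\[
H(\theta\conv\nu,\mathcal{D}_n)\;\geq\;\int H(\varphi\nu,\mathcal{D}_n)\,d\theta(\varphi).
\]
So it suffices to prove the pointwise bound $H(\varphi\nu,\mathcal{D}_n)\geq H(\nu,\mathcal{D}_n)+O_R(1)$ for every $\varphi\in\supp\theta$, with a constant depending only on $R$.

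Next, for any $\varphi$ with $d(\varphi,\id)\leq R$, the left-invariance of $d$ together with the fact that it is induced from a Riemannian metric implies that $\|A_\varphi\|$ and $\|A_\varphi^{-1}\|$ are bounded by a constant $C=C(R)$: indeed, integrating the length of a curve from $\id$ to $\varphi$ bounds the corresponding Lie-algebra element in a fixed norm, whose exponential then controls both $A_\varphi$ and $A_\varphi^{-1}$. Thus $\varphi$ is a bi-Lipschitz self-homeomorphism of $\mathbb{R}^2$ with constant $C$. Consequently the atoms of $\varphi^{-1}\mathcal{D}_n$ are parallelograms of diameter in $[C^{-1},C]\cdot\sqrt{2}\cdot 2^{-n}$, and a volume/diameter count (using that $\varphi$ and $\varphi^{-1}$ are $C$-Lipschitz) shows that $\varphi^{-1}\mathcal{D}_n$ and $\mathcal{D}_n$ are $O_R(1)$-commensurable. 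By the standard entropy estimate for commensurable partitions,
\[
H(\varphi\nu,\mathcal{D}_n)\;=\;H(\nu,\varphi^{-1}\mathcal{D}_n)\;=\;H(\nu,\mathcal{D}_n)+O_R(1),
\]
which, combined with the previous display, gives the desired inequality.

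The argument is entirely routine and there is no real obstacle; the only mild point is the uniform bi-Lipschitz bound on $\varphi$, which is immediate from the geometry of the left-invariant Riemannian metric. (The hypothesis that $\supp\nu$ lies in an $R$-neighborhood of the origin is not strictly needed for this argument, but it ensures that the supports of the measures under consideration remain in a compact set, as elsewhere in the paper.)
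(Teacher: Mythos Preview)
Your proof is correct and follows essentially the same approach as the paper: write $\theta\conv\nu$ as a mixture, apply concavity of entropy, and use that each $\varphi$ in an $R$-ball around the identity is bi-Lipschitz with constant $O_R(1)$ to get $H(\varphi\nu,\mathcal{D}_n)=H(\nu,\mathcal{D}_n)+O_R(1)$. The paper's argument is slightly terser but identical in substance, and your remark that the support hypothesis on $\nu$ is not actually used is accurate.
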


\begin{proof}
Every $h\in\supp\theta$ is bi-Lipschitz with constant $O_{R}(1)$,
hence $H(h\nu,\mathcal{D}_{n})=H(\nu,\mathcal{D}_{n})+O_{R}(1)$.
Thus, using $\theta\conv\nu=\int h\nu\,d\theta(h)$ and concavity
of entropy,
\begin{align*}
H(\theta\conv\nu,\mathcal{D}_{n}) & =H(\int h\nu\,d\theta(h),\mathcal{D}_{n})\\
 & \geq\int H(h\nu,\mathcal{D}_{n})\,d\theta(h)\\
 & \geq H(\nu,\mathcal{D}_{n})+O_{R}(1)\;.\qedhere
\end{align*}
\end{proof}

\section{\label{sec:non-conformal-partitions}The non-conformal partitions
$\mathcal{D}_{n}^{g}$ and entropy growth}

In this section we assume everything: namely, that $\Phi$ is non-conformal,
totally irreducible and exponentially separated, and that $\dim\mu\geq1$. 

Our objective in this section is to prove an entropy growth result
for $\theta\conv\mu$, when $\theta$ is far from the identity, but
still of bounded diameter. It is important to notice that entropy
can even decrease under such a convolution if we do not measure it
in the right way. Indeed, consider the matrix $A=\diag(1,2^{-n})$
for some large $n$. Then at resolution $2^{-n}$ (corresponding to
$\mathcal{D}_{n}$), the measure $A\mu$ is extremely close to being
supported on a horizontal line, hence $\frac{1}{n}H(A\mu,\mathcal{D}_{n})\leq1+o(1)$.
If $\theta$ were supported on a bounded neighborhood of $A$ then,
no matter how smooth $\theta$ is, we would similarly have 
\[
\frac{1}{n}H(\theta\conv\mu,\mathcal{D}_{n})\leq1+o(1)
\]
since $\theta\conv\mu$ is still close to a horizontal line. At the
same time, if $\dim\mu>1+\delta$, then we will have 
\[
\frac{1}{n}H(\mu,\mathcal{D}_{n})=\dim\mu-o(1)>1+\delta-o(1)
\]
. Thus, for large $n$ we certainly have $\frac{1}{n}H(\theta\conv\mu,\mathcal{D}_{n})\leq\frac{1}{n}H(\mu,\mathcal{D}_{n})-\delta$,
which even gives an entropy decrease.

The problem is, of course, that we are measuring entropy in the wrong
coordinates. The right way is in the coordinates induced by $A$:
Let $Ax+a=g(x)\in A_{2,2}$ and let $VDU$ be a singular value decomposition
of $A$. Assume that $\alpha_{1}(A)>\alpha_{2}(A)$, where $\alpha_{1}(A),\alpha_{2}(A)$
are the singular values of $A$. For $n\ge0$ we set 
\begin{equation}
\mathcal{D}_{n}^{g}=VD(\mathcal{D}_{n})\;.\label{eq:D-n-g}
\end{equation}
With respect to this partition, one does not have an entropy drop
from $\mu$ to $\theta\conv\mu$. Furthermore, under our assumptions
on $\mu$, we will be able to interpolate between $\mathcal{D}_{n}^{g}$
and ordinary dyadic partitions at appropriate scales, to show that
entropy growth generally does occur.

\subsection{\label{subsec:Interpolating-between-non-conformal-partitions}Interpolating
between non-conformal and conformal partitions}

The purpose of this section is to relate the entropy of a measure
with respect to $\mathcal{D}_{n}^{g}$ to the entropy with respect
to the usual partitions $\mathcal{D}_{n}$. This relies on analysis
of projections of the measure, and therefore requires the assumptions
stated at the start of the section, which, by Theorem \ref{thm:BHR-projections},
imply that
\[
\dim\pi_{V}\mu=1\qquad\text{for }\eta^{*}\text{-all }V\in\PR\;.
\]

In this section we fix the following notation. Let $g\in A_{2,2}$
and recall that we write $g(x)=A_{g}x+b_{g}$. Let $n\in\mathbb{N}$,
and denote the singular values of $A_{g}$ by $\alpha_{1}=\alpha_{1}(A_{g})=2^{-c_{1}n}$
and $\alpha_{2}=\alpha_{2}(A_{g})=2^{-c_{2}n}$, with $0<c_{1}<c_{2}$
(we introduce $n$ because later we will consider $c_{1},c_{2}$ fixed
and $n\rightarrow\infty$; one may imagine that $c_{i}=|\chi_{i}|$).
Let $A_{g}=VDU$ be the singular value decomposition of $A_{g}$,
and recall that $\mathcal{D}_{n}^{g}=VD\mathcal{D}_{n}$, so it consists
of rectangular cells whose long edge has direction $\overline{v}=\overline{Ve_{1}}$
and length $2^{-(1+c_{1})n}$, and whose short edge has direction
$\overline{v}^{\perp}$ and length $2^{-(1+c_{2})n}$. 

As a first consequence observe that for any $M\geq0$, and up to a
translation, $\mathcal{D}_{(M+c_{2})n}^{\overline{v}\oplus\overline{v}^{\perp}}$
refines $\mathcal{D}_{Mn}^{g}$; and in fact, 
\[
\mathcal{D}_{Mn}^{g}\lor\pi_{\overline{v}}^{-1}\mathcal{D}_{(M+c_{2})n}\text{ is commensurable with }\mathcal{D}_{(M+c_{2})n}\;.
\]

It follows that for any measure $\nu\in\mathcal{P}(\mathbb{R}^{2})$,
and for $M\geq0$, 
\begin{align}
H(\nu,\mathcal{D}_{(M+c_{2})n}|\mathcal{D}_{c_{2}n}) & =H(\nu,\mathcal{D}_{(M+c_{2})n})-H(\nu,\mathcal{D}_{c_{2}n})\nonumber \\
 & =H(\nu,\mathcal{D}_{Mn}^{g}\lor\pi_{\overline{v}}^{-1}\mathcal{D}_{(M+c_{2})n})-H(\nu,\mathcal{D}_{0}^{g}\lor\pi_{\overline{v}}^{-1}\mathcal{D}_{c_{2}n})\pm O(1)\nonumber \\
 & =\left(H(\nu,\mathcal{D}_{Mn}^{g})+H(\nu,\pi_{\overline{v}}^{-1}\mathcal{D}_{(M+c_{2})n}|\mathcal{D}_{Mn}^{g})\right)\nonumber \\
 & \qquad-\left(H(\nu,\mathcal{D}_{0}^{g})+H(\nu,\pi_{\overline{v}}^{-1}\mathcal{D}_{c_{2}n}|\mathcal{D}_{0}^{g})\right)\pm O(1)\;.\label{eq:nonconformal-decomposition}
\end{align}
\begin{lem}
Let $R>1$, let $g\in A_{2,2}$ be as above, and suppose that $c_{2}-c_{1}>R^{-1}$.
Let $\theta\in\mathcal{P}(A_{2,2})$ be supported in an $R$-neighborhood
of $g$ (with respect to the invariant metric). Let $\nu=\theta\conv\mu$,
where $\mu$ is a self-affine measure generated by a non-conformal
and totally irreducible system satisfying exponential separation and
$\dim\mu\geq1$. Then
\[
H(\nu,\mathcal{D}_{0}^{g})=O_{R}(1),
\]
and for all $M\in\{0\}\cup[1,\infty)$,
\[
H(\nu,\pi_{\overline{v}}^{-1}\mathcal{D}_{(M+c_{2})n}|\mathcal{D}_{Mn}^{g})=(c_{2}-c_{1})n+o_{R}(n)\;.
\]
\end{lem}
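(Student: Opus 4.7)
The first bound is a support estimate. By left-invariance of $d$, every $h\in\supp\theta$ factors as $h=g\circ f$ with $f$ in the $R$-ball of the identity, so $A_f=I+O_R(1)$ and $\|b_f\|=O_R(1)$; for $x\in\supp\mu$ this gives $h(x)-g(x)=A_g(f(x)-x)\in A_g B_{O_R(1)}(0)$, an ellipse of axes $O_R(\alpha_1),O_R(\alpha_2)$ along $\overline{v},\overline{v}^{\perp}$. Since $g(\supp\mu)$ fits in a similar ellipse, $\supp\nu$ meets only $O_R(1)$ cells of $\mathcal{D}_0^g$, yielding $H(\nu,\mathcal{D}_0^g)=O_R(1)$.

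For the second statement, the basic observation is that $\mathcal{D}_{Mn}^g$ is $O(1)$-commensurable with the product partition $\pi_{\overline{v}}^{-1}\mathcal{D}_{(M+c_1)n}\vee\pi_{\overline{v}^{\perp}}^{-1}\mathcal{D}_{(M+c_2)n}$; in particular it refines $\pi_{\overline{v}}^{-1}\mathcal{D}_{(M+c_1)n}$, so monotonicity of conditional entropy yields the upper bound
\[
H(\nu,\pi_{\overline{v}}^{-1}\mathcal{D}_{(M+c_2)n}\mid\mathcal{D}_{Mn}^g)\le H(\pi_{\overline{v}}\nu,\mathcal{D}_{(M+c_2)n}\mid\mathcal{D}_{(M+c_1)n})+O(1).
\]
I would evaluate each $H(\pi_{\overline{v}}\nu,\mathcal{D}_{(M+c_i)n})$, $i=1,2$, to within $o_R(n)$: the support of $\pi_{\overline{v}}\nu$ has diameter $O_R(2^{-c_1n})$, so the upper estimate $(M+c_i-c_1)n+O_R(1)$ is the trivial support bound, while the matching lower estimate comes from decomposing $\pi_{\overline{v}}\nu=\int\pi_{\overline{v}}h\mu\,d\theta(h)$, rewriting each fibre via (\ref{eq:affine-map-to-projection}) as $\pm\alpha_h\pi_{A_h^*\overline{v}}\mu+\text{shift}$ with $\alpha_h=\Theta_R(\alpha_1)$, and invoking the uniform projection lower bound $H(\pi_W\mu,\mathcal{D}_k)\ge k-o(k)$ of Lemma~\ref{lem:uni conv of ent} (applicable with $\beta=1$ because Theorem~\ref{thm:BHR-projections} applies under the standing assumptions of the section) together with concavity of entropy. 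Subtracting these two matching estimates gives the upper bound $(c_2-c_1)n+o_R(n)$.

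For the lower bound I would use concavity of conditional entropy in the measure argument to pass to a per-$h$ estimate, then conjugate by $(VD)^{-1}$: this sends $\mathcal{D}_{Mn}^g$ to $\mathcal{D}_{Mn}$, sends $\pi_{\overline{v}}^{-1}\mathcal{D}_{(M+c_2)n}$ to $\pi_{e_1}^{-1}\mathcal{D}_{(M+c_2-c_1)n}$ (using $\|\pi_{\overline{v}}VD\|=\alpha_1$), and sends $h\mu$ to a translate of $UA_f\mu$, all up to $O_R(1)$ commensurability and with only an $O_R(1)$ entropy cost from the bi-Lipschitz constant of $UA_f$. The task then reduces to proving
\[
H(\mu,\pi_{W_f}^{-1}\mathcal{D}_{(M+c_2-c_1)n}\mid\mathcal{D}_{Mn}^{W_f\oplus W_f^{\perp}})\ge(c_2-c_1)n-o_R(n)
\]
uniformly in $W_f=\overline{A_f^*U^*e_1}$, which stays in a bounded neighbourhood of $\overline{U^*e_1}$, and this is precisely what Lemma~\ref{lem:ent of proj of restrictions} delivers (with $\beta=1$), applied to $E=\Lambda^{\mathbb{N}}$ and an open interval $J\subseteq\PR$ of $L\xi$-mass close to $1$ with $d_{\PR}(W_f^{\perp},J)\ge\rho$ for every $f$; the existence of one such $J$ that works for all $h$ at once follows from non-atomicity of $\eta=L\xi$ combined with compactness of $\PR$, which together force $\sup_{W'\in\PR}L\xi(B_\rho(W'))\to 0$ as $\rho\to 0$. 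Concavity in the measure then propagates the lemma's bound from $\mu_{L^{-1}(J)}$ to $\mu$ with loss $o_R(n)$, and averaging over $h$ completes the lower bound for $M\ge 1$; the case $M=0$ bypasses this, since $H(\nu,\mathcal{D}_0^g)=O_R(1)$ reduces the target to $H(\pi_{\overline{v}}\nu,\mathcal{D}_{c_2n})\pm O_R(1)$, which is already handled above. The main obstacle is precisely this lower bound for $M\ge 1$: the extra $\overline{v}^{\perp}$-coordinate information in $\mathcal{D}_{Mn}^g$ could in principle leak out the $\overline{v}$-information one is trying to measure, and Lemma~\ref{lem:ent of proj of restrictions} is indispensable because its statement incorporates the 2D conditioning directly, exactly once the non-conformality of $g$ has been absorbed into the direction $W_f$ by the $(VD)^{-1}$-conjugation.
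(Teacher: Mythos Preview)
Your overall architecture---the support bound for $H(\nu,\mathcal{D}_0^g)$, the trivial combinatorial upper bound for the conditional entropy, and the lower bound via conjugation by $(VD)^{-1}$---matches the paper's, and your upper bound is correct (though the paper observes more simply that each cell of $\mathcal{D}_{Mn}^g$ is cut into $O(2^{(c_2-c_1)n})$ pieces by $\pi_{\overline{v}}^{-1}\mathcal{D}_{(M+c_2)n}$, which avoids any appeal to $\beta=1$).

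The gap is in the lower bound for $M\ge 1$. After conjugating by $(VD)^{-1}$ you arrive at the correct target, namely a lower bound of the form
\[
H(h''\mu,\pi_{W}^{-1}\mathcal{D}_{(M+c_2-c_1)n}\mid\mathcal{D}_{Mn})\ge (c_2-c_1)n-o_R(n)
\]
uniformly over $h''$ in a compact neighbourhood of the identity and $W$ in a compact subset of $\PR$. But Lemma~\ref{lem:ent of proj of restrictions} does not deliver this: in that lemma the gap parameter $m$ is fixed first (large relative to $\varepsilon,\rho$) and only then is the scale $n$ allowed to grow, with $N_1=N_1(\varepsilon,\rho,m,E,J,W)$ depending on $m$. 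You need both the scale $Mn$ and the gap $(c_2-c_1)n$ to grow linearly in $n$ simultaneously, which as stated is a circular dependence. Fixing this would require a telescoping argument over sub-intervals of length $m_0$, which in effect re-proves Proposition~\ref{prop:lb on ent of proj of comp of mu}---and that proposition is precisely what the paper invokes here. Its hypotheses $m\ge M(E,\varepsilon)$ and $n\ge N(\varepsilon)$ are independent, so both can be taken to be linear multiples of the same $n$, and it is stated uniformly over $h\in E$ and $W\in\PR$, so no interval $J$ or restriction $\mu_{L^{-1}(J)}$ is needed. Your remark that Lemma~\ref{lem:ent of proj of restrictions} is ``indispensable because its statement incorporates the 2D conditioning directly'' is therefore mistaken: $\mathcal{D}_{Mn}$ and $\mathcal{D}_{Mn}^{W\oplus W^\perp}$ are $O(1)$-commensurable, so Proposition~\ref{prop:lb on ent of proj of comp of mu} applies without modification.
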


\begin{proof}
We prove the second statement first and adopt the notation from the
previous discussion. Since $\mathcal{D}_{Mn}^{g}$ consists of rectangles
of dimensions $2^{-(M+c_{1})n}\times2^{-(M+c_{2})n}$ with long edge
in direction $\overline{v}$, and since $\pi_{\overline{v}}^{-1}\mathcal{D}_{(M+c_{2})n}$
consists of strips of width $2^{-(M+c_{2})n}$ in direction $\overline{v}^{\perp}$,
every cell of the former partition is divided by the latter partition
into $O(2^{(c_{2}-c_{1})n})$ cells. Therefore we have the trivial
bound
\[
H(\nu,\pi_{\overline{v}}^{-1}\mathcal{D}_{(M+c_{2})n}|\mathcal{D}_{Mn}^{g})\leq(c_{2}-c_{1})n+O(1)\;.
\]

To prove the reverse inequality, use $\nu=\theta\conv\mu=\int h\mu\,d\theta(h)$
and concavity of entropy to conclude that
\begin{equation}
H(\nu,\pi_{\overline{v}}^{-1}\mathcal{D}_{(M+c_{2})n}|\mathcal{D}_{Mn}^{g})\geq\int H(h\mu,\pi_{\overline{v}}^{-1}\mathcal{D}_{(M+c_{2})n}|\mathcal{D}_{Mn}^{g})\,d\theta(h)\;,\label{eq:transforming-measure-by-h}
\end{equation}
so it is enough to prove the lower bound for the integrand on the
right hand side, under the assumption that $d(h,g)=O_{R}(1)$. Recall
that $A_{g}=VDU$ is the singular value decomposition of $A_{g}$,
so that $\mathcal{D}_{Mn}^{g}=VD\mathcal{D}_{Mn}$. By assumption,
we can write $h=gh'$ with $d(h',\id)=O_{R}(1)$, and therefore $h=VDUA_{h'}+gb_{h'}=VDh''+gb_{h'}$,
where we have defined $h''=UA_{h'}$. Note that $h''$ lies in an
$O_{R}(1)$ neighborhood of the identity. Substituting this into (\ref{eq:transforming-measure-by-h}),
and eliminating the translation $gb_{h'}$ at the expense of absorbing
an additive $O(1)$ term into the $o(n)$ term, we see that it is
enough to show that
\[
H(VD(h''\mu),\pi_{\overline{v}}^{-1}\mathcal{D}_{(M+c_{2})n}|VD\mathcal{D}_{Mn})\geq(c_{2}-c_{1})n+o(n)\;.
\]
Applying $(VD)^{-1}$ to all terms, this is the same as
\[
H(h''\mu,(VD)^{-1}\pi_{\overline{v}}^{-1}\mathcal{D}_{(M+c_{2})n}|\mathcal{D}_{Mn})\geq(c_{2}-c_{1})n+o(n)\;.
\]
Now, $(VD)^{-1}\pi_{\overline{v}}^{-1}=(\pi_{\overline{v}}VD)^{-1}=(\pi_{\overline{e}_{1}}D)^{-1}=\pi_{\overline{e}_{1}}^{-1}S_{2^{c_{1}n}}$
(because $\overline{v}=\overline{Ve_{1}}$ and $D^{-1}=\diag(2^{c_{1}n},2^{c_{2}n})$),
so we must show that
\[
H(h''\mu,\pi_{e_{1}}^{-1}\mathcal{D}_{(M+c_{2}-c_{1})n}|\mathcal{D}_{Mn})\geq(c_{2}-c_{1})n+o(n)\;.
\]
For $M\ge1$ this is a consequence of Proposition \ref{prop:lb on ent of proj of comp of mu}.
For $M=0$ this follows easily from Lemma \ref{lem:uni conv of ent}
and $d(h'',\id)=O_{R}(1)$.

The first statement is proved similarly: first write $\theta=g\theta'$,
with $\theta'\in\mathcal{P}(A_{2,2})$ supported in an $O_{R}(1)$
neighborhood of the identity. Write $\mu'=\theta'\conv\mu$, so $\nu=g\mu'$.
Then, by the same reasoning as above, for some map $h''\in A_{2,2}$
within distance $O_{R}(1)$ of the identity, we have 
\[
H(\nu,\mathcal{D}_{0}^{g})=H(h''\mu',\mathcal{D}_{0})=O_{R}(1)\;,
\]
where the last bound is because $\mu'$, and hence $h''\mu'$, is
supported on a set of diameter $O_{R}(1)$.
\end{proof}
\begin{prop}
\label{prop:interpolation}Let $R>1$, let $\theta\in\mathcal{P}(A_{2,2})$
be supported on a set of diameter $R$ (in the invariant metric),
and let $g\in\supp\theta$. Let $2^{-c_{2}n}<2^{-c_{1}n}<1$ denote
the singular values of $A_{g}$ and suppose that $c_{2}-c_{1}>R^{-1}$.
Then for every $M\geq1$,
\[
H(\theta\conv\mu,\mathcal{D}_{(M+c_{2})n}|\mathcal{D}_{c_{2}n})=H(\theta\conv\mu,\mathcal{D}_{Mn}^{g})+o_{R}(n)\;.
\]
\end{prop}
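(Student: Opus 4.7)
The plan is to read this off as a direct bookkeeping exercise from the two ingredients immediately preceding the proposition: the general algebraic identity~(\ref{eq:nonconformal-decomposition}) and the lemma that computes the ``ancillary'' entropies appearing in it for the specific measure $\nu=\theta\conv\mu$. The hypotheses of that lemma are in fact satisfied here: since $g\in\supp\theta$ and $\diam(\supp\theta)\le R$ in the invariant metric, $\theta$ is supported in the $R$-neighborhood of $g$, and the condition $c_{2}-c_{1}>R^{-1}$ is assumed outright.

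First, I would specialize the identity~(\ref{eq:nonconformal-decomposition}) to $\nu=\theta\conv\mu$, writing
\[
H(\nu,\mathcal{D}_{(M+c_{2})n}\mid\mathcal{D}_{c_{2}n})
=H(\nu,\mathcal{D}_{Mn}^{g})+H(\nu,\pi_{\overline{v}}^{-1}\mathcal{D}_{(M+c_{2})n}\mid\mathcal{D}_{Mn}^{g})
-H(\nu,\mathcal{D}_{0}^{g})-H(\nu,\pi_{\overline{v}}^{-1}\mathcal{D}_{c_{2}n}\mid\mathcal{D}_{0}^{g})\pm O(1).
\]
This is an exact equality up to an additive $O(1)$ and does not yet use any self-affinity.

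Next, I would invoke the preceding lemma to dispose of the three unwanted entropy terms. The lemma gives $H(\nu,\mathcal{D}_{0}^{g})=O_{R}(1)$, and the asymptotic $H(\nu,\pi_{\overline{v}}^{-1}\mathcal{D}_{(M+c_{2})n}\mid\mathcal{D}_{Mn}^{g})=(c_{2}-c_{1})n+o_{R}(n)$ for every $M\in\{0\}\cup[1,\infty)$. Applied with the current $M\ge 1$, it handles the second term; applied with $M=0$, it handles the fourth term. The two copies of $(c_{2}-c_{1})n$ then cancel.

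Substituting back, the four auxiliary terms collapse into $o_{R}(n)+O_{R}(1)=o_{R}(n)$, leaving exactly
\[
H(\theta\conv\mu,\mathcal{D}_{(M+c_{2})n}\mid\mathcal{D}_{c_{2}n})=H(\theta\conv\mu,\mathcal{D}_{Mn}^{g})+o_{R}(n),
\]
as required. There is no real obstacle at the level of this proposition: all of the genuine analytic content (the projection estimate for $\theta\conv\mu$ via Proposition~\ref{prop:lb on ent of proj of comp of mu} and Lemma~\ref{lem:uni conv of ent}, which in turn relies on exponential separation, total irreducibility and $\dim\mu\geq 1$ through Theorem~\ref{thm:BHR-projections}) has been packaged into the preceding lemma, so here one only needs to align the indices and verify the cancellation.
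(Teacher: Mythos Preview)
Your proposal is correct and follows exactly the same approach as the paper's own proof: specialize identity~(\ref{eq:nonconformal-decomposition}) to $\nu=\theta\conv\mu$, then invoke the preceding lemma at $M\ge1$ and $M=0$ so that the two $(c_{2}-c_{1})n$ terms cancel and the $H(\nu,\mathcal{D}_{0}^{g})=O_{R}(1)$ term is absorbed into the $o_{R}(n)$ error.
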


\begin{proof}
By equation (\ref{eq:nonconformal-decomposition}), the claim follows
if we show that
\[
H(\theta\conv\mu,\pi_{\overline{v}}^{-1}\mathcal{D}_{(M+c_{2})n}|\mathcal{D}_{Mn}^{g})-H(\theta\conv\mu,\mathcal{D}_{0}^{g})-H(\theta\conv\mu,\pi_{\overline{v}}^{-1}\mathcal{D}_{c_{2}n}|\mathcal{D}_{0}^{g})=o(n)\;.
\]
This, in turn, follows from the previous lemma, which says that the
two extreme terms are $(c_{2}-c_{1})n+o(n)$, so these cancel up to
an $o(n)$ error, and the middle term is $O(1)$. 
\end{proof}

\subsection{\label{subsec:Entropy-growth-wrt-Dng}Entropy growth far from the
identity}

We can now prove our entropy growth results for $\theta\conv\mu$
when $\theta$ is far from the identity, but still of bounded diameter. 
\begin{thm}
\label{thm:entropy growth} Let $\mu$ be a self-affine measure in
$\mathbb{R}^{2}$ defined by a non-conformal, totally irreducible
system $\Phi$ and satisfying $\dim\mu<2$. Then for every $\varepsilon>0$
and $R>1$ there exists $\delta=\delta(\mu,\varepsilon,R)>0$ such
that for $n\ge N(\mu,\varepsilon,R)$, the following holds. 

Let $\theta\in\mathcal{P}(A_{2,2})$ be supported in an $R$-neighborhood
of a contraction $g\in A_{2,2}$. Then,
\[
\frac{1}{n}H(\theta,\mathcal{D}_{n})>\varepsilon\qquad\implies\qquad\frac{1}{n}H(\theta\conv\mu,\mathcal{D}_{n}^{g})>\dim\mu+\delta\;.
\]
Furthermore, if we also assume exponential separation and $\dim\mu\geq1$,
then for any $M\ge1$, writing $a_{i}=\frac{1}{n}\log\alpha_{i}(A_{g})$
for $i=1,2$ and assuming $a_{1}-a_{2}>R^{-1}$,
\[
\frac{1}{Mn}H(\theta,\mathcal{D}_{Mn})>\varepsilon\qquad\implies\qquad\frac{1}{Mn}H(\theta\conv\mu,\mathcal{D}_{(M+|a_{2}|)n}|\mathcal{D}_{|a_{2}|n})>\dim\mu+\delta\;.
\]
\end{thm}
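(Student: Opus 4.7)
The plan is to reduce both statements to Theorem~\ref{thm:entropy growth-near-identity} by exploiting the left-invariance of $d$ on $A_{2,2}$ and the equivariance of the action-convolution, together with, for the second part, the interpolation identity of Proposition~\ref{prop:interpolation}. Thus the present theorem is essentially a formal consequence of the analytic work already carried out in the preceding two sections.

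For part one, I set $\theta' = g^{-1}\theta$ (left multiplication in $A_{2,2}$); by left-invariance of $d$, the measure $\theta'$ is supported within distance $R$ of the identity, and $\theta\conv\mu = g(\theta'\conv\mu)$. Writing $A_g = VDU$ for the singular value decomposition, one has $A_g^{-1}VD = U^{-1}$, so $g^{-1}\mathcal{D}_n^g$ agrees, up to the fixed translation by $-A_g^{-1}b_g$, with $U^{-1}\mathcal{D}_n$, which is $O(1)$-commensurable with $\mathcal{D}_n$ because $U^{-1}$ is orthogonal. Consequently
\[
H(\theta\conv\mu,\mathcal{D}_n^g) = H(\theta'\conv\mu,\mathcal{D}_n) + O(1).
\]
On the $A_{2,2}$ side, left multiplication by $g^{-1}$ is an isometry of $(A_{2,2},d)$ and the partition $\mathcal{D}_n^{A_{2,2}}$ has bounded geometry in the sense of property~(\ref{item:ball}) of Section~\ref{sub:dyadic-partitions}; hence $\mathcal{D}_n^{A_{2,2}}$ and $g^{-1}\mathcal{D}_n^{A_{2,2}}$ are $O(1)$-commensurable, which gives $\tfrac{1}{n}H(\theta',\mathcal{D}_n) > \varepsilon - O(1/n)$. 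Applying Theorem~\ref{thm:entropy growth-near-identity} to $\theta'$ yields a $\delta' = \delta'(\mu,\varepsilon,R)>0$ such that $\tfrac{1}{n}H(\theta'\conv\mu,\mathcal{D}_n) > \tfrac{1}{n}H(\mu,\mathcal{D}_n) + \delta'$ for all sufficiently large $n$. Since $\tfrac{1}{n}H(\mu,\mathcal{D}_n)\to\dim\mu$ by Lemma~\ref{lem:entropy-dimension}, the first claim follows with $\delta = \delta'/2$.

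For part two, Proposition~\ref{prop:interpolation} (applied with $c_i = |a_i|$, so that its gap hypothesis $c_2-c_1>R^{-1}$ matches $a_1 - a_2 > R^{-1}$, and using the new assumptions $\dim\mu\geq1$ and exponential separation) gives
\[
\tfrac{1}{Mn}H\bigl(\theta\conv\mu,\mathcal{D}_{(M+|a_2|)n}\mid\mathcal{D}_{|a_2|n}\bigr) = \tfrac{1}{Mn}H(\theta\conv\mu,\mathcal{D}_{Mn}^g) + o_R(1),
\]
so it suffices to lower-bound the right-hand term. Repeating the equivariance argument above at scale $Mn$ in place of $n$, and applying Theorem~\ref{thm:entropy growth-near-identity} at that same scale (the parameters $\mu,\varepsilon,R$ are unchanged, so the resulting $\delta'$ and threshold $N$ do not depend on $M$; since $M\geq1$, the requirement $Mn\geq N$ reduces to $n\geq N$), one obtains $\tfrac{1}{Mn}H(\theta\conv\mu,\mathcal{D}_{Mn}^g) > \dim\mu + \delta'/2$ for $n$ large. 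Taking $n$ also large enough that the $o_R(1)$ error from the interpolation identity is below $\delta'/4$ completes the proof.

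The main obstacle on the road to this theorem has already been handled in Section~\ref{subsec:Interpolating-between-non-conformal-partitions}: the nontrivial ingredient is the interpolation identity of Proposition~\ref{prop:interpolation}, which itself rests on Theorem~\ref{thm:BHR-projections} and the fine entropy estimates of Section~\ref{sec:Entropy}. This is precisely why the stronger hypotheses $\dim\mu\geq1$ and exponential separation are needed for part two and not for part one. Given these inputs, the deduction here is purely a matter of transporting the near-identity entropy growth of Theorem~\ref{thm:entropy growth-near-identity} to an arbitrary contraction $g$ via the group-equivariance of convolution.
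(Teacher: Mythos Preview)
Your proof is correct and follows essentially the same route as the paper's: reduce part one to Theorem~\ref{thm:entropy growth-near-identity} by left-translating $\theta$ to a neighborhood of the identity and using that $\mathcal{D}_n^g$ transforms to an orthogonal image of $\mathcal{D}_n$, then deduce part two from part one via Proposition~\ref{prop:interpolation}. The only cosmetic difference is that the paper translates by $(VD)^{-1}$ (landing near the orthogonal $U$) while you translate by $g^{-1}$ (landing near the identity); both work since the orthogonal group is bounded.
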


\begin{proof}
The argument is identical to the previous proposition except that
instead of concavity we apply Theorem \ref{thm:entropy growth-near-identity}.
In detail, let $g(x)=Ax+b$ and $A=VDU$ be the singular value decomposition.
Let $B=VD$ so that $\mathcal{D}_{n}^{g}=B\mathcal{D}_{n}$. We claim
that the statement follows from Theorem \ref{thm:entropy growth-near-identity}
applied to $\mu$ and the measure $\theta'$ obtained by translating
$B^{-1}\theta$ by $-B^{-1}b$. Indeed, by left-invariance of $d$,
\[
|H(\theta',\mathcal{D}_{n})-H(\theta,\mathcal{D}_{n})|=O(1)\;.
\]
Also, again by left-invariance, $\theta'$ is supported on an $R$-neighborhood
of $B^{-1}g-B^{-1}b=U$, and since $U$ lies in the compact (and hence
bounded) group of orthogonal matrices, $\theta'$ is supported in
a $(R+c)$-neighborhood of the identity in $A_{2,2}$, where the constant
$c$ is the diameter of the orthogonal group of $\mathbb{R}^{2}$.
By Theorem \ref{thm:entropy growth-near-identity} we obtain that
for some $\delta>0$, for $n$ large enough,
\[
\frac{1}{n}H(\theta'\conv\mu,\mathcal{D}_{n})\geq\dim\mu+\delta\;.
\]
Finally, we have
\[
H(\theta\conv\mu,\mathcal{D}_{n}^{g})=H(\theta\conv\mu,B\mathcal{D}_{n})=H(B^{-1}(\theta\conv\mu),\mathcal{D}_{n})=H(\theta'\conv\mu,\mathcal{D}_{n})+O(1)\:,
\]
which completes our proof of the first part. The second part follows
from Proposition \ref{prop:interpolation} and from the first part
of the present theorem (using $Mn$ in place of $n$).
\end{proof}
Finally, we have the softer fact that entropy can never substantially
decrease under convolution (if measured at appropriate scales). 
\begin{prop}
\label{prop:entropy-non-decrease}Let $\mu$ be a self-affine measure
in $\mathbb{R}^{2}$ defined by a non-conformal, totally irreducible
system $\Phi$. For every $R>1$, if $n>N(R)$, the following holds.

Let $\theta\in\mathcal{P}(A_{2,2})$ be supported in an $R$-neighborhood
of a contraction $g\in A_{2,2}$. Then, as $n\rightarrow\infty$,
\[
\frac{1}{n}H(\theta\conv\mu,\mathcal{D}_{n}^{g})\geq\dim\mu-o_{R}(1)\;.
\]
Furthermore, if we also assume exponential separation and $\dim\mu\geq1$,
then for any $M\geq1$, writing $a_{i}=\frac{1}{n}\log\alpha_{i}(A_{g})$
for $i=1,2$ and assuming $a_{1}-a_{2}>R^{-1}$, as $n\rightarrow\infty$,
\[
\frac{1}{Mn}H(\theta\conv\mu,\mathcal{D}_{(M+|a_{2}|)n}|\mathcal{D}_{|a_{2}|n})\geq\dim\mu-o_{R}(1)\;.
\]
\end{prop}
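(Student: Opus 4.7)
The plan is to mirror the normalization trick used in the proof of Theorem \ref{thm:entropy growth}, reducing both claims to a convolution against a measure supported near the identity in $A_{2,2}$, and then invoking Proposition \ref{prop:entropy-non-decrease-near-identity} in place of Theorem \ref{thm:entropy growth-near-identity}. The second claim additionally combines this with the interpolation identity of Proposition \ref{prop:interpolation}.

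For the first claim (which does not require exponential separation or $\dim\mu\ge1$), write $A_g=VDU$ for a singular value decomposition of $A_g$ and set $B=VD$, so that $\mathcal{D}_n^g=B\mathcal{D}_n$. Define
\[
\theta'=T_{-B^{-1}b_g}(B^{-1}\theta)\in\mathcal{P}(A_{2,2})\;.
\]
Because $d$ is left-invariant, $B^{-1}\theta$ is supported within $d$-distance $R$ of $B^{-1}g$, and the identity $B^{-1}g(x)=Ux+B^{-1}b_g$ shows that after subtracting the translation $T_{B^{-1}b_g}$ the measure $\theta'$ is supported within $d$-distance $R$ of the orthogonal map $U$. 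Since the orthogonal group has bounded $d$-diameter, $\theta'$ is supported in an $O_R(1)$-neighborhood of the identity.

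The measures $\theta\conv\mu$ and $\theta'\conv\mu$ are related by the invertible linear coordinate change $B^{-1}$ together with a translation, so by the definition of $\mathcal{D}_n^g$ and (\ref{eq:entropy-under-translation}),
\[
H(\theta\conv\mu,\mathcal{D}_n^g)=H(B^{-1}(\theta\conv\mu),\mathcal{D}_n)=H(\theta'\conv\mu,\mathcal{D}_n)+O(1)\;.
\]
Applying Proposition \ref{prop:entropy-non-decrease-near-identity} to $\theta'$ and to (a translate of) $\mu$ gives $H(\theta'\conv\mu,\mathcal{D}_n)\ge H(\mu,\mathcal{D}_n)+O_R(1)$. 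Dividing by $n$ and using $\frac{1}{n}H(\mu,\mathcal{D}_n)\to\dim\mu$ yields the first claim.

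For the second claim, assume exponential separation and $\dim\mu\ge1$. In the notation of Proposition \ref{prop:interpolation} with $c_i=|a_i|$, the hypothesis $a_1-a_2>R^{-1}$ is exactly $c_2-c_1>R^{-1}$, so that proposition yields
\[
H(\theta\conv\mu,\mathcal{D}_{(M+|a_2|)n}\mid\mathcal{D}_{|a_2|n})=H(\theta\conv\mu,\mathcal{D}_{Mn}^g)+o_R(n)\;.
\]
Applying the first claim at scale $Mn$ bounds the right-hand side below by $Mn\cdot\dim\mu-o_R(n)$, and dividing by $Mn$ completes the proof. The only real obstacle is the bookkeeping required to verify the normalization to a near-identity measure; this is handled by recalling that $\supp\mu$ is compact, so that $\mu$ itself can be translated to sit near the origin at a cost absorbed into the $R$-dependence.
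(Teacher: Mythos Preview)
Your proof is correct and follows essentially the same approach as the paper: normalize $\theta$ to a near-identity measure, invoke Proposition~\ref{prop:entropy-non-decrease-near-identity}, and then use Proposition~\ref{prop:interpolation} for the second part. The only cosmetic difference is that the paper normalizes via $g^{-1}\theta$ directly (which lands near the identity since $d$ is left-invariant), whereas you mimic the proof of Theorem~\ref{thm:entropy growth} and use $T_{-B^{-1}b_g}(B^{-1}\theta)$; these differ only by the orthogonal map $U$ and a translation, so the entropy computations agree up to $O(1)$.
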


\begin{proof}
We observe $g^{-1}\theta$ is supported on a an $R$-neighborhood
of the identity and apply Proposition \ref{prop:entropy-non-decrease-near-identity}
to get,
\begin{align*}
\frac{1}{n}H(\theta\conv\mu,\mathcal{D}_{n}^{g}) & =\frac{1}{n}H(g^{-1}\theta\conv\mu,\mathcal{D}_{n})+O(\frac{1}{n})\\
 & \geq\frac{1}{n}H(\mu,\mathcal{D}_{n})+O_{R}(\frac{1}{n})\\
 & =\dim\mu+o_{R}(1)\;.
\end{align*}
The second statement is immediate from Proposition \ref{prop:interpolation}.
\end{proof}

\section{\label{sec:decomposition-of-p*n}Surplus entropy of $p^{*n}$ at
small scales}

In this section we shall assume that $\Phi$ is non-conformal, totally
irreducible and satisfies exponential separation. We also assume that
$\dim\mu<2$.

As in the introduction, we identify the probability vector $p=(p_{i})_{i\in\Lambda}$
with the measure $\sum_{i\in\Lambda}p_{i}\cdot\delta_{\varphi_{i}}\in\mathcal{P}(A_{2,2})$
and write $p^{*n}$ for the $n$-fold self-convolution of $p$ in
$A_{2,2}$. 

Our goal is to show that the level-$0$ components of $p^{*n}\in\mathcal{P}(A_{2,2})$
has substantial entropy at small scales, assuming $p^{*n}$ has non-negligible
entropy when conditioned on the fibers of the symbolic coding map
$\Pi$.

\subsection{\label{subsec:Distances-in-affine-group}Distances in the affine
group}

Write $G=Gl_{3}(\mathbb{R})$. Recall that $d$ is a left-invariant
metric on $A_{2,2}$. Identifying $A_{2,2}$ in the usual way as a
subgroup of $G$, we may assume that $d$ is the restriction to $A_{2,2}$
of a left-invariant metric on $G$, also denoted by $d$, which is
derived from a Riemannian metric.

Given $\beta_{1},\beta_{2},\beta_{3}\in\mathbb{R}\setminus\{0\}$,
write $\diag(\beta_{1},\beta_{2},\beta_{3})\in G$ for the diagonal
matrix with entries $\beta_{1},\beta_{2},\beta_{3}$ on the diagonal.
Given $E\in G$ write $\Vert E\Vert$ for the operator norm of $E$.
\begin{lem}
\label{lem:dist of diag from 1}Let $\beta_{1},\beta_{2},\beta_{3}>0$
and set $D=diag(\beta_{1},\beta_{2},\beta_{3})$. Then,
\[
d(D,1_{G})=O(1+\max\left\{ \log\Vert D\Vert,\log\Vert D^{-1}\Vert\right\} )\:.
\]
\end{lem}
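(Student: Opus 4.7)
The plan is to bound $d(D,1_G)$ by exhibiting an explicit curve from $1_G$ to $D$ in $G$ and estimating its Riemannian length, then relating the length to the operator norms of $D$ and $D^{-1}$.

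First I would set $M = \mathrm{diag}(\log\beta_1,\log\beta_2,\log\beta_3) \in \mathfrak{gl}_3(\mathbb{R})$ and consider the one-parameter path
\[
\gamma:[0,1]\to G,\qquad \gamma(t)=\exp(tM)=\mathrm{diag}(\beta_1^{\,t},\beta_2^{\,t},\beta_3^{\,t}),
\]
which satisfies $\gamma(0)=1_G$ and $\gamma(1)=D$. Its derivative is $\gamma'(t)=\gamma(t)M$, so by left-invariance of the Riemannian metric, if $\|\cdot\|_e$ denotes the norm on $T_{1_G}G$, the length of $\gamma$ is
\[
\mathrm{length}(\gamma)=\int_0^1 \|M\|_e\,dt = \|M\|_e.
\]
Since $\|\cdot\|_e$ is equivalent on the finite-dimensional space $\mathfrak{gl}_3(\mathbb{R})$ to the operator norm, we get $\|M\|_e = O(\|M\|)=O(\max_i|\log\beta_i|)$.

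Next I would observe that $\|D\|=\max_i\beta_i$ and $\|D^{-1}\|=\max_i\beta_i^{-1}$, so
\[
\max_i|\log\beta_i| = \max\bigl\{\log\|D\|,\,\log\|D^{-1}\|\bigr\}.
\]
Combining this with the length bound and the fact that $d(D,1_G)\le \mathrm{length}(\gamma)$ yields the desired estimate, with the additive $O(1)$ simply absorbing the constant from the equivalence of norms (and covering the regime where $\max\{\log\|D\|,\log\|D^{-1}\|\}$ is negative or very small).

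There is no real obstacle here; the only point that needs a word of care is that $d$ is a metric on $G\supseteq A_{2,2}$, not just on $A_{2,2}$, so it is legitimate to use a path $\gamma(t)$ which is diagonal (hence in $G$) even when it leaves the affine subgroup—but the statement of the lemma is $d(D,1_G)$ inside $G$, so this is fine. The argument is a direct Riemannian length computation.
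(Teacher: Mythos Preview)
Your proof is correct and follows essentially the same idea as the paper's: both connect $1_G$ to $D$ along the one-parameter diagonal subgroup $t\mapsto\mathrm{diag}(\beta_i^{\,t})$ and use left-invariance to bound the length. The paper discretizes this path into $\lceil\max_i|\log\beta_i|\rceil$ steps and applies the triangle inequality, whereas you integrate the Riemannian speed directly; your version is slightly cleaner but relies explicitly on the Riemannian origin of $d$, which the paper's setup does provide.
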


\begin{proof}
Clearly we can assume that $\beta_{i}\ne1$ for some $1\le i\le3$.
Write
\[
M=\left\lceil \max\left\{ \left|\log\beta_{i}\right|\::\:1\le i\le3\right\} \right\rceil \;,
\]
and set
\[
E=diag(\beta_{1}^{1/M},\beta_{2}^{1/M},\beta_{3}^{1/M})\:.
\]
 Since $\beta_{i}^{1/M}\in[\frac{1}{2},2]$ for $1\le i\le3$, it
holds that $d(E,1_{G})=O(1)$. Hence,
\[
d(D,1_{G})=d(E^{M},1_{G})\le\sum_{j=1}^{M}d(E^{j},E^{j-1})=M\cdot d(E,1_{G})=O(M)\:.
\]
Now since
\[
M\le1+\max\left\{ \log\Vert D\Vert,\log\Vert D^{-1}\Vert\right\} \;,
\]
the lemma follows.
\end{proof}
\begin{lem}
\label{lem:dist from 1_G}For any $E\in G$,
\[
d(E,1_{G})=O(1+\max\left\{ \log\Vert E\Vert,\log\Vert E^{-1}\Vert\right\} )\:.
\]
\end{lem}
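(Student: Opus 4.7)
My plan is to reduce the general case to the diagonal case already handled in Lemma~\ref{lem:dist of diag from 1}, using the singular value decomposition and the compactness of the orthogonal group.

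First, I would write $E = UDV$ as a singular value decomposition, where $U,V\in O(3)$ and $D=\diag(\sigma_1,\sigma_2,\sigma_3)$ with $\sigma_1\geq\sigma_2\geq\sigma_3>0$ the singular values of $E$. Since $O(3)$ is a compact subgroup of $G$, there is a constant $c=\sup_{W\in O(3)} d(W,1_G)<\infty$. Then by the triangle inequality and the left-invariance of $d$,
\[
d(E,1_G)\;\leq\;d(UDV,UD)+d(UD,U)+d(U,1_G)\;=\;d(V,1_G)+d(D,1_G)+d(U,1_G)\;\leq\;d(D,1_G)+2c.
\]

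Next, I would observe that because $U$ and $V$ are orthogonal, the operator norms satisfy $\|D\|=\|E\|$ and $\|D^{-1}\|=\|E^{-1}\|$. Thus Lemma~\ref{lem:dist of diag from 1} applied to $D$ yields
\[
d(D,1_G)=O\bigl(1+\max\{\log\|E\|,\log\|E^{-1}\|\}\bigr),
\]
and combining with the previous display gives the desired bound.

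The argument is essentially mechanical once the SVD is invoked; the only points requiring any care are that the singular value decomposition exists for every $E\in G$ (with $U,V\in O(3)$ and positive diagonal $D$), and that the compactness of $O(3)$ really does give a uniform bound $d(W,1_G)=O(1)$ for $W\in O(3)$, which is immediate since $O(3)$ is compact and $d$ is continuous. No step should present a genuine obstacle.
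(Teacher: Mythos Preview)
Your proof is correct and is essentially the same as the paper's: both use the singular value decomposition $E=UDV$, bound $d(U,1_G)$ and $d(V,1_G)$ by $O(1)$ via compactness of the orthogonal group, use left-invariance and the triangle inequality to reduce to $d(D,1_G)$, and then invoke Lemma~\ref{lem:dist of diag from 1} together with $\|D\|=\|E\|$, $\|D^{-1}\|=\|E^{-1}\|$.
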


\begin{proof}
Let $E=VDU$ be a singular value decomposition of $E$. Since $V,U$
are orthogonal,
\[
d(V,1_{G}),d(U,1_{G})=O(1)\:.
\]
Therefore,
\begin{multline*}
d(E,1_{G})\le d(VDU,V)+d(V,1_{G})=d(DU,1_{G})+O(1)\\
\le d(DU,D)+d(D,1_{G})+O(1)=d(D,1_{G})+O(1)\:.
\end{multline*}
Now since $\Vert E\Vert=\Vert D\Vert$ and $\Vert E^{-1}\Vert=\Vert D^{-1}\Vert$,
the lemma follows by Lemma \ref{lem:dist of diag from 1}.
\end{proof}
Recall that for $W_{1},W_{2}\in\PR$ we write $d_{\PR}(W_{1},W_{2})$
for the operator norm $\left\Vert \pi_{W_{1}}-\pi_{W_{2}}\right\Vert _{op}$
of the difference between the orthogonal projections onto $W_{1}$
and $W_{2}$. Given $A\in Gl_{2}(\mathbb{R})$, with $\alpha_{1}(A)>\alpha_{2}(A)$
and singular value decomposition $A=VDU$, recall that we write $L(A)=\overline{Ve_{1}}\in\PR$.
\begin{lem}
\label{lem:est of d}Let $g_{1},g_{2}\in A_{2,2}$ satisfy $g_{i}(x)=B_{i}x+b_{i}$
and $\alpha_{1}(B_{i})>\alpha_{2}(B_{i})$ for $i=1,2$. Assume that,

\begin{align}
d_{\PR}(L(B_{1}),L(B_{2})) & =O\left(\frac{\alpha_{2}(B_{1})}{\alpha_{1}(B_{1})}\right)\label{enu:est of d a1}\\
\alpha_{i}(B_{2}) & =\Theta\left(\alpha_{i}(B_{1})\right)\text{ for }i=1,2\label{enu:est of d a2}\\
|b_{1}-b_{2}| & =O(\alpha_{1}(B_{1}))\label{enu:est of d a3}\\
|\pi_{L(B_{1})^{\perp}}(b_{1}-b_{2})| & =O(\alpha_{2}(B_{1}))\;.\label{enu:est of d a4}
\end{align}

Then $d(g_{1},g_{2})=O(1)$.
\end{lem}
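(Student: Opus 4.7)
The plan is to use the left-invariance of $d$ to reduce the bound $d(g_1,g_2)=O(1)$ to showing that the composition $g_1^{-1}g_2$ lies within bounded distance of the identity in $G$. By Lemma \ref{lem:dist from 1_G}, this will in turn follow from operator-norm bounds $\|g_1^{-1}g_2\|=O(1)$ and $\|(g_1^{-1}g_2)^{-1}\|=\|g_2^{-1}g_1\|=O(1)$ when $g_1^{-1}g_2$ is viewed as an element of $G=GL_3(\mathbb{R})$ via the standard embedding $Ax+b\mapsto\bigl(\begin{smallmatrix}A & b\\ 0 & 1\end{smallmatrix}\bigr)$. Under this embedding, the $3\times 3$ operator norm is comparable to $\max(\|A_\varphi\|,\,|b_\varphi|,\,1)$, so it suffices to bound the linear and translation parts of both $g_1^{-1}g_2$ and $g_2^{-1}g_1$ separately.

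A direct computation gives $g_1^{-1}g_2(x)=B_1^{-1}B_2\,x+B_1^{-1}(b_2-b_1)$. I will handle the linear part first by writing $B_1^{-1}B_2=I+B_1^{-1}(B_2-B_1)$ and using the singular value decomposition $B_1=V_1D_1U_1$, so that $B_1^{-1}=U_1^*D_1^{-1}V_1^*$. Since $U_1^*$ is orthogonal, $\|B_1^{-1}(B_2-B_1)\|$ equals the norm of the matrix whose rows are $\alpha_1(B_1)^{-1}\pi_{L(B_1)}(B_2-B_1)$ and $\alpha_2(B_1)^{-1}\pi_{L(B_1)^\perp}(B_2-B_1)$. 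The first row is bounded by $\alpha_1(B_1)^{-1}(\|B_1\|+\|B_2\|)=O(1)$ using hypothesis (\ref{enu:est of d a2}). The second row is the crux of the argument: I will show $\|\pi_{L(B_1)^\perp}(B_2-B_1)\|=O(\alpha_2(B_1))$. For $B_1$ this is immediate from its SVD, since $\|\pi_{L(B_1)^\perp}B_1\|=\alpha_2(B_1)$. For $B_2$, writing $B_2=V_2D_2U_2$, the operator $\pi_{L(B_1)^\perp}V_2D_2$ sends $e_1$ to a vector of norm $\alpha_1(B_2)\,|\sin\angle(L(B_1),L(B_2))|$, which by (\ref{eq:compar-dPR-and-sine-of-angle}), hypothesis (\ref{enu:est of d a1}), and hypothesis (\ref{enu:est of d a2}) is $O(\alpha_1(B_1)\cdot\alpha_2(B_1)/\alpha_1(B_1))=O(\alpha_2(B_1))$; and it sends $e_2$ to a vector of norm at most $\alpha_2(B_2)=O(\alpha_2(B_1))$. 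Hence $\|\pi_{L(B_1)^\perp}B_2\|=O(\alpha_2(B_1))$, and the triangle inequality finishes the linear estimate.

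For the translation part, I use the same SVD-coordinate decomposition of $B_1^{-1}$ to obtain
\[
|B_1^{-1}(b_2-b_1)|^2=\alpha_1(B_1)^{-2}|\pi_{L(B_1)}(b_2-b_1)|^2+\alpha_2(B_1)^{-2}|\pi_{L(B_1)^\perp}(b_2-b_1)|^2,
\]
and each term is $O(1)$: the first by (\ref{enu:est of d a3}) and $|\pi_{L(B_1)}(b_2-b_1)|\leq|b_2-b_1|$, the second by (\ref{enu:est of d a4}). To close the argument I need the symmetric bounds $\|B_2^{-1}B_1\|=O(1)$ and $|B_2^{-1}(b_1-b_2)|=O(1)$. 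For these I verify that hypotheses (\ref{enu:est of d a1})--(\ref{enu:est of d a4}) hold with the roles of $B_1$ and $B_2$ exchanged: (\ref{enu:est of d a2}) is manifestly symmetric; for (\ref{enu:est of d a3}) and (\ref{enu:est of d a1}), the bounds differ only by a constant factor coming from (\ref{enu:est of d a2}); and for (\ref{enu:est of d a4}), writing
\[
\pi_{L(B_2)^\perp}(b_1-b_2)=\pi_{L(B_1)^\perp}(b_1-b_2)+(\pi_{L(B_2)^\perp}-\pi_{L(B_1)^\perp})(b_1-b_2)
\]
and applying (\ref{enu:est of d a1}), (\ref{enu:est of d a3}), and (\ref{enu:est of d a4}) gives $|\pi_{L(B_2)^\perp}(b_1-b_2)|=O(\alpha_2(B_1))=O(\alpha_2(B_2))$. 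With all four symmetric hypotheses in hand, the same SVD argument applied to $B_2$ yields the required bounds on $g_2^{-1}g_1$, and Lemma \ref{lem:dist from 1_G} concludes the proof. The main technical obstacle will be the bound $\|\pi_{L(B_1)^\perp}B_2\|=O(\alpha_2(B_1))$, which is precisely where the near-alignment hypothesis (\ref{enu:est of d a1}) is used to overcome the a priori expansion rate $\alpha_1(B_2)$.
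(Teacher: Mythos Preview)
Your proposal is correct and follows essentially the same approach as the paper: reduce via left-invariance to bounding $\|E\|$ and $\|E^{-1}\|$ in $GL_3(\mathbb R)$, then use the SVD of $B_1$ (resp.\ $B_2$) to split everything into components along $L(B_i)$ and $L(B_i)^\perp$, with hypothesis (\ref{enu:est of d a1}) controlling the dangerous cross-term. The only cosmetic differences are that the paper works with $g_2^{-1}g_1$ and bounds $|B_2^{-1}B_1U_1^{-1}e_i|$ directly on a basis rather than via your $I+B_1^{-1}(B_2-B_1)$ decomposition, and it dismisses the inverse bound as ``analogous'' rather than explicitly symmetrizing the hypotheses as you do.
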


\begin{proof}
Note that $d(g_{1},g_{2})=d(g_{2}^{-1}g_{1},1_{G})$ and
\[
g_{2}^{-1}g_{1}(x)=B_{2}^{-1}B_{1}x+B_{2}^{-1}(b_{1}-b_{2})\text{ for \ensuremath{x\in\mathbb{R}^{2}}\;.}
\]
Set
\[
E=\left(\begin{array}{cc}
B_{2}^{-1}B_{1} & B_{2}^{-1}(b_{1}-b_{2})\\
0 & 1
\end{array}\right)\in G\;,
\]
then by Lemma \ref{lem:dist from 1_G} it suffices to show that $\Vert E\Vert,\Vert E^{-1}\Vert=O(1).$
We shall show that $\Vert E\Vert=O(1)$. In an analogues manner it
can be shown that $\Vert E^{-1}\Vert=O(1)$. Note that
\begin{equation}
\Vert E\Vert=O(1+\Vert B_{2}^{-1}B_{1}\Vert+|B_{2}^{-1}(b_{1}-b_{2})|)\:.\label{eq:bd on E}
\end{equation}

For $i=1,2$ let $V_{i}D_{i}U_{i}$ be a singular value decomposition
of $B_{i}$. It holds that,
\begin{multline*}
|B_{2}^{-1}(b_{1}-b_{2})|=\left|D_{2}^{-1}V_{2}^{-1}\left(\left\langle b_{1}-b_{2},V_{2}e_{1}\right\rangle V_{2}e_{1}+\left\langle b_{1}-b_{2},V_{2}e_{2}\right\rangle V_{2}e_{2}\right)\right|\\
\le\alpha_{1}(B_{2})^{-1}|b_{1}-b_{2}|+\alpha_{2}(B_{2})^{-1}\left|\left\langle b_{1}-b_{2},V_{2}e_{2}\right\rangle \right|\:.
\end{multline*}
By assumptions (\ref{enu:est of d a2}) and (\ref{enu:est of d a3})
of the lemma,
\[
\alpha_{1}(B_{2})^{-1}|b_{1}-b_{2}|=O(1)\:.
\]
Additionally,
\begin{multline*}
\left|\left\langle b_{1}-b_{2},V_{2}e_{2}\right\rangle \right|=|\pi_{L(B_{2})^{\perp}}(b_{1}-b_{2})|\\
\le d_{\PR}(L(B_{1})^{\perp},L(B_{2})^{\perp})\cdot|b_{1}-b_{2}|+|\pi_{L(B_{1})^{\perp}}(b_{1}-b_{2})|\:.
\end{multline*}
By this and assumptions (\ref{enu:est of d a1}) to (\ref{enu:est of d a4})
\[
\alpha_{2}(B_{2})^{-1}\left|\left\langle b_{1}-b_{2},V_{2}e_{2}\right\rangle \right|=O(1)\;,
\]
which shows that
\begin{equation}
|B_{2}^{-1}(b_{1}-b_{2})|=O(1)\:.\label{eq:bd on translation}
\end{equation}

For $i=1,2$,
\begin{eqnarray*}
|B_{2}^{-1}B_{1}U_{1}^{-1}e_{i}| & = & |D_{2}^{-1}V_{2}^{-1}V_{1}D_{1}e_{i}|=\alpha_{i}(B_{1})\cdot|D_{2}^{-1}V_{2}^{-1}V_{1}e_{i}|\\
 & = & \alpha_{i}(B_{1})\cdot\left|D_{2}^{-1}V_{2}^{-1}\left(\left\langle V_{1}e_{i},V_{2}e_{1}\right\rangle V_{2}e_{1}+\left\langle V_{1}e_{i},V_{2}e_{2}\right\rangle V_{2}e_{2}\right)\right|\\
 & \le & \frac{\alpha_{i}(B_{1})}{\alpha_{1}(B_{2})}\left|\left\langle V_{1}e_{i},V_{2}e_{1}\right\rangle \right|+\frac{\alpha_{i}(B_{1})}{\alpha_{2}(B_{2})}\left|\left\langle V_{1}e_{i},V_{2}e_{2}\right\rangle \right|\\
 & = & O(1)+\frac{\alpha_{i}(B_{1})}{\alpha_{2}(B_{2})}\left|\left\langle V_{1}e_{i},V_{2}e_{2}\right\rangle \right|\:.
\end{eqnarray*}
From this and assumption (\ref{enu:est of d a2}) we get $|B_{2}^{-1}B_{1}U_{1}^{-1}e_{2}|=O(1)$.
Additionally,
\[
\left|\left\langle V_{1}e_{1},V_{2}e_{2}\right\rangle \right|=\left|\pi_{L(B_{1})}(V_{2}e_{2})\right|\le d_{\PR}(L(B_{1}),L(B_{2}))+\left|\pi_{L(B_{2})}(V_{2}e_{2})\right|\:.
\]
From this, $\pi_{L(B_{2})}(V_{2}e_{2})=0$, and assumptions (\ref{enu:est of d a1})
and (\ref{enu:est of d a2}),
\[
\frac{\alpha_{1}(B_{1})}{\alpha_{2}(B_{2})}\left|\left\langle V_{1}e_{1},V_{2}e_{2}\right\rangle \right|=O(1)\:.
\]
It follows 
\[
|B_{2}^{-1}B_{1}U_{1}^{-1}e_{i}|=O(1)\text{ for \ensuremath{i=1,2}}\;,
\]
which shows that $\Vert B_{2}^{-1}B_{1}\Vert=O(1)$. From this, (\ref{eq:bd on translation})
and (\ref{eq:bd on E}) we get $\Vert E\Vert=O(1)$, which completes
the proof of the lemma.
\end{proof}

\subsection{\label{subsec:A-decomposition-of-p*n}Surplus entropy of components
of $p^{*n}$}

Recall that $\xi=p^{\mathbb{N}}\in\mathcal{P}(\Lambda^{\mathbb{N}})$
and $\Pi:\Lambda^{\mathbb{N}}\rightarrow\mathbb{R}^{2}$ is the coding
map associated with $\Phi$. 

Let $\{\xi_{\omega}\}_{\omega\in\Lambda^{\mathbb{N}}}\subset\mathcal{P}(\Lambda^{\mathbb{N}})$
be the disintegration of $\xi$ with respect to $\Pi^{-1}(\mathcal{B})$,
where $\mathcal{B}$ is the Borel $\sigma$-algebra of $\mathbb{R}^{2}$.
The function $\omega\rightarrow\xi_{\omega}$ is measurable and defined
$\xi$-a.e. We also write this as $\{\xi_{x}\}_{x\in X},$ since the
map $\omega\rightarrow\xi_{\omega}$ is measurable with respect to
$\Pi^{-1}\mathcal{B}$. This is defined $\mu$-a.e. since $\mu=\Pi\xi$.

Given $\nu\in\mathcal{P}(\Lambda^{\mathbb{N}})$ and $n\ge1$ write
\[
[\nu]_{n}=\sum_{w\in\Lambda^{n}}\nu[w]\cdot\delta_{\varphi_{w}}\in\mathcal{P}(A_{2,2})\:.
\]
\begin{lem}
\label{lem:first decomp of nu^*n}For every $n\ge1$,
\[
p^{*n}=\int[\xi_{\omega}]_{n}\:d\xi(\omega)\:.
\]
\end{lem}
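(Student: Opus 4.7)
The plan is to verify the identity by unwinding the definitions and invoking the defining property of the disintegration. First, I would note the elementary identity
\[
p^{*n}=\sum_{w\in\Lambda^{n}}p_{w}\cdot\delta_{\varphi_{w}}\in\mathcal{P}(A_{2,2}),
\]
which follows from $p=\sum_{i\in\Lambda}p_{i}\delta_{\varphi_{i}}$ together with the multiplicativity $\varphi_{i_{1}}\circ\cdots\circ\varphi_{i_{n}}=\varphi_{i_{1}\ldots i_{n}}$ and $p_{w}=p_{w_{1}}\cdots p_{w_{n}}$. Since the support of $p^{*n}$ is the countable set $\{\varphi_{w}:w\in\Lambda^{n}\}$, to check equality of two measures on $A_{2,2}$ supported there it suffices to compare their masses at each $\varphi_{w}$.

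Next, by definition
\[
[\xi_{\omega}]_{n}=\sum_{w\in\Lambda^{n}}\xi_{\omega}[w]\cdot\delta_{\varphi_{w}},
\]
so for any $w\in\Lambda^{n}$,
\[
\Bigl(\int[\xi_{\omega}]_{n}\,d\xi(\omega)\Bigr)(\{\varphi_{w}\})=\int\xi_{\omega}[w]\,d\xi(\omega).
\]
The key step is now to invoke the defining property of the disintegration $\{\xi_{\omega}\}_{\omega}$ with respect to the $\sigma$-algebra $\Pi^{-1}\mathcal{B}$: namely, that $\xi=\int\xi_{\omega}\,d\xi(\omega)$ as Borel measures on $\Lambda^{\mathbb{N}}$ (this holds for arbitrary Borel sets, not just for $\Pi^{-1}\mathcal{B}$-measurable ones). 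Applying this to the cylinder set $[w]$ gives
\[
\int\xi_{\omega}[w]\,d\xi(\omega)=\xi[w]=p_{w}.
\]
Combining these equalities yields
\[
\Bigl(\int[\xi_{\omega}]_{n}\,d\xi(\omega)\Bigr)(\{\varphi_{w}\})=p_{w}=p^{*n}(\{\varphi_{w}\})
\]
for every $w\in\Lambda^{n}$, and hence the two measures agree.

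There is essentially no obstacle here; the lemma is a purely formal consequence of the definition of disintegration. The only mild point requiring care is the measurability of $\omega\mapsto\xi_{\omega}[w]$ (which is standard, since $\omega\mapsto\xi_{\omega}$ is measurable and $[w]$ is a fixed Borel set), and the observation that one is allowed to evaluate the disintegration identity on the non-$\Pi^{-1}\mathcal{B}$-measurable set $[w]$.
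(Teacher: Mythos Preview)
Your proof is correct and essentially identical to the paper's: both write $p^{*n}=\sum_{w\in\Lambda^{n}}\xi[w]\cdot\delta_{\varphi_{w}}$, invoke the disintegration identity $\xi[w]=\int\xi_{\omega}[w]\,d\xi(\omega)$, and interchange sum and integral. The paper does the interchange directly rather than evaluating at points $\{\varphi_{w}\}$, which sidesteps the (harmless) issue of possible collisions $\varphi_{w}=\varphi_{w'}$ for distinct $w,w'\in\Lambda^{n}$.
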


\begin{proof}
It holds that
\begin{align*}
p^{*n} & =\sum_{w\in\Lambda^{n}}\xi[w]\cdot\delta_{\varphi_{w}}\\
 & =\sum_{w\in\Lambda^{n}}\int\xi_{\omega}[w]\:d\xi(\omega)\cdot\delta_{\varphi_{w}}\\
 & =\int\sum_{w\in\Lambda^{n}}\xi_{\omega}[w]\cdot\delta_{\varphi_{w}}\:d\xi(\omega)\\
 & =\int[\xi_{\omega}]_{n}\:d\xi(\omega)\;,
\end{align*}
which proves the lemma.
\end{proof}
Let $0>\chi_{1}>\chi_{2}>-\infty$ be the Lyapunov exponents corresponding
to $\sum_{i\in\Lambda}p_{i}\cdot\delta_{A_{i}}\in\mathcal{P}(Gl_{2}(\mathbb{R}))$
(see Theorem \ref{thm:Furstenberg-Oseledets} (\ref{enu:Lyapunov exponents})).
For $g\in A_{2,2}$ recall that $A_{g}\in Gl_{2}(\mathbb{R})$ and
$b_{g}\in\mathbb{R}^{2}$ are the linear and translation parts of
$g$ respectively. Also recall that $\mathcal{P}_{n}$ is the partition
of $\Lambda^{\mathbb{N}}$ into $n$-cylinders: $\mathcal{P}_{n}=\{[w]\subset\Lambda^{\mathbb{N}}\::\:w\in\Lambda^{n}\}$. 
\begin{prop}
\label{prop:decomp of desc of slices}Let $\mu$ be a self-affine
measure defined by a non-conformal, totally irreducible and exponentially
separated system $\Phi$. Suppose that $\dim\mu<2$ and
\[
H(\xi,\mathcal{P}_{1}|\Pi^{-1}\mathcal{B})>0\:.
\]
Then there exist $\varepsilon>0$ and $M\ge1$ so that for $\xi$-a.e.
$\omega\in\Lambda^{\mathbb{N}}$ and $n>N(\omega)$,

\[
\frac{1}{Mn}H([\xi_{\omega}]_{n},\mathcal{D}_{Mn}|\mathcal{D}_{0})>\epsilon\;.
\]
Furthermore, writing $\widetilde{\theta}^{\omega,n}$ for a random
level-$0$ component of $[\xi_{\omega}]_{n}$, 
\begin{equation}
\liminf_{n\rightarrow\infty}\mathbb{P}\left(\frac{1}{Mn}H(\widetilde{\theta}^{\omega,n},\mathcal{D}_{Mn})>\varepsilon\right)>\varepsilon\;,\label{eq:fibers-cor-1}
\end{equation}
and there exists a sequence $\delta_{n}\searrow0$ (depending on $\omega$)
such that, for $i=1,2$,
\begin{equation}
\lim_{n\rightarrow\infty}\mathbb{P}\left(|\chi_{i}-\frac{1}{n}\log\alpha_{i}(A_{g})|<\delta_{n}\text{ for all }g\in\supp\widetilde{\theta}^{\omega,n}\right)=1\;.\label{eq:fibers-cor-2}
\end{equation}
\end{prop}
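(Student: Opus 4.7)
The plan is to establish the main inequality via three ingredients combined through the identity $H([\xi_\omega]_n,\mathcal{D}_{Mn}\mid\mathcal{D}_0) = H([\xi_\omega]_n,\mathcal{D}_{Mn}) - H([\xi_\omega]_n,\mathcal{D}_0)$. The first bounds $H([\xi_\omega]_n,\mathcal{D}_0) = O(1)$ by showing that $[\xi_\omega]_n$ is essentially supported on a single bounded $d$-ball; the second applies exponential separation to evaluate $H([\xi_\omega]_n,\mathcal{D}_{Mn}) = H(\xi_\omega,\mathcal{P}_n)$; the third derives linear growth of $H(\xi_\omega,\mathcal{P}_n)$ from $H_3>0$.

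For the coarse-scale bound, I would use Theorem \ref{thm:L-descends-to-mu} to note that $L$ is $\xi_\omega$-a.s.\ constant on the fiber $\Pi^{-1}(\Pi\omega)$. For $\xi_\omega$-typical $\omega'$, Theorem \ref{thm:Furstenberg-Oseledets} then yields $L(A_{\omega'_1\cdots\omega'_n})\to L(\omega)$ (at rate controlled by $\alpha_2/\alpha_1$) and $\alpha_i(A_{\omega'_1\cdots\omega'_n})=2^{(\chi_i+o(1))n}$. Because $\omega'\in\Pi^{-1}(\Pi\omega)$, the relation $\Pi(\omega)=\varphi_{\omega'_1\cdots\omega'_n}(\Pi(S^n\omega'))$ pins down the translation, giving $|b_{\varphi_{\omega'_1\cdots\omega'_n}}-\Pi(\omega)|=O(2^{\chi_1 n})$ with transverse component $O(2^{\chi_2 n})$. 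Lemma \ref{lem:est of d} then yields $d(\varphi_w,\varphi_{w'})=O(1)$ for any two words $w,w'$ drawn from a $\xi_\omega$-large event, hence $H([\xi_\omega]_n,\mathcal{D}_0) = O(1)$ for $n\ge N(\omega)$.

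For the fine scale, fix $M>-\log_2 c$ with $c$ the exponential separation constant, so that distinct atoms of $[\xi_\omega]_n$ are at $d$-distance $>c^n>2^{-Mn}$ and therefore lie in distinct cells of $\mathcal{D}_{Mn}$. Thus $H([\xi_\omega]_n,\mathcal{D}_{Mn})=H(\xi_\omega,\mathcal{P}_n)$, and combining with the coarse-scale bound yields $H([\xi_\omega]_n,\mathcal{D}_{Mn}\mid\mathcal{D}_0) = H(\xi_\omega,\mathcal{P}_n) - O(1)$. A relativized Shannon-McMillan-Breiman argument for the Bernoulli system $(\Lambda^\mathbb{N},\xi,S)$ and the disintegration $\{\xi_\omega\}$ produces convergence of $\frac{1}{n}H(\xi_\omega,\mathcal{P}_n)$ to some $h(\omega)\ge 0$ whose $\xi$-average equals $H_3>0$; thus $h>0$ $\xi$-a.s., giving the constants $\varepsilon,M$ required by the first conclusion. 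The remaining conclusions then follow: the identity $H([\xi_\omega]_n,\mathcal{D}_{Mn}\mid\mathcal{D}_0) = \mathbb{E}\,H(\widetilde\theta^{\omega,n},\mathcal{D}_{Mn})$ together with the a priori bound $\frac{1}{Mn}H(\widetilde\theta^{\omega,n},\mathcal{D}_{Mn}) = O(1)$ forces a positive-mass set of components to have normalized entropy at least $\varepsilon/2$ by a deficit argument, giving \eqref{eq:fibers-cor-1}; and each component sits in a $d$-ball of radius $O(1)$ by construction, so by Lemma \ref{lem:dist from 1_G} any two $g,g'$ in its support satisfy $\|A_gA_{g'}^{-1}\|,\|A_{g'}A_g^{-1}\|=O(1)$, hence their singular values differ only by a multiplicative $O(1)$; coupled with Oseledets for a reference element of the component identified in the first step, this yields \eqref{eq:fibers-cor-2}.

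The main obstacle will be the relativized Shannon-McMillan step: because $\Pi^{-1}\mathcal{B}$ is not $S$-invariant, the classical theorem does not apply verbatim. I would circumvent this by exploiting the backwards Markov structure of the fiber measures, namely the identity $\mathbb{P}(\omega_1=i\mid\Pi\omega=x) = p_i\,d(\varphi_i\mu)/d\mu\,(x)$ together with $\Pi(S\omega) = \varphi_{\omega_1}^{-1}(\Pi\omega)$, which exhibits $\{\xi_x\}$ as the path measure of a Markov chain driven by a factor of $\mu$; the ergodic theorem for this chain delivers pointwise convergence of $-\frac{1}{n}\log\xi_\omega[\omega_1\cdots\omega_n]$ to a function of $\mu$-integral $H_3$, from which the entropy-rate statement follows by a standard approximation argument.
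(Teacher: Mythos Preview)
Your overall strategy matches the paper's: split $H([\xi_\omega]_n,\mathcal{D}_{Mn}\mid\mathcal{D}_0)$ into the fine-scale entropy (handled by exponential separation) minus the coarse-scale entropy (controlled via Lemma~\ref{lem:est of d}, using that $L$ is $\xi_\omega$-a.s.\ constant on the fiber and Oseledets for the singular values and translations). The ``furthermore'' parts are also handled the same way.

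There is, however, one overclaim. You assert $H([\xi_\omega]_n,\mathcal{D}_0)=O(1)$, i.e.\ that the $\xi_\omega$-bulk of the atoms $\varphi_{\sigma|_n}$ lies in a \emph{single} $d$-ball of bounded radius. This is not what your argument gives. The Oseledets-type estimates you invoke are $\alpha_i(A_{\sigma|_n})=2^{(\chi_i+o_\sigma(1))n}$ and $d_{\PR}(L(A_{\sigma|_n}),L(\omega))=2^{(\chi_2-\chi_1+o_\sigma(1))n}$, with error terms that are not uniform in $\sigma$. For two different $\sigma,\sigma'$ in the fiber the ratios $\alpha_i(A_{\sigma|_n})/\alpha_i(A_{\sigma'|_n})$ and $d_{\PR}(L(A_{\sigma|_n}),L(A_{\sigma'|_n}))\big/\bigl(\alpha_2(A_{\sigma'|_n})/\alpha_1(A_{\sigma'|_n})\bigr)$ can be as large as $2^{o(n)}$, not $O(1)$, so the hypotheses of Lemma~\ref{lem:est of d} hold only up to $2^{o(n)}$ constants. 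Tracing through Lemmas~\ref{lem:dist from 1_G} and~\ref{lem:est of d}, this yields $d(\varphi_{\sigma|_n},\varphi_{\sigma'|_n})=o(n)$, hence only $H([\xi_\omega]_n,\mathcal{D}_0)=o(n)$. That weaker bound is precisely what the paper proves (phrased as: a set of $[\xi_\omega]_n$-mass tending to $1$ can be covered by $2^{o(n)}$ level-$0$ cells, obtained by sub-partitioning according to the values of $\alpha_i$ and $L$), and it is already sufficient, since after dividing by $Mn$ the coarse term vanishes.

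For the entropy rate $\frac{1}{n}H(\xi_\omega,\mathcal{P}_n)\to H_3$, the paper simply quotes Feng--Hu \cite[Theorem~2.2(iii)]{FH}, which gives that $\xi_\omega$ is exact-dimensional with a deterministic dimension equal to $H(\xi,\mathcal{P}_1\mid\Pi^{-1}\mathcal{B})$. Your relativized SMB/Markov-chain argument is a legitimate alternative, but note that you need the limit $h(\omega)$ to be $\xi$-a.s.\ constant (not merely to have positive mean) to conclude $h>0$ a.s.; this follows from ergodicity of the shift, which you should make explicit.
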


\begin{proof}
By $H(\xi,\mathcal{P}_{1}|\Pi^{-1}\mathcal{B})>0$ and \cite[Theorem 2.2, part (iii)]{FH},
there exists $\varepsilon'>0$ such that $\xi_{\omega}$ has exact
dimension $>\varepsilon'$ for $\xi$-a.e. $\omega\in\Lambda^{\mathbb{N}}$.
Hence
\begin{equation}
\lim_{n\rightarrow\infty}\:\frac{1}{n}H(\xi_{\omega},\mathcal{P}_{n})>\varepsilon'\text{ for }\xi\text{-a.e. }\omega\in\Lambda^{\mathbb{N}}\;.\label{eq:ent dim of slices-1}
\end{equation}
Since $\Phi$ satisfies exponential separation, there exists $M\ge1$
such that
\[
\mathcal{D}_{Mn}(\varphi_{w_{1}})\ne\mathcal{D}_{Mn}(\varphi_{w_{2}})\qquad\text{for every \ensuremath{n\ge1} and distinct \ensuremath{w_{1},w_{2}\in\Lambda^{n}}\;.}
\]
By this and (\ref{eq:ent dim of slices-1}),
\[
\underset{n\rightarrow\infty}{\lim}\:\frac{1}{n}H([\xi_{\omega}]_{n},\mathcal{D}_{Mn})>\varepsilon'\text{ for }\xi\text{-a.e. }\omega\in\Lambda^{n}\:.
\]
Setting $\varepsilon=\varepsilon'/M$ we have, equivalently,
\begin{equation}
\underset{n\rightarrow\infty}{\lim}\:\frac{1}{Mn}H([\xi_{\omega}]_{n},\mathcal{D}_{Mn})>\varepsilon\text{ for }\xi\text{-a.e. }\omega\in\Lambda^{n}\:.\label{eq:ent of desc slices-1}
\end{equation}
We wish show that this continues to hold when we condition on $\mathcal{D}_{0}$.
For this, it suffices to show that there are sets $E_{n}=E_{\omega,n}\subseteq A_{2,2}$
such that
\begin{enumerate}
\item $\lim_{n\rightarrow\infty}[\xi_{\omega}]_{n}(E_{\omega,n})=1$ for
$\xi$-a.e. $\omega$;
\item $E_{\omega,n}$ can be covered by $2^{o(n)}$ cells from $\mathcal{D}_{0}$. 
\end{enumerate}
This is sufficient because, by (1) and by concavity and almost convexity
of entropy, we have that the entropies
\[
\frac{1}{Mn}H([\xi_{\omega}]_{n},\mathcal{D}_{Mn}|\mathcal{D}_{0})\text{ and }\frac{1}{Mn}H(([\xi_{\omega}]_{n})_{E_{\omega,n}},\mathcal{D}_{Mn}|\mathcal{D}_{0})
\]
are asymptotic as $n\rightarrow\infty$; and by (2), the second of
these entropies is asymptotic to $\frac{1}{Mn}H(([\xi_{\omega}]_{n})_{E_{\omega,n}},\mathcal{D}_{Mn})$,
because (2) easily implies that
\[
\frac{1}{Mn}H(([\xi_{\omega}]_{n})_{E_{\omega,n}},\mathcal{D}_{0})=o(1)\:.
\]

For the remainder of the proof we fix a $\xi$-typical $\omega\in\Lambda^{\mathbb{N}}$,
which we will assume satisfies several full-measure conditions which
arise in the course of the proof.

By Theorem \ref{thm:Furstenberg-Oseledets} (and the identity $\xi=\int\xi_{\omega}d\xi(\omega)$),
for $i=1,2$,
\[
\alpha_{i}(A_{\sigma|_{n}})=2^{n(\chi_{i}+o_{\sigma}(1))}\text{ for \ensuremath{\xi_{\omega}}-a.e. \ensuremath{\sigma\in\Lambda^{\mathbb{N}}}.\;}
\]
Furthermore, as a by-product of the proof of the Oseledets theorem
(see. e.g. \cite{Ruelle1979}),
\[
d_{\PR}(L(A_{\sigma|_{n}}),L(\sigma))=2^{n(\chi_{2}-\chi_{1}+o_{\sigma}(1))}\text{ for \ensuremath{\xi}-a.e. \ensuremath{\sigma\in\Lambda^{\mathbb{N}}}.\;}
\]
Hence, by Proposition \ref{prop:const directions} and the assumption
$\dim\mu<2$,
\[
d_{\PR}(L(A_{\sigma|_{n}}),L(\omega))=2^{n(\chi_{2}-\chi_{1}+o_{\sigma}(1))}\text{ for \ensuremath{\xi_{\omega}}-a.e. \ensuremath{\sigma\in\Lambda^{\mathbb{N}}}\;.}
\]
It follows that there exists a sequence $\delta_{n}\searrow0$ (which
implicitly depends on $\omega$) such that the sets $F_{n}=F_{\omega,n}$
defined by
\begin{equation}
F_{n}=\left\{ \sigma\in\Lambda^{\mathbb{N}}\;:\;\begin{array}{c}
d_{\PR}(L(A_{\sigma|_{n}}),L(\omega))\leq2^{n(\chi_{2}-\chi_{1}+\delta_{n})},\\
2^{n(\chi_{i}-\delta_{n})}\leq\alpha_{i}(A_{\sigma|_{n}})\leq2^{n(\chi_{i}+\delta_{n})}\text{ for }i=1,2,\\
\text{ and }[\sigma|_{n}]\cap\Pi^{-1}(\Pi\omega)\ne\emptyset
\end{array}\right\} \label{eq:sing vals of eta in F_=00007Bomega,n=00007D}
\end{equation}
satisfy
\[
\xi_{\omega}(F_{n})\rightarrow1\;.
\]
Note that $F_{n}$ is a union of $n$-cylinders (since $\sigma\in F_{n}$
depends on $\sigma|_{n}$). We define $E_{n}=E_{\omega,n}\subseteq A_{2,2}$
by
\[
E_{n}=\{\varphi_{\sigma|_{n}}\;:\;\sigma\in F_{n}\}\:.
\]
Then, by definition of $[\xi_{\omega}]_{n}$, we have
\[
[\xi_{\omega}]_{n}(E_{n})=\xi_{\omega}(F_{n})\rightarrow1\;,
\]
giving the first required property of $E_{n}$. 

It remains to be shown that we can cover $E_{n}$ by $2^{o(n)}$ level-$0$
dyadic cells, or equivalently, $2^{o(n)}$ sets of diameter $O(1)$.
To begin, observe that by (\ref{eq:sing vals of eta in F_=00007Bomega,n=00007D}),
for each $n\ge1$ and $\sigma\in F_{n}$, 
\[
d_{\PR}(L(A_{\sigma|_{n}}),L(\omega))\le2^{3\delta_{n}n}\cdot\inf_{\zeta\in F_{n}}\:\frac{\alpha_{2}(A_{\zeta|_{n}})}{\alpha_{1}(A_{\zeta|_{n}})}
\]
and
\[
0<\alpha_{i}(A_{\sigma|_{n}})\le2^{2\delta_{n}n}\cdot\inf_{\zeta\in F_{n}}\:\alpha_{i}(A_{\zeta|_{n}})\text{ for }i=1,2\:.
\]
Hence we can partition $F_{n}$ into $2^{o(n)}$ Borel sets in such
a way that on each cell the values of $L(A_{\sigma|_{n}})$ lie in
an interval of diameter $\inf_{\zeta\in F_{n}}\alpha_{2}(A_{\zeta|_{n}})/\alpha_{1}(A_{\zeta|_{n}})$
and the values of $\alpha_{i}(A_{\sigma|n})$ lie in an interval of
length $\frac{1}{2}\inf_{\zeta\in F_{n}}\alpha_{i}(A_{\zeta|_{n}})$.
We obtain a finite Borel partition $\mathcal{F}_{n}=\mathcal{F}_{\omega,n}$
of $F_{n}$ such that $|\mathcal{F}_{n}|=2^{O(\delta_{n}n)}=2^{o(n)}$,
and such that
\begin{equation}
d_{\PR}(L(A_{\sigma|_{n}}),L(A_{\zeta|_{n}}))\le\frac{\alpha_{2}(A_{\zeta|_{n}})}{\alpha_{1}(A_{\zeta|_{n}})}\text{ for \ensuremath{F\in\mathcal{F}_{n}} and \ensuremath{\sigma,\zeta\in F}\;,}\label{eq:dist of directions-1}
\end{equation}
and for $i=1,2$,
\begin{equation}
|\alpha_{i}(A_{\sigma|_{n}})-\alpha_{i}(A_{\zeta|_{n}})|\le\frac{1}{2}\alpha_{i}(A_{\zeta|_{n}})\text{ for \ensuremath{F\in\mathcal{F}_{n},} and \ensuremath{\sigma,\zeta\in F}\;.}\label{eq:dist of sing vals-1}
\end{equation}
Every $F\in\mathcal{F}_{n}$ is defined by conditions on $n$-cylinders
so $F$ is again a union of $n$-cylinders, hence the collection $\mathcal{\mathcal{E}}_{n}$
of corresponding sets
\[
E=E(F)=\{\varphi_{\sigma|_{n}}\;:\;\sigma\in F\}
\]
is a partition of $E_{n}$, and has the same size as $\mathcal{F}_{n}$. 

Therefore, it is sufficient for us to show that $\diam E(F)=O(1)$
for all $F\in\mathcal{F}_{n}$. For this we will use Lemma \ref{lem:est of d}.
Equations (\ref{eq:dist of directions-1}) and (\ref{eq:dist of sing vals-1})
establish the first two hypotheses of that lemma, so it remains to
establish the last two.

Let $B\subset\mathbb{R}^{2}$ be a ball with center $0$ and $\supp(\mu)\subset B$.
Let $n\ge1$ and $\sigma\in\Lambda^{\mathbb{N}}$ with $[\sigma|_{n}]\cap\Pi^{-1}(\Pi\omega)\ne\emptyset$.
For $\zeta\in[\sigma|_{n}]\cap\Pi^{-1}(\Pi\omega)$ we have,
\[
\{\Pi(\omega)\}=\cap_{k\ge1}\varphi_{\zeta|_{k}}(B)\:.
\]
Hence $\varphi_{\zeta|_{n}}(0),\Pi(\omega)\in\varphi_{\zeta|_{n}}(B)$,
which gives $\varphi_{\sigma|_{n}}(0),\Pi(\omega)\in\varphi_{\sigma|_{n}}(B)$.
It follows that,
\begin{equation}
|\varphi_{\sigma|_{n}}(0)-\Pi(\omega)|=O(\alpha_{1}(A_{\sigma|_{n}}))\label{eq:dist of trans-1}
\end{equation}
and
\begin{equation}
|\pi_{L(A_{\sigma|_{n}})^{\perp}}(\varphi_{\sigma|_{n}}(0)-\Pi(\omega))|=O(\alpha_{2}(A_{\sigma|_{n}}))\:.\label{eq:dist of proj of trans-1}
\end{equation}

Let $n\ge1$, $F\in\mathcal{F}_{n}$ and $\sigma,\zeta\in F$. Set
$a_{\sigma}=\varphi_{\sigma|_{n}}(0)$, $a_{\zeta}=\varphi_{\zeta|_{n}}(0)$,
$\pi_{\sigma}=\pi_{L(A_{\sigma|_{n}})^{\perp}}$ and $\pi_{\zeta}=\pi_{L(A_{\zeta|_{n}})^{\perp}}$.
By (\ref{eq:dist of trans-1}) and (\ref{eq:dist of sing vals-1}),
\begin{align*}
|a_{\sigma}-a_{\zeta}| & \le|a_{\sigma}-\Pi(\omega)|+|\Pi(\omega)-a_{\zeta}|\\
 & \le O(\alpha_{1}(A_{\sigma|_{n}})+\alpha_{1}(A_{\zeta|_{n}}))\\
 & =O(\alpha_{1}(A_{\zeta|_{n}}))\:.
\end{align*}
This is the third hypothesis of Lemma \ref{lem:est of d}.

Finally, By (\ref{eq:dist of proj of trans-1}),
\begin{align*}
|\pi_{\zeta}(a_{\sigma}-a_{\zeta})| & \le|\pi_{\zeta}(a_{\sigma}-\Pi(\omega))|+|\pi_{\zeta}(\Pi(\omega)-a_{\zeta})|\\
 & =|\pi_{\zeta}(a_{\sigma}-\Pi(\omega))|+O(\alpha_{2}(A_{\zeta|_{n}}))\:.
\end{align*}
Since $d_{\PR}$ is defined via the operator norm,
\[
|\pi_{\zeta}(a_{\sigma}-\Pi(\omega))|\le|\pi_{\sigma}(a_{\sigma}-\Pi(\omega))|+d_{\PR}(L(A_{\sigma|_{n}})^{\perp},L(A_{\zeta|_{n}})^{\perp})\cdot|a_{\zeta}-\Pi(\omega)|\:.
\]
Hence by (\ref{eq:dist of proj of trans-1}), (\ref{eq:dist of trans-1}),
(\ref{eq:dist of directions-1}) and (\ref{eq:dist of sing vals-1}),
\[
|\pi_{\zeta}(a_{\sigma}-\Pi(\omega))|=O(\alpha_{2}(A_{\sigma|_{n}})+\frac{\alpha_{2}(A_{\zeta|_{n}})}{\alpha_{1}(A_{\zeta|_{n}})}\cdot\alpha_{1}(A_{\sigma|_{n}}))=O(\alpha_{2}(A_{\zeta|_{n}}))\;,
\]
which gives $|\pi_{\zeta}(a_{\sigma}-a_{\zeta})|=O(\alpha_{2}(A_{\zeta|_{n}}))$,
the last hypothesis of Lemma \ref{lem:est of d}. Thus we have shown
that $\varphi_{\sigma|_{n}}$ and $\varphi_{\zeta|_{n}}$ satisfy
all of the hypotheses of Lemma \ref{lem:est of d}, and hence that
$d(\varphi_{\sigma|_{n}},\varphi_{\zeta|_{n}})=O(1)$ for all $\sigma,\zeta\in F$.
This precisely means that $\diam E(F)=O(1)$, as needed.

To prove Equation (\ref{eq:fibers-cor-1}), we use the trivial identity
\[
\frac{1}{Mn}H([\xi_{\omega}]_{n},\mathcal{D}_{Mn}|\mathcal{D}_{0})=\frac{1}{Mn}\mathbb{E}\left(H(\widetilde{\theta}^{\omega,n},\mathcal{D}_{Mn})\right)
\]
(which is just a consequence of the definition of conditional entropy
and the component distribution), and the elementary fact that if random
variable $H\in[0,1]$ satisfies $\mathbb{E}(H)>\varepsilon$ then
$\mathbb{P}(H>\varepsilon/2)>\varepsilon/2$. So (\ref{eq:fibers-cor-1})
follows from what was already proved upon replacing $\varepsilon$
by $\varepsilon/C$ for some universal constant $C>1$.

As for (\ref{eq:fibers-cor-2}), from our construction it is clear
that
\begin{equation}
\lim_{n\rightarrow\infty}\mathbb{P}\left(|\chi_{i}-\frac{1}{n}\log\alpha_{i}(A_{g})|<\delta_{n}\text{ for some }g\in\supp\widetilde{\theta}^{\omega,n}\right)=1\;.\label{eq:some}
\end{equation}
If $|\chi_{i}-\frac{1}{n}\log\alpha_{i}(A_{g})|<\delta_{n}$ for some
$g\in\supp\widetilde{\theta}^{\omega,n}$ and if $h\in\supp\widetilde{\theta}^{\omega,n}$
then, since $d(g,h)\leq R$ for some global $R>0$ (because $\widetilde{\theta}^{\omega,n}$
is supported on a level-$0$ dyadic cell), we have $|\chi_{i}-\frac{1}{n}\log\alpha_{i}(A_{h})|<\delta_{n}+O_{R}(1/n)$
(because we can write $h=gg'$ with $d(g',\id)\leq R$, and so clearly
$\alpha_{i}(h)/\alpha_{i}(g)=\Theta_{R}(1)$ for $i=1,2$, from which
the claim follows). Thus in the Equation (\ref{eq:some}) we can replace
``some'' by ``all'' at the expense of replacing $\delta_{n}$
by $C\max\{\delta_{n},1/n\}$ for some universal constant $C>1$.
\end{proof}

\section{\label{sec:Proof-of-main-result}Proof of main results}

\subsection{Strongly irreducible case: Proof of Theorem \ref{thm:main}}

As explained in the introduction, our main result (Theorem \ref{thm:main})
follows from Theorem \ref{thm:main-reformulated}, which is the following
statement:
\begin{thm*}
\label{thm:dim 0 slices}If $\mu$ is a self-affine measure defined
by a non-conformal, totally irreducible and exponentially separated
system, and if $H(\xi,\mathcal{P}_{1}|\Pi^{-1}\mathcal{B})>0$ and
$\dim\mu\geq1$, then $\dim\mu=2$.
\end{thm*}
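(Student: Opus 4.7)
The proof proceeds by contradiction: suppose $\alpha := \dim\mu \in [1,2)$, so all of the hypotheses of Proposition~\ref{prop:decomp of desc of slices}, Theorem~\ref{thm:entropy growth}, and Proposition~\ref{prop:entropy-non-decrease} are in force. Proposition~\ref{prop:decomp of desc of slices} yields constants $\varepsilon_{0}>0$ and $M\geq 1$ such that, for $\xi$-a.e.\ $\omega\in\Lambda^{\mathbb{N}}$ and all sufficiently large $n$, the random level-$0$ component $\widetilde{\theta}^{\omega,n}$ of $[\xi_{\omega}]_{n}$ satisfies both $\mathbb{P}(\tfrac{1}{Mn}H(\widetilde{\theta}^{\omega,n},\mathcal{D}_{Mn})>\varepsilon_{0})>\varepsilon_{0}$ and $\tfrac{1}{n}\log\alpha_{i}(A_{g})\to\chi_{i}$ uniformly over $g\in\supp\widetilde{\theta}^{\omega,n}$ for $i=1,2$. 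Moreover, since $\widetilde{\theta}^{\omega,n}$ lies in a single level-$0$ cell of $A_{2,2}$, its support has universal diameter at most some $R>0$, and any two $g,h\in\supp\widetilde{\theta}^{\omega,n}$ satisfy $d(g,h)\le R$.

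My plan is then to apply the second part of Theorem~\ref{thm:entropy growth} to each $\widetilde{\theta}^{\omega,n}$ with this $M$ and $R$ (noting that the non-degeneracy condition $a_{1}-a_{2}>R^{-1}$ holds eventually because $\chi_{2}<\chi_{1}$): this yields some $\delta>0$ with
\[
\frac{1}{Mn}H\!\left(\widetilde{\theta}^{\omega,n}\conv\mu,\,\mathcal{D}_{(M+|a_{2}|)n}\,\big|\,\mathcal{D}_{|a_{2}|n}\right)\;>\;\alpha+\delta
\]
whenever the scale-$Mn$ entropy of $\widetilde{\theta}^{\omega,n}$ exceeds $\varepsilon_{0}$, where $a_{i}=\tfrac{1}{n}\log\alpha_{i}(A_{g})$ for any fixed $g\in\supp\widetilde{\theta}^{\omega,n}$. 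For the remaining components, Proposition~\ref{prop:entropy-non-decrease} provides the soft baseline bound $\geq\alpha-o_{R}(1)$ for the same conditional entropy. Since $a_{i}\to\chi_{i}$ uniformly over $g$, shifting the partition indices by the $o(n)$ levels $|a_{i}n-|\chi_{i}|n|$ perturbs the normalized entropy by $o(1)$, so we may replace $|a_{2}|$ by the global constant $|\chi_{2}|$ throughout. Averaging over the component and combining the high- and low-entropy cases gives, for $\xi$-a.e.\ $\omega$,
\[
\liminf_{n\to\infty}\,\mathbb{E}\!\left[\tfrac{1}{Mn}H\!\left(\widetilde{\theta}^{\omega,n}\conv\mu,\,\mathcal{D}_{(M+|\chi_{2}|)n}\,\big|\,\mathcal{D}_{|\chi_{2}|n}\right)\right]\;\geq\;\alpha+\varepsilon_{0}\delta.
\]

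To conclude, the self-affinity $\mu=p^{*n}\conv\mu$ together with the decomposition $p^{*n}=\int [\xi_{\omega}]_{n}\,d\xi(\omega)$ expresses $\mu$ as the mixture $\int([\xi_{\omega}]_{n}\conv\mu)\,d\xi(\omega)$, and each $[\xi_{\omega}]_{n}\conv\mu$ is in turn the discrete convex combination of $\widetilde{\theta}^{\omega,n}\conv\mu$ over its level-$0$ components. Since the map $\nu\mapsto H(\nu,\mathcal{A}\mid\mathcal{B})$ is concave with no correction term (for both discrete and continuous mixtures, by writing $H(\nu,\mathcal{A}\mid\mathcal{B})=\sum_{B\in\mathcal{B}}\nu(B)H(\nu_{B},\mathcal{A})$ and applying Jensen), two applications of concavity followed by Fatou's lemma yield
\[
\liminf_{n\to\infty}\tfrac{1}{Mn}H\!\left(\mu,\,\mathcal{D}_{(M+|\chi_{2}|)n}\,\big|\,\mathcal{D}_{|\chi_{2}|n}\right)\;\geq\;\alpha+\varepsilon_{0}\delta.
\]
But by exact dimensionality of $\mu$ and Lemma~\ref{lem:entropy-dimension}, $\tfrac{1}{N}H(\mu,\mathcal{D}_{N})\to\alpha$, so the displayed conditional entropy on the left converges to $\tfrac{M+|\chi_{2}|}{M}\alpha-\tfrac{|\chi_{2}|}{M}\alpha=\alpha$, a contradiction. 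The main obstacle is harmonizing the component-dependent non-conformal scale $|a_{2}|$ with the single global scale $|\chi_{2}|$—this is exactly what clause~(\ref{eq:fibers-cor-2}) of Proposition~\ref{prop:decomp of desc of slices} was tailored to provide—and ensuring that the low-entropy components, which may carry the bulk of the mass and fall outside the hypothesis of the entropy-growth theorem, do not pull the average below $\alpha$; this is precisely the role of the soft non-decrease bound in Proposition~\ref{prop:entropy-non-decrease}.
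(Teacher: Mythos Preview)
Your proof is correct and follows essentially the same approach as the paper: a contradiction argument that decomposes $\mu=p^{*n}\conv\mu$ via Lemma~\ref{lem:first decomp of nu^*n} and the level-$0$ components of $[\xi_\omega]_n$, applies Theorem~\ref{thm:entropy growth} on the positive-fraction of components with entropy $>\varepsilon_0$ and Proposition~\ref{prop:entropy-non-decrease} on the rest, harmonizes the scales via~(\ref{eq:fibers-cor-2}), and contradicts exact dimensionality. Your use of Fatou to pass the $\liminf$ through the $\xi$-integral makes explicit a step the paper leaves somewhat informal (the ``$o_\omega(1)$'' in the final display there), and you should note that $R$ may be taken large enough that $R^{-1}<\chi_1-\chi_2$ while still bounding the cell diameters, but otherwise the arguments coincide.
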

\begin{proof}
Assume for the sake of contradiction that $\dim\mu<2$. 

Let $\varepsilon>0$ and $M\ge1$ be as in Proposition \ref{prop:decomp of desc of slices}.
For $n\ge1$ we have $\mu=p^{*n}\conv\mu$. By Lemma \ref{lem:first decomp of nu^*n},
$\mu=p^{*n}\conv\mu=\int[\xi_{\omega}]_{n}\conv\mu\,d\xi(\omega)$,
so by concavity of conditional entropy,
\begin{gather}
\frac{1}{Mn}H(\mu,\mathcal{D}_{(M+|\chi_{2}|)n}\mid\mathcal{D}_{|\chi_{2}|n})\;\;\;\;\;\;\;\;\;\;\;\;\;\;\;\;\;\;\;\;\;\;\;\;\;\;\;\;\;\;\;\;\;\;\;\;\nonumber \\
\;\;\;\;\;\;\ge\int\frac{1}{Mn}H([\xi_{\omega}]_{n}\conv\mu,\mathcal{D}_{(M+|\chi_{2}|)n}\mid\mathcal{D}_{|\chi_{2}|n})\:d\xi(\omega)\;.\label{eq:ent first decomposition}
\end{gather}

Let us write $\widetilde{\theta}^{\omega,n}$ for a random level-$0$
component of the measure $[\xi_{\omega}]_{n}$, so that for each $\omega$,
\[
[\xi_{\omega}]_{n}=\mathbb{E}\left(\widetilde{\theta}^{\omega,n}\right)
\]
Inserting this into (\ref{eq:ent first decomposition}) and using
concavity again,
\begin{gather}
\frac{1}{Mn}H(\mu,\mathcal{D}_{(M+|\chi_{2}|)n}\mid\mathcal{D}_{|\chi_{2}|n})\;\;\;\;\;\;\;\;\;\;\;\;\;\;\;\;\;\;\;\;\;\;\;\;\;\;\;\;\;\;\;\;\;\;\;\;\;\;\;\;\;\;\nonumber \\
\;\;\;\;\;\;\geq\int\mathbb{E}\left(\frac{1}{Mn}H(\widetilde{\theta}^{\omega,n}\conv\mu,\mathcal{D}_{(M+|\chi_{2}|)n}|\mathcal{D}_{|\chi_{2}|n})\right)\;d\xi(\omega)\;.\label{eq:ent second decomposition}
\end{gather}
Our goal is to get a lower bound for the integrand on the right hand
side. Specifically we will show that for $\xi$-a.e. $\omega$, with
probability tending to one (over the choice of the component), the
entropy in the expectation is bounded below by $\alpha-o(1)$, and,
when $n$ is large enough, with some definite probability $q>0$ it
is greater than $\alpha+\delta$ (for another parameter $\delta>0$).
This will imply that for large $n$ the right hand side is $\geq\alpha+q\delta-o(1)$,
giving a contradiction.

Let $R>1$ be a global constant which is larger than the diameter
of any level-$0$ dyadic component of $A_{2,2}$. Suppose also that
$R^{-1}<\frac{\chi_{1}-\chi_{2}}{2}$. From now on fix a $\xi$-typical
$\omega\in\Lambda^{\mathbb{N}}$. Terms of the form $o(1)$ etc. are
asymptotic as $n\rightarrow\infty$ (but may depend on $\omega$ as
indicated).

Since $\varepsilon$ and $M$ were chosen as in Proposition \ref{prop:decomp of desc of slices},
\begin{equation}
\liminf_{n\rightarrow\infty}\mathbb{P}\left(\frac{1}{Mn}H(\widetilde{\theta}^{\omega,n},\mathcal{D}_{Mn})>\varepsilon\right)>\varepsilon\;,\label{eq:good-fiber-entropy}
\end{equation}
and, for some $\delta_{n}\searrow0$ (depending on $\omega$), for
$i=1,2$,
\begin{equation}
\lim_{n\rightarrow\infty}\mathbb{P}\left(|\chi_{i}-\frac{1}{n}\log\alpha_{i}(A_{g})|<\delta_{n}\text{ for all }g\in\supp\widetilde{\theta}^{\omega,n}\right)=1\;.\label{eq:good-fiber-exponents}
\end{equation}

Fix a large $n\ge1$, a component $\widetilde{\theta}^{\omega,n}$
in the event in (\ref{eq:good-fiber-exponents}), and some $g\in\supp\widetilde{\theta}^{\omega,n}$.
Note that $R$ bounds the diameter of $\supp\widetilde{\theta}^{\omega,n}$.
Write
\[
a_{i}=\frac{1}{n}\log\alpha_{i}(A_{g})\quad\text{for }i=1,2
\]
so that 
\[
|a_{i}-\chi_{i}|<\delta_{n}\qquad\text{for }i=1,2\;.
\]
Since $\delta_{n}\searrow0$ we may assume that $a_{1}-a_{2}>\frac{\chi_{1}-\chi_{2}}{2}>R^{-1}$.
Then, by Proposition \ref{prop:entropy-non-decrease},

\[
\frac{1}{Mn}H(\widetilde{\theta}^{\omega,n}\conv\mu,\mathcal{D}_{(M+|a_{2}|)n}|\mathcal{D}_{|a_{2}|n})\ge\alpha-o(1)\;,
\]
which, in view of $|a_{i}-\chi_{i}|<\delta_{n}$ is the same as
\begin{equation}
\frac{1}{Mn}H(\widetilde{\theta}^{\omega,n}\conv\mu,\mathcal{D}_{(M+|\chi_{2}|)n}|\mathcal{D}_{|\chi_{2}|n})\ge\alpha-o_{\omega}(1)\;.\label{eq:ent no increase}
\end{equation}
This is the general lower bound we wanted for the integrand in (\ref{eq:ent second decomposition}).

Next, assume that $\widetilde{\theta}^{\omega,n}$ is in the event
in (\ref{eq:good-fiber-entropy}) and let $\delta=\delta(\varepsilon,R)>0$
be as in Theorem \ref{thm:entropy growth}. Then, by\footnote{This is where the assumption $\dim\mu<2$ is used.}
Theorem \ref{thm:entropy growth}, 
\[
\frac{1}{Mn}H(\widetilde{\theta}^{\omega,n}\conv\mu,\mathcal{D}_{(M+|a_{2}|)n}|\mathcal{D}_{|a_{2}|n})\ge\alpha+\delta\;.
\]
Using again the fact that $|\chi_{i}-a_{i}|<\delta_{n}$, this is
equivalent to
\begin{equation}
\frac{1}{Mn}H\left(\widetilde{\theta}^{\omega,n}\conv\mu,\mathcal{D}_{(M+|\chi_{2}|)n}\mid\mathcal{D}_{|\chi_{2}|n}\right)\ge\alpha+\delta-o_{\omega}(1)\;.\label{eq:ent increase}
\end{equation}
Combining (\ref{eq:ent no increase}), (\ref{eq:ent increase}) with
(\ref{eq:ent second decomposition}), (\ref{eq:good-fiber-entropy})
and (\ref{eq:good-fiber-exponents}), we find that 
\[
\frac{1}{Mn}H(\mu,\mathcal{D}_{(M+|\chi_{2}|)n}|\mathcal{D}_{|\chi_{2}|n})\geq\alpha+\delta\cdot\varepsilon-o_{\omega}(1)\:.
\]
But since $\mu$ has exact dimension $\alpha$,
\[
\frac{1}{Mn}H(\mu,\mathcal{D}_{(M+|\chi_{2}|)n}\mid\mathcal{D}_{|\chi_{2}|n})=\alpha+o(1)\:.
\]
This contradiction completes the proof of the theorem.
\end{proof}

\subsection{Triangular case: proof of Theorem \ref{thm:main-triangular}}

As in the introduction, let $\pi_{1}$ denote projection to the $x$-axis
$\overline{e}_{1}\in\PR$, and also write $\overline{e}_{2}\in\PR$
for the vertical direction. We recall the statement of Theorem \ref{thm:main-triangular}:
\begin{thm*}
Let $\mu$ be a self-affine measure defined by a system $\Phi=\{\varphi_{i}(x)=A_{i}x+v_{i}\}_{i\in\Lambda}$
as in (\ref{eq:triangular-system}), i.e. $\{A_{i}\}$ are invertible
and lower-triangular. Suppose that,
\begin{itemize}
\item $\{A_{i}\}$ are not simultaneously conjugated to a diagonal system;
\item $\Phi$ satisfies exponential separation;
\item The Lyapunov exponents are distinct: $-\infty<\chi_{2}<\chi_{1}<0$
and $\overline{e}_{2}$ is contracted at rate $2^{\chi_{2}}$ (for
example, this holds if $|c_{i}|<|a_{i}|$ for all $i\in\Lambda$);
\item $\mu$ is not supported on a quadratic curve;
\item The projection $\pi_{1}\mu$ has the maximal possible dimension, i.e.
\begin{equation}
\dim\pi_{1}\mu=\min\{1,\dim\mu\}\;.\label{eq:pi-1-lower-bound-1}
\end{equation}
\end{itemize}
Then
\[
\dim\mu=\min\{2,\dim_{L}\mu\}\;.
\]
\end{thm*}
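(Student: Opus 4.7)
The strategy is to mimic the proof of Theorem \ref{thm:main-reformulated}, with modifications to handle the reducibility of $\{A_i\}$ and the degeneracy $\eta^* = \delta_{\overline{e}_1}$. Reduce via the Ledrappier--Young formula to the case $H_3 > 0$ and $\dim\mu < 2$; the case $H_3 = 0$ is covered by \cite[Proposition 6.6]{BHR}. The goal is to derive a contradiction by showing $\dim\mu = 2$. Under the hypotheses, standard random-matrix theory for reducible but non-diagonalizable systems yields $\eta^* = \delta_{\overline{e}_1}$ (the derivative of each $A_i^*$-action on $\PR$ at the common fixed direction $\overline{e}_1$ has magnitude $|c_i/a_i| < 1$, so $\overline{e}_1$ is globally attracting for the dual walk), while $\eta$ is continuous, of positive dimension, and supported in $\PR \setminus \{\overline{e}_2\}$ (by non-diagonalizability). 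Hence $L : \Lambda^{\mathbb{N}} \to \PR$ is $\xi$-a.s.\ well defined and takes values in $\supp\eta$.

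\textbf{Adapting the projection and slice machinery.} Re-derive the projection-entropy results of Section \ref{sec:Entropy} in the triangular setting. The key substitution: in place of equidistribution against a continuous $\eta^*$, use that $A^*_{\mathbf{I}(n)} W \to \overline{e}_1$ as $n \to \infty$ for \emph{every} starting direction $W \in \PR$ (no exceptional direction). Together with hypothesis (\ref{eq:pi-1-lower-bound-1}) as a substitute for Theorem \ref{thm:BHR-projections}, this gives $\beta = \dim\pi_{\overline{e}_1}\mu = \min\{1,\dim\mu\}$ and a uniform lower bound $\inf_W \tfrac{1}{n}H(\pi_W \mu, \mathcal{D}_n) \geq \beta - o(1)$ in the manner of Lemma \ref{lem:uni conv of ent}. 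Next, verify the analog of Theorem \ref{thm:L-descends-to-mu} through Proposition \ref{prop:singular pulbacks}: its proof needs a uniform projection bound, available from the step just performed plus Corollary \ref{cor:bourgain-projection-of-SA-measures} (see the obstacle below for the latter input). The algebraic results of Section \ref{sec:Concentration-near-lines} carry over directly: the critical input Proposition \ref{prop:measure 0 for alg cur} is now \emph{hypothesis}, and Proposition \ref{prop:non-aff of L} then gives that $L$ does not agree $\mu$-a.e.\ with any affine map.

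\textbf{Entropy growth and contradiction.} With the background in place, establish Theorem \ref{thm:entropy growth-near-identity} in the triangular setting following Section \ref{subsec:Proof-of-ent growth thm} verbatim. The inverse theorem supplies a saturated subspace $V_j^u$: the case $\dim V_j^u = 2$ contradicts $\dim\mu < 2$; the case $\dim V_j^u = 1$ with $V_j^u \notin B(L(x), \sigma)$ is ruled out via $\alpha > 1 + \beta - O(\sigma)$ combined with $\beta \geq \alpha/2 + \tau$ (this also automatically covers $V_j^u = \overline{e}_2$, since $L(x) \neq \overline{e}_2$ a.s.); the residual case $V_j^u \approx L(x)$ is excluded by Corollary \ref{cor:non-concentration-of-L}. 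Theorem \ref{thm:entropy growth} then follows via Proposition \ref{prop:interpolation} and the uniform projection bound. Finally, Proposition \ref{prop:decomp of desc of slices} decomposes $p^{*n}$ into components within distance $O(1)$ of the identity having non-negligible entropy, and the argument of Section \ref{sec:Proof-of-main-result} combines this with the entropy-growth theorem and the identity $p^{*n} \conv \mu = \mu$ to force $\dim\mu = 2$, a contradiction.

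\textbf{Main obstacle.} The hardest point is recovering the uniform projection bound $\dim\pi_W \mu \geq \alpha/2 + \tau$ over every $W \in \PR$, since the proof in the totally irreducible case (Corollary \ref{cor:bourgain-projection-of-SA-measures}) relied on $\dim\eta^* > 0$, which fails here. The plan is to argue instead that $\mu$ has uniform entropy dimension $\alpha$ (Proposition \ref{prop:uniform ent dim}), a consequence of $\mu$ giving mass zero to every line, which follows from the non-quadratic-curve hypothesis; apply Bourgain's projection theorem (Theorem \ref{thm:bourgain-projection}) to obtain some $W \in \PR$ with $\dim\pi_W \mu > \alpha/2 + \tau$; and transfer this to every $W$ using the equidistribution $A^*_{\mathbf{I}(n)} W \to \overline{e}_1$ at every starting direction, together with hypothesis (\ref{eq:pi-1-lower-bound-1}). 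A secondary subtlety appears in Proposition \ref{prop:interpolation}, which in the totally irreducible setting used $\dim\pi_V \mu = 1$ for $\eta^*$-a.e.\ $V$; here we instead need this at $V = L(A_g)$ for $g \in \supp\theta$ typical, which follows from the uniform projection bound together with $L(A_g) \in \supp\eta \subseteq \PR \setminus \{\overline{e}_2\}$.
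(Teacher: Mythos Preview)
Your proposal follows the paper's route and most adaptations you list are correct, but you misidentify the main obstacle and miss a genuine one.

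The ``main obstacle'' you name is not one: the paper does not try to salvage Corollary~\ref{cor:bourgain-projection-of-SA-measures}. The hypothesis $\beta=\dim\pi_{1}\mu=\min\{1,\dim\mu\}$ already gives $\beta>\alpha/2$ whenever $0<\alpha<2$, and Lemma~\ref{lem:uni conv of ent}---whose proof only uses that $A_{\mathbf I(n)}^{*}W\to\overline e_{1}$ from every initial $W$, which holds here---upgrades this to the uniform bound $\inf_W\tfrac1n H(\pi_W\mu,\mathcal D_n)>\alpha/2+\tau$. That is all Propositions~\ref{prop:singular pulbacks},~\ref{prop:const directions} and the step at~(\ref{eq:triangular-issue}) require. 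Your Bourgain detour is redundant: your own ``transfer'' step already invokes the hypothesis and Lemma~\ref{lem:uni conv of ent}, which suffice by themselves. Likewise, Proposition~\ref{prop:interpolation} needs only $\beta=1$, i.e.\ the projection hypothesis once one has reduced to $\dim\mu\ge 1$; the direction $L(A_g)$ plays no special role there.

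The modification you do miss is algebraic. Lemma~\ref{lem:commutes in action on P(R^2)} used total irreducibility to exclude $\rank(B)=1$; without it one concludes only that $B$ is scalar \emph{or} $\rank(B)=1$ with $\im(B)=\overline e_{2}$. In Proposition~\ref{prop:non-aff of L} one must then separately rule out $\im(A_{\psi})=\overline e_{2}$: inserting this into~(\ref{eq:parallel lines}) forces $b_{\psi}\in\overline e_{2}$, so $\psi$ takes values in $\overline e_{2}$ and hence $L(x)=\overline e_{2}$ for $\mu$-a.e.\ $x$, contradicting the continuity of $\eta=L\mu$. This, not just the replacement of Proposition~\ref{prop:measure 0 for alg cur} by a hypothesis, is where the non-diagonalizability and quadratic-curve assumptions do extra work. (A minor correction: \cite[Proposition~6.6]{BHR} treats the complementary triangular case where $\overline e_{2}$ is contracted at rate $2^{\chi_1}$, not the case $H_3=0$ of the present setup; the latter follows directly from the Ledrappier--Young formula combined with the projection hypothesis.)
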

Let us discuss what changes relative to the proof of the irreducible
case.

\subsubsection*{Furstenberg measures and Ledrappier-Young}

Most of Theorem \ref{thm:Furstenberg-Oseledets} continues to hold
for system which are non-conformal and have distinct Lyapunov exponents,
with the exception of the uniqueness of the limiting distribution
(part 4), and the pointwise convergence in the last equation of part
(5), which no longer holds for all initial lines. Nevertheless, the
measures $\eta,\eta^{*}$ are well-defined as the limiting distributions
of $L(\zeta_{n}\ldots\zeta_{1})$ and $L(\zeta_{n}^{*}\ldots\zeta_{1}^{*})$,
respectively, where $(\zeta_{i})$ are i.i.d. variables with distribution
$\sum_{i\in\Lambda}p_{i}\cdot\delta_{A_{i}}$. Under our assumptions
that $\overline{e}_{2}$ is contracted asymptotically at rate $2^{\chi_{2}}$,
and the matrices are not jointly diagonalizable, one can show that
\begin{enumerate}
\item $\eta$ is continuous and has positive dimension, and it is the limiting
distribution of $\zeta_{n}\ldots\zeta_{1}W$ for every $W\in\PR\setminus\{\overline{e}_{2}\}$.
\item $\eta^{*}=\delta_{\overline{e}_{1}}$, and it is the limiting distribution
of $\zeta_{n}^{*}\ldots\zeta_{1}^{*}W$ for every $W\in\PR$.
\end{enumerate}
The Ledrappier-Young formula is valid, but since $\eta^{*}=\delta_{\overline{e}_{1}}$,
it simply states that
\[
\dim\mu=\dim\pi_{1}\mu+\dim\mu_{x}^{\overline{e}_{2}}\qquad\text{for }\mu\text{-a.e. }x\;.
\]
Recall from the introduction that $\pi_{1}\mu$ is self-similar. Also
it is not supported on a point, since then $\mu$ would be supported
on a translate of $\overline{e}_{2}$, contradicting our assumption
that $\mu$ is not supported on a quadratic curve. Thus, we know at
least that $\dim\pi_{1}\mu>0$. This is still far from (\ref{eq:pi-1-lower-bound-1}),
but one cannot in general do better without further information (see
discussion after Theorem \ref{thm:main-triangular}).

\subsubsection*{Projections and slices}

Due to the fact that $\eta^{*}=\delta_{\overline{e}_{1}}$ has dimension
zero, Theorem \ref{thm:BHR-projections} no longer holds. But $\eta^{*}=\delta_{\overline{e}_{1}}$
still attracts the random walks started from all initial lines. This,
and the inequality $\chi_{2}<\chi_{1}$ which we have assumed, mean
that the results in Section \ref{sec:Entropy} continue to hold as
stated. 

Note that in the case considered in \cite{BHR} (where $\overline{e}_{2}$
is contracted at asymptotic rate $2^{\chi_{1}}$ instead of $2^{\chi_{2}}$),
the situation was different: there, we did not have convergence to
$\eta^{*}$ from all initial lines, and so many analogous results
about projections needed to be modified to non-uniform variants.

\subsubsection*{The function $L$}

Because $\dim\eta^{*}=0$, Corollary \ref{cor:bourgain-projection-of-SA-measures}
is no longer valid. Nevertheless, we have added the assumption
\[
\beta=\dim\pi_{1}\mu\ge\min\{1,\dim\mu\},
\]
hence Propositions \ref{prop:singular pulbacks} and Proposition \ref{prop:const directions}
continue to hold. 

\subsubsection*{Algebraic arguments}

As noted in the introduction, in the triangular matrix case, the attractor
could be supported on a quadratic curve, and in such cases the dimension
can be smaller than the expected one even if the other hypotheses
hold. We have therefore added the condition that $\mu$ is not supported
on quadratic curve as one of the hypotheses of Theorem \ref{thm:main-triangular},
so Section \ref{subsec:The-mu-measure-of-quadratic-curves} is no
longer needed, except for the easy observation that if $\mu$ gave
positive mass to a quadratic curve, it would be supported on one.

For the non-affinity of $L$ that is proved in Section \ref{subsec:non-affinity-of L},
a few modifications are necessary:

In Lemma \ref{lem:commutes in action on P(R^2)}, the conclusion is
not as stated, but rather, that either $B$ is scalar, or else it
has rank $1$ and its image is the common eigenvector of the $A_{i}$,
namely, $\overline{e}_{2}$.

In Proposition \ref{prop:non-aff of L}, several modifications are
needed. First, as noted above, the fact that $\mu$ does not give
mass to quadratic curves follows from our assumptions, rather than
from Proposition \ref{prop:measure 0 for alg cur}. Second, when invoking
Lemma \ref{lem:commutes in action on P(R^2)}, one must deal with
the possibility that $\im(A_{\psi})=\overline{e_{2}}$. Supposing
that this is the case, it follows from equation (\ref{eq:parallel lines})
that $b_{\psi}\in\overline{e}_{2}$, but then $\overline{e_{2}}$
is an invariant line under all $\varphi_{i}$ and we conclude that
$\mu$ is supported on this line, contradicting again the assumption
that it is not supported on a quadratic curve.

\subsubsection*{Entropy growth}

The entropy growth result, Theorem \ref{thm:entropy growth-near-identity},
requires no change.

\subsubsection*{Bottom line}

The remainder of the proof can now proceed as it did for Theorem \ref{thm:main}.

\bibliographystyle{plain}
\bibliography{bib}

\begin{thebibliography}{10}

\bibitem{BandtKravchenko2011-fractal-curves}
Christoph Bandt and Aleksey Kravchenko.
\newblock Differentiability of fractal curves.
\newblock {\em Nonlinearity}, 24(10):2717--2728, 2011.

\bibitem{Barany2015}
Bal{\' a}zs B{\' a}r{\' a}ny.
\newblock On the ledrappier-young formula for self-affine measures.
\newblock {\em Mathematical Proceedings of the Cambridge Philosophical
  Society}, 159(3):405--432, 008 2015.

\bibitem{BHR}
Bal{\'a}zs B{\'a}r{\'a}ny, Michael Hochman, and Ariel Rapaport.
\newblock Hausdorff dimension of planar self-affine sets and measures.
\newblock {\em to appear in Inventiones Mathematicae}, 2018.
\newblock https://arxiv.org/abs/1712.07353.

\bibitem{BK}
Bal{\'a}zs B{\'a}r{\'a}ny and Antti K{\"a}enm{\"a}ki.
\newblock Ledrappier-young formula and exact dimensionality of self-affine
  measures.
\newblock {\em Advances in Mathematics}, 318:88 -- 129, 2017.

\bibitem{BaranyKaenmakiKoivusalo2018}
Bal\'{a}zs B\'{a}r\'{a}ny, Antti K\"{a}enm\"{a}ki, and Henna Koivusalo.
\newblock Dimension of self-affine sets for fixed translation vectors.
\newblock {\em J. Lond. Math. Soc. (2)}, 98(1):223--252, 2018.

\bibitem{Bedford1989-box-dimension-of-repellers}
T~Bedford.
\newblock The box dimension of self-affine graphs and repellers.
\newblock {\em Nonlinearity}, 2(1):53, 1989.

\bibitem{BL}
P~Bougerol and J~Lacroix.
\newblock Random products of matrices with applications to schr{\"o}dinger
  operators.
\newblock {\em Basel, Switzerland: Birkhuser}, 1985.

\bibitem{Falconer1988}
K.~J. Falconer.
\newblock The hausdorff dimension of self-affine fractals.
\newblock {\em Mathematical Proceedings of the Cambridge Philosophical
  Society}, 103(2):339--350, 1988.

\bibitem{Falconer1992}
K.~J. Falconer.
\newblock The dimension of self-affine fractals. {II}.
\newblock {\em Math. Proc. Cambridge Philos. Soc.}, 111(1):169--179, 1992.

\bibitem{FalconerKempton2016}
K.~J. Falconer and Tom Kempton.
\newblock Planar self-affine sets with equal hausdorff, box and affinity
  dimensions.
\newblock {\em Ergodic Theory and Dynamical Systems}, pages 1--20, 2016.

\bibitem{FanLauRao02}
Ai-Hua Fan, Ka-Sing Lau, and Hui Rao.
\newblock Relationships between different dimensions of a measure.
\newblock {\em Monatsh. Math.}, 135(3):191--201, 2002.

\bibitem{Feng2019}
De-Jun Feng.
\newblock Dimension of invariant measures for affine iterated function systems.
\newblock {\em preprint}, 2019.
\newblock https://arxiv.org/abs/1901.01691.

\bibitem{FH}
De-Jun Feng and Huyi Hu.
\newblock Dimension theory of iterated function systems.
\newblock {\em Comm. Pure Appl. Math.}, 62(11):1435--1500, 2009.

\bibitem{FK}
De-Jun Feng and Antti K\"{a}enm\"{a}ki.
\newblock Self-affine sets in analytic curves and algebraic surfaces.
\newblock {\em Ann. Acad. Sci. Fenn. Math.}, 43(1):109--119, 2018.

\bibitem{Ho}
Michael Hochman.
\newblock On self-similar sets with overlaps and inverse theorems for entropy.
\newblock {\em Ann. of Math. (2)}, 180(2):773--822, 2014.

\bibitem{HO2}
Michael Hochman.
\newblock On self-similar sets with overlaps and inverse theorems for entropy
  in {$\mathbb{R}^d$}.
\newblock {\em to appear in Memoirs of the American Mathematical Society},
  2015.
\newblock http://arxiv.org/abs/1503.09043.

\bibitem{HueterLalley1995}
Irene Hueter and Steven~P. Lalley.
\newblock Falconer's formula for the hausdorff dimension of a self-affine set
  in r2.
\newblock {\em Ergodic Theory and Dynamical Systems}, 15(1):77--97, 1995.

\bibitem{JordanPollicottSimon2007}
Thomas Jordan, Mark Pollicott, and K{\'a}roly Simon.
\newblock Hausdorff dimension for randomly perturbed self affine attractors.
\newblock {\em Communications in Mathematical Physics}, 270(2):519--544, Mar
  2007.

\bibitem{KaenmakiRajalaSuomala2012}
Antti K\"{a}enm\"{a}ki, Tapio Rajala, and Ville Suomala.
\newblock Existence of doubling measures via generalised nested cubes.
\newblock {\em Proc. Amer. Math. Soc.}, 140(9):3275--3281, 2012.

\bibitem{GatzourasLalley1992-dim-of-certain-self-affine-fractals}
Steven~P. Lalley and Dimitrios Gatzouras.
\newblock Hausdorff and box dimensions of certain self-affine fractals.
\newblock {\em Indiana University Mathematics Journal}, 41(2):533--568, 1992.

\bibitem{Ma}
Pertti Mattila.
\newblock {\em Geometry of sets and measures in {E}uclidean spaces}, volume~44
  of {\em Cambridge Studies in Advanced Mathematics}.
\newblock Cambridge University Press, Cambridge, 1995.
\newblock Fractals and rectifiability.

\bibitem{McMullen1984}
Curt McMullen.
\newblock The hausdorff dimension of general sierpi{\'n}ski carpets.
\newblock {\em Nagoya Math. J.}, 96:1--9, 1984.

\bibitem{MorrisShmerkin2016}
Ian~D Morris and Pablo Shmerkin.
\newblock On equality of hausdorff and affinity dimensions, via self-affine
  measures on positive subsystems.
\newblock {\em arXiv preprint arXiv:1602.08789}, 2016.

\bibitem{Rapaport2016}
Ariel Rapaport.
\newblock On self-affine measures with equal {H}ausdorff and {L}yapunov
  dimensions.
\newblock {\em Trans. Amer. Math. Soc.}, 370(7):4759--4783, 2018.

\bibitem{Ruelle1979}
David Ruelle.
\newblock Ergodic theory of differentiable dynamical systems.
\newblock {\em Publications Math{\'e}matiques de l'Institut des Hautes
  {\'E}tudes Scientifiques}, 50(1):27--58, Dec 1979.

\bibitem{Solomyak1998}
Boris Solomyak.
\newblock Measure and dimension for some fractal families.
\newblock {\em Mathematical Proceedings of the Cambridge Philosophical
  Society}, 124(3):531--546, 1998.

\bibitem{VARJU-BC-PAPER}
P{\'{e}}ter Varj{\'{u}}.
\newblock On the dimension of bernoulli convolutions for all transcendental
  parameters.
\newblock {\em to appear in Annals of Mathematics}, 2018.
\newblock https://arxiv.org/abs/1810.08905.

\end{thebibliography}

\noindent Einstein Institute of Mathematics, Edmond J. Safra campus,
The Hebrew University of Jerusalem, Israel; 

and

\noindent Institute for Advanced Study, Princeton, 1 Einstein Drive,
Princeton, NJ 08540, USA. 

michael.hochman@mail.huji.ac.il

\bigskip

\noindent Einstein Institute of Mathematics, Edmond J. Safra campus,
The Hebrew University of Jerusalem, Israel;

and

\noindent Centre for Mathematical Sciences, Wilberforce Road, Cambridge
CB3 0WA, UK. 

ariel.rapaport@mail.huji.ac.il
\end{document}